\DeclareRobustCommand{\SkipTocEntry}[5]{}
\newcommand{\nupt}{\nu_{\partial M}}
\newcommand{\Mm}{\mathcal{M}}
\newcommand{\length}{\operatorname{length}}
 \newcommand{\graph}{\operatorname{graph}}
 \newcommand{\Cc}{\mathcal C}
 \newcommand{\MM}{\mathcal{M}}
 \newcommand{\slope}{\operatorname{slope}}
 \newcommand{\FF}{\mathcal{F}}
 \newcommand{\sS}{\mathbf{S}}
 \newcommand{\RR}{\mathbf{R}}  
 \newcommand{\ZZ}{\mathbf{Z}}  
 \newcommand{\BB}{\mathbf{B}}  
 \newcommand{\dist}{\operatorname{dist}}
 \newcommand{\area}{\operatorname{area}}
 \newcommand{\eps}{\epsilon}
 \newcommand{\Tan}{\operatorname{Tan}}
 \newcommand{\genus}{\operatorname{genus}}
\newcommand{\interior}{\operatorname{interior}}
\newcommand{\vv}{\bold v}
\newcommand{\ee}{\mathbf{e}}
\newcommand{\grad}{\nabla}
 \newcommand{\Div}{\operatorname{div}}
\newcommand{\pdf}[2]{\frac{\partial #1}{\partial #2}}
\newcommand{\talpha}{\tilde\alpha}
\def\begfig {
\begin{figure}[b]
\small }
\def\endfig {
\normalsize
\end{figure}
}
    \newtheorem{THEOREM} {Theorem}         
    \newtheorem{theorem}    {Theorem}       [section]
    \newtheorem{lemma}      [theorem]       {Lemma}
    \newtheorem{corollary}  [theorem]     {Corollary}
    \newtheorem{proposition}       [theorem]       {Proposition}
    \newtheorem{alt-claim}    [theorem]       {Claim}                     
    \newtheorem*{claim}{Claim}
    \newtheorem*{theorem*}{Theorem}
    \newtheorem*{corollary*}{Corollary}
    \theoremstyle{definition}
    \newtheorem{definition}  [theorem] {Definition}
    \theoremstyle{definition}
    \newtheorem{remark}   [theorem]       {Remark}
\begin{document}

\renewcommand{\thesubsection}{\thetheorem}

\title{Helicoidal minimal surfaces of prescribed genus, I}
\subjclass[2000]{Primary: 53A10; Secondary: 49Q05, 53C42}
\author{David Hoffman}
\address{Department of Mathematics\\ Stanford University\\ Stanford, CA 94305}
\email{hoffman@math.stanford.edu}
\author{Martin Traizet}
\address{Laboratoire de Math\'{e}matiques et Physique Th\'{e}orique,Universit\'{e} Fran\c cois Rabelais, 37200 Tours, France}
\email{Martin.Traizet@lmpt.univ-tours.fr}
\author{Brian White}
\address{Department of Mathematics\\ Stanford University\\ Stanford, CA 94305}
\thanks{The research of the third author was supported by NSF
   grants~DMS--1105330  }
\email{white@math.stanford.edu}
\date{April 6, 2013.}
\begin{abstract}
For every genus $g$, we prove that $\sS^2\times\RR$
contains complete, properly embedded, genus-$g$ minimal
surfaces whose two ends are asymptotic to helicoids of any prescribed pitch.   
We also show that as the radius
of the $\sS^2$ tends to infinity, these examples converge smoothly to complete, properly embedded minimal surfaces in
$\RR^3$ that are helicoidal at infinity.   
In a companion paper, we prove that helicoidal surfaces in $\RR^3$ of every prescribed genus occur as such limits of examples in $\sS^2\times\RR$.
\end{abstract}

\maketitle

\tableofcontents

\section{Introduction}\label{section:intro}
The study of complete, properly embedded minimal surfaces in 
  $\Sigma\times \RR$, where $\Sigma$ is complete Riemannian $2$-manifold, was
initiated by Harold Rosenberg in \cite{rosenberg2002}.
The general theory of such surfaces was further developed by Meeks and Rosenberg
 in~\cite{MeeksRosenberg2005}.
In the case of $\sS^2\times\RR$, 
 if such a surface has finite topology, then
 either it is a union of horizontal spheres $\sS^2\times\{t\}$, 
or else it is conformally a connected, twice-punctured, 
compact Riemann surface,
with one end going up and the other end going down~\cite{rosenberg2002}*{Theorems~3.3,~4.2,~5.2}.
In that same paper, Rosenberg described a class of such
surfaces in $\sS^2\times \RR$ 
that are very similar to helicoids in $\RR^3$, and hence
are also called helicoids.   
They may be characterized as the complete, non-flat minimal
surfaces in $\sS^2\times\RR$ whose 
horizontal slices are all great circles. 
(See Section~\ref{helicoid-guide-section} for a more explicit
description of helicoids in $\sS^2\times\RR$ and for a 
discussion of their basic properties.) He went on to construct  for each genus a minimal surface
 in $\sS^2\times\RR$
 whose two ends are asymptotic to a totally geodesic cylinder.

In this paper we prove the existence of properly embedded minimal
surfaces in $\sS ^2\times \RR$  of  prescribed finite topology, with top and
bottom ends asymptotic to  an end of a helicoid of any
prescribed pitch. (The pitch of a helicoid in $\sS ^2\times \RR$   is defined in section~\ref{helicoid-guide-section}. The absolute value of the pitch
is twice the vertical distance between successive sheets of the helicoid. The sign of the pitch depends on the sense in which the helicoid winds about its axes.) 
Although  the pitch of the helicoid  to which the top end is asymptotic equals 
 pitch of the helicoid to which the bottom is asymptotic, we do not know if these two helicoids coincide;
one might conceivably be a vertical translate of the other.  Each of the surfaces
we produce contains a pair of antipodal vertical lines $Z$ and $Z^*$ (called axes of the surface)
and a horizontal great circle $X$ that intersects each of the axes.  Indeed, for each  our surfaces, there is a helicoid whose intersection with the surface is precisely 
$X\cup Z \cup Z^*$.

 For every genus, our method produces two examples that are not
 congruent to each other by any orientation-preserving isometry 
 of $\sS^2\times \RR$. 
 The two examples are distinguished by their behavior at the origin $O$:
 one is ``positive" at $O$ and the other is ``negative" at $O$. (The
 positive/negative terminology is explained in Section~\ref{sign-section}.) 
 If the genus is odd, the two examples
 are congruent to each other by reflection $\mu_E$ in the totally geodesic cylinder
 consisting of all points equidistant from the two axes.
 If the genus is even, the two examples are not congruent to each other
 by any isometry of $\sS^2\times \RR$, but each one is invariant under the
 reflection $\mu_E$.
 The examples of even genus $2g$ are also invariant under 
 $(p,z)\mapsto (\tilde p, z)$, where $p$ and $\tilde p$ are antipodal points 
  in $\sS^2$, so their quotients under this involution are
 genus-$g$ minimal surfaces in $\RR \mathbf{P}^2\times\RR$ with helicoidal ends. 

For each genus $g$ and for each helicoidal pitch, we prove that as the radius
of the~$\sS^2$ tends to infinity, our examples converge subsequentially
to complete, properly embedded minimal surfaces in $\RR^3$ that are asymptotic
to helicoids at infinity.   In general, some of the handles  will drift off to infinity.
Indeed, a priori, the limiting surface might have genus $0$ even if $g$ is large.
However, in the companion paper \cite{hoffman-traizet-white-2}, we show that when $g$ is even, we can ensure
that exactly half of the handles drift away.  It follows that $\RR^3$ contains
properly embedded, helicoidal minimal surfaces of every genus.

The arguments required to keep too many handles from drifting away
are rather delicate.  It is much easier to control whether the 
limiting surface has odd or even genus: a limit (as the radius of $\sS^2$
tends to infinity) of ``positive'' examples must have even genus and a limit
of ``negative'' examples must have odd genus.
Such parity control   is sufficient (without the delicate arguments of~\cite{hoffman-traizet-white-2})
to give a new  proof of the existence of a
 genus-one helicoid in $\RR^3$. Previous proofs were given in \cite{hoffman-white-genus-one} and 
 \cite{weber-hoffman-wolf}.
 See the corollary to theorem~\ref{theorem2} in section~\ref{section:main-theorems} for
 details.
 
Returning to our discussion of examples in $\sS^2\times \RR$, we also prove existence of what might be
called periodic genus-$g$ helicoids.  They are properly embedded minimal
surfaces  that are invariant under a screw motion of $\sS^2\times\RR$
and that have fundamental domains of genus $g$.  Indeed, our nonperiodic examples in  $\sS^2\times\RR$ are obtained as limits of the periodic examples as the period  tends to infinity.

As mentioned above, all of our examples contain two vertical axes $Z\cup Z^*$
and a horizontal great circle $X\subset \sS^2\times\{0\}$ at height $0$.   Let $Y$ be the great circle at height $0$ such that $X$, $Y$, and $Z$ meet
orthogonally at a pair of points $O\in Z$ and $O^*\in Z^*$.
All of our examples are invariant
under $180^\circ$ rotation about $X$, $Y$, and $Z$
 (or, equivalently,  about $Z^*$: rotations about $Z$ are also rotations about $Z^*$).
In addition, the nonperiodic examples (and suitable fundamental domains
of the periodic examples) are what we call ``$Y$-surfaces".  Intuitively, this
means that they are $\rho_Y$-invariant (where $\rho_Y$ is $180^\circ$
rotation about $Y$) and that the handles (if there are any)  occur along $Y$. 
The precise definition is that $\rho_Y$ acts by multiplication by $-1$ on
the first homology group of the surface.
This property is very useful because it means that when we
let the period of the periodic examples tend to infinity, the handles cannot drift
away: they are trapped along $Y$, which is compact.  
In the companion paper~\cite{hoffman-traizet-white-2}, when we need to control handles drifting off to infinity
as we let the radius of the $\sS^2$ tend to infinity, 
the $Y$-surface property means that 
the handles can only drift off in one direction (namely along $Y$).
 
This paper is organized as follows.
In section~\ref{helicoid-guide-section}, we present the basic facts about helicoids
in $\sS^2\times\RR$.
In section~\ref{section:main-theorems}, we state
the main results of this paper and the companion paper~\cite{hoffman-traizet-white-2}. 
In section~\ref{sign-section}, we describe what it means
for a surface to be positive or negative at $O$ with respect to $H$.
In section~\ref{section:$Y$-surfaces}, we describe the general properties
of $Y$-surfaces.
In sections~\ref{construction-outline-section}\,--\ref{smooth-count-section},
 we prove existence of periodic genus-$g$ helicoids in 
  $\sS^2\times \RR$.
In sections~\ref{general-results-section} and~\ref{area-bounds-section} 
we present general results we will use in order to establish the existence of limits. 
In section~\ref{nonperiodic-section}, 
we get nonperiodic genus $g$ helicoids as limit of periodic
examples by letting the period tend to infinity.
In section~\ref{R3-section} and \ref{Proof_of_theorem_2}, we prove that as the radius of the $\sS^2$ tends to infinity,
our nonperiodic genus-$g$ helicoids in $\sS^2\times\RR$ converge
to properly embedded, minimal surfaces in $\RR^3$ with helicoidal ends.
\stepcounter{theorem}
 \section{Helicoidal minimal surfaces in $\sS ^2\times \RR$}\label{helicoid-guide-section}

\stepcounter{theorem}
\addtocontents{toc}{\SkipTocEntry}
\subsection*{Helicoids in  $\sS ^2\times \RR$}
\label{mainresults}

In $\sS^2\times\RR$, 
 fix a point $O$ in $\sS ^2\times\{0\}$ and let
$Z$ be the vertical line through $O$.  
Define $\sigma_{\theta,v}$ to be the screw motion
of $\sS^2\times\RR$ given by clockwise rotation through angle $\theta$ about
$Z$ followed by vertical translation by $v$. If $C$ is a great
circle in $\sS^2\times\{0\}$ passing through   $O$ and its
antipodal point $O^*$, define for any constant $\kappa \neq 0$,
\begin{equation}\label{HCkappa}
   H_{C,\kappa}=\bigcup_{t\in \RR} \sigma_{2\pi t, \kappa t}(C).
\end{equation}
We say that $H_{C,\kappa}$ is {\em the helicoid of pitch $\kappa$
generated by $C$}.  The absolute value of the  pitch is twice the vertical distance between successive sheets of the helicoid.  Any two choices of $C$ (and $O$)   produce  congruent 
helicoids in  $\sS^2\times\RR$ if and only  their pitches have the same absolute value.
{ In the rest of the paper, unless otherwise specified, we will assume that $\kappa > 0$.}

\addtocontents{toc}{\SkipTocEntry}
\subsection*{Helicoids in $\sS^2\times\RR$ are minimal surfaces} 
The helicoid $H_{C,\kappa}$ contains two {\em axes},  the vertical
lines $Z$ and $Z^*=\{O^*\}\times \RR$. Like helicoids in $\RR ^3$,
the helicoids $H_{C,\kappa}$ are fibred by horizontal geodesics,
about each of which  they are invariant under $180^\circ$ rotation, an isometry of $\sS^2\times\RR$. If $p$
is a point in the helicoid, then $180^\circ$ rotation of $\sS^2\times\RR$ about
the horizontal great circle in $H_{C,\kappa}$ through $p$
interchanges the two components of $\sS^2\times\RR\setminus H_{C,\kappa} $
but leaves $p$ fixed. If that symmetry is denoted by $\rho$, then
we have $\rho H_{C,\kappa}=H_{C,\kappa}$. If $\vec{h}(p)$ is the
mean curvature of $H_{C,\kappa}$ at $p$, then $\rho
_*\vec{h}(p)=\vec{h}(\rho(p))=\vec{h}(p)$. But since $\rho$
interchanges the components of $\sS^2\times\RR\setminus H_{C,\kappa}$,  $\rho
_*\vec{h}(p)=-\vec{h}(p)$.  Hence  $\vec{h}(p)=0$. Since $p$ is
 an arbitrary point on the helicoid $H_{X,\kappa}$, it is a minimal surface.

\addtocontents{toc}{\SkipTocEntry}
\subsection*{Similarities with helicoids in $\RR^3$} 
The helicoids $H_{C,\kappa}$\,, like helicoids
in $\RR ^3$, are fibred by horizontal geodesics, about each of
which  they are invariant under $180^\circ$ rotation, an isometry
of $\sS^2\times\RR$.  Also, composition of reflection in $Z$ and
$180^\circ$ rotation about $\sigma_{2\pi t, \kappa t}(C)$, produces a $180^\circ$ rotation 
about the great circle $C^{'}\subset S^2\times
\{t\}$ that passes through $Z$ and $Z^*$ and is
orthogonal to $\sigma_{2\pi t,\kappa t}(C)$. These great
circles $C^{'}$ are orthogonal to $H_{C,\kappa}$ and the
rotational symmetries around them are called {\em normal
symmetries of $H_{C,\kappa}$}. In $\RR^3$ the horizontal lines
through the axis $Z$ that meet a helicoid orthogonally are also lines
of rotational symmetry.

 As $\kappa\rightarrow 0$, the helicoid  $H_{C,\kappa}$ converges  to a lamination of
$\sS^2\times\RR$, with leaves  $S^2\times\{t\}$, $t\in \RR$,  and
singular set $Z\cup Z^*$.  Rewriting \eqref{HCkappa} as  
\[
H_{C,\kappa}= \bigcup_{t\in \RR} \sigma_{2\pi t/\kappa ,t}(C),
\]
 it is easy to see that as $\kappa \to \infty$, 
 the helicoid $H_{C,\kappa}$ converges to the cylinder $C\times\RR$. (We
consider such cylinders to be degenerate helicoids.)
This is directly analogous to what happens in $\RR^3$, where the
convergence as $\kappa \to 0$ is to a lamination by
horizontal planes with the $z$-axis as singular set and 
the convergence as $\kappa\to \infty$ is to a vertical plane.

\addtocontents{toc}{\SkipTocEntry}
\subsection*{Differences between helicoids in $\sS^2\times\RR$ and  helicoids in $\RR^3$} 
There are some important differences. First,
as we have seen, helicoids in $\sS^2\times\RR$ have {\em two} axes, $Z$ and $Z^*$,
not just one. Second, any two helicoids in $\RR ^3$ with positive pitch differ by a
rigid motion followed by a homothety. But in $\sS^2\times\RR$ there are no
homotheties; helicoids with different pitches are essentially
different surfaces. Third, helicoids in $\sS^2\times\RR$ have a symmetry of
reflection not possessed by helicoids in $\RR^3$.  Let $E \subset
\sS^2\times \{0\} $ be the great circle of points equidistant from
$O$ and $O^*$. (Thus $E$ is the equator with poles $O$ and $O^*$.) Then
it is easy to see that any  helicoid with axes $Z$ and $Z^*$ is invariant under the
reflection $\mu_E$ in the flat totally geodesic cylinder $E \times \RR$.
Finally, note that a helicoid in $\sS^2\times\RR$  is topologically an annulus
and conformally a  twice
punctured sphere, while a helicoid in $\RR ^3$ is simply
connected and conformally a once-punctured sphere.

\addtocontents{toc}{\SkipTocEntry}
\subsection*{Normalization and notation}
As noted above, if  $C$ and $C^{'}$ are two great circles in $S^2\times\{0\}$, it
easy to see that $H_{C,\kappa}$ and $H_{C^{'},\kappa}$ differ by
an ambient isometry of $\sS^2\times\RR$. We will fix our choice of base circles
for the remainder of this paper: Let X and Y be two great circles
in $S^2\times\{0\}$ that meet orthogonally at points $O$ and
$O^*$, and let $Z$ and $Z^{'}$ be the vertical geodesics through
$O$ and $O^*$, respectively. Then
\[
H_{\kappa}=H_{X,\kappa}
\]
is the helicoid with pitch $\kappa$ and axes $Z\cup Z^*$ that contains $X$. It is
invariant under rotation $\rho _Y$ around the geodesic $Y$.
It is also invariant under reflection in the cylinder $E\times\RR$.
Unless stated otherwise, a helicoid in this paper will be an $H_\kappa$ 

\addtocontents{toc}{\SkipTocEntry}
\subsection*{Symmetries and scaling}
The symmetries of the helicoid $H_\kappa$ in $\sS^2\times\RR$  are
generated by: 
\begin{enumerate} 
\item 
The  screw motions
$\sigma_{2\pi t,\kappa t}$\,; \item $\rho _X$, $\rho _Y$, and
$\rho _Z=\rho_{Z^*}$, the $180^\circ$ rotations about the geodesics
$X$, $Y$, and $Z$; 
\item 
$\mu _E$, reflection in the geodesic
cylinder $E\times \RR$.
\end{enumerate}
When restricted to  $H_{\kappa}$, the symmetries $\sigma_{2\pi t, \kappa t}$ and 
 $\rho_Y$ are orientation-preserving,  while $\rho_X$ and $\rho_Z$ 
 are orientation-reversing.

Helicoids in $\sS^2\times\RR$ whose pitches differ in absolute value
are neither congruent nor related by a homothety. (There are no
homotheties in $\sS^2\times\RR$.)
However, if $H$ is a helicoid of pitch $\kappa$ in $\sS^2(r)\times\RR$,
then dilating $H$ by $\lambda$ produces a helicoid of pitch $\lambda\kappa$
in $\sS^2(\lambda r)\times\RR$.

\section{The Main Theorems}\label{section:main-theorems}

 We now state our first main result in a form that
includes the periodic case ($h<\infty$) and the nonperiodic case ($h=\infty$.)
The reader may wish initially to ignore the periodic case.
Here $X$ and $Y$ are horizontal great circles at height $z=0$ that intersect
each other orthogonally at points $O$ and $O^*$, and $Z$ and $Z^*$ are the
vertical lines passing through $O$ and $O^*$.  

\begin{THEOREM}\label{theorem1}
Let $H$ be a helicoid in $\sS^2\times \RR$ that has vertical axes $Z\cup Z^*$
and that contains the horizontal great circle $X$.
For each genus $g\ge 1$ and each height $h\in (0,\infty]$, there
exists a pair $M_{+}$ and $M_{-}$ of embedded
minimal surfaces in $\sS^2\times \RR$ of genus $g$ with the following
properties (where $s\in \{+, -\}$):
\begin{enumerate}[\upshape (1)]
\item If $h=\infty$, then $M_s$ has no boundary, it is properly embedded in $\sS^2\times\RR$, 
  and each of its two ends
  is asymptotic to $H$ or to a vertical translate of $H$.
\item\label{1:boundary-circles} If $h<\infty$, then $M_s$ is a smooth, compact 
  surface-with-boundary in $\sS^2\times[-h,h]$.  Its boundary consists of the 
  two great circles at heights $h$  and $-h$ that intersect $H$ orthogonally at points in $Z$ and in $Z^*$.
\item If $h=\infty$, then
     \[  
      M_s\cap H = Z\cup Z^*\cup X.
     \]
    If $h<\infty$, then 
    \[
       {\rm interior}(M_s)\cap H = Z_h \cup Z_h^* \cup X,
    \]
    where $Z_h$, and $Z^*_h$
   are the portions of $Z$ and $Z^*$ with $|z|<h$.
\item $M_s$ is a $Y$-surface. 
\item $M_s \cap Y$ contains exactly $2g+2$ points.
\item\label{1:sign}
   $M_{+}$ and $M_{-}$ are positive and negative, 
   respectively, with respect to $H$ at $O$.
 \item
If $g$ is odd, then $M_{+}$ and $M_{-}$ 
are congruent to each other by reflection $\mu_E$ in the cylinder $E\times \RR$.
They are not congruent to each other by any orientation-preserving isometry of $\sS^2\times\RR$.
\item\label{even-genus-congruence-assertion}
 If $g$ is even, then
 $M_{+}$ and $M_{-}$ are each invariant under reflection $\mu_E$  in the cylinder 
   $E\times\RR$.  They are not congruent to each other by any isometry of 
   $\sS^2\times\RR$.
\end{enumerate}
\end{THEOREM}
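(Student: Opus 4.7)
My plan is to follow the paper's apparent two-step approach: construct the periodic examples ($h<\infty$) first in Sections~\ref{construction-outline-section}--\ref{smooth-count-section}, then extract the nonperiodic ($h=\infty$) examples as smooth limits when $h\to\infty$ in Section~\ref{nonperiodic-section}. The central idea is to exploit the full symmetry group together with the intersection condition with $H$ to reduce the global construction to a problem on a single simply connected fundamental piece.

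For the periodic case, observe that the intersection condition $\interior(M_s)\cap H = Z_h\cup Z_h^*\cup X$ together with the rotational symmetries $\rho_X,\rho_Y,\rho_Z$ determines $M_s$ up to Schwarz reflection from a single fundamental region $\Omega$, bounded by arcs of $H$, of the axes $Z,Z^*$, of the symmetry geodesics $X,Y$, and of the prescribed great-circle boundaries at heights $\pm h$. I would then produce $\Omega$ by a constrained variational argument --- most likely a min-max or Plateau-type problem --- with the genus $g$ encoded as a choice of sweepout parameter that yields exactly $2g+2$ intersections with $Y$. The ``smooth count'' of Section~\ref{smooth-count-section} is presumably what pins down the genus of the closed-up, Schwarz-reflected surface to be exactly $g$ rather than smaller. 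The two sign choices $s\in\{+,-\}$ arise from the two topologically distinct ways $\Omega$ can approach $Z_h$, in the sense of Section~\ref{sign-section}.

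For the nonperiodic case I would extract a smooth subsequential limit of the periodic examples as $h\to\infty$, using uniform area bounds (Section~\ref{area-bounds-section}) and standard curvature estimates for embedded minimal surfaces. The hardest step, and the essential use of the $Y$-surface property, is to prevent the genus from dropping in the limit: since the $2g+2$ intersection points with the compact great circle $Y$ trap all handles in a bounded region of $\sS^2\times\RR$, no handle can escape to infinity, and the limit has genus exactly $g$. The intersection condition $M_s\cap H = X\cup Z\cup Z^*$ passes to the limit and pins each end of the limit surface to one side of $H$; a barrier/unique-continuation argument then forces each end to be asymptotic to $H$ or a vertical translate of $H$.

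Items~(7) and (8) reduce to analysing how $\mu_E$ acts on the sign at $O$: the local description in Section~\ref{sign-section} shows $\mu_E$ flips the sign exactly when $g$ is odd, yielding $\mu_E(M_+)=M_-$ for odd $g$ and $\mu_E$-invariance of each $M_\pm$ for even $g$. The non-congruence statements follow because any isometry sending $M_+$ to $M_-$ would have to permute the axis pair $\{Z,Z^*\}$ and (after composing with one of the surface's own symmetries) preserve the height-$0$ slice, hence lie in the finite group generated by $\rho_X,\rho_Y,\rho_Z,\mu_E$; tracking orientations and signs within this finite group gives the claim.
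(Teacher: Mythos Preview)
Your $h=\infty$ step and your handling of items~(7)--(8) are broadly in line with the paper, but your periodic construction misses the paper's central mechanism. The paper does \emph{not} produce the fundamental piece $S\subset H^+$ by a constrained variational or min-max argument with a sweepout encoding the genus. Instead, the existence proof is degree-theoretic: one fixes a bumpy metric, \emph{rounds} the singular boundary curve $\Gamma=\Gamma_C$ (which has corners and a self-crossing pattern at $O,O^*$) to a one-parameter family $\Gamma(t)$ of smooth embedded curves in $H$, and proves a precise correspondence (Theorems~\ref{bridged-approximations-theorem} and~\ref{all-accounted-for-theorem}) between $Y$-surfaces bounded by $\Gamma$ and $Y$-surfaces bounded by $\Gamma(t)$ for small $t$. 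The ``smooth count'' section is not about pinning down the genus after Schwarz reflection; it is the classical mod~$2$ count (Theorem~\ref{Y-degree-theorem}) of embedded minimal $Y$-surfaces bounded by a \emph{smooth} curve, which equals $1$ exactly in the disk/pair-of-disks case. Feeding this through the rounding correspondence and Proposition~\ref{disagreements-proposition} (which tracks how the sign of the rounding at $O,O^*$ shifts $\|S\cap Y\|$ by $0$, $1$, or $2$) gives an inductive recursion showing that $\|\MM(\Gamma,n,s)\|$ is odd for every $n\ge 0$ and each sign $s$ (Theorem~\ref{main-count-theorem}). The two signs do not come from ``two ways $\Omega$ can approach $Z_h$'' in a variational picture; they are separated by the choice of rounding sign at $O$, and Lemma~\ref{parity-sign-lemma} is what links the parity of $\|S\cap Y^+\|$ to agreement of the signs at $O$ and $O^*$, which is how the $\mu_E$ behaviour in (7)--(8) actually falls out.

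A min-max scheme of the kind you sketch would face real obstacles here: you would need, for each $n$ and each sign $s$, a sweepout whose width is realized by a $Y$-surface with \emph{exactly} $n$ points on $Y^+$ and the prescribed sign at $O$, and you give no mechanism to enforce either constraint or to distinguish $M_+$ from $M_-$. The paper's mod~$2$ counting avoids this entirely and is what you should aim to reconstruct.
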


The positive/negative terminology in assertion~\eqref{1:sign} is explained
in section~\ref{sign-section}, and $Y$-surfaces are defined and discussed
in section~\ref{section:$Y$-surfaces}.

Note that if $h<\infty$, we can extend $M_s$
 by repeated Schwarz reflections to
get a complete, properly embedded minimal surface $\widehat{M}_s$ that
is invariant under the screw motion $\sigma$ that takes $H$ to $H$ (preserving its orientation) and $\{z=0\}$
to $\{z=2h\}$.   The intersection $\widehat{M}_s\cap H$ consists of $Z$, $Z^*$,
and the horizontal circles $H\cap \{z=2nh\}$, $n\in \ZZ$. 
The surfaces $\widehat{M}_s$ are the periodic genus-$g$ helicoids mentioned
in the introduction.

\begin{remark}\label{other-circles-remark}
Assertion~\eqref{1:boundary-circles} states (for $h<\infty$) 
that the boundary $\partial M_s$ consists of two great circles that meet
$H$ orthogonally.   Actually, we could allow $\partial M_s$ to be any $\rho_Y$-invariant pair of great circles
at heights $h$ and $-h$ that intersect $Z$ and $Z^*$.
We have chosen to state theorem~\ref{theorem1} for circles that meet
the helicoid $H$ orthogonally because when we extend $M_s$ by
repeated Schwarz reflection to get a complete, properly embedded surface $\widehat M$, that choice
makes the intersection set $\widehat M\cap H$ particularly simple.
In section~\ref{adjusting-pitch-section}, we explain why the choice does not matter: 
if the theorem is true for one choice, it is also true for any other choice.  
Indeed, in {\em proving} the $h<\infty$ case of theorem~\ref{theorem1}, it will be more convenient
to let $\partial M_s$ be the great circles $H\cap \{z=\pm h\}$ that lie in $H$.
(Later, when we let $h\to \infty$ to get nonperiodic genus-$g$ helicoids in $\sS^2\times\RR$, the
choice of great circles $\partial M_s$ plays no role in the proofs.)  
\end{remark}

\begin{remark}\label{non-round-remark}
Theorem~\ref{theorem1} remains true if the round metric on $\sS^2$ is replaced
by any metric that has positive curvature, that is rotationally symmetric about the poles $O$ and $O^*$,
and that is symmetric with respect to reflection in the equator of points equidistant from $O$ and $O^*$.
(In fact the last symmetry is required only for the assertions about $\mu_E$ symmetry.)
No changes are required in any of the proofs. 
\end{remark}

In the nonperiodic case ($h=\infty$) of theorem~\ref{theorem1}, 
we do not know whether the two ends of $M_s$ are asymptotic to
opposite ends of the same helicoid. Indeed, it is possible that the top end
is asymptotic to a $H$ shifted vertically by some amount $v\ne 0$; the bottom end would then be asymptotic to $H$ shifted vertically by $-v$.
 Also, we do not know whether
 $M_{+}$ and $M_{-}$ must be asymptotic to each other, or
 to what extent the pair $\{M_{+}, M_{-}\}$
is unique.

Except for the noncongruence assertions, the proof of theorem~\ref{theorem1}
 holds  for all helicoids $H$ including
    $H=X\times\RR$, which may be regarded as a helicoid of infinite pitch.
(When $H=X\times \RR$ and $h=\infty$, theorem~\ref{theorem1} was  proved
 by Rosenberg in section~4 of~\cite{rosenberg2002}  by completely different methods.)  
 When $X=H\times\RR$, the noncongruence
 assertions break down: see appendix~\ref{noncongruence-appendix}. 
The periodic (i.e., $h<\infty$) case
of theorem~\ref{theorem1} is proved at the end of section~\ref{construction-outline-section},
assuming theorem~\ref{special-existence-theorem}, whose proof is a consequence of the
material in subsequent sections.  
The nonperiodic ($h=\infty$) case is proved in section~\ref{nonperiodic-section}.

Our second main result  lets us take limits as the radius of
the $\sS^2$ tends to infinity. For simplicity we only deal with the
nonperiodic case  ($h=\infty$) here.\footnote{An analogous theorem is
true for the periodic case ($h<\infty$).}

\begin{THEOREM}\label{theorem2}
Let $R_n$ be a sequence of radii tending to
infinity.   For each $n$, let $M_{+}(R_n)$ and $M_{-}(R_n)$ be 
genus-$g$ surfaces in $\sS^2(R_n)\times \RR$ satisfying the list
of properties in theorem~\ref{theorem1}, where $H$ is the helicoid of pitch $1$
and $h=\infty$.
 Then, after passing to
a subsequence, the  $M_+(R_n)$ and $M_{-}(R_n)$ converge smoothly on
compact sets to limits $M_+$ and $M_{-}$ with the
following properties:
\begin{enumerate}
\item\label{2:helicoidlike} $M_{+}$ and $M_{-}$ are complete, properly
embedded minimal surfaces in $\RR^3$ that are asymptotic to the
standard helicoid $H\subset\RR^3$. 
\item\label{2:intersection-property} If $M_s \ne H$, then $M_s\cap H=X\cup Z$ and
  $M_s$ has sign $s$ at $O$ with respect to $H$.
\item\label{2:Y-surfacenish}  $M_s$ is a $Y$-surface.
\item\label{2:point-count} 
  $\|M_s\cap Y\| = 2\, \|M_s\cap Y^+\| +1 = 2\,\genus(M_s)+1$.
 \item\label{2:genus-bound} If $g$ is even, then
      $M_{+}$ and $M_{-}$ each have genus at most $g/2$.
      If $g$ is odd, then $\genus(M_{+})+\genus(M_{-})$
       is at most $g$.
 \item\label{2:genus-parity} The genus of $M_{+}$ is even. 
          The genus of $M_{-}$ is odd.
\end{enumerate}
\end{THEOREM}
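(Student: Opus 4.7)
The plan is to extract a subsequential smooth limit using uniform curvature and area bounds established in the earlier general sections, verify that the limit inherits the relevant symmetries and asymptotic behavior, and then count intersections with $Y$ to control the genus and its parity.

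I would begin by observing that the ambient spaces $\sS^2(R_n)\times\RR$, pointed at $O$, converge smoothly on compact sets to $\RR^3$, and that the helicoid of pitch $1$ in $\sS^2(R_n)\times\RR$ converges smoothly on compact sets to the standard helicoid $H\subset\RR^3$. The area bounds of section~\ref{area-bounds-section} give locally uniform area bounds for the $M_s(R_n)$, and the general minimal surface results of section~\ref{general-results-section} supply the corresponding curvature bounds away from the limiting axis. Standard compactness for properly embedded minimal surfaces with locally uniform area and curvature bounds then produces a subsequential smooth limit $M_s$ that is a properly embedded minimal surface in $\RR^3$. The $\rho_X, \rho_Y, \rho_Z$ symmetries of each $M_s(R_n)$, together with the asymptoticity to a (possibly translated) helicoid of pitch $1$, pass to the limit and force $M_s$ to be asymptotic at infinity to the standard helicoid, proving assertion~(\ref{2:helicoidlike}).

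Next I would pass the structural information through the limit. The axis $Z^*$ sits at distance $\pi R_n$ from $O$ and escapes to infinity, so the intersection $M_s(R_n)\cap H = Z\cup Z^*\cup X$ collapses to $Z\cup X$ in the limit, giving~(\ref{2:intersection-property}) when $M_s\ne H$. The sign at $O$ is a local invariant of the position of the surface relative to $H$ at the origin (section~\ref{sign-section}), and is therefore preserved by the smooth convergence at $O$. The $Y$-surface condition is preserved because it is defined in terms of the $\rho_Y$-action on first homology, a topological invariant stable under smooth convergence of $\rho_Y$-invariant surfaces; this yields~(\ref{2:Y-surfacenish}).

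The heart of the argument, and what I expect to be the main obstacle, is assertions~(\ref{2:point-count})--(\ref{2:genus-parity}). Each $M_s(R_n)$ meets the great circle $Y\subset\sS^2(R_n)\times\{0\}$ in exactly $2g+2$ points, but as $R_n\to\infty$ only the half-arc $Y^+$ near $O$ survives to form a line in $\RR^3$; the antipodal arc recedes to infinity and any intersection points on it are lost. So $\|M_s\cap Y\|$ and the genus of $M_s$ are determined by how the $2g+2$ points of $M_s(R_n)\cap Y$ distribute between the two halves of $Y$. The identity $\|M_s\cap Y\|=2\,\genus(M_s)+1$ then follows from the general formula for $Y$-surfaces proved in section~\ref{section:$Y$-surfaces}, applied to the limit surface in $\RR^3$ (the ``$+1$'' records the forced intersection at $O$). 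For assertion~(\ref{2:genus-parity}), the sign at $O$ with respect to $H$ determines, via the local picture in section~\ref{sign-section}, whether $M_s$ cuts $Y^+$ at $O$ transversely with an even or odd number of additional neighboring intersection points, and hence fixes the parity of $\genus(M_s)$. For the genus bounds~(\ref{2:genus-bound}), in the even-genus case the $\mu_E$-symmetry of each $M_s(R_n)$ pairs the $2g+2$ intersection points symmetrically across the equator $E$, so at most $g+1$ of them lie on the side of $Y$ containing $Y^+$, which gives $\genus(M_s)\le g/2$; in the odd-genus case $\mu_E$ swaps $M_+(R_n)$ and $M_-(R_n)$, and counting the intersection points that survive on each side of $E$ yields the combined bound $\genus(M_+)+\genus(M_-)\le g$.
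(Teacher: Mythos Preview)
Your overall architecture matches the paper's, but two of your steps hide precisely the places where the real work lies.

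\textbf{Compactness.} The flux-based area bounds of section~\ref{area-bounds-section} do \emph{not} give locally uniform area bounds as $R_n\to\infty$. In Corollary~\ref{flux-bounds-corollary} the length $L$ of $(\partial M)\setminus(Z\cup Z^*)$ includes the great circle $X$, whose length is $2\pi R_n$, so the bound degenerates. The paper's actual route (all of section~\ref{R3-section}) is substantially more involved: one first proves a structure theorem showing that outside a solid cylinder of fixed radius about $Z$ the surfaces $M_s(R_n)$ are unions of two multigraphs (Theorem~\ref{graph-form-theorem}, Corollary~\ref{form-of-graph-corollary}), hence have locally bounded area there; then the halfspace theorem for area blowup sets is used to rule out area blowup inside the cylinder (Theorem~\ref{R-to-infinity-theorem}). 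Only after this can the general compactness theorem of section~\ref{general-results-section} be invoked. The asymptoticity to $H$ (Theorem~\ref{asymptotic-to-H-theorem}) likewise requires this multigraph structure, not just the symmetries.

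\textbf{Genus parity.} Your argument for~(\ref{2:genus-parity}) does not work: the sign at $O$ is a first-order tangency condition and by itself says nothing about the number of nearby points of $M_s\cap Y^+$. The paper's argument is global. Setting $S=M_s\cap H^+$, one uses the multigraph description of $S$ outside a large cylinder (again from Corollary~\ref{form-of-graph-corollary}) to see that $S$ has either one or two ends, according to whether $Z^+$ and $X^-$ lie in the same end of $S$; but which of these occurs is decided near $O$ and is exactly what the sign $s$ records. Corollary~\ref{$Y$-corollary} then converts the end count into the parity of $\|S\cap Y\|=\|M_s\cap Y^+\|$, and hence of the genus via~(\ref{2:point-count}). (Your treatment of~(\ref{2:Y-surfacenish}) is also too fast: one must check, using the maximum principle and the single helicoidal end of $M_s$, that the bounding regions $W_n\subset M_s(R_n)$ stay in a fixed slab and hence converge to a \emph{compact} region in $M_s$.)
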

Here if $A$ is a set, then $\|A\|$ denotes the number of elements of $A$.

Theorem~\ref{theorem2} is proved in section~\ref{Proof_of_theorem_2}.

As mentioned earlier, theorem~\ref{theorem2} gives a new proof of the existence of genus-one helicoids
in $\RR^3$:

\begin{corollary*}
If $g=1$ or $2$, then $M_{+}$ has genus $0$ and $M_{-}$ has genus $1$.
\end{corollary*}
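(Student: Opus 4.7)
The plan is to deduce the corollary directly from assertions~\eqref{2:genus-bound} and~\eqref{2:genus-parity} of Theorem~\ref{theorem2}, treating the two cases $g=1$ and $g=2$ separately.

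First, I would consider the case $g = 1$. Since $g$ is odd, assertion~\eqref{2:genus-bound} gives $\genus(M_{+}) + \genus(M_{-}) \le 1$. Assertion~\eqref{2:genus-parity} says $\genus(M_{+})$ is even and $\genus(M_{-})$ is odd. The only nonnegative integer solution to these constraints is $\genus(M_{+}) = 0$ and $\genus(M_{-}) = 1$.

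Next, I would handle the case $g = 2$. Since $g$ is even, assertion~\eqref{2:genus-bound} gives $\genus(M_{s}) \le g/2 = 1$ for each $s \in \{+, -\}$. Combined with assertion~\eqref{2:genus-parity}, the even number $\genus(M_{+}) \in \{0,1\}$ must equal $0$, and the odd number $\genus(M_{-}) \in \{0,1\}$ must equal $1$.

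Since there is nothing else to check, the corollary follows. There is no real obstacle here: both pieces of input (the genus bound and the parity statement) are exactly the ingredients needed, and the arithmetic is trivial. The substance of the argument lies entirely in the proof of Theorem~\ref{theorem2} itself, which is carried out in section~\ref{Proof_of_theorem_2}; once those two assertions are in hand, the existence of a genus-one helicoid in $\RR^3$ follows from assertion~\eqref{2:helicoidlike} applied to $M_{-}$.
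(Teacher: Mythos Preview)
Your proof is correct and matches the paper's own argument: the corollary is stated to follow immediately from assertions~\eqref{2:genus-bound} and~\eqref{2:genus-parity} of Theorem~\ref{theorem2}, and your case-by-case verification for $g=1$ and $g=2$ is exactly that deduction spelled out.
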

The corollary follows immediately from statements~\eqref{2:genus-bound}
and~\eqref{2:genus-parity} of theorem~\ref{theorem2}.

In the companion paper~\cite{hoffman-traizet-white-2}, we prove existence of helicoidal surfaces
of arbitrary genus in $\RR^3$:

\begin{THEOREM}\label{theorem3}
Let $M_{+}$ and $M_{-}$ be the limit minimal surfaces in $\RR^3$
described in theorem~\ref{theorem2}, and suppose that $g$ is even.
\begin{enumerate}
\item If $g/2$ is even, then $M_{+}$ has genus $g/2$.
\item If $g/2$ is odd, then $M_{-}$ has genus $g/2$.
\end{enumerate}
\end{THEOREM}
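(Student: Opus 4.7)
The plan is to translate the genus of $M_s$ into a count of intersection points via Theorem~\ref{theorem2}(\ref{2:point-count}), which gives $\genus(M_s)=\|M_s\cap Y^+\|$. Each $M_s(R_n)\subset\sS^2(R_n)\times\RR$ meets $Y$ in exactly $2g+2$ points: the two axis points $O,O^*$, together with $g$ interior points on each of the two arcs of $Y\setminus\{O,O^*\}$ (the two arcs are interchanged by $\rho_Z$). As $R_n\to\infty$, the Euclidean limit of $Y$ is a line through $O$, and $O^*$ escapes to infinity; $\rho_Z$-symmetry implies that drift of non-axis points occurs in symmetric pairs, so it suffices to count how many of the $g$ interior points on a single arc remain in a bounded region of $\RR^3$.

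The $\mu_E$-symmetry (available because $g$ is even) is the main tool. Since $\mu_E$ exchanges $O$ and $O^*$ and maps each arc of $Y\setminus\{O,O^*\}$ to itself with its midpoint on the equator $E$ fixed, it pairs the $g$ intersection points into $g/2$ lying in the hemisphere of $O$ (``near-$O$'' points) and $g/2$ lying in the hemisphere of $O^*$ (``near-$O^*$'' points). The near-$O^*$ points necessarily escape with $O^*$, which already gives the upper bound $\genus(M_s)\le g/2$ of Theorem~\ref{theorem2}(\ref{2:genus-bound}). Theorem~\ref{theorem3} amounts to showing that the remaining $g/2$ near-$O$ points all stay bounded for the sign $s$ whose parity (from Theorem~\ref{theorem2}(\ref{2:genus-parity})) matches the parity of $g/2$.

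For this step I would argue by contradiction using a bubble-rescaling analysis. Suppose some of the near-$O$ points drift off as $R_n\to\infty$; by $\mu_E$- and $\rho_Z$-symmetry, they must drift off in symmetric groups of at least four. A rescaling of $M_s(R_n)$ around the escaping points (or, equivalently, around $O^*$ at an appropriate scale) should produce a nontrivial complete, properly embedded minimal $Y$-surface in $\RR^3$ asymptotic to a helicoid, carrying the escaping handles and hence having positive genus. Tracking the sign invariant from Theorem~\ref{theorem2}(\ref{2:intersection-property}) through the rescaling and combining it with the parity constraint of Theorem~\ref{theorem2}(\ref{2:genus-parity}) should show that the bubble and the surviving limit $M_s$ together carry sign/parity data incompatible with the parity-matching choice of $s$, forcing no drift and hence $\genus(M_s)=g/2$.

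The main obstacle is the bubble step: producing a clean rescaled limit with uniform curvature control near $O^*$, identifying it as a genuine $Y$-surface with a well-defined sign, and verifying that the ``parity bookkeeping'' between the bounded limit $M_s$ and the bubble limit closes only when $s$ matches the parity of $g/2$. This is precisely the delicate handle-counting argument alluded to in the introduction, and is carried out in detail in the companion paper~\cite{hoffman-traizet-white-2}.
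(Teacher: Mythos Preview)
The paper does not contain a proof of Theorem~\ref{theorem3}; it is explicitly stated and then deferred to the companion paper~\cite{hoffman-traizet-white-2}. So there is no ``paper's own proof'' to compare your proposal against. You correctly recognize this in your final paragraph, and your write-up is really an outline of a strategy rather than a proof.

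That said, a few remarks on the outline itself. Your reduction via Theorem~\ref{theorem2}(\ref{2:point-count}) and the $\mu_E$-pairing into $g/2$ near-$O$ and $g/2$ near-$O^*$ points is correct and is exactly how the paper obtains the upper bound $\genus(M_s)\le g/2$ in Theorem~\ref{theorem2}(\ref{2:genus-bound}). However, your claim that drifting near-$O$ points ``must drift off in symmetric groups of at least four'' is not justified by the symmetries you invoke: $\mu_E$ pairs a near-$O$ point with a near-$O^*$ point (which drifts anyway), not with another near-$O$ point, so among the $g/2$ near-$O$ points on $Y^+$ there is no further pairing forcing drift in multiples. The $\rho_Z$-symmetry only pairs $Y^+$ with $Y^-$, which is already accounted for in the formula $\|M_s\cap Y\|=2\|M_s\cap Y^+\|+1$.

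More seriously, the heart of the matter---showing that for the parity-matching sign $s$ \emph{none} of the $g/2$ near-$O$ handles drift---is not something that falls out of a single bubble limit plus sign/parity bookkeeping as you sketch. The introduction to the present paper warns that ``the arguments required to keep too many handles from drifting away are rather delicate,'' and the companion paper carries this out by quite different means than a contradiction via one rescaled limit. Your sketch does not identify what actually prevents, say, exactly two of the near-$O$ handles from escaping (which would be consistent with the parity constraint of Theorem~\ref{theorem2}(\ref{2:genus-parity}) for the ``wrong'' sign but would also need to be ruled out for the ``right'' sign). In short: the reduction is right, the endgame is the real content, and that content lives in~\cite{hoffman-traizet-white-2}, not here.
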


The sign here is crucial: if $g/2$ is even, then $M_{-}$ has
genus strictly less than $g/2$, 
and if $g/2$ is odd, then $M_{+}$ has genus strictly less than $g/2$.
(These inequalities follow immediately from Statements~\eqref{2:genus-bound} 
and~\eqref{2:genus-parity} of theorem~\ref{theorem2}.)

\section{Positivity/Negativity of Surfaces at $O$}\label{sign-section}

In this section, we explain the positive/negative terminology
used in theorem~\ref{theorem1}. 
Let $H$ be a helicoid that has axes $Z\cup Z^*$ and that contains $X$.
 The set 
 \[
     H\setminus (X \cup Z\cup Z^*)
\]
consists of four
components that we will call quadrants. The axes $Z$ and $Z^*$ are
naturally oriented, and we choose an orientation of $X$ allowing
us to label the components of $X\setminus\{O,O^*\}$ as $X^+$ and
$X^-$.   
We will refer to the  quadrant bounded by $X^+$, $Z^+$ and $(Z^*)^+$ and the
quadrant bounded by $X^-$, $Z^-$, and $(Z^*)^-$ as
the {\em positive quadrants} of $H$.  
The other two quadrants are called the {\em negative quadrants}.
 We orient $Y$ so
that the triple $(X,Y,Z)$ is positively oriented at $O$, and let
 $H^+$ denote the the component of the complement of $H$ that contains
$Y^+$. 

Consider an embedded minimal surface $S$  in $\sS^2\times\RR$ such that
in some open set $U$ containing $O$,
\begin{equation}\label{S-sign-definition}
     (\partial S)\cap U = (X\cup Z) \cap U.
\end{equation}
If $S$ and the two positive quadrants of $H\setminus (X\cup Z)$ are tangent to each other at $O$,
 we say that $S$ is {\em positive} at $O$.
If $S$ and the two negative quadrants of $H\setminus(X\cup Z)$ are tangent 
to each other at $O$, we say that $S$ is {\em negative} at $O$.
(Otherwise the sign of $S$ at $O$ with respect to $H$ is not defined.)

Now consider an embedded minimal surface $M$ in $\sS^2\times\RR$ such that 
\begin{equation}\label{M-sign-definition}
\begin{aligned}
   &\text{The origin $O$ is an interior point of $M$, and} \\
   &\text{$M\cap H$ coincides with $X\cup Z$ in some neighborhood of $O$.}
\end{aligned}
\end{equation}
We say that $M$ is positive or negative at $O$ with respect to $H$ according
to whether $M\cap H^+$ is positive or negative at $O$.

Positivity and negativity at $O^*$ is defined in exactly the same way.

\begin{remark}
A surface $S$ satisfying~\eqref{S-sign-definition} is positive (or negative) at $O$ 
if and only if $\mu_ES$ is positive (or negative)
at $O^*$, where $\mu_E$ denotes reflection in the totally geodesic cylinder
consisting of all points equidistant from $Z$ and $Z^*$.  
Similarly, a surface $M$ satisfying~\eqref{M-sign-definition} is positive (or negative) at $O$ with respect to $H$ if
and only $\mu_EM$ is positive (or negative) at $O^*$ with respect to $H$.
(If this is not clear, note that $\mu_E(H^+)=H^+$ and that $\mu_E(Q)=Q$ for each quadrant $Q$ of $H$.)
\end{remark}



\section{$Y$-surfaces}\label{section:$Y$-surfaces}

As discussed in the introduction, the surfaces we construct
will be $Y$-surfaces.  In this section, we define ``$Y$-surface"
and prove basic properties of $Y$-surfaces.

\begin{definition}\label{Y-Surface}
Suppose $N$ is a Riemannian $3$-manifold that admits an order-two rotation
 $\rho_Y$
about a geodesic $Y$.  An orientable surface $S$ in $N$ 
is called a {\em $Y$-surface}
if $\rho_Y$ restricts to an orientation-preserving isometry of $S$
and if 
\begin{equation}
 \label{-1homology}
  \mbox{ $\rho _Y$ acts  on $H_1(S,\ZZ)$ by
multiplication by $-1$.}
\end{equation}
\end{definition}

 The following proposition shows that the definition of a $Y$-surface is equivalent to two other topological conditions.
 
\begfig
 \hspace{4.0in}
  \vspace{.2in}
  \centerline{
\includegraphics[width=5.05in]{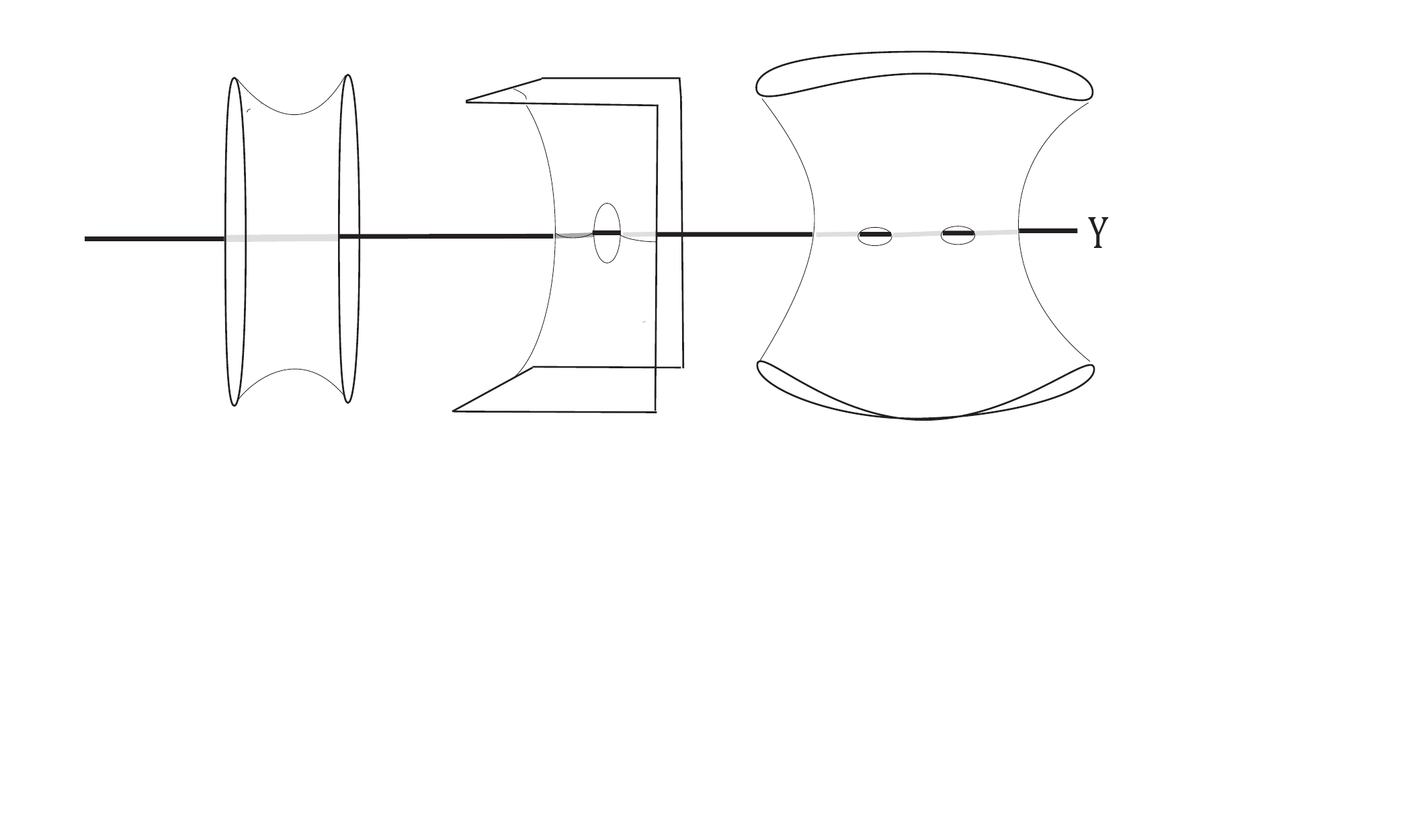}
   }
   \vspace{-2.0 in}
 \begin{center}
 \parbox{5.1in}{
\caption{
{\bf Right:} A $Y$-surface of genus two. The number of fixed points of $\rho _Y$ ($180$-degree rotation around $Y$) is even (equal to six)
and the number of  boundary components is two.
{\bf Center:} A $Y$-surface of genus one. The number of fixed points of $\rho _Y$ is odd (equal to three) and there is a single boundary component.
{\bf Left:} This annular surface $A$ is not a $Y$-surface. The rotation $\rho_Y$ acts  as the identity  on $H_1(A,\ZZ)$, not as multiplication by $-1$.
 }\label{Y-surface-figure} 
 }
 \end{center}
 \endfig

\newcommand{\Surf}{S}  

\begin{proposition}\label{Y-surface-topology-propostion}
Suppose that $\Surf$ is an open,  orientable Riemannian 
$2$-manifold of finite topology,
that $\rho: \Surf \to \Surf$ is an
 orientation-preserving
 isometry of order two, and that $\Surf/\rho$ is connected.
Then the following are equivalent:
\begin{enumerate}[\upshape(a)]
\item $\rho$ acts by multiplication by $-1$ on the first homology group 
    $H_1(\Surf,\ZZ)$.
\item\label{fixed-points-euler} the quotient $\Surf/\rho$ is topologically a disk. \item
$\Surf$ has exactly $2-\chi(\Surf)$ fixed points of $\rho$,
where $\chi(\Surf)$ is the Euler
  characteristic of $\Surf$.
\end{enumerate}
\end{proposition}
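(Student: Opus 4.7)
The plan is to realize $\pi \colon S \to Q := S/\rho$ as a branched double cover and combine the Riemann--Hurwitz formula with the transfer isomorphism. Since $\rho$ is an orientation-preserving isometry of order two, its differential at any fixed point lies in $\mathrm{SO}(2)$ and has order two, so $d\rho_p = -I$; hence $\mathrm{Fix}(\rho)$ is a discrete set of isolated points, and $Q$ is a connected, orientable, non-compact surface of finite topology on which $\pi$ is a genuine branched double cover ramified exactly over $\pi(\mathrm{Fix}(\rho))$.

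For (b) $\Leftrightarrow$ (c), I would apply Riemann--Hurwitz:
\[
   \chi(S) \;=\; 2\chi(Q) - F,
\]
where $F = |\mathrm{Fix}(\rho)|$. Hence $F = 2 - \chi(S)$ if and only if $\chi(Q) = 1$. Writing $Q$ as the interior of a compact orientable surface of genus $g$ with $b \ge 1$ boundary components gives $\chi(Q) = 2 - 2g - b$, which equals $1$ precisely when $(g,b) = (0,1)$, i.e.\ when $Q$ is a disk.

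For (a) $\Leftrightarrow$ (b), I would invoke the transfer isomorphism
\[
   H_1(Q,\mathbb{Q}) \;\cong\; H_1(S,\mathbb{Q})^{\rho_*}
\]
for the $\ZZ/2$ cover (valid because the branch locus is finite and $\rho$ preserves orientation). Since $\rho_*$ is an involution on $H_1(S,\mathbb{Q})$, it diagonalizes with eigenvalues $\pm 1$, so condition (a) is equivalent to the vanishing of the $(+1)$-eigenspace, which by transfer is equivalent to $H_1(Q,\mathbb{Q}) = 0$. A connected, orientable, non-compact, finite-topology surface has $H_1 = 0$ exactly when it is a disk, giving (b). The passage from rational to integral coefficients needed for (a) itself is automatic because $H_1(S,\ZZ)$ is free abelian for a surface of finite topology, so $\rho_* + I$ vanishes on $H_1(S,\mathbb{Q})$ iff it vanishes on $H_1(S,\ZZ)$.

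The main technical hurdle I anticipate is applying the branched-cover machinery (Riemann--Hurwitz and transfer) on the non-compact surface $S$. This I would handle by choosing a $\rho$-invariant compact core of $S$ onto which $S$ deformation retracts and which contains all of $\mathrm{Fix}(\rho)$; such a core has the same $\chi$ and $H_1$ as $S$, reducing everything to the standard compact case.
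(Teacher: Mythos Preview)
Your argument is correct. The paper itself does not spell out a proof, only noting that it follows from a $\rho$-invariant triangulation of $S$ with the fixed points among the vertices, and deferring details to \cite{hoffman-white-number}. That approach establishes (b)$\Leftrightarrow$(c) by a direct cell count---which is exactly Riemann--Hurwitz unwound---and handles (a)$\Leftrightarrow$(b) by manipulating simplicial $1$-cycles by hand. Your route, packaging the cell count as Riemann--Hurwitz and replacing the cycle manipulation by the transfer isomorphism $H_1(Q,\mathbb{Q})\cong H_1(S,\mathbb{Q})^{\rho_*}$, is more conceptual and arguably cleaner; the price is that you invoke a general theorem where the triangulation approach would prove the needed special case from scratch.

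Two minor remarks. First, your reduction to a compact core containing all of $\mathrm{Fix}(\rho)$ presupposes that this set is finite---true, but worth a sentence (on a $\rho$-invariant cylindrical end $E$, apply Riemann--Hurwitz to compact $\rho$-invariant sub-annuli: these sit in the cylinder $E$, hence have genus $0$ and at most four boundary circles, forcing the branch count there to be at most $2$). Second, the parenthetical justification you give for transfer is not quite the right reason: the rational isomorphism $H_*(X/G,\mathbb{Q})\cong H_*(X,\mathbb{Q})^G$ holds for any cellular action of a finite group on a CW complex, with no hypothesis on the branch locus or on orientation.
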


Note that Proposition~\ref{Y-surface-topology-propostion} is intrinsic in nature.
It does not require that the orientation-preserving automorphism $\rho$ be a reflection in an ambient geodesic $Y$.   
Proposition~\ref{Y-surface-topology-propostion} is easily proved 
using a $\rho$-invariant triangulation of $S$ whose vertices include
the fixed points of $\rho$; details may be found in \cite{hoffman-white-number}.

\begin{corollary}\label{$Y$-corollary}
Let $S$ be an open, orientable $Y$-surface such that $S/\rho_Y$ is connected.
Let $k$ be the number of fixed points of $\rho_Y:S\to S$.
\begin{enumerate}[\upshape(i)]
\item\label{$Y$-corollary-ends}
           The surface $\Surf$ has either one or two ends, according to
          whether $k$ is odd or even.
\item\label{$Y$-corollary-disks}
       If $k=0$, then $S$ is the union of two disks.
\item\label{$Y$-corollary-genus}
       If $k>0$, then $S$ is connected, and the genus of  $S$ is 
      $(k-2)/2$ if $k$ is even and $(k-1)/2$ if $k$ is odd. 
\end{enumerate}
\end{corollary}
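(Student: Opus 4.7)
The plan is to invoke Proposition~\ref{Y-surface-topology-propostion} with $\rho=\rho_Y$: it supplies two key facts, namely that the quotient $S/\rho_Y$ is (topologically an open) disk and that $k=2-\chi(S)$. All three assertions will then follow from covering-space considerations applied to the branched double cover $\pi\colon S\to S/\rho_Y$, whose branch locus is the $k$ fixed points of $\rho_Y$. For~(\ref{$Y$-corollary-disks}), if $k=0$ then $\pi$ is an unbranched double cover over the simply connected base $S/\rho_Y$, so it is trivial, and $S$ is the disjoint union of two copies of $S/\rho_Y$, each a disk. For the connectedness half of~(\ref{$Y$-corollary-genus}), a decomposition $S=S_1\sqcup S_2$ with $k>0$ would force $\rho_Y$ to swap $S_1$ and $S_2$ (since $S/\rho_Y$ is connected) and hence have no fixed points, contradicting $k>0$.

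The heart of the argument is the end count in~(\ref{$Y$-corollary-ends}). I would set $U = S/\rho_Y \setminus \{\text{branch points}\}$, an open disk with $k$ punctures (so $\pi_1(U)$ is free on loops $\gamma_1,\dots,\gamma_k$ around the punctures), and $V=\pi^{-1}(U)$, so that $\pi|_V\colon V\to U$ is an unbranched double cover classified by a homomorphism $\varphi\colon\pi_1(U)\to\ZZ/2$. Because each fixed point of $\rho_Y$ is a genuine ramification point, $\varphi(\gamma_i)=1$ for all $i$; the boundary loop $\gamma_\infty$ encircling the outer end of the disk is freely homotopic to $\gamma_1\cdots\gamma_k$, hence $\varphi(\gamma_\infty) = k \bmod 2$. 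An end of $U$ has one preimage in $V$ when its monodromy is nontrivial and two otherwise. The $k$ inner punctures each lift to a single end of $V$ which becomes an interior point of $S$, so contributes no end of $S$; the outer puncture therefore gives exactly one end of $S$ when $k$ is odd and exactly two when $k$ is even.

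Finally, for the genus formula in~(\ref{$Y$-corollary-genus}), I would combine $\chi(S)=2-k$ from the Proposition with the classical formula $\chi(S) = 2 - 2\,\genus(S) - e$ for a connected orientable surface of finite topology with $e$ ends, obtaining $\genus(S) = (k-e)/2$. The previously established end count then yields $(k-1)/2$ for odd $k$ and $(k-2)/2$ for even positive $k$. The main technical obstacle is the monodromy computation: one must identify the outer boundary loop of the punctured disk with the product of small loops around the punctures and check that each branch point contributes nontrivial monodromy. Both facts are routine for branched covers of surfaces but demand a careful choice of base point and generators.
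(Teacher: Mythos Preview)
Your proof is correct, but it takes a different route from the paper's. The paper argues almost entirely via Euler-characteristic arithmetic: writing $\chi(S)=2c-2g-e$ (with $c$ the number of components, $g$ the genus, $e$ the number of ends) and combining this with $k=2-\chi(S)$ from Proposition~\ref{Y-surface-topology-propostion} gives $2-k=2c-2g-e$. Reducing mod~$2$ yields $k\equiv e\pmod 2$, and since the quotient disk has one end, $e\in\{1,2\}$, proving~(\ref{$Y$-corollary-ends}). For~(\ref{$Y$-corollary-disks}) the paper observes that disconnectedness forces $\rho_Y$ to swap the two pieces (hence $k=0$), and conversely $k=0$ gives $e=2$ and $2c=2g+4\ge 4$, so $c\ge2$. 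Part~(\ref{$Y$-corollary-genus}) is then the same computation as yours.

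Your monodromy approach is more geometric: you read off the end count directly from whether the boundary loop of the punctured-disk quotient lifts or not, and you get the $k=0$ case from triviality of covers of a simply connected base. This makes the parity in~(\ref{$Y$-corollary-ends}) transparent as a monodromy phenomenon, at the cost of invoking covering-space machinery and checking the branch-point monodromy carefully. The paper's argument is shorter and needs nothing beyond the Euler characteristic, but is less illuminating about \emph{why} parity governs the end count.
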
  

In particular, $S$ is a single disk if and only if $k=1$.

\begin{proof}[Proof of Corollary~\ref{$Y$-corollary}]
Since $S/\rho_Y$ is a disk, it has one end, and thus $S$ has either one
or two ends.  The Euler Characteristic of $S$ is $2c-2g-e$, where $c$
is the number of connected components, $g$ is the genus, and $e$
is the number of ends.  
Thus by Proposition~\ref{Y-surface-topology-propostion}\eqref{fixed-points-euler},
\begin{equation}\label{genus-fixed-points}
   2 - k = 2c - 2g - e.
\end{equation}
Hence $k$ and $e$ are congruent are congruent modulo $2$.  
Assertion~\eqref{$Y$-corollary-ends} follows immediately.  
(Figure~\ref{Y-surface-figure} shows two examples of assertion~\eqref{$Y$-corollary-ends}.)

Note that if $S$ has more than one component, then since $S/\rho_Y$
is a disk, in fact $S$ must have exactly two components, each of which
must be a disk.  Furthermore, $\rho_Y$ interchanges the two disks,
so that $\rho_Y$ has no fixed points in $S$, i.e., $k=0$.

Conversely, suppose $k=0$.  Then $e=2$ by Assertion~\eqref{$Y$-corollary-ends},
so from~\eqref{genus-fixed-points} we see that
\[
   2c = 2g + 4.
\]
Hence $2c\ge 4$ and therefore $c\ge 2$, i.e., $S$ has two or more components.
But we have just shown that in that case $S$ has exactly two components,
each of which is a disk. 
This completes the proof of Assertion~\eqref{$Y$-corollary-disks}.

Now suppose that $k>0$.  Then as we have just shown, $S$ is connected,
so~\eqref{genus-fixed-points} becomes $k=2g+e$, or
\begin{equation}\label{genus-vs-fixed-points}
    g =  \frac{k-e}2
\end{equation}
This together with Assertion~\eqref{$Y$-corollary-ends} gives
Assertion~\eqref{$Y$-corollary-genus}.
\end{proof}

\begin{remark}
To apply proposition~\ref{Y-surface-topology-propostion} and corollary~\ref{$Y$-corollary}
 to a compact manifold $M$ with non-empty boundary, one
lets $\Surf=M\setminus \partial M$.  The number of ends of $\Surf$ is equal to the
number of boundary components of $M$.
\end{remark}

\begin{proposition}\label{genus-0-proposition}
 If $S$ is a $Y$-surface in $N$ and if $U$ is an open subset of $S$
such that $U$ and $\rho_YU$ are disjoint, then $U$ has genus $0$.
\end{proposition}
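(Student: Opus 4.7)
The plan is to argue by contradiction: suppose $U$ has positive genus, and derive a contradiction with the hypothesis that $\rho_Y$ acts by $-1$ on $H_1(S,\ZZ)$.

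First, assume $U$ has genus at least one. Then inside $U$ we can find a handle, i.e., a pair of smoothly embedded simple closed curves $\alpha$ and $\beta$ in $U$ that meet transversely and have algebraic intersection number $\alpha\cdot\beta=\pm 1$ as computed in $S$ with its given orientation.

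Second, I would exploit disjointness. Since $U$ and $\rho_Y U$ are disjoint, the curve $\rho_Y\alpha$, which lies in $\rho_Y U$, is disjoint from $\beta\subset U$. Hence the geometric (and therefore algebraic) intersection number of $\rho_Y\alpha$ with $\beta$ in $S$ is zero:
\[
   (\rho_Y\alpha)\cdot \beta = 0.
\]

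Third, I would use the $Y$-surface hypothesis. By definition, $[\rho_Y\alpha]=-[\alpha]$ in $H_1(S,\ZZ)$. The algebraic intersection number of a closed $1$-cycle with a compactly supported $1$-cycle in an oriented surface depends only on the homology class of the first cycle (if $\alpha-\alpha'=\partial C$ for a $2$-chain $C$, then for the compactly supported cycle $\beta$ one has $(\alpha-\alpha')\cdot\beta=\partial C\cdot\beta=0$; this is valid even when $S$ is noncompact because $\beta$ is compact). Therefore
\[
   (\rho_Y\alpha)\cdot\beta = [\rho_Y\alpha]\cdot[\beta] = -[\alpha]\cdot[\beta] = -(\alpha\cdot\beta) = \mp 1,
\]
contradicting the previous step. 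Hence $U$ must have genus $0$.

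The only delicate point is the homotopy/homology invariance of the intersection pairing in the noncompact setting; this is the step I would be most careful with, but it is standard once one notes that $\beta$ is compactly supported so the pairing is well defined and invariant under homologies of $\alpha$. Everything else is elementary surface topology together with the defining property \eqref{-1homology} of a $Y$-surface.
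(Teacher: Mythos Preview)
Your proof is correct, and it takes a genuinely different route from the paper's.

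The paper's argument is a one-liner that leans on Proposition~\ref{Y-surface-topology-propostion}: since $S/\rho_Y$ is topologically a disk, and since $U\cap\rho_Y U=\emptyset$ means the quotient map $S\to S/\rho_Y$ restricts to an embedding on $U$, we may identify $U$ with an open subset of a disk, hence $U$ has genus~$0$.

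Your argument bypasses Proposition~\ref{Y-surface-topology-propostion} entirely and works directly from the defining property~\eqref{-1homology} via the intersection form. This is more self-contained and in fact slightly more general: you never use the finite-topology or connected-quotient hypotheses that Proposition~\ref{Y-surface-topology-propostion} requires. The price is that you must be careful (as you note) about homology invariance of the intersection pairing on a possibly noncompact $S$; your justification is fine, since both $\beta$ and any singular $2$-chain realizing a homology between $\alpha$ and $\rho_Y\alpha$ are compactly supported, so one can work inside a large compact subsurface. The paper's approach, by contrast, is shorter once the structural result is in hand, and it fits the paper's overall strategy of exploiting the quotient $S/\rho_Y$ repeatedly.
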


\begin{proof}
Note that we can identify $U$ with a subset of $S/\rho_Y$.  Since $S$ is a $Y$-surface,
$S/\rho_Y$ has genus $0$ (by proposition~\ref{Y-surface-topology-propostion})
and therefore $U$ has genus $0$.
\end{proof}

\section{Periodic genus-$g$ helicoids in $\sS^2\times\RR$:
   theorem~\ref{theorem1} for $h<\infty$}\label{construction-outline-section}  

Let $0<h<\infty$.
Recall that we are trying to construct a minimal surface $M$
in $\sS^2\times [-h,h]$ such that
\[
  {\rm interior}(M) \cap H = Z_h \cup Z_h^* \cup X
\]
(where $Z_h$ and $Z_h^*$ are the portions of  $Z$ and $Z^*$ 
where $|z|< h$)
and such that $\partial M$ is a certain pair of circles at heights $h$
and $-h$. 
Since such an $M$ contains $Z_h$, $Z_h^*$, and $X$, it must
(by the Schwarz reflection principle)
be invariant under $\rho_Z$ (which is the same as $\rho_{Z^*}$)
and under $\rho_X$, the $180^\circ$ rotations about $Z$ and about $X$.
It follows that $M$ is invariant under $\rho_Y$, the composition of $\rho_Z$
and $\rho_X$.
In particular, if we let $S=\text{interior}(M)\cap H^+$ be the portion of 
the interior\footnote{It will be convenient for us to have $S$ be an open
manifold, because although $S$ is a smooth surface, its closure has has corners.}
 of $M$ in $H^+$,
then 
\[
    M = \overline{S \cup \rho_Z S} = \overline{S\cup \rho_X S}. 
\]
Thus to construct $M$, it suffices to construct $S$. 
 Note that the boundary
of $S$ is $Z_h \cup Z_h^* \cup X$ together with a great semicircle $C$ in $\overline{H^+}\cap\{z=h\}$
and its image $\rho_YC$ under $\rho_Y$.
Let us call that boundary $\Gamma_C$.
Thus we wish to construct embedded minimal surface $S$ in $H^+$ having
specified topology and having boundary $\partial S=\Gamma_C$.
Note we need $S$ to be $\rho_Y$-invariant; otherwise Schwarz reflection in $Z$
and Schwarz reflection in $X$ would not produce the same surface.

We will  prove existence by counting surfaces mod $2$. 
 Suppose for the moment
that the curve $\Gamma_C$ is nondegenerate in the following sense: if $S$ is a smooth
embedded, minimal, $Y$-surface in $H^+$ with boundary $\Gamma_C$, then $S$
has no nonzero $\rho_Y$-invariant jacobi fields that vanish on $\Gamma_C$.
For each $g\ge 0$, the number of such surfaces $S$ of genus $g$
turns out to be even.   Of course, for the purposes of proving existence, this fact is not
 useful, since $0$ is an even number.   However, if instead of considering
all $Y$-surfaces of genus $g$, we consider only that that are positive (or those that
are negative) at $O$, then the number of such surfaces turns out to be odd, and therefore
existence follows.

For the next few sections, we fix a helicoid $H$ and we fix an $h$ with $0<h<\infty$.
Our goal is to prove the following theorem:

\begfig
 \hspace{5.0in}
  \vspace{.2in}
  \centerline{
\includegraphics[width=4.05in]{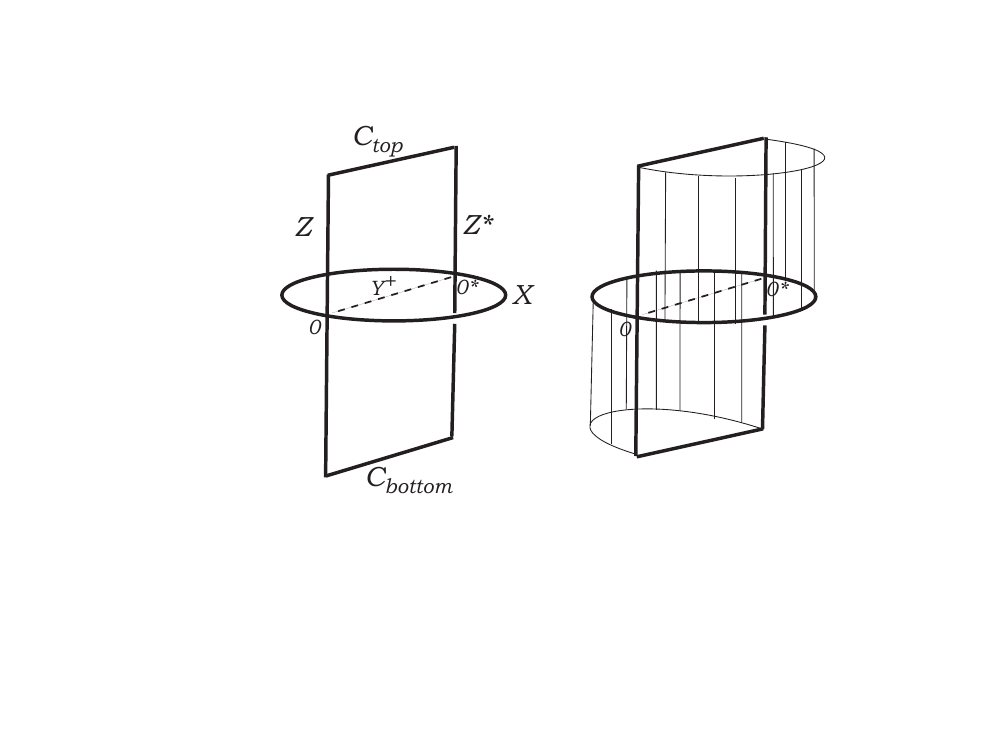}
   }
   \vspace{-1.3 in}
 \begin{center}
 \parbox{5.1in}{
\caption{\label{GammaFigure} {\bf The boundary curve $\Gamma_C$}.
We depict $\sS^2\times\RR$  in these illustrations as $\RR^3$ with  
each horizontal $\sS^2\times\{z\}$ represented as horizontal plane via stereographic projection, 
with one point of the sphere at infinity. Here, that point is the 
antipodal point of the midpoint of the semicircle $Y^+$.
{\bf Right:} For ease of illustration, we have 
chosen the reference helicoid $H$ to be  the vertical cylinder $X\times\RR$, and the semicircle 
$C=C_\text{top}$ to meet $H$ orthogonally.  The geodesics
$X$, $Z$ and $Z^*$ divide $H$ into four components, two of which are shaded. 
The helicoid $H$ divides $\sS^2\times\RR$ into two
components. The component $H^+$ is the interior of the solid cylinder bounded by $H$. 
{\bf Left:} The boundary curve $\Gamma=\Gamma _C$  consists of the great circle  $X$, 
two vertical line segments on the axes $Z\cup Z^*$ of height
$2h$ and two semicircles in $(\sS^2\times\{\pm h\})\cap H^+$. 
Note that $\Gamma$ has $\rho_Y$ symmetry. 
We seek a $\rho_Y$-invariant minimal surface in $H^+$  that has boundary $\Gamma_C$  
and has all of its topology
concentrated along $Y^+$. That is,   
we want a $Y$-surface as defined Section~\ref{section:$Y$-surfaces} with the properties    established in Proposition~\ref{Y-surface-topology-propostion}.  According to  theorem~\ref{special-existence-theorem}, there are in fact two such surfaces for every positive genus.
}
 }
 \end{center}
 \endfig

\begin{theorem}\label{special-existence-theorem}
Let $0<h<\infty$, let $C$ be a great semicircle in $\overline{H^+}\cap\{z=h\}$
joining $Z$ to $Z^*$, and let $\Gamma_C$ be the curve given by
\[
   \Gamma_C = Z_h\cup Z_h^*\cup C\cup \rho_YC
\]
where $Z_h=Z\cap \{|z|\le h\}$ and $Z^*=Z^*\cap\{|z|\le h\}$.

For each sign $s\in\{+,-\}$ and for each $n\ge 1$, there exists
an open, embedded 
minimal  $Y$-surface $S=S_s$ in $H^+\cap \{|z|<h\}$ such that $\partial S=\Gamma_C$,
such that $Y^+\cap S$ contains exactly $n$ points,
and such that $S$ is 
positive\footnote{Positivity and negativity of $S$ at $O$ were defined in section~\ref{sign-section}.}
 at $O$ if $s=+$ and negative at $O$
if $s=-$.

If $n$ is even, there is such a surface that is invariant under
reflection $\mu_E$ in the totally geodesic cylinder $E\times\RR$.
\end{theorem}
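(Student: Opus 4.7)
The plan is to implement the mod-2 degree argument sketched in the paragraphs preceding the theorem. For each admissible curve $C$ and each $n\ge 1$, let $\MM_n(C)$ denote the set of embedded minimal $Y$-surfaces $S\subset H^+\cap\{|z|<h\}$ with $\partial S=\Gamma_C$ and $\|S\cap Y^+\|=n$, with subsets $\MM_n^\pm(C)$ consisting of those of sign $\pm$ at $O$. By proposition~\ref{Y-surface-topology-propostion} and its corollary applied to the interior of $S$, the topology (genus and number of ends) of any such $S$ is determined by $n$, so counting $Y$-surfaces by $n$ is equivalent to counting them by genus.

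To make the count well-defined, I first establish compactness and finiteness. The smooth barrier $H\setminus \Gamma_C$ confines elements of $\MM_n(C)$ to $H^+$, the horizontal caps $\sS^2\times\{\pm h\}$ bound the slab, and comparison with a $\rho_Y$-invariant competitor filling $\Gamma_C$ in $\overline{H^+}$ yields a uniform area bound; combined with the topological bound $g\le n/2$, standard compactness results for embedded minimal surfaces of bounded area and genus give smooth subsequential convergence of sequences in $\MM_n(C)$, with the limit remaining in $\MM_n(C)$. A Sard--Smale argument applied to the $\rho_Y$-equivariant Plateau problem shows that for a dense set of admissible $C$ every $S\in\MM_n(C)$ is nondegenerate (no nonzero $\rho_Y$-invariant Jacobi field vanishing on $\Gamma_C$), making $\MM_n(C)$ a finite discrete set. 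Along a generic one-parameter family $C_t$ connecting two such nondegenerate curves, bifurcations are isolated folds in which surfaces are created or annihilated in pairs that necessarily carry the same sign at $O$ (since the sign is an open condition preserved along each smooth branch), so $\#\MM_n^\pm(C)\bmod 2$ is a well-defined invariant independent of $C$ within the connected space of admissible curves.

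It remains to show this parity invariant equals $1$ for each sign, which is the main obstacle. The strategy is to select a distinguished model $C^*$ for which the sign-stratified moduli space admits an explicit description --- for instance, by continuing from the helicoid $H$ itself through a controlled degenerate family, or by an equivariant min-max construction along $Y^+$ that produces exactly one minimal $Y$-surface of each sign and each prescribed intersection count with $Y^+$. The $\mu_E$-symmetric statement for even $n$ is handled by restricting the entire argument to the subclass of $\mu_E$-invariant $Y$-surfaces: since $\mu_E$ commutes with $\rho_Y$ and preserves $\Gamma_C$ for $\mu_E$-symmetric $C$, the compactness, transversality, and parity-invariance steps transfer verbatim, and the base-case parity computation in this subclass yields an odd count when $n$ is even. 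The chief technical difficulties are ruling out degenerations of the moduli space during the parity continuation --- in particular, boundary bubbles at the corners of $\Gamma_C$ where $X$ meets the axes or where $C$ meets $Z\cup Z^*$, and loss of embeddedness as handles approach the boundary --- and exhibiting a concrete $C^*$ where the sign-stratified count is explicitly computable. The $Y$-surface property, which forces handles to be trapped near $Y^+$ and therefore away from the boundary corners, together with the one-sided barrier provided by $H$, is the main tool for controlling both issues.
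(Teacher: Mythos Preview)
Your outline has the right shape --- mod-2 counting with a parity invariant --- but the argument as written has a genuine gap at the base case, and the paper's actual mechanism is quite different from what you sketch.

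\textbf{The missing base case.} You reduce everything to ``exhibiting a concrete $C^*$ where the sign-stratified count is explicitly computable,'' and then offer only two vague suggestions (degenerating to $H$, or equivariant min-max). Neither is made precise, and this is essentially the entire content of the theorem. Every admissible $C$ produces a curve $\Gamma_C$ with the same corner structure at $O$ and $O^*$, so there is no choice of $C^*$ for which the problem becomes obviously computable. The space of admissible $C$'s is connected and the corners never go away, so your continuation argument, even if valid, just pushes the problem around without ever anchoring it.

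\textbf{What the paper actually does.} The paper does not search for a base curve $C^*$. Instead it (i) reduces to the special case $C\subset H$ by a barrier argument varying the pitch of $H$; (ii) perturbs the \emph{metric} (not the curve) to achieve bumpiness, since $\Gamma_C$ has unavoidable corners at $O$, $O^*$ and standard transversality for curves does not directly apply; and (iii) --- this is the key idea --- \emph{rounds the corners} of $\Gamma$ to produce a one-parameter family of smooth embedded curves $\Gamma(t)\subset H$. For smooth curves in $\partial N$ the mod-2 count of $Y$-surfaces is already known (odd for disks or disk-pairs, even otherwise). The heart of the proof is then a bridge-type theorem showing that each $Y$-surface $S$ bounded by $\Gamma$ extends to a family $S(t)$ bounded by $\Gamma(t)$, and conversely that for small $t$ every $Y$-surface bounded by $\Gamma(t)$ arises this way. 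The sign of the rounding at $O$ and $O^*$ versus the sign of $S$ determines whether $\|S(t)\cap Y\|$ equals $\|S\cap Y\|$, $\|S\cap Y\|+1$, or $\|S\cap Y\|+2$, and this yields a recursion in $n$ that computes $\#\MM_n^\pm(\Gamma)\bmod 2$ from the known smooth-curve counts. The induction on $n$ is what replaces your missing base case.

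\textbf{Secondary issues.} Your bumpiness argument via generic $C$ is suspect: the corners at $O$ and $O^*$ are fixed regardless of $C$, so a Sard--Smale argument in the space of semicircles does not obviously kill equivariant Jacobi fields. The paper's use of bumpy metrics circumvents this. Your $\mu_E$-invariant argument for even $n$ is also different from the paper's: rather than redo the whole argument equivariantly, the paper observes (using the sign-parity lemma relating signs at $O$ and $O^*$) that non-$\mu_E$-invariant surfaces come in pairs, so oddness of the full count forces oddness of the $\mu_E$-invariant count.
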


Before proving theorem~\ref{special-existence-theorem}, let us show that it implies the periodic case
of theorem~\ref{theorem1} of section~\ref{section:main-theorems}:

\begin{proposition}\label{reduction-to-H^+-proposition}
Theorem~\ref{special-existence-theorem} implies theorem~\ref{theorem1} in the 
periodic case $h<\infty$.
\end{proposition}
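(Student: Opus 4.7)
The plan is to build $M_s$ from the surface $S_s$ supplied by Theorem~\ref{special-existence-theorem}, using Schwarz reflection across $Z$, $Z^*$, and $X$. Following Remark~\ref{other-circles-remark}, I adopt the convenient choice of taking $\partial M_s$ to be the two great circles $H\cap\{z=\pm h\}$ lying in $H$, which corresponds to choosing $C = H\cap\{z=h\}\cap\overline{H^+}$, and I set $n=g$. Theorem~\ref{special-existence-theorem} then gives a minimal $Y$-surface $S_s$ with boundary $\Gamma_C$, with $|S_s\cap Y^+|=g$, and with the prescribed sign $s$ at $O$.

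Define $M_s := \overline{S_s\cup\rho_Z S_s}$. Its smoothness follows from Schwarz reflection: along the geodesic arcs $Z_h$ and $Z_h^*$ (fixed pointwise by $\rho_Z=\rho_{Z^*}$), smoothness is automatic; along $X$ the would-be reflected piece is $\rho_X S_s$, but $\rho_X=\rho_Y\rho_Z$ together with the $Y$-surface relation $\rho_Y S_s=S_s$ gives $\rho_X S_s=\rho_Z S_s$, so the $X$-reflection coincides with the $Z$-reflection and no conflict arises. Hence $M_s$ is a smooth, embedded minimal surface, invariant under all three rotations $\rho_X$, $\rho_Y$, $\rho_Z$.

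With $M_s$ in hand, I would then verify the properties in Theorem~\ref{theorem1}\,(3)--(6) by direct inspection. Property (3), $\interior(M_s)\cap H=Z_h\cup Z_h^*\cup X$, is the Schwarz reflection locus. Property (4), the $Y$-surface property, is inherited from $S_s$. For (5), the decomposition $Y = Y^+\sqcup\{O,O^*\}\sqcup Y^-$ together with $Y^+\subset H^+$ and $Y^-=\rho_Z Y^+$ yields $|M_s\cap Y| = 2n+2 = 2g+2$. The genus of $M_s$ is then $g$ by Corollary~\ref{$Y$-corollary} applied to $\interior(M_s)$ with $k=2g+2$ $\rho_Y$-fixed points and $e=2$ boundary components, giving genus $(k-e)/2=g$. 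Property (6), the sign of $M_s$ at $O$, transfers directly from $S_s = M_s\cap H^+$ by the definition in Section~\ref{sign-section}. The boundary $\partial M_s$ built here consists of the circles $H\cap\{z=\pm h\}$; the passage to the orthogonal-to-$H$ boundary circles asserted in (2) is deferred to Section~\ref{adjusting-pitch-section}, as Remark~\ref{other-circles-remark} explains.

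The main obstacle is the congruence/noncongruence content of (7) and (8). For $g$ even, $n=g$ is even, so the final clause of Theorem~\ref{special-existence-theorem} produces a $\mu_E$-invariant $S_s$, and $M_s$ then inherits $\mu_E$-invariance because $\mu_E$ commutes with $\rho_Z$. For $g$ odd, the natural move is to set $M_- := \mu_E M_+$; by the remark at the end of Section~\ref{sign-section}, $\mu_E$ converts positivity of $M_+$ at $O$ into positivity of $\mu_E M_+$ at $O^*$, which together with the $\rho_Z$-symmetry and the absence of $\mu_E$-invariance forced by odd genus must be translated into negativity at $O$. The noncongruence statements ultimately reduce to showing that the sign at $O$ is not changed by any isometry (orientation-preserving in the odd case, arbitrary in the even case) of $\sS^2\times\RR$ carrying $M_+$ to $M_-$ while preserving $H$, $Z$, $Z^*$, and $X$; this rigidity check on the finite symmetry group of the helicoid is the most delicate ingredient I foresee.
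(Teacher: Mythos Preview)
Your overall architecture matches the paper's, but three steps are genuinely incomplete or confused.

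First, smoothness of $M_s$ at the corner points $O$ and $O^*$ does not follow from ordinary Schwarz reflection along the open arcs of $Z_h$, $Z_h^*$, and $X$; at $O$ (and $O^*$) four arcs of $\Gamma_C$ meet and the reflected surface could a priori have a branch point. The paper handles this in Lemma~\ref{Schwarz-regularity-lemma}: near $O$, the $Y$-surface property forces $S_s\cap B$ to be a pair of disks (Corollary~\ref{$Y$-corollary}\eqref{$Y$-corollary-disks}), so $M_s\cap B$ is a single disk, whence embeddedness rules out branching. Second, the claim that the $Y$-surface property is ``inherited from $S_s$'' is not automatic: $M_s$ has strictly larger first homology than $S_s$, and you must show $\rho_Y$ acts by $-1$ on all of $H_1(M_s,\ZZ)$. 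The paper's argument is that $S_s/\rho_Y$ is a disk (Proposition~\ref{Y-surface-topology-propostion}), so $\interior(M_s)/\rho_Y$, being two such disks glued along a boundary arc, is again a disk, and Proposition~\ref{Y-surface-topology-propostion} then gives the $Y$-surface property for $M_s$.

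Third, your odd-genus paragraph is confused. You correctly observe that $\mu_E$ sends positivity at $O$ to positivity at $O^*$, but you then invoke ``the absence of $\mu_E$-invariance forced by odd genus'' to conclude negativity at $O$; this is circular, since noninvariance under $\mu_E$ is part of what you are trying to establish. The missing ingredient is Lemma~\ref{parity-sign-lemma}: when $|S_+\cap Y^+|=n$ is odd, the signs of $S_+$ at $O$ and at $O^*$ disagree, so $S_+$ is negative at $O^*$, hence $\mu_E S_+$ is negative at $O$, and one may take $S_-=\mu_E S_+$. With these three points supplied (and noncongruence deferred to Appendix~\ref{noncongruence-appendix}, as the paper also does), your proof coincides with the paper's.
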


\begin{proof}
Let $C$ be the great semicircle in $\overline{H^+}\cap \{z=h\}$ that has endpoints on
$Z\cup Z^*$ and that meets $H$ orthogonally at those endpoints.
First suppose $n$ is even, and let $S_s$ for $s\in\{+,-\}$ be the surfaces
given by theorem~\ref{special-existence-theorem}.
Let $M_s$ be the surface obtained by Schwarz
reflection from $S_s$:
\[
   M_s = \overline{S_s \cup \rho_Z S_s} = \overline{S_s\cup \rho_X S_s}, 
\]
(The second equality holds because $S_s$ is $\rho_Y$-invariant and $\rho _Z\circ \rho_Y =\rho_X$.)

By lemma~\ref{Schwarz-regularity-lemma} below, 
  $M_s$ is a smoothly embedded minimal surface.  
  Clearly it is $\rho_Y$-invariant, it lies
   in  $\sS^2\times [-h,h]$,
its interior has the desired intersection with $H$, it has the indicated sign at $O$,
it has $\mu_E$ symmetry, 
and its boundary is the desired pair of horizontal circles.
We claim that $M_s$ is a $Y$-surface.  To see this, note that 
 since $S_s$
is a $Y$-surface, the quotient $S_s/\rho_Y$ is 
topologically a disk by proposition~\ref{Y-surface-topology-propostion}.
The interior of $M_s/\rho_Y$ is two copies of $S_s/\rho_Y$ glued along a common
boundary segment.  Thus the interior of $M_s/\rho_Y$ is also topologically a disk, and therefore
$M_s$ is a $Y$-surface by proposition~\ref{Y-surface-topology-propostion}.

Note that $M_s\cap Y$ has $2n+2$ points: the $n$ points in $S_s\cap Y^+$,
an equal number of points in $\rho_ZS_s\cap Y^-$, and the two
points $O$ and $O^*$.  Thus by corollary~\ref{$Y$-corollary}, 
$M_s$ has genus $n$.  Since $n$
is an arbitrary even number, this completes the proof for even genus, except
for assertion~\eqref{even-genus-congruence-assertion}, 
the assertion that $M_{+}$ and $M_{-}$ are not congruent.

Now let $n$ be odd, and let $S_{+}$ be the surface given by 
theorem~\ref{special-existence-theorem}.
By lemma~\ref{parity-sign-lemma} below, $S_{+}$ is negative at $O^*$,
which  implies that $\mu_E(S_{+})$ is 
negative at $O$.  In this case, we choose our $S_{-}$ to be $\mu_E(S_{+})$.
Exactly as when $n$ is even, we extend $S_{\pm }$ by Schwarz reflection
to get $M_{\pm }$.   As before, the $M_{\pm }$ are $Y$-surfaces
of genus $n$.  The proof that they have the required properties is exactly
as in the case of even $n$, except for the statement that $M_{+}$ and $M_{-}$
are not congruent by any orientation-preserving isometry of $\sS^2\times \RR$.

It remains only to prove the statements about noncongruence of $M_{+}$ and 
 $M_{-}$.  Those statements (which we never actually use) 
 are proved in appendix~\ref{noncongruence-appendix}.
\end{proof}

The proof above used the following two lemmas:

\begin{lemma}\label{Schwarz-regularity-lemma}
 If $S$ is a $\rho_Y$-invariant embedded minimal surface in $H^+$  with boundary
$\Gamma$ and with $Y\cap S$ a finite set,  then the Schwarz-extended surface
\[
      M = \overline{S \cup \rho_Z S} = \overline{S \cup \rho_X S}. 
\]
is smoothly embedded everywhere.
\end{lemma}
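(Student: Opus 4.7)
The plan is to verify smoothness of $M$ by applying the classical Schwarz reflection principle along each geodesic arc of $\partial S$ that lies on $H$, and then to handle the two corner points $O$ and $O^*$, where such arcs meet perpendicularly, via a removable-singularity argument.

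First I would check that the two candidate expressions for $M$ agree. Because $\rho_X$, $\rho_Y$, $\rho_Z$ are $180^\circ$ rotations about mutually perpendicular axes through the common fixed point $O$, they are commuting involutions with $\rho_X\rho_Y\rho_Z=\mathrm{id}$, so in particular $\rho_X\rho_Y=\rho_Z$. Combined with the hypothesis $\rho_Y S=S$ this gives $\rho_X S=\rho_Z S$, and hence $\overline{S\cup\rho_Z S}=\overline{S\cup\rho_X S}$. For embeddedness I would note that $\rho_Z$ interchanges the two components $H^{\pm}$ of $(\sS^2\times\RR)\setminus H$, so $S\subset H^+$ and $\rho_Z S\subset H^-$ can meet only on $H$, along the portions of $\partial S$ that lie in $H$; the hypothesis that $Y\cap S$ is finite forces $O,O^*\notin S$, so the two halves glue cleanly along these shared boundary arcs.

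Next I would obtain smoothness at every interior point of $M$ other than possibly $O$ and $O^*$ by ordinary Schwarz reflection. At any point $p$ in the relative interior of one of the geodesic arcs of $\partial S$ on $Z$ or $Z^*$, away from the corner points, the boundary of $S$ near $p$ is a single smooth geodesic arc, so the classical reflection principle for minimal surfaces meeting a geodesic in their boundary produces a smooth minimal extension of $S$ across the arc; hence $M=S\cup\rho_Z S$ is smooth across the arc at $p$. At any point of $X\setminus\{O,O^*\}$, the alternative expression $M=S\cup\rho_X S$ and Schwarz reflection across $X$ yield smoothness in exactly the same way.

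The hard part is smoothness at the two corner points $O$ and $O^*$; by symmetry it suffices to treat $O$. After the previous step, $M$ is a minimal surface that is smooth on a punctured neighborhood of $O$, so $O$ is an isolated possible singular point. I would first observe that the tangent plane to $M$ at $O$ exists and equals $T_O H$: $M$ contains the two smooth perpendicular curves $X$ and $Z$ through $O$, whose tangent vectors span $T_O H$, and since $S$ and $\rho_Z S$ lie on opposite sides of $H$ and meet $H$ locally only along $X\cup Z$, the surface $M$ has a single sheet through $O$, so its density at $O$ equals $1$. A standard removable-singularity result for two-dimensional minimal surfaces with an isolated density-$1$ singular point---most efficiently via Allard's regularity theorem, or concretely by taking a conformal parametrization of $S$ on a sector, Schwarz-reflecting it across the preimage of $Z$ to obtain a half-disk parametrization, and then reflecting again across the preimage of $X$ to a full disk---then shows that $O$ is removable and that $M$ is smooth at $O$. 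The same argument applies at $O^*$, completing the proof.
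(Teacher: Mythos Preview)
Your argument has a genuine gap at $O$ and $O^*$, and you also overlook four boundary corners.

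The curve $\Gamma$ has six corners: $O$, $O^*$, and four points at heights $\pm h$ where $Z_h$ and $Z_h^*$ meet the semicircles $C$ and $\rho_Y C$. After reflection these four become points on the smooth boundary circles $\partial M$, but smoothness of $M$ there does not follow from Schwarz reflection along the open arcs of $Z$ and $Z^*$; that principle only yields regularity at interior points of the reflecting arc. The paper treats each such corner $q$ separately: the tangent cone of $S$ at $q$ is a multiplicity-one quarter-plane, hence the tangent cone of $M$ at $q$ is a multiplicity-one half-plane, and Allard's boundary regularity theorem then gives smoothness of $M$ up to its boundary at $q$.

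At $O$ (and likewise $O^*$) your density-one claim is not justified. Knowing that $S\subset H^+$ and $\rho_Z S\subset H^-$ does not by itself force $S$ to approach $O$ along a single sheet; a priori $S$ could have several components in any small ball about $O$, each with boundary on $X\cup Z$, and your ``conformal parametrization on a sector'' already presupposes exactly the local topology you are trying to establish. The paper supplies this missing topological input via the $Y$-surface hypothesis: choose a ball $B$ about $O$ small enough to miss $Y\cap S$; then $S\cap B$ is a $Y$-surface with no $\rho_Y$-fixed points, and by corollary~\ref{$Y$-corollary}\eqref{$Y$-corollary-disks} it is a disjoint union of two disks. Hence $M\cap B$ is a single topological disk, so $M$ is a (possibly branched) minimal immersion at $O$ by Gulliver's removable-singularity theorem, and embeddedness rules out branching.
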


\begin{proof}
One easily checks that if $q$ is a corner of $\Gamma$ other than $O$ or $O^*$, 
then the tangent cone to $S$ at $q$ is a multiplicity-one quarter plane.  
 Thus the tangent
cone to $M$ at $q$ is a multiplicity-one halfplane, which implies that $M$ is smooth at $q$
by Allard's boundary regularity theorem.

Let $B$ be an open ball centered at $O$ small enough that $B$ contains no points of $Y\cap S$.
Now $S\cap B$ is a $Y$-surface, so by 
corollary~\ref{$Y$-corollary}\eqref{$Y$-corollary-disks}, it is topologically the union of two disks.  It follows
that $M\cap B$ is a disk, 
so $M$ is a branched minimal immersion at $O$ by~\cite{gulliver-removability}.
But since $M$ is embedded, in fact $M$ is unbranched.
\end{proof}

\begin{lemma}\label{parity-sign-lemma}
 Let $S\subset H^+$ be  a $Y$-surface with $\partial S=\Gamma$. 
 Then the signs of $S$ at $O$ and $O^*$ agree or disagree according to whether
 the number of points of $Y\cap S$ is even or odd.
\end{lemma}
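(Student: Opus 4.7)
The plan is to identify the parity of $\#(Y\cap S)$ with the parity of the number $b$ of connected components of $\partial S$ (after resolving its $4$-valent vertices at $O$ and $O^*$), and to show independently that $b$ encodes whether the signs at $O$ and $O^*$ agree.

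Because $S$ fills two opposite quadrants of $H\setminus(X\cup Z)$ at $O$, the surface near $O$ consists of two quarter-disks meeting only at the corner point $O$. Splitting the $4$-valent vertex of $\partial S$ at $O$ into two $90^\circ$ corners, one per quarter-disk, yields a pairing of the four boundary half-edges $X^+, X^-, Z^+, Z^-$: if $S$ is positive at $O$ the pairing is $\{X^+,Z^+\}$ and $\{X^-,Z^-\}$, and if negative it is $\{X^+,Z^-\}$ and $\{X^-,Z^+\}$; the analogous statement holds at $O^*$ with $Z$ replaced by $Z^*$. After these splittings $\partial S$ becomes a smooth $1$-manifold whose components are found by following the four arcs between $O$ and $O^*$ (namely $X^+$, $X^-$, and the top and bottom semicircular paths through $z=\pm h$) and joining them according to the pairings. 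A routine four-case enumeration over the signs at $O$ and $O^*$ then yields $b=2$ when the signs agree and $b=1$ when they disagree.

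The resolved surface has the same interior as $S$, so by the standard formula for a compact oriented surface-with-boundary we have $\chi(\mathrm{int}(S))=2c-2g-b$, where $c$ is the number of components, $g$ the total genus, and $b$ the number of smooth boundary components. Proposition~\ref{Y-surface-topology-propostion} (using that $S$ is a $Y$-surface, so $\rho_Y$ is orientation-preserving with connected quotient) then gives that the number of interior $\rho_Y$-fixed points of $\mathrm{int}(S)$, namely $\#(Y^+\cap\mathrm{int}(S))$, equals $2-\chi(\mathrm{int}(S))=2g+b-2c+2$, which has the same parity as $b$. Since $Y\cap S$ is obtained from $Y^+\cap\mathrm{int}(S)$ by adjoining the two boundary points $O$ and $O^*$, the parity of $\#(Y\cap S)$ coincides with that of $b$, and combining with the previous paragraph completes the proof.

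The main obstacle I expect is the corner bookkeeping: one must verify carefully that the two opposite quarter-disks of $S$ at $O$ really do meet only at $O$ (so that the splitting is a legitimate topological operation) and that the resulting object is a compact oriented surface-with-boundary whose Euler characteristic is $2c-2g-b$ and whose interior coincides with $\mathrm{int}(S)$. A secondary subtlety is the case in which $S$ is disconnected (for instance, the two disks swapped by $\rho_Y$ that can arise when $\#(Y\cap S)=2$); this is handled either by applying proposition~\ref{Y-surface-topology-propostion} to the $\rho_Y$-orbit of components, or by invoking corollary~\ref{$Y$-corollary}\eqref{$Y$-corollary-disks} directly, the Euler-characteristic formula above remaining additive and yielding the same conclusion.
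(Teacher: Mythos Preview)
Your proof is correct and follows essentially the same approach as the paper: both resolve the $4$-valent corners at $O,O^*$ (the paper via the geodesic completion $\widehat{S}$), enumerate the resulting boundary components to get $b\in\{1,2\}$ according to whether the signs disagree or agree, and then link $b$ to the parity of $\#(Y\cap S)$ via Proposition~\ref{Y-surface-topology-propostion} (which the paper invokes through Corollary~\ref{$Y$-corollary}). One harmless slip: since $S$ is open, $O,O^*\notin S$, so in fact $Y\cap S=Y^+\cap\mathrm{int}(S)$ already---but this does not affect your parity conclusion.
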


\begin{proof}
Let $\widehat{S}$ be the geodesic completion of $S$.
We can identity $\widehat{S}$ with $\overline{S}=S\cup\partial S$, except
that $O\in \overline{S}$ corresponds to two points in $\widehat{S}$,
and similarly for $O^*$.
Note that the number of ends of $S$ is equal to the number of boundary components of 
 $\partial \widehat{S}$.

By symmetry, we may assume that the sign of $S$ at $O$ is $+$. 
Then at $O$, $Z^+$ is joined in $\partial \widehat{S}$ to $X^+$ and $Z^-$ is joined to $X^-$.
 If the sign of $S$ at $O^*$ is also $+$, then  the same pairing occurs at $O^*$, 
 from which it follows that 
 $\partial \widehat{S}$ has two components and therefore that $S$ has two ends. 
 If the sign of $S$ at $O^*$ is $-$, then  the pairings are crossed, so that
$\partial \widehat{S}$ has  only one component and therefore $S$ has only one end.
We have shown that $S$ has two ends or one end
according to whether the signs of $S$ at $O$ and $O^*$ are equal or not.
The lemma now follows from corollary~\ref{$Y$-corollary},
 according to which the number of ends of $S$
is two or one according to whether the number of points of $Y\cap S$ is even or odd.
\end{proof}






\section{Adjusting the pitch of the helicoid}\label{adjusting-pitch-section}

Theorem~\ref{special-existence-theorem}  of section~\ref{construction-outline-section} asserts that
the curve $\Gamma_C$ 
bounds various minimal surfaces in $H^+$.  In that theorem, $C=\Gamma\cap \{z=h\}$ is allowed
to be any semicircle in $\overline{H^+}\cap \{z=h\}$ with endpoints in $Z\cup Z^*$.  
In this section, we will show that in order to prove 
Theorem~\ref{special-existence-theorem}, it is sufficient to prove it for the special case where
 $C$ is a semicircle in the helicoid $H$.

\begin{theorem}\label{special-existence-theorem-tilted}[Special case of 
theorem~\ref{special-existence-theorem}]
Let $0<h<\infty$ and let $C$ be one of the two great semicircles in $H\cap\{z=h\}$
joining $Z$ to $Z^*$.
For each sign $s\in\{+,-\}$ and for each $n\ge 1$, there exists
an open embedded 
minimal  $Y$-surface $S=S_s$ in $H^+\cap \{|z|<h\}$ such that 
\[
   \partial S=\Gamma_C :=  Z_h \cup Z_h^* \cup C \cup \rho_YC,
\]
such that $Y^+\cap S$ contains exactly $n$ points,
and such that $S$ is positive at $O$ if $s=+$ and negative at $O$
if $s=-$.

If $n$ is even, there is such a surface that is invariant under
reflection $\mu_E$ in the totally geodesic cylinder $E\times\RR$.
\end{theorem}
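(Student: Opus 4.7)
The plan is to prove existence by the mod-$2$ parity count announced in Section~\ref{construction-outline-section}. Fix $n \ge 1$ and $s \in \{+,-\}$, and let $\mathcal{M}_n^s$ denote the set of open, smoothly embedded minimal $Y$-surfaces $S \subset H^+\cap\{|z|<h\}$ with $\partial S = \Gamma_C$, with $\|S\cap Y^+\| = n$, and with sign $s$ at $O$; by Corollary~\ref{$Y$-corollary}, such surfaces are connected of genus $n$. The goal is to show $|\mathcal{M}_n^s|$ is odd, hence non-zero, and to establish the parallel count inside the $\mu_E$-invariant subcollection when $n$ is even.

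First I would set up a compactness/regularity framework for $\mathcal{M}_n^s$. Because $\Gamma_C$ lies in the helicoid $H$, the boundary is as regular as one could hope: each $S\in\mathcal{M}_n^s$ meets $H$ along $\Gamma_C$ at prescribed angles determined by $s$, and Schwarz reflection across $\Gamma_C$ gives a bona fide minimal surface in a neighborhood. Combining this with Proposition~\ref{genus-0-proposition} (which forces all handles of $S$ to be pinned along $Y^+$) one obtains uniform area bounds, and a standard blow-up argument yields interior and boundary curvature bounds. The upshot is that any sequence in $\mathcal{M}_n^s$ subconverges smoothly away from a possible discrete singular set on $Y^+$ to a minimal $Y$-surface of genus at most $n$; this is the content that Section~\ref{area-bounds-section} and the ``smooth count'' of Section~\ref{smooth-count-section} will have to make precise.

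Next I would prove generic nondegeneracy and invoke a one-parameter deformation argument. Using the Rosenberg--Meeks technology, for generic choice of the pitch $\kappa$ (or of $h$), no element of $\mathcal{M}_n^s$ carries a nonzero $\rho_Y$-invariant Jacobi field that vanishes on $\Gamma_C$. Then $|\mathcal{M}_n^s|$ is finite, and a standard bifurcation analysis shows that along a continuous path of parameters avoiding degeneracies, $|\mathcal{M}_n^s|\pmod 2$ is constant: fold bifurcations create or annihilate surfaces in pairs with the sign at $O$ preserved on both branches. With parity in hand, I would evaluate it by deforming to an endpoint of the parameter range where the count is transparent. A natural choice is to let $\kappa\to\infty$, so that $H$ degenerates to the totally geodesic cylinder $X\times\RR$, where the added $S^1$-symmetry reduces the count to a rotationally equivariant problem solvable explicitly. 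An alternative is to induct on $n$, opening handles one at a time along $Y^+$ from the base case $n=1$ via a localized gluing. The $\mu_E$-invariant statement for even $n$ follows from running the identical parity count within the $\mu_E$-invariant subclass (which is closed under the same bifurcations since $\mu_E$ commutes with $\rho_Y$).

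The hard part will be compactness and the classification of possible degenerations. One must exclude, in particular, that a sequence $S_i \in \mathcal{M}_n^s$ drifts into a limit with (a) a branch point on $Y^+$, (b) a change in sign at $O$ through a tangential blow-up with $H$, or (c) a loss of handles leaving a surface of strictly smaller genus. The $Y$-surface property traps any concentration of topology on the compact arc $Y^+$, which controls (c), but a careful local analysis of minimal graphs over half-helicoidal pieces, together with maximum-principle based barriers exploiting the reflection symmetries of $\Gamma_C$ across $H$, is needed to rule out (a) and (b). Getting all three obstructions to mesh with the parity-preservation of the bifurcation count is the technical heart of the argument, and is what the subsequent sections of the paper are set up to deliver.
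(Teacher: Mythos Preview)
Your proposal correctly identifies that the argument proceeds via a mod-$2$ count, but your implementation diverges from the paper's in an essential way and has a real gap.

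The paper does \emph{not} vary $\kappa$ or $h$ and does \emph{not} compute the parity at a degenerate endpoint such as $\kappa\to\infty$. Instead it does two things. First (Section~\ref{bumpy-section}), it perturbs the \emph{Riemannian metric} on $\sS^2\times\RR$ to a bumpy one satisfying the hypotheses of theorem~\ref{main-bumpy-theorem}; the odd count is proved for that metric, and then one passes to a limit of surfaces as the metric returns to the standard one (proposition~\ref{bumpy-suffices-proposition}). Second, and this is the central idea you are missing: the paper \emph{rounds the corners} of the singular curve $\Gamma_C$ to obtain a one-parameter family of smooth embedded curves $\Gamma(t)\subset H$ (Section~\ref{rounding-section}). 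For smooth embedded curves in the boundary of a mean-convex ball, the mod-$2$ count of minimal $Y$-surfaces is already known (theorem~\ref{Y-degree-theorem}): it is $1$ for disks/pairs of disks and $0$ otherwise. A bridge-type theorem (theorem~\ref{bridged-approximations-theorem}, theorem~\ref{strong-uniqueness-theorem}, theorem~\ref{all-accounted-for-theorem}) shows that for small $t$ the surfaces bounded by $\Gamma(t)$ are in bijection with those bounded by $\Gamma$, with $\|S(t)\cap Y^+\|$ differing from $\|S\cap Y^+\|$ by $0$, $1$, or $2$ according to sign agreements at $O$ and $O^*$ (proposition~\ref{disagreements-proposition}). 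Playing the four possible sign choices of the rounding against one another yields a recursion (theorem~\ref{main-count-theorem}) that forces $|\MM(\Gamma,n,s)|$ to be odd for every $n\ge 0$ and every $s$.

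The gap in your approach is the endpoint computation. Deforming to $\kappa\to\infty$ gives the cylinder $X\times\RR$, where the extra reflection $\mu_Y$ actually breaks the sign dichotomy (see remark~\ref{extra-symmetry-remark}), so there is no obvious reason the count there is any easier to evaluate; and the bifurcation analysis along a path in $\kappa$ would have to control degenerations far more severe than the very tame limit $\Gamma(t)\to\Gamma$ that the paper uses. Your alternative of inducting on $n$ via gluing is not developed and would itself require a bridge principle of roughly the same strength as theorem~\ref{bridged-approximations-theorem}. In short, the paper's rounding-of-$\Gamma$ device is precisely the missing idea that turns the mod-$2$ philosophy into a proof.
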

 We will prove that theorem~\ref{special-existence-theorem-tilted}, a special case of theorem~\ref{special-existence-theorem}, is in fact equivalent to it:
 
 \begin{proposition} Theorem~\ref{special-existence-theorem-tilted} implies theorem~\ref{special-existence-theorem}. \label{tilted-suffices-proposition}
 \end{proposition}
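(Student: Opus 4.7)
The strategy is to reduce the general statement of Theorem~\ref{special-existence-theorem} to the special case (Theorem~\ref{special-existence-theorem-tilted}) by replacing the given helicoid $H$ with a carefully chosen helicoid $\tilde H$ whose slice at height $h$ contains the prescribed semicircle $C$.

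\textbf{Step 1 (choice of adapted helicoid).} Given $(H,C)$, I select $\tilde H$ to be the helicoid with axes $Z\cup Z^*$ and containing $X$ whose pitch $\tilde\kappa$ is chosen so that $\tilde H\cap\{z=h\}=C\cup\rho_YC$. Such a $\tilde\kappa$ exists: the slice angle of $\tilde H\cap\{z=h\}$ relative to $X$ equals $2\pi h/\tilde\kappa$ modulo $\pi$, and this varies over all of $[0,\pi)$ as $\tilde\kappa$ ranges over $(0,\infty)$. By construction, $C$ is now one of the two great semicircles in $\tilde H\cap\{z=h\}$.

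\textbf{Step 2 (application of the tilted theorem and transfer of intrinsic properties).} Apply Theorem~\ref{special-existence-theorem-tilted} to the pair $(\tilde H,C)$. For each sign $s\in\{+,-\}$ and each $n\ge 1$, this yields an embedded minimal $Y$-surface $\tilde S_s$ in $\tilde H^+\cap\{|z|<h\}$ with $\partial\tilde S_s=\Gamma_C$, with exactly $n$ points in $Y^+\cap\tilde S_s$, having sign $s$ at $O$ with respect to $\tilde H$, and (when $n$ is even) invariant under $\mu_E$. The intrinsic conclusions---boundary, $Y$-surface structure, point count on $Y^+$, and $\mu_E$-invariance---transfer verbatim to the setup of Theorem~\ref{special-existence-theorem}. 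The sign condition also transfers: $H$ and $\tilde H$ both pass through $O$ with the same tangent plane (spanned by the tangent vectors to $X$ and to $Z$), so the decompositions of $H\setminus(X\cup Z\cup Z^*)$ and $\tilde H\setminus(X\cup Z\cup Z^*)$ into positive and negative quadrants agree near $O$ to first order, and hence the sign of $\tilde S_s$ at $O$ with respect to $H$ coincides with its sign with respect to $\tilde H$.

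\textbf{Main obstacle.} What must still be verified is that $\tilde S_s$ actually lies in $H^+\cap\{|z|<h\}$, not merely in $\tilde H^+\cap\{|z|<h\}$; since $\tilde\kappa\ne\kappa$, these two regions genuinely differ. I would argue as follows. By Corollary~\ref{$Y$-corollary}, together with $n\ge 1$, the surface $\tilde S_s$ is connected, and it contains the $n$ points of $Y^+\cap\tilde S_s$, which lie in $H^+$; so it suffices to rule out interior intersections of $\tilde S_s$ with $H$. An open coincidence $\tilde S_s\subset H$ is impossible, since $C\subset\partial\tilde S_s$ is not a subset of $H$. To exclude a transverse crossing, I would continuously deform the pitch $\kappa_t$ from $\kappa$ (at $t=0$) to $\tilde\kappa$ (at $t=1$), apply the tilted theorem to each intermediate helicoid $H_{\kappa_t}$, and use standard compactness of embedded minimal surfaces of bounded area together with the strong maximum principle---which forbids the surface family from first touching $H$ tangentially as $t$ increases---to conclude that each $\tilde S_t$ remains in $H^+$ throughout the deformation, yielding the desired conclusion at $t=1$.
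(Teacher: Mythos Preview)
Your overall strategy---choose a helicoid $\tilde H$ containing $C$, apply the tilted theorem there, and then argue the resulting surface lies in $H^+$---is exactly the paper's approach, and your treatment of the sign at $O$ is correct. The gap is in your handling of the ``main obstacle.''

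You propose to ``apply the tilted theorem to each intermediate helicoid $H_{\kappa_t}$'' and obtain a family $\tilde S_t$. This does not work, for two reasons. First, for $0<t<1$ the semicircle $C$ is not contained in $H_{\kappa_t}$, so Theorem~\ref{special-existence-theorem-tilted} simply does not apply with boundary $\Gamma_C$; if instead you use the semicircle $C_t\subset H_{\kappa_t}$, the boundaries $\Gamma_{C_t}$ vary with $t$ and the endpoint $t=1$ no longer has boundary $\Gamma_C$. Second, even where the tilted theorem applies, it is an existence statement with no uniqueness, so you have no continuous (or even measurable) selection $t\mapsto\tilde S_t$ on which to run a first-touching argument; ``standard compactness'' does not manufacture such a family.

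The fix is much simpler and is what the paper does: do not vary the surface at all. Produce the single surface $S=\tilde S_s$ once, from $\tilde H$, and then vary only the helicoid through a one-parameter family $H(t)$ with $H(0)=\tilde H$, $H(1)=H$, chosen so that $C\subset\overline{H(t)^+}$ for every $t$ (a monotone pitch deformation does this). Let $T=\{t\in[0,1]:S\subset\overline{H(t)^+}\}$. This set is closed and contains $0$. It is also relatively open: if $S\subset\overline{H(t)^+}$, then by the strong maximum principle (interior and boundary) $S$ cannot touch $H(t)$ away from the corners $O,O^*$, and at those corners $S$ and $H(t)$ have exactly first-order contact (their intersection near $O$ is $X\cup Z$), so the curvatures differ and a small change in $t$ keeps $S$ on the correct side. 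Hence $T=[0,1]$ and $S\subset H^+$. The continuity argument for the sign at $O$ then goes through exactly as you wrote.
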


 \begin{proof}[Proof of Proposition~\ref{tilted-suffices-proposition}]
Let $H$ be a helicoid and let $C$ be a great semicircle in
   $\overline{H^+}\cap\{z=h\}$.
We may assume that $C$ does not lie in $H$, as otherwise there is nothing to prove.
Therefore the interior of the semicircle $C$ lies in $H^+$.
Now increase (or decrease) the pitch of $H$  to get a one-parameter family of helicoids
$H(t)$ with $0\le t\le 1$ such that
\begin{enumerate}
\item $H(1)=H$,
\item $C \subset \overline{H(t)^+}$ for all $t\in [0,1]$,
\item $C \subset H(0)$.
\end{enumerate}

\begin{claim} Suppose $S$ is an open, $\rho_Y$-invariant, embedded minimal surface bounded by $\Gamma_C$
with $S\cap Y^+$ nonempty.
If $S$
 is contained in $H(0)^+$, then it is contained in $H(t)^+$ for all $t\in[0,1]$.
Furthermore, in that case
 the sign of $S$ at $O$ with respect to $H(t)$ does not depend on $t$.
\end{claim}

\begin{proof}[Proof of claim] 
Let $T$ be the set of $t\in [0,1]$ for which $S$ is contained in $\overline{H(t)^+}$. 
Clearly $T$ is a closed set.
We claim that $T$ is also open relative to $[0,1]$.
For suppose that $t\in T$, and thus that $S\subset \overline{H(t)^+}$.
Now $S$ is not contained in $H(t)$ since $S\cap Y^+$ is nonempty.
Thus by the strong maximum principle and the strong boundary maximum
principle, $S$ cannot touch $H(t)$, nor is $\overline{S}$ tangent
to $H(t)$ at any points of $\Gamma_C$ other than its corners.

At the corners $O$ and $O^*$, $S$ and $H(\tau)$ are tangent.  However, the curvatures of $H$ and
$
    M:=\overline{S\cup \rho_YS}
$
differ from each other\footnote{Recall that if two minimal surfaces in a $3$-manifold 
are tangent at a point, then the intersection set
near the point is like the zero set of a homogeneous harmonic polynomial.  In particular, it consists
of $(n+1)$ curves crossing through the point, where $n$ is the degree of contact of the two surfaces at the point.
 Near $O$, the intersection of $M$ and $H$ coincides with $X\cup Z$, so their order of contact  
at $O$ is  
exactly one.} 
 at $O$, and also at $O^*$. 
It follows readily that $t$ is in the interior of $T$ relative to $[0,1]$.  
Since $T$ is open and closed in $[0,1]$, either $=\emptyset$ or $T=[0,1]$.
This proves the first assertion of the claim.  The second follows by continuity.
\end{proof}

By the claim, if theorem~\ref{special-existence-theorem-tilted} 
is true for $\Gamma_C$ and $H(0)$, then theorem~\ref{special-existence-theorem}
 is true for $H=H(1)$ and $\Gamma_C$. 
 This completes the proof of proposition~\ref{tilted-suffices-proposition}.
 \end{proof}


\section{Eliminating jacobi fields by perturbing the metric}\label{bumpy-section}

Our proof involves counting minimal surfaces mod $2$.  Minimal surfaces
with nontrivial jacobi fields tend to throw off such counts.  (A nontrivial jacobi field
is a nonzero normal jacobi field that vanishes on the boundary.)
Fortunately, if we fix a curve $\Gamma$ in a $3$-manifold, then 
a generic Riemannian metric on the $3$-manifold
 will be  ``bumpy" (with respect to $\Gamma$)
 in the following sense: $\Gamma$ will not bound any 
minimal surfaces with
nontrivial jacobi fields.  
 Thus instead of working with the 
standard product metric on
$\sS^2\times\RR$, we will use
 a slightly perturbed bumpy metric and prove
 theorem~\ref{special-existence-theorem-tilted}
for that perturbed metric.  By taking a limit of surfaces as the perturbation
goes to $0$, we get the surfaces whose existence is asserted in
 theorem~\ref{special-existence-theorem-tilted} for the standard metric.
In this section, we explain how to perturb the metric to make
it bumpy, and how to take the limit as the perturbation goes to $0$.

In what class of metrics should we make our perturbations?
The metrics should have $\rho_X$ and $\rho_Z$ symmetry so that we can do Schwarz reflection,
$\rho_Y$ symmetry so that the notion of $Y$-surface makes sense,
and $\mu_E$-symmetry so that the conclusion of theorem~\ref{special-existence-theorem-tilted} makes sense.
It is convenient to use metrics for which the helicoid $H$ and the spheres $\{z=\pm h\}$ are minimal, 
because we will need  
the region $N=\overline{H^+}\cap \{|z|\le h\}$ to be weakly mean-convex.
We will also need to have an isoperimetric inequality hold for minimal surfaces in $N$,
which is equivalent (see remark~\ref{isoperimetric-equivalence-remark}) 
to the nonexistence of any smooth, closed minimal surfaces
in $N$.  Finally, at one point (see the last sentence in section~\ref{smooth-count-section}) 
we will need the two bounded components of $H\setminus\Gamma$
to be strictly stable, so we restrict ourselves to metrics for which they are strictly stable.

The following theorem (together with its corollary) is theorem~\ref{special-existence-theorem-tilted}
with the standard metric on $\sS^2\times\RR$ replaced by 
a suitably bumpy metric in the class of metrics described above,
and with the conclusion strengthened to say that $\Gamma_C$
bounds an odd number of surfaces with the desired properties:

\begin{theorem}\label{main-bumpy-theorem}
Let $H$ be a helicoid in $\sS^2\times\RR$, let $0<h<\infty$,
and let $\Gamma=\Gamma_C$ be the curve in theorem~\ref{special-existence-theorem-tilted}:
\[
 \Gamma= Z_h \cup Z_h^* \cup C \cup \rho_YC,
\]
where $C$ is one of the semicircles in $H\cap\{z=h\}$ joining $Z$ to $Z^*$.
Let $G$ be the group of isometries of $\sS^2\times\RR$ generated
by $\rho_X$, $\rho_Y$, $\rho_Z=\rho_{Z^*}$, and $\mu_E$.
 Let $\gamma$ be a smooth, $G$-invariant Riemannian metric on $\sS^2\times\RR$
such that 
\begin{enumerate}
\item\label{minimal-anchors-hypothesis}
        the helicoid $H$ and the horizontal spheres $\{z=\pm h\}$ are $\gamma$-minimal surfaces.
\item\label{strictly-stable-hypothesis} the two bounded components of $H\setminus \Gamma$ are strictly stable
     (as $\gamma$-minimal surfaces).
\item\label{no-closed-minimal-surface-hypothesis} 
    the region $N:=\overline{H^+}\cap \{|z|\le h\}$ contains no smooth, closed, embedded $\gamma$-minimal surface, 
\item\label{bumpy-hypothesis}
    the curve $\Gamma$ does not bound any embedded $\gamma$-minimal $Y$-surfaces
   in $\overline{H^+}\cap \{|z|\le h\}$ with nontrivial $\rho_Y$-invariant jacobi fields.
\end{enumerate}
For each nonnegative
 integer $n$ and each sign $s\in \{+,-\}$, let
\[
    \MM(\Gamma,n,s) = \MM_\gamma(\Gamma,n,s)
\]
denote the set of 
 embedded,  $\gamma$-minimal $Y$-surfaces $S$ in 
  $\overline{H^+}\cap\{|z|\le h\}$ bounded by $\Gamma$ such that $S\cap Y^+$ has exactly $n$
  points and such that $S$ has sign $s$ at $O$.
Then the number of surfaces in $\MM(\Gamma,n,s)$ is odd.
\end{theorem}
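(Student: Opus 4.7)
The strategy is to establish the mod 2 count by a cobordism argument: vary the metric $\gamma$ within the class of $G$-invariant metrics satisfying~(1)--(3), show the mod 2 count is invariant under such deformations, and compute it in a favorable reference configuration.

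First, I would show that $\MM_\gamma(\Gamma, n, s)$ is a finite set. Area bounds for $\rho_Y$-invariant embedded minimal surfaces in $N$ with prescribed value of $|Y^+ \cap S|$ (to be developed subsequently), together with standard Allard-type smooth compactness at the boundary, give pre-compactness in the smooth topology. Hypothesis~(3) prevents loss of compactness via closed components splitting off in the interior (it is equivalent to an isoperimetric inequality on $N$), and weak mean-convexity of $\partial N$ from~(1) confines smooth limits to $N$. Bumpiness (hypothesis~(4)) then makes each element of $\MM_\gamma(\Gamma, n, s)$ an isolated point of the equivariant moduli space, and pre-compactness combined with isolation gives finiteness.

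Next, I would connect $\gamma$ to a reference metric $\gamma_0$ by a continuous path $\gamma_t$ of $G$-invariant metrics satisfying~(1)--(3). At generic $t$ the metric $\gamma_t$ is bumpy, and the implicit function theorem applied to the $\rho_Y$-equivariant Plateau problem shows that $|\MM_{\gamma_t}(\Gamma, n, s)|$ is locally constant in $t$. At the exceptional $t$ for which some surface acquires a nontrivial $\rho_Y$-invariant jacobi field vanishing on $\Gamma$, a $\rho_Y$-equivariant Lyapunov--Schmidt reduction to the finite-dimensional kernel shows that surfaces appear and disappear in pairs. The sign at $O$ is a discrete invariant that is stable under smooth deformation (it is determined by the $2$-jet at $O$ of $S$ relative to $H$, which varies continuously in smooth families), so paired surfaces share a common sign. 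Hence $|\MM_{\gamma_t}(\Gamma, n, s)| \bmod 2$ is constant along the path.

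Finally, I would compute the count at $\gamma_0$. For the base case $n = 0$, Corollary~\ref{$Y$-corollary}\eqref{$Y$-corollary-disks} forces any $S \in \MM(\Gamma, 0, s)$ to be a disjoint union of two disks; the strong boundary maximum principle applied in $\overline{H^+}$ together with hypothesis~(2) on strict stability identifies $S$ uniquely with the pair of bounded components of $H \setminus \Gamma$ of sign $s$, giving count $1$. For $n \geq 1$, one constructs a reference metric in which a $\rho_Y$-symmetric handle-attachment procedure along $Y^+$ produces exactly one surface of each sign, inductively building on the $n = 0$ count. The main obstacle is the bifurcation analysis in the invariance step: one must verify that every degeneration through a $\rho_Y$-invariant jacobi field produces paired surfaces with a common sign at $O$, excluding both isolated births and sign-flipping pairs. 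This rests on analyzing the structure of $\rho_Y$-invariant jacobi fields vanishing on $\Gamma$ at degenerate elements of $\MM$, and on the fact that exchanging the sign at $O$ would require a surface crossing $H$ transversally along $X \cup Z$ near $O$, which is incompatible with the smooth variation of $S$ in a Lyapunov--Schmidt family.
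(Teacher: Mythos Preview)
Your approach differs fundamentally from the paper's. You propose to vary the \emph{metric} and invoke cobordism invariance, computing the count at a reference metric. The paper instead keeps the metric fixed and varies the \emph{boundary curve}: it rounds the corners of $\Gamma$ to obtain smooth embedded curves $\Gamma(t)$, constructs for each $S\in\MM(\Gamma)$ a one-parameter family $t\mapsto S(t)$ with $\partial S(t)=\Gamma(t)$, shows that for small $t$ these account for all $Y$-surfaces bounded by $\Gamma(t)$, and then invokes the known mod~$2$ count for smooth embedded curves (theorem~\ref{Y-degree-theorem}). The key device is that the \emph{signs} of the rounding at $O$ and at $O^*$ can be chosen independently, and each choice determines which classes $\MM(\Gamma,k,\pm)$ feed into $\MM(\Gamma(t),n)$; this yields a recursion that solves to $f(n,s)=1$ by induction on~$n$.

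Your reference computation has a genuine gap. For $n=0$ you assert that the unique surface of sign $s$ is ``the pair of bounded components of $H\setminus\Gamma$ of sign $s$'', but those two components form a \emph{single} $\rho_Y$-invariant pair of disks carrying one specific sign at $O$ (determined by the choice of semicircle $C$). Your candidate therefore lies in $\MM(\Gamma,0,s)$ for only one value of $s$; for the opposite sign you have exhibited nothing, yet the theorem asserts that count is also odd. The maximum principle and strict stability establish neither uniqueness for the sign you have nor existence for the one you lack. For $n\ge1$ the gap is sharper: ``construct a reference metric in which handle-attachment produces exactly one surface of each sign'' is circular, since a gluing construction yields \emph{at least} one surface but says nothing about the total count, which is precisely what you are trying to compute. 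Without an independent computation at some reference, the cobordism-invariance framework---even granting its analytic details (bifurcation at degenerate metrics, regularity at the corners of $\Gamma$)---determines the count only up to an unknown constant.
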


\begin{corollary}\label{extra-symmetry-corollary}
Under the hypotheses of the theorem, if $n$ is even, then the number of 
$\mu_E$-invariant surfaces in $\MM(\Gamma,n,s)$ is odd.
\end{corollary}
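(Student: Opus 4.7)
The plan is to realize the corollary as a parity count for the natural involution $\mu_E$ on the finite set $\MM(\Gamma,n,s)$. Since Theorem~\ref{main-bumpy-theorem} already asserts that $|\MM(\Gamma,n,s)|$ is odd, it will suffice to verify that $\mu_E$ acts on $\MM(\Gamma,n,s)$ by pullback whenever $n$ is even; the orbits then have size $1$ or $2$, the size-$1$ orbits are precisely the $\mu_E$-invariant members of $\MM(\Gamma,n,s)$, and so
\[
|\MM(\Gamma,n,s)|\equiv \#\{S\in\MM(\Gamma,n,s) : \mu_E S = S\} \pmod 2.
\]

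The first step is to check preservation of the ambient data. Since $\mu_E \in G$ and $\gamma$ is $G$-invariant, $\mu_E$ is a $\gamma$-isometry. The geodesic $Y$ lies inside the totally geodesic cylinder $E\times\RR$, so $\mu_E$ fixes $Y$ (in particular $Y^+$) pointwise; hence $\mu_E$ preserves the component $H^+$ containing $Y^+$ and commutes with $\rho_Y$. It follows that $\mu_E S$ is an embedded $\gamma$-minimal $Y$-surface in $\overline{H^+}\cap\{|z|\le h\}$ whenever $S$ is. The boundary curve $\Gamma$ is also $\mu_E$-invariant: the pair $Z_h\cup Z_h^*$ is swapped by $\mu_E$ (which interchanges the axes $Z$ and $Z^*$) and thus preserved as a set, while each of the two semicircles $C$ and $\rho_YC$ in $H\cap\{z=h\}$ is preserved individually, since each is a longitude of $\sS^2\times\{h\}$ and reflection across the equator $E\times\{h\}$ takes a longitude to itself, swapping its two pole-endpoints and fixing its equatorial midpoint. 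Finally, since $\mu_E$ fixes $Y^+$ pointwise, $|\mu_E S \cap Y^+| = |S\cap Y^+| = n$.

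The crux is sign preservation, and this is where the hypothesis $n$ even enters. The remark in Section~\ref{sign-section} asserts that the sign of $\mu_E S$ at $O$ with respect to $H$ coincides with the sign of $S$ at $O^*$ with respect to $H$. On the other hand $S\subset \overline{H^+}$ and $Y^-\subset H^-$ imply $Y\cap S = Y^+\cap S$, which has $n$ points; so when $n$ is even, Lemma~\ref{parity-sign-lemma} gives that the signs of $S$ at $O$ and at $O^*$ agree. Combining these two statements, $\mu_E S$ has the same sign $s$ at $O$ as $S$, so $\mu_E$ carries $\MM(\Gamma,n,s)$ into itself. The orbit-counting observation from the first paragraph, combined with Theorem~\ref{main-bumpy-theorem}, then yields that the number of $\mu_E$-invariant surfaces is odd. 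The main obstacle is precisely this sign-preservation step; once it is in place, the corollary is immediate.
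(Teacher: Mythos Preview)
Your approach is the same as the paper's: show that $\mu_E$ acts as an involution on $\MM(\Gamma,n,s)$, so that non-fixed surfaces pair off and the parity of the fixed-point set equals the parity of the whole set. The key step---using Lemma~\ref{parity-sign-lemma} to see that for $n$ even the sign of $S$ at $O^*$ equals its sign at $O$, and then the remark in Section~\ref{sign-section} to conclude that $\mu_E S$ has sign $s$ at $O$---is exactly the paper's argument.

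There is one factual error in your write-up, though it does not break the proof. You assert that $Y$ lies inside $E\times\RR$ and hence that $\mu_E$ fixes $Y$ pointwise. This is false: $Y$ is a great circle through the poles $O$ and $O^*$, while $E$ is the equator equidistant from those poles, so $Y$ meets $E\times\RR$ in only two points. What is true (and sufficient) is that $\mu_E$ maps $Y^+$ to itself \emph{setwise}, swapping its endpoints $O$ and $O^*$; this already gives $|\mu_E S\cap Y^+|=|S\cap Y^+|$. Likewise, $\mu_E(H^+)=H^+$ and $\mu_E\rho_Y=\rho_Y\mu_E$ are true, but not for the reason you give; they follow from the explicit description of these isometries (or simply from the remark in Section~\ref{sign-section} and the fact that $G$ is abelian). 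You should correct the justification accordingly.
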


\begin{proof}[Proof of corollary]
Let $n$ be even.  By lemma~\ref{parity-sign-lemma},
 if $S\in \MM(\Gamma,n,s)$, then $S$ also has sign $s$ at $O^*$,
from which it follows that $\mu_E(S)\in \MM(\Gamma,n,s)$.
Thus the number of non-$\mu_E$-invariant surfaces in $\MM(\Gamma,n,s)$ is even because
such surfaces come in pairs ($S$ being paired with $\mu_ES$.)  
By the theorem, the total number of surfaces in $\MM(\Gamma,n,s)$ is odd, so therefore
the number of $\mu_E$-invariant surfaces must also be odd.
\end{proof}

\begin{remark}\label{isoperimetric-equivalence-remark}
Hypothesis~\eqref{minimal-anchors-hypothesis} of theorem~\ref{main-bumpy-theorem}
 implies that the compact region $N:=\overline{H^+}\cap\{|z|\le h\}$ is $\gamma$-mean-convex.
It follows (see~\cite{white-isoperimetric}*{\S2.1 and \S5}) that 
condition~\eqref{no-closed-minimal-surface-hypothesis}
is equivalent to the following condition:
\begin{enumerate}
\item[(3$'$)] There is a finite constant $c$ such that $\area(\Sigma)\le c \length(\partial \Sigma)$
                for every $\gamma$-minimal surface $\Sigma$ in $N$.
\end{enumerate}
Furthermore, the proof of theorem~2.3 in~\cite{white-isoperimetric}
 shows that for any compact set $N$, the set of Riemannian metrics  
 satisfying~\thetag{3$'$} 
is open, with a constant $c=c_{\gamma}$ that depends 
upper-semicontinuously on the 
metric\footnote{As explained in \cite{white-isoperimetric}, for any metric $\gamma$,
we can let $c_\gamma$ be the supremum (possibly infinite)
of $|V|/|\delta V|$ among all $2$-dimensional varifolds $V$ in $N$ with $|\delta V|<\infty$,
 where $|V|$ is the mass of $V$ and $|\delta V|$
is its total first variation measure.
 The supremum is attained by a varifold $V_\gamma$
with mass $|V_\gamma|=1$.  Suppose $\gamma(i)\to\gamma$.  By passing to a subsequence, we may
assume that the $V_{\gamma(i)}$ converge weakly to a varifold $V$.
Under weak convergence, mass is continuous and total first variation measure is lower
semicontinuous.  Thus 
\[
   c_\gamma \ge \frac{ |V| }{|\delta V|} 
   \ge \limsup \frac{|V_{\gamma(i)}|}{|\delta V_{\gamma(i)}|}
   = \limsup c_{\gamma(i)}.
\]
This proves that the map $\gamma\mapsto c_\gamma\in (0,\infty]$ is uppersemicontinuous,
and therefore also that the set of metrics $\gamma$ for which $c_\gamma<\infty$ is 
an open set.  (The compactness, continuity, and lower-semicontinuity results used here
are easy and standard, and are explained in the appendix to~\cite{white-isoperimetric}.
See in particular~\cite{white-isoperimetric}*{\S7.5}.)
}.   
\end{remark}

\begin{proposition}\label{bumpy-suffices-proposition}
Suppose theorem~\ref{main-bumpy-theorem} is true.  
Then theorem~\ref{special-existence-theorem-tilted} is true.
\end{proposition}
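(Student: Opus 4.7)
The plan is to approximate the standard metric $\gamma_0$ on $\sS^2\times\RR$ by a sequence of $G$-invariant Riemannian metrics $\gamma_i\to\gamma_0$ that each satisfy all four hypotheses of theorem~\ref{main-bumpy-theorem}, apply that theorem to produce surfaces $S_i\in\MM_{\gamma_i}(\Gamma,n,s)$, and then extract a smooth subsequential limit $S$ having the properties required by theorem~\ref{special-existence-theorem-tilted}. For the $\mu_E$-invariant clause when $n$ is even, I would instead select $S_i$ from the $\mu_E$-invariant subfamily supplied by corollary~\ref{extra-symmetry-corollary} and pass $\mu_E$-invariance to the limit.

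To construct the $\gamma_i$, I would work within the space $\Aa$ of smooth $G$-invariant metrics on $\sS^2\times\RR$ that agree with $\gamma_0$ in a fixed neighborhood of $H\cup\{z=\pm h\}$. Every such metric automatically satisfies hypotheses~(1) and~(2) of theorem~\ref{main-bumpy-theorem}. Hypothesis~(3) holds for $\gamma_0$ (for example, via a vertical-translation argument applied in the mean-convex region $N$) and, by remark~\ref{isoperimetric-equivalence-remark}, is open in $\Aa$ with an upper-semicontinuous isoperimetric constant, so it persists for $\gamma$ sufficiently close to $\gamma_0$. Hypothesis~(4), the bumpiness condition for $\Gamma$, I would obtain by an equivariant transversality argument in the style of White's bumpy-metric theorem: perturbations of $\gamma$ compactly supported in the interior of $N$ (and symmetrized under $G$) suffice to destroy any nontrivial $\rho_Y$-invariant element in the kernel of the Jacobi operator, so bumpy metrics form a residual subset of $\Aa$ near $\gamma_0$.

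Given such $\gamma_i\to\gamma_0$ and surfaces $S_i\in\MM_{\gamma_i}(\Gamma,n,s)$, the uniform isoperimetric bound from remark~\ref{isoperimetric-equivalence-remark} yields $\area(S_i)\le c\,\length(\Gamma)$, and standard curvature and boundary-regularity estimates for embedded minimal surfaces with bounded area and smooth fixed boundary produce a subsequential limit $S\subset\overline{H^+}\cap\{|z|\le h\}$, smooth and embedded, with $\partial S=\Gamma$. The strong maximum principle against the $\gamma_0$-minimal barriers $H$ and $\{z=\pm h\}$ then forces $S\subset H^+\cap\{|z|<h\}$ away from $\Gamma$. The main obstacle is ruling out degeneration of the discrete invariants in the limit: one must verify that (a)~$\rho_Y$ still acts as multiplication by $-1$ on $H_1(S,\ZZ)$, (b)~$|Y^+\cap S|$ is still $n$, and (c)~$S$ still has sign $s$ at $O$. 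Item~(a) follows from smooth, multiplicity-one convergence once embeddedness of the limit is confirmed, since the topological action of $\rho_Y$ on first homology is locally constant in a smooth family of embedded surfaces. Items~(b) and~(c) could fail only if two points of $S_i\cap Y^+$ coalesced into a tangency, an intersection point escaped toward $O$ or $O^*$, or the order of contact of $S_i$ with $H$ at $O$ jumped; each such scenario would force $S\cap H$ to strictly contain $X\cup Z\cup Z^*$, contradicting either the strong maximum principle or the unique-continuation analysis of $S$ against $H$ near $O$ already used in the claim inside the proof of proposition~\ref{tilted-suffices-proposition}.
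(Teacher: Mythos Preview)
Your overall strategy---approximate the standard metric by bumpy $G$-invariant metrics, extract surfaces from theorem~\ref{main-bumpy-theorem} (and corollary~\ref{extra-symmetry-corollary} when $n$ is even), and pass to a limit---is exactly the paper's. The difficulty lies in the two technical steps you gloss over, and in both places the paper's key move is one you omit: \emph{Schwarz reflection}.

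\textbf{The compactness step is a genuine gap.} You invoke ``standard curvature and boundary-regularity estimates for embedded minimal surfaces with bounded area and smooth fixed boundary,'' but $\Gamma$ is \emph{not} smooth: it has corners at $O$, $O^*$, and at the four points where the vertical segments meet $C$ and $\rho_Y C$. No off-the-shelf boundary regularity or curvature estimate applies at those corners, and area plus genus bounds alone do not preclude curvature concentration (neck-pinching) in the interior. The paper handles both issues in one stroke: it replaces $S_i$ by the Schwarz-reflected surface $M_i=\overline{S_i\cup\rho_Z S_i}$, whose boundary is the \emph{smooth} pair of circles $H\cap\{z=\pm h\}$. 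With smooth boundary, Gauss--Bonnet (using minimality and the ambient sectional-curvature bound) gives a uniform bound on $\int_{M_i}|A|^2$, and then the compactness theorem of \cite{white-curvature-estimates} yields smooth subsequential convergence $M_i\to M$. Once the convergence is smooth, your items~(a), (b), (c) are automatic: $M_i$ and $M$ are diffeomorphic for large $i$, $Y$ meets any $Y$-surface transversally (since $\rho_Y$ is orientation-preserving on the surface but reverses the normal plane to $Y$), and the sign at $O$ is read off the tangent plane. Your longer discussion of coalescing intersection points and jumping contact order is then unnecessary.

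\textbf{The bumpiness step has the same issue.} You want to apply an equivariant bumpy-metrics argument directly to $\Gamma$, but the standard theorems are stated for smooth boundary curves. The paper's lemma~\ref{bumpy-lemma} again uses Schwarz reflection: a nontrivial $\rho_Y$-invariant Jacobi field on $S$ extends to a nontrivial Jacobi field on $M$, so bumpiness for the smooth boundary $\partial M$ (which is what \cite{white-bumpy} provides) implies bumpiness for $\Gamma$ in the required sense. Your restriction to metrics agreeing with $\gamma_0$ near $H\cup\{z=\pm h\}$ is harmless but not needed; the paper works in the broader class of metrics for which $H$ and $\{z=\pm h\}$ are minimal.
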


\newcommand{\Gbig}{\mathcal{G}_1}
\newcommand{\Gmedium}{\widehat{\mathcal{G}}}
\newcommand{\Gsmall}{\mathcal{G}}

\begin{proof}
Let $\Gbig$ be the space of all smooth, $G$-invariant Riemannian
metrics $\gamma$ on $\sS^2\times\RR$
that satisfy hypothesis~\eqref{minimal-anchors-hypothesis} of 
the theorem.  
Let $\Gmedium$ be the subset consisting of those metrics $\gamma\in \Gbig$ such that
also satisfy hypotheses~\eqref{strictly-stable-hypothesis} 
and~\eqref{no-closed-minimal-surface-hypothesis} 
of  theorem~\ref{main-bumpy-theorem}, and let $\Gsmall$ be the set of metrics 
that satisfy all the hypotheses of the theorem.

We claim that the standard product metric $\gamma$ belongs to $\Gmedium$. 
Clearly it is $G$-invariant and satisfies hypothesis~\eqref{minimal-anchors-hypothesis}.
Note that each bounded component of $H\setminus \Gamma$ is strictly stable, because it is contained
in one of the half-helicoidal components of $H\setminus(Z\cup Z^*)$ and those half-helicoids
are stable (vertical translation induces  a positive jacobi field). 
Thus $\gamma$ satisfies the strict stability
hypothesis~\eqref{strictly-stable-hypothesis}.  
It also satisfies hypothesis~\eqref{no-closed-minimal-surface-hypothesis}
because if $\Sigma$ were a closed minimal surface in $N$, then the height function $z$ would attain
a maximum value, say $a$, on $\Sigma$, which implies by the strong maximum principle that
 the sphere $\{z=a\}$ would be contained in $\Sigma$, contradicting the fact that 
    $\Sigma\subset N\subset \overline{H^+}$.
 This completes the proof that the standard product metric $\gamma$ belongs to $\Gmedium$.
 
By lemma~\ref{bumpy-lemma} below, a generic metric in $\Gbig$ satisfies the bumpiness
hypothesis~\eqref{bumpy-hypothesis} of theorem~\ref{main-bumpy-theorem}.
Since $\Gmedium$ is an open subset of $\Gbig$ (see  remark~\ref{isoperimetric-equivalence-remark}),
it follows that a generic metric in $\Gmedium$ satisfies the bumpiness hypothesis.  
In particular, this means that $\Gsmall$ is a dense subset of $\Gmedium$.

Since the standard metric $\gamma$ is in $\Gmedium$, 
there is a sequence $\gamma_i$ of metrics in $\Gsmall$
that converge smoothly to $\gamma$. 
Fix a nonnegative integer $n$ and a sign $s$.
By theorem~\ref{main-bumpy-theorem},  $\MM_{\gamma_i}(\Gamma,n,s)$
contains at least one surface $S_i$.  If $n$ is even, we choose $S_i$ to
be $\mu_E$-invariant, which is possible by corollary~\ref{extra-symmetry-corollary}.

By remark~\ref{isoperimetric-equivalence-remark},
\begin{equation}\label{isoperimetric-ratio}
  \limsup_i \frac{\area_{\gamma_i}(S_i)}{\length_{\gamma_i}(\partial S_i)} \le c_\gamma
\end{equation}
where $c_\gamma$ is the constant 
 in remark~\ref{isoperimetric-equivalence-remark}
for the standard product metric $\gamma$.   Since
\[
   \length_{\gamma_i}(\partial S_i) = \length_{\gamma_i}(\Gamma)\to \length_\gamma(\Gamma)<\infty,
\]
we see from~\eqref{isoperimetric-ratio} that the areas of the $S_i$ are uniformly bounded.


Let
\[
  M_i = \overline{S_i\cup \rho_Z S_i}.
\]
be obtained from $S_i$ by Schwarz reflection.
Of course the areas of the $M_i$ are also uniformly bounded.
Using  the Gauss-Bonnet theorem, the minimality of the $M_i$, and the fact that the sectional curvatures 
of $\sS^2\times\RR$ are bounded, it follows that 
\begin{equation}\label{total-curvature-bound}
  \sup_i \int_{M_i} \beta(M_i,\cdot)\,dA < \infty ,
\end{equation}
where $\beta(M_i,p)$ is the square of the norm of the 
second fundamental  form of $M_i$ at the point $p$.

The total curvature bound~\eqref{total-curvature-bound} implies (see~\cite{white-curvature-estimates}*{theorem~3})
that after passing to a further subsequence, the $M_i$ converge 
smoothly to an embedded minimal surface $M$, which implies that the $S_i$ converge uniformly smoothly to 
a surface $S$ in $N$ with $\partial S=\Gamma$ and with $M=\overline{S\cup\rho_Y S}$.
The smooth convergence $M_i\to M$ implies
that $S\in \MM_\gamma(\Gamma,n,s)$, where $\gamma$ is the standard product
metric.  Furthermore, if $n$ is even, then $S$ is $\mu_E$-invariant.
This completes the proof of theorem~\ref{special-existence-theorem-tilted} (assuming 
theorem~\ref{main-bumpy-theorem}).
\end{proof}

\begin{lemma}\label{bumpy-lemma}
Let $\Gbig$ be the set of smooth, $G$-invariant metrics $\gamma$ on $\sS^2\times \RR$ such
that the helicoid $H$ and the spheres $\{z=\pm h\}$ are $\gamma$-minimal.
For a generic metric $\gamma$ in $\Gbig$, the curve $\Gamma$ bounds no 
embedded, $\rho_Y$-invariant, $\gamma$-minimal surfaces with nontrivial $\rho_Y$-invariant
jacobi fields.
\end{lemma}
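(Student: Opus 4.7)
The plan is to apply the standard Sard--Smale transversality framework underlying White's bumpy-metrics theorems, carefully adapted to the equivariant, constrained setting here. Topologize $\Gbig$ as a separable $C^{k,\alpha}$ Banach manifold (for some $k\ge 3$ and $0<\alpha<1$); its tangent space at $\gamma$ consists of those $G$-invariant symmetric $2$-tensors $\dot\gamma$ on $\sS^2\times\RR$ under which $H$ and the spheres $\{z=\pm h\}$ remain minimal to first order. For each topological type $\tau$ of compact surface-with-boundary homeomorphic to a filling of $\Gamma$, form the universal moduli space
\[
 \mathcal U_\tau = \{(S,\gamma)\,:\,\gamma\in\Gbig,\ S \text{ embedded, } \rho_Y\text{-invariant, } \gamma\text{-minimal of type } \tau,\ \partial S=\Gamma\},
\]
and consider the projection $\pi\colon\mathcal U_\tau\to\Gbig$.

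Near a point $(S_0,\gamma_0)\in\mathcal U_\tau$, parametrize nearby $\rho_Y$-invariant competitors by $\rho_Y$-equivariant normal sections $u$ of $S_0$ vanishing on $\Gamma$, and consider the mean-curvature map $F(u,\gamma)=\mathbf H_\gamma(\graph u)$. The partial derivative $\partial_u F(0,\gamma_0)$ is the $\rho_Y$-invariant Jacobi operator $L$ on $S_0$, a self-adjoint Fredholm operator of index $0$ whose kernel is precisely the space of $\rho_Y$-invariant Jacobi fields on $S_0$ vanishing on $\Gamma$. Thus $\gamma_0$ fails the conclusion of the lemma at $S_0$ exactly when $\ker L\neq 0$. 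The key reduction is to show that $F$ is a submersion at each of its zeros, i.e., that the total linearization $(u,\dot\gamma)\mapsto Lu + \partial_\gamma F(0,\gamma_0)(\dot\gamma)$ is surjective onto the target Banach space. Given this, $\mathcal U_\tau$ is a Banach manifold and $\pi$ is Fredholm of index $0$, so by Sard--Smale its regular values form a residual subset of $\Gbig$; these are exactly the metrics bumpy with respect to type $\tau$. A countable intersection over the countable collection of relevant topological types (countable because the isoperimetric inequality of remark~\ref{isoperimetric-equivalence-remark} bounds the area, hence the genus and component count, of any minimal surface bounded by $\Gamma$) yields a residual, in particular dense, set of fully bumpy metrics in $\Gbig^{k,\alpha}$, and a standard Taubes-style argument promotes $C^{k,\alpha}$-genericity to $C^\infty$-genericity.

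The only substantive obstacle is the submersivity, equivalently: given any $0\ne v\in\ker L$, producing an admissible $\dot\gamma$ whose first variation of mean curvature at $S_0$ pairs nontrivially with $v$. I would handle this as follows. Unique continuation for the Jacobi operator says that $v$ is nonzero on a dense open subset of $\interior(S_0)$; since $G$ is a finite group and $H\cup\{z=\pm h\}$ is a finite union of codimension-one surfaces, one can find a point $p\in\interior(S_0)$ with $v(p)\ne 0$ whose entire $G$-orbit avoids $H\cup\{z=\pm h\}$. Pick a small $G$-invariant neighborhood $U$ of $G\cdot p$ that is also disjoint from $H$ and from the two spheres. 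Then any $G$-invariant $\dot\gamma\in C^\infty_c(U)$ automatically belongs to the tangent space of $\Gbig$, since it leaves the ambient metric unchanged in a neighborhood of $H$ and $\{z=\pm h\}$ and so preserves their minimality for free. The classical formula for the first variation of mean curvature under a metric perturbation shows that such $\dot\gamma$ can be chosen so that the induced variation of $\mathbf H$ on $S_0$ is any prescribed $G$-invariant bump function concentrated near $G\cdot p$; selecting one whose integral against $v$ is nonzero gives the required nontrivial pairing. This establishes the submersivity and completes the proof.
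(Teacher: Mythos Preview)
Your argument is essentially a from-scratch proof of an equivariant bumpy-metrics theorem, and the submersivity step is correctly handled: interior points of $S$ lie in $H^+$, hence off $H$, and their $G$-orbits stay in $\{|z|<h\}$, so compactly supported $G$-invariant metric perturbations near such an orbit automatically lie in the tangent space to $\Gbig$. The paper, however, takes a much shorter route that you should be aware of.

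Rather than set up Sard--Smale for surfaces bounded by $\Gamma$, the paper applies the \emph{standard} (non-equivariant) bumpy metrics theorem \cite{white-bumpy} to the smooth pair of circles $H\cap\{z=\pm h\}$: generically in $\Gbig$, no embedded minimal surface with that smooth boundary carries a nontrivial Jacobi field. It then observes that if some $\rho_Y$-invariant minimal $S$ with $\partial S=\Gamma$ carried a nontrivial $\rho_Y$-invariant Jacobi field $v$, then $v$ would extend by Schwarz reflection to a nontrivial Jacobi field on the smooth manifold-with-boundary $M=\overline{S\cup\rho_Z S}$, whose boundary is exactly that pair of circles---contradiction. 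Two sentences, and the existing bumpy-metrics theorem is invoked as a black box.

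This comparison is not merely aesthetic. Your setup glosses over a real technical point: $\Gamma$ has corners (at $O$, $O^*$, and at the four endpoints of $C\cup\rho_Y C$), so the surfaces $S$ are manifolds with corners. Asserting that nearby $\rho_Y$-invariant minimal surfaces form a Banach manifold parametrized by normal graphs, and that the linearization is Fredholm of index zero in the function spaces you have in mind, requires analysis in weighted spaces adapted to the corner angles; White's theorem as stated is for smooth boundary data. The paper's Schwarz-reflection trick removes the corners entirely and reduces everything to the off-the-shelf smooth-boundary result, which is precisely what buys the short proof.
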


\begin{proof}
By the bumpy metrics theorem~\cite{white-bumpy}, a generic metric $\gamma$ in $\Gbig$ has the property
\begin{enumerate}
\item[(*)]\label{circles-bumpy-item}
The pair of circles $H\cap\{z=\pm h\}$ bounds no embedded $\gamma$-minimal
 surface in $H\cap\{|z|\le h\}$ with a nontrivial jacobi field.
\end{enumerate}
Thus it suffices to prove that if $\gamma$ has the property~\thetag{*},
and if $S\subset N$ is an embedded, $\rho_Y$-invariant, $\gamma$-minimal
surface with boundary $\Gamma$, then $S$ has no nontrivial $\rho_Y$-invariant jacobi field.

Suppose to the contrary that $S$ had such a nontrivial jacobi field $v$.  Then
$v$ would extend by Schwarz reflection to a nontrivial jacobi field on 
$
  M:= \overline{S\cup \rho_Y S},
$
contradicting~\thetag{*}.
\end{proof}


\section{Rounding the curve $\Gamma$ and the family of surfaces $t\mapsto S(t)$}\label{rounding-section}

Our goal for the next few sections is to prove theorem~\ref{main-bumpy-theorem}. 
The proof is somewhat involved.  
It will be completed in section~\ref{smooth-count-section}.
From now until the end of section~\ref{smooth-count-section}, we fix a helicoid $H$ in $\sS^2\times\RR$
    and a height $h$ with $0<h<\infty$.
We let $\Gamma=\Gamma_C$ be the curve in theorem~\ref{main-bumpy-theorem}.
We also fix a  Riemannian metric on $\sS^2\times\RR$ that satisfies the
hypotheses of theorem~\ref{main-bumpy-theorem}.
In particular, in sections~\ref{rounding-section}\,--\,\ref{smooth-count-section}, 
 every result is with respect to that Riemannian metric.
In reading those sections, it may be helpful to imagine that the metric
is the standard product metric.  (In fact, for the purposes of proving theorem~\ref{theorem1},
the metric may as well be arbitrarily close to the standard product metric.)
Of course, in carrying out the proofs in sections~\ref{rounding-section}\,--\,\ref{smooth-count-section}, 
we must take care to use no property of the metric
other than those enumerated in theorem~\ref{main-bumpy-theorem}.

Note that theorem~\ref{main-bumpy-theorem} is about counting minimal surfaces mod $2$.
The  mod $2$ number of embedded minimal surfaces of a given topological type bounded by a smoothly 
embedded, suitably  bumpy curve is rather well understood. 
 For example, if the curve lies on the boundary
of a strictly convex set in $\RR^3$, the number is $1$ is the surface is a disk and is $0$ if not.
Of course the curve $\Gamma$ in theorem~\ref{main-bumpy-theorem} 
is neither smooth nor embedded, so to take advantage of such results,
we will round the corners of $\Gamma$ to make a smooth embedded curve, and we will 
use information
about the mod $2$ number of various surfaces bounded by the rounded curve to deduce
information about mod $2$ numbers of various surfaces bounded by the original curve $\Gamma$.

In this section, we define the notion of a ``rounding''. 
A rounding of $\Gamma$ is a one-parameter family 
  $t\in (0,\tau]\mapsto \Gamma(t)$ of smooth embedded curves (with
certain properties) that converge to $\Gamma$ as $t\to 0$.
Now if $\Gamma$ were smooth and bumpy, then by the implicit function theorem, any smooth
minimal surface $S(0)$ bounded by $\Gamma$ would extend uniquely to a one-parameter family
$t\in [0,\tau']\mapsto S(t)$ of minimal surfaces with $\partial S(t)\equiv \Gamma(t)$ (for some possibly smaller 
  $\tau'\in (0,\tau]$.)

It is natural to guess that this is also the case even in our situation, when $\Gamma$ is neither smooth
nor embedded.   
In fact, we prove that the guess is 
correct\footnote{The correctness of the guess can be viewed as a kind of bridge theorem. 
Though it does not quite follow from the bridge theorems in~\cite{smale-bridge} or  
in~\cite{white-stable-bridge,white-unstable-bridge}, we believe the proofs
there could be adapted to our situation.  However, the proof here is shorter and more elementary than
those proofs.
(It takes advantage of special properties of our surfaces.)}.
The proof is  still based on the implicit function
theorem, but the corners make the proof significantly more complicated.  However, the idea of the proof
is simple: we project the rounded curve $\Gamma(t)$ to a curve in the 
surface
\[  
  M:=\overline{S\cup \rho_ZS}
\] 
by the nearest point projection. We already have a minimal surface bounded by that projected curve:
it bounds a portion $\Omega(t)$ of $M$.  Now we smoothly isotope the projected curve back to $\Gamma(t)$,
and use the implicit function theorem to make a corresponding isotopy through minimal surfaces of $\Omega(t)$
to the surface $S(t)$ we want.  Of course we have to be careful to verify that we do not encounter
nontrivial jacobi fields on the way.

We also prove that, roughly speaking, the surfaces $S(t)$  (for the various $S$'s bounded by $\Gamma$)
account for {\em all} the minimal $Y$-surfaces bounded by
$\Gamma(t)$ when $t$ is sufficiently small.  
The precise statement (theorem~\ref{all-accounted-for-theorem}) is 
slightly more complicated because the larger the genus of the surfaces, the 
smaller one has to choose $t$.

Defining roundings, proving the existence of the associated one-parameter families $t\mapsto S(t)$
of minimal surfaces as described above, and proving basic properties of such families take up
the rest of this section and the following section.  Once we have those tools, 
the proof of theorem~\ref{main-bumpy-theorem}
 is not so hard: it is carried out
in section~\ref{counting-section}.  

To avoid losing track of the big picture, 
the reader may find it helpful initially to skip 
sections~\ref{Gamma(s,t)-definition}--\ref{eigenfunction-claim} 
(the proof of theorem~\ref{bridged-approximations-theorem})
as well as the proofs in section~\ref{additional-properties-section}, 
and then to read section~\ref{counting-section}, which contains the heart
of the proof of theorem~\ref{main-bumpy-theorem} and therefore also
(see remark~\ref{periodic-case-done-remark}) of the periodic case of theorem~\ref{theorem1}.

\begin{lemma}\label{TubularNeighborhoodLemma}
Suppose that  $S$   a minimal embedded $Y$-surface in $N=\overline{H^+}\cap\{|z|\le h\}$ with
 $\partial S=\Gamma$.   
Let 
\[
    V(S,\eps) = \{p\in \sS^2\times\RR: \dist(p,S) <\eps\}.
\]
For all sufficiently small $\eps>0$, the following hold:
\begin{enumerate}
\item\label{nearest-point-item} if $p\in V(S,\eps)$, then there is a unique point $\pi(p)$ in $\overline{S\cup\rho_Z S}$ nearest to $p$.
\item\label{unique-in-V-item}
if $S'$ is a $\rho_Y$-invariant minimal surface in $\overline{V(S,\eps)}$ with $\partial S'=\Gamma$,
  and if $S'$ is smooth except possibly at the corners of $\Gamma$,  then $S'=S$.
\end{enumerate}
\end{lemma}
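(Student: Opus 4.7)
For part~\eqref{nearest-point-item}, I would invoke lemma~\ref{Schwarz-regularity-lemma}: the Schwarz-extended surface $M := \overline{S\cup \rho_Z S}$ is a smoothly embedded compact minimal surface whose boundary consists of two smooth great circles at heights $\pm h$. Standard tubular neighborhood theory then produces a uniform $\epsilon_0 > 0$ such that every point within distance $\epsilon_0$ of the compact set $M$ has a unique closest point in $M$; since $S\subset M$, any $\epsilon\le \epsilon_0$ has the desired property.

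Assertion~\eqref{unique-in-V-item} is the substantive one, and I would prove it by a blow-up and Jacobi field extraction argument. Suppose toward contradiction that for some sequence $\epsilon_n\downarrow 0$ there exist surfaces $S'_n\ne S$ satisfying the hypotheses for $\epsilon = \epsilon_n$. Form the Schwarz reflections $M'_n := \overline{S'_n\cup \rho_Z S'_n}$. Since $Z_h\cup Z_h^*\subset \partial S'_n$ is fixed by $\rho_Z$, and since $S'_n$ is $\rho_Y$-invariant, the disk-counting argument of lemma~\ref{Schwarz-regularity-lemma} (via corollary~\ref{$Y$-corollary} and~\cite{gulliver-removability}) shows that $M'_n$ is smoothly embedded and minimal with $\partial M'_n = \partial M$. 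Because $M'_n\subset \overline{V(M,\epsilon_n)}$, curvature estimates together with boundary regularity for embedded minimal surfaces converging uniformly to a smooth limit yield smooth convergence $M'_n\to M$. Hence for large $n$, $M'_n$ is the normal graph of some function $u_n$ over $M$ with $\|u_n\|_{C^{2,\alpha}}\to 0$.

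The function $u_n$ satisfies a quasilinear PDE of the form $L_M u_n = Q(u_n, \nabla u_n, \nabla^2 u_n)$, where $L_M$ is the Jacobi operator of $M$ and $Q$ is quadratic. Setting $v_n := u_n/\|u_n\|_{C^{2,\alpha}}$ and passing to a subsequence converging in $C^2_{\text{loc}}$ produces a nontrivial Jacobi field $v$ on $M$ with $v|_{\partial M} = 0$, inheriting $\rho_Y$-invariance from the $M'_n$. The key additional observation is that the $\rho_Z$-symmetry of $M'_n$ forces $u_n$, and therefore $v$, to vanish on $Z\cup Z^*$: at any $p\in Z$ the tangent plane $T_pM$ contains the axis direction, so the unit normal $\nu(p)$ lies in the $2$-plane orthogonal to $Z$ and is reversed by $\rho_Z$, which forces $u_n(p) = 0$. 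Consequently $v$ vanishes on the whole of $\Gamma = Z_h\cup Z_h^*\cup C\cup \rho_Y C$, and its restriction to $S$ is a nontrivial $\rho_Y$-invariant Jacobi field on $S$ vanishing on $\partial S = \Gamma$ — contradicting the bumpiness hypothesis~\eqref{bumpy-hypothesis} of theorem~\ref{main-bumpy-theorem}.

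The main technical obstacle is verifying that $M'_n$ is smooth globally (at the interior axial points $O,O^*$, where $S'_n$ itself is permitted to be nonsmooth) and smooth up to $\partial M$, so that the normal graph representation is valid on all of $M$. The former is handled exactly as in lemma~\ref{Schwarz-regularity-lemma}, using corollary~\ref{$Y$-corollary}\eqref{$Y$-corollary-disks} together with Gulliver's removability of branch points and embeddedness. The latter is standard: since $\partial M$ lies in the $\gamma$-minimal spheres $\{z=\pm h\}$ by hypothesis~\eqref{minimal-anchors-hypothesis} of theorem~\ref{main-bumpy-theorem} and $\partial M'_n = \partial M$, Allard-type boundary regularity gives uniform $C^{2,\alpha}$ bounds near $\partial M$, ensuring that the blow-up limit $v$ is $C^2$ up to the boundary and inherits its boundary vanishing cleanly.
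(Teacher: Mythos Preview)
Your approach is the same as the paper's: Schwarz-reflect $S'_n$ to a closed surface $M'_n$, establish smooth convergence $M'_n\to M$, write $M'_n$ as a normal graph $u_n$ over $M$, normalize, and extract a nontrivial $\rho_Y$-invariant Jacobi field on $S$ vanishing on $\Gamma$, contradicting bumpiness. Two technical points are handled differently, and in each case the paper's route is shorter.

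First, you justify smoothness of $M'_n$ at $O,O^*$ by invoking the disk-counting argument of lemma~\ref{Schwarz-regularity-lemma} via corollary~\ref{$Y$-corollary}. But corollary~\ref{$Y$-corollary} requires the $Y$-surface property, and the hypothesis here only says $S'$ is $\rho_Y$-invariant; you also do not verify that $Y\cap S'_n$ is finite. The paper sidesteps this entirely: it does \emph{not} claim $M_n$ is smooth a priori, but notes only that $M_n$ is minimal with $\partial M_n=\partial M$ and $\max_{p\in M_n}\dist(p,M)\to 0$, and then invokes \cite{white-controlling-area}*{6.1} (an extension of Allard boundary regularity) to conclude that the convergence $M_n\to M$ is smooth. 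This yields the normal-graph representation directly, without any separate regularity argument for $M_n$.

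Second, you normalize by $\|u_n\|_{C^{2,\alpha}}$ and assert that a $C^2_{\mathrm{loc}}$ subsequential limit is nontrivial. That is not automatic: the $C^{2,\alpha}$ seminorm could concentrate, and the $C^2$ limit could vanish. One can rescue this with a Schauder estimate ($\|v_n\|_{C^{2,\alpha}}\le C(\|v_n\|_{C^0}+\|L_M v_n\|_{C^{0,\alpha}})$ together with $L_M v_n\to 0$ forces $\|v_n\|_{C^0}$ bounded below), but the paper simply normalizes by $\|f_n\|_0$, so the limit is manifestly nonzero. Your explicit explanation of why the graph function vanishes on $Z\cup Z^*$ (via $\rho_Z$-antisymmetry of the unit normal) is a nice elaboration of the paper's bare assertion ``$f_n|\Gamma=0$''.
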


\begin{proof}[Proof of Lemma~\ref{TubularNeighborhoodLemma}]
Assertion~\eqref{nearest-point-item} holds (for sufficiently small $\eps$) 
because $M:=\overline{S\cup \rho_ZS}$ is a
smooth embedded manifold-with-boundary.

Suppose assertion~\eqref{unique-in-V-item} fails.
Then there is a sequence of minimal $Y$-surfaces $S_n\subset \overline{V(S,\eps_n)}$ 
with $\partial S_n=\Gamma$ such that $S_n\ne S$ and such that $\eps_n\to 0$.
 Let $M_n$ be the closure of $S_n\cap \rho_Z S_n$ or (equivalently) of $S_n\cap \rho_X S_n$. 
 (Note that $\rho_Z S_n=\rho_X S_n$ by the $\rho_Y$-invariance of $S_n$.)
Then $M_n$ is a 
minimal surface with boundary $\partial M_n=\partial M$, $M_n$ is smooth away from $Y$ and from the corners of $\Gamma$, and 
\[
   \max_{p\in M_n}\dist(p,M)\to 0.
\]
Since $M$ is a smooth, embedded manifold with nonempty boundary,
 this implies 
that the convergence $M_n\to M$ is smooth 
by~\cite{white-controlling-area}*{6.1}.

A  {\em normal graph} of $f:S\to \RR$ over a hypersurface $S$ in a Riemannian manifold is  the hypersurface
$\{\exp_p(f(p)n(p))\, |\,\, p\in S\}$, where n(p) is a unit normal
vector field on $S\subset N$ and $\exp _p$ is the exponential mapping  at 
$p$.
From the previous paragraph, it follows that for all sufficiently large $n$,  $M_n$ is the 
    normal graph of a function $f_n:M\to \RR$ 
with $f_n\vert\Gamma=0$ such that $f_n\to 0$
smoothly.  
But then 
\[
  \frac{f_n}{\|f_n\|_0}
\]
converges (after passing to a subsequence) to a nonzero jacobi field on $S$ that vanishes
on $\partial S=\Gamma$, contradicting the assumption (hypothesis~\eqref{bumpy-hypothesis}
of theorem~\ref{main-bumpy-theorem}) that 
the Riemannian metric is bumpy with respect to $\Gamma$.
\end{proof}



\stepcounter{theorem}

\addtocontents{toc}{\SkipTocEntry}
\subsection{Roundings of $\Gamma$}\label{roundings-subsection}
 Let $t_0>0$ be less than half the
distance between any two corners of $\Gamma$. For $t$ satisfying
$0<t\leq t_0$, we  can form from $\Gamma$ a smoothly embedded $\rho_Y$-invariant
curve $\Gamma (t)$ in the portion of $H$ with $|z|\le h$ as follows:
\begin{enumerate}
\item If $q$ is a corner of $\Gamma$ other than $O$ or $O^*$, we
replace $\Gamma\cap\BB(q,t)$ by  a smooth curve in $H\cap
\BB(q,t)$ that has the same endpoints as $\Gamma\cap \BB(q,t)$ but
that is otherwise disjoint from $\Gamma\cap\BB(q,t)$. 
\item 
  If $q=O$
or $q=O^*$ we replace $\Gamma\cap \BB(q,t)$ by two smoothly embedded
curves in $H$ that have the same endpoints as $\Gamma\cap\BB(q, t)$ but that
are otherwise disjoint from $\Gamma\cap \BB(q,t)$. See
Figures~\ref{rounding-corners-figure} and \ref{signs-figure}.
\end{enumerate}
Note that $\Gamma(t)$ lies in the boundary of $\partial N$ of the region $N=\overline{H^+}\cap\{|z|\le h\}$.


\begfig
 \hspace{-1.0in}
  \vspace{-2.0in}
  \centerline{
\includegraphics[width=4.05in]{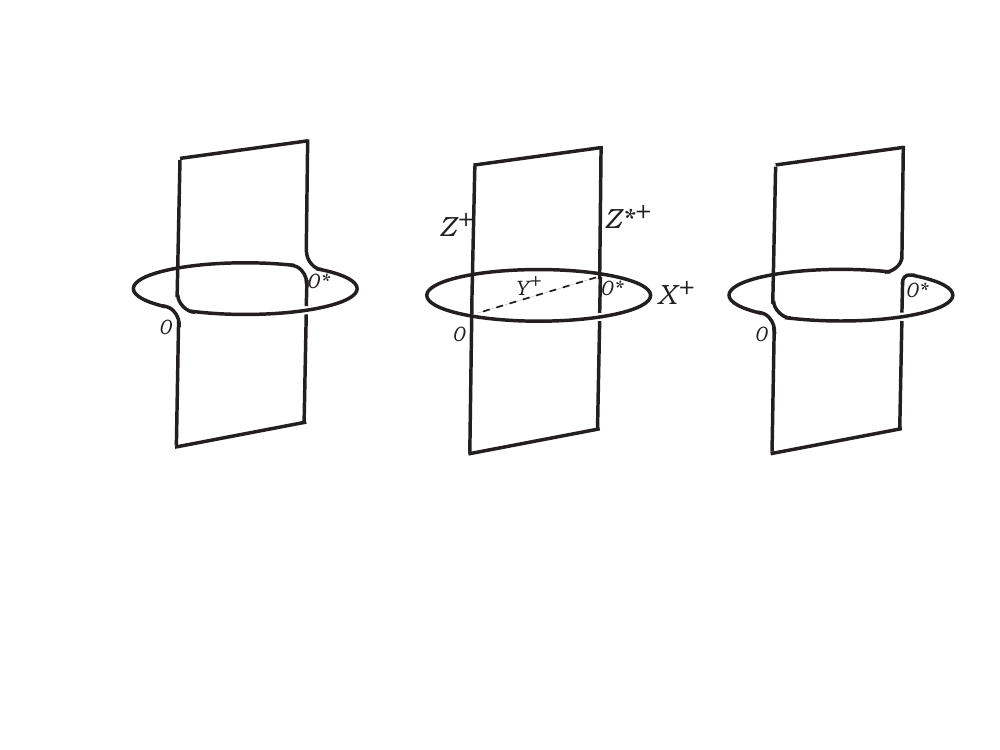}
   }
   \vspace{0.60 in}
 \begin{center}
 \parbox{5.1in}{
\caption{\label{rounding-corners-figure}  {\bf Rounding the corners  of $\Gamma$.}
{\bf Center:} The boundary curve $\Gamma$ as illustrated in Figure~\ref{GammaFigure}.
{\bf Left and Right:} Desingularizations of $\Gamma$. The corners at $O$ and $O^*$ are removed, following the conditions $(1)$ and $(2)$ of \ref{roundings-subsection}. In both cases we have desingularized near $O$ by joining $X^+$ to $Z^+$ and
$X^-$ to $Z^-$. In the language of Definition~\ref{PosNegRounding}, both desingularizations are {\em positive at $O$}.
On the left, the rounding is also positive  at $O^*$. On the right, the rounding is
{\em negative at $O^*$}. Note that when the signs of the rounding agree at $O$ and $O^*$, as they do on the left, 
the rounded curve  has two components; 
when the signs are different, as on the right, the rounded curve is  connected.
 }
 }
 \end{center}
 \endfig

\begin{definition}\label{Rounding1} Suppose $\Gamma(t)\subset H$ is a family of smooth embedded
$\rho _Y$-invariant curves created from $\Gamma$ according to the
recipe above. Suppose  we do this in such a way that that  for each
corner $q$  of $\Gamma$, the curve
\begin{equation}\label{expression}
       (1/t) ( \Gamma (t) - q)
\end{equation}
converges smoothly to a smooth, embedded planar curve $\Gamma'$ as $t\to 0$.
Then we say that the family $\Gamma (t)$ is a {\em rounding of}
$\Gamma$.   
\end{definition}

\begin{remark}\label{meaning-of-translation-remark} 
Since we are working in $\sS^2\times\RR$
 with some Riemannian metric, 
 it may not be immediately obvious what we mean by 
  translation and  by scaling in definition~\ref{Rounding1}. 
 However, there are various ways to make sense of it.  For example, by the Nash embedding
 theorem, we can regard $\sS^2\times\RR$ with the given Riemannian metric as 
 embedded isometrically 
 in some Euclidean space.  In that Euclidean space, the expression~\eqref{expression}
  is well defined, 
and its limit as $t\to 0$ lies in the $3$-dimensional tangent space (at $q$) to $\sS^2\times\RR$,
which is of course linearly isometric to $\RR^3$.
\end{remark}

\begin{remark}
In definition~\ref{Rounding1}, note that if the corner $q$ is $O$ or $O^*$, then $\Gamma'$ consists
of  two components, and $\Gamma'$ coincides with a pair of perpendicular lines outside a disk of radius $1$ about
the intersection of those lines.   In this case, $\Gamma'$ is the boundary of two regions in the plane: one
region is connected, and the other region (the complement of the connected region) consists of two connected components. 
 We refer to each of these regions as a {\bf rounded quadrant pair}.
 If $q$ is a corner other than $O$ or $O^*$, then $\Gamma'$ consists
of a single curve.  In this case, $\Gamma'$ bounds a planar region which, outside of a disk, coincides with a quadrant
of the plane.  We call such a region a {\bf rounded quadrant}.
\end{remark}

\stepcounter{theorem}
\addtocontents{toc}{\SkipTocEntry}
\subsection{The existence of bridged approximations to $S$}
We  will assume until further notice that $\Gamma\subset H$ bounds an embedded
minimal $Y$-surface $S$ in $N=\overline{H^+}\cap\{|z|\le h\}$. As in the previous section we
define  $M= \overline {S\cup\rho _Z S}$. 
For $p\in \sS^2\times\RR$, let $\pi(p)=\pi_M(p)$ be the point in $M$ closest to $p$,
provided that point is unique.   Thus the domain of $\pi$ is the set of all points in $\sS^2\times\RR$
such that there is a unique nearest point in $M$.
Since $M$ is a smooth embedded 
manifold-with-boundary, the domain of $\pi$ contains $M$ in its interior.

Consider a rounding 
 $\Gamma (t)$ of $\Gamma$ with $t\in [0,t_0]$.
 By replacing $t_0$ by a smaller value, we may assume that for all $t\in [0,t_0]$,
 the curve $\Gamma(t)$ is in the interior of the domain of $\pi$ and
 $\pi(\Gamma(t))$ is a smooth embedded curve in $M$.  It follows that $\Gamma(t)$ is the 
 normal graph of a function
 \[
     \phi_t: \pi(\Gamma(t))\to \RR.
 \]
We let $\Omega(t)$ be the domain in $M$ bounded by $\pi(\Gamma(t))$.

\begin{remark}\label{where-is-O-remark}
Suppose that $S$ is positive at $O$, i.e., that it is tangent to the positive quadrants of $H$
(namely the quadrant bounded by $X^+$ and $Z^+$ and the quadrant bounded by $X^-$ and $Z^-$.)
Note that $O$ is in $\Omega(t)$ if and only if
   $\Gamma(t)\cap \BB(O,t)$ lies in the {\em negative} quadrants of $H$, or, equivalently, 
 if and only if $\Gamma(t)\cap \BB(O,t)$ connects $Z^+$ to $X^-$ and $Z^-$ to $X^+$. 
 See figure~\ref{signs-figure}.
\end{remark}

\begin{theorem}\label{bridged-approximations-theorem}
There exists a $\tau>0$ and a smooth one-parameter family $t\in (0,\tau]\mapsto f_t$ of functions
\[  
    f_t: \Omega(t)\to \RR
\]
with the following properties:
\begin{enumerate}
\item The normal graph $S(t)$ of $f_t$ is a $Y$-nongenerate,
minimal embedded $Y$-surface with boundary $\Gamma(t)$,
\item $\|f_t\|_0 + \|Df_t\|_0\to 0$ as $t\to 0$, 
\item $S(t)$ converges smoothly to $S$ as $t\to 0$ except possibly at the corners of $S$,
\item\label{in-H^+-item} $S(t)$ lies in $\overline{H^+}$.
\end{enumerate}
\end{theorem}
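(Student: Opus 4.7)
The strategy is a continuation argument based on the implicit function theorem. Direct application of the IFT to $S$ fails because its boundary $\Gamma$ has corners, but the Schwarz-extended surface $M=\overline{S\cup\rho_ZS}$ is smooth (by Lemma~\ref{Schwarz-regularity-lemma}) and serves as a scaffold. For each small $t$, define an isotopy $s\in[0,1]\mapsto\Gamma(s,t)$ of smooth $\rho_Y$-invariant curves by letting $\Gamma(s,t)$ be the normal graph (in $\sS^2\times\RR$) over $\pi(\Gamma(t))$ of the function $s\phi_t$. Thus $\Gamma(0,t)=\pi(\Gamma(t))$ is already bounded by the minimal surface $\Omega(t)\subset M$, and $\Gamma(1,t)=\Gamma(t)$. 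We seek a one-parameter family $s\mapsto f_{s,t}$ of $\rho_Y$-invariant $C^{2,\alpha}$ functions on $\Omega(t)$ such that the normal graph $S(s,t)$ of $f_{s,t}$ is minimal with $\partial S(s,t)=\Gamma(s,t)$. At $s=0$ we take $f_{0,t}\equiv 0$ (so $S(0,t)=\Omega(t)$), and we continue by the IFT as long as the linearized operator --- the Jacobi operator $L_{S(s,t)}$ with Dirichlet conditions, restricted to $\rho_Y$-invariant functions --- is invertible.

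The crux of the proof is establishing this invertibility. Suppose for contradiction that there are sequences $t_n\to 0$, $s_n\in[0,1]$, and nonzero $\rho_Y$-invariant Jacobi fields $u_n$ on $S(s_n,t_n)$ vanishing on $\partial S(s_n,t_n)$, normalized so that $\sup|u_n|=1$. Since $S(s_n,t_n)$ converges smoothly to $M$ on compact subsets of $M\setminus\{\text{corners of }\Gamma\}$, standard elliptic compactness produces a subsequential limit $u$ that is a bounded $\rho_Y$-invariant Jacobi field on $M$ away from the corners of $\Gamma$, vanishing on $\Gamma\setminus\{\text{corners}\}$. A removable singularities argument (the corners are isolated, and $u$ is bounded) extends $u$ to a Jacobi field on $M$ with $u\equiv 0$ on $\Gamma$. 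Decomposing $u$ into its $\rho_Z$-even and $\rho_Z$-odd parts (which is compatible with $\rho_Y$-invariance since $\rho_Y$ and $\rho_Z$ commute), each part is again a Jacobi field on $M$ vanishing on $\Gamma$, and each restricts to a $\rho_Y$-invariant Jacobi field on $S$ vanishing on $\partial S=\Gamma$. By bumpiness hypothesis~\eqref{bumpy-hypothesis} of Theorem~\ref{main-bumpy-theorem}, both restrictions vanish; hence $u\equiv 0$ on $M$. A separate argument (locating where $|u_n|$ attains its supremum and using boundary estimates near $\partial\Omega(t_n)$ away from the corners) is needed to show this contradicts $\sup|u_n|=1$; this rules out concentration at the corners.

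Given invertibility, the IFT produces the family $s\mapsto f_{s,t}$ with the uniform bound $\|f_{s,t}\|_{C^{2,\alpha}}\le C\|s\phi_t\|_{C^{2,\alpha}}\to 0$, so setting $f_t:=f_{1,t}$ and $S(t):=S(1,t)$ gives a $\rho_Y$-invariant minimal graph with the required $C^0$ and $C^1$ smallness, and smooth convergence away from corners follows from higher elliptic regularity. The $Y$-nondegeneracy of $S(t)$ holds by construction, since the Jacobi operator on $S(t)$ restricted to $\rho_Y$-invariant functions is invertible (this was established in the continuation step at $s=1$). Finally, to show $S(t)\subset\overline{H^+}$: since $H$ is $\gamma$-minimal by hypothesis~\eqref{minimal-anchors-hypothesis}, $\partial S(t)=\Gamma(t)\subset H$, and $S$ lies in $\overline{H^+}$, if $S(t)$ dipped into $H^-$ for some small $t$, then for the first such $t$ (or at a first interior tangency) the strong maximum principle applied to the minimal surfaces $S(t)$ and $H$ would force $S(t)\subset H$, which is impossible since $S(t)$ is close to $S\ne H$ in the interior.

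The main obstacle is the Jacobi-operator invertibility argument of Step~2: one must rule out Jacobi fields of the approximating surfaces concentrating at the corners of $\Gamma$ as $t\to 0$. This is the essential content of a bridge-theorem-type analysis. The removable-singularity / compactness argument sketched above works because the ``bridges'' shrink at a controlled rate (encoded by Definition~\ref{Rounding1}), the surfaces $S(s,t)$ are embedded and $Y$-invariant, and bumpiness applies both to $S$ itself and (via Schwarz decomposition) to $M$.
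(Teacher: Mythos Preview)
Your overall strategy---extend $\Gamma(t)$ to a two-parameter family $\Gamma(s,t)$ with $\Gamma(0,t)=\pi(\Gamma(t))\subset M$ bounding $\Omega(t)$, then continue in $s$ via the implicit function theorem---is exactly the paper's approach. The identification of the main obstacle (Jacobi fields concentrating at the corners) is also correct.

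However, the crucial step is not actually carried out. You write that ``a separate argument (locating where $|u_n|$ attains its supremum and using boundary estimates near $\partial\Omega(t_n)$ away from the corners) is needed'' and then assert it ``works,'' but this is the entire content of the proof. The paper's mechanism is a blow-up argument: if $u_n(p_n)=\max|u_n|=1$ and $p_n$ converges to a corner, rescale $S_n$ by $1/\dist(p_n,\partial S_n)$. The rescaled surfaces converge to one of a short list of \emph{planar} model domains (quadrant, rounded quadrant, quadrant pair, rounded quadrant pair, halfplane---this classification is where Definition~\ref{Rounding1} enters), and the rescaled $u_n$ converge to a bounded harmonic function on that domain attaining an interior maximum yet vanishing on the nonempty boundary, contradicting the strong maximum principle. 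The same rescaling argument is needed a second time to establish $\|Df_t\|_0\to 0$ (your claimed bound $\|f_{s,t}\|_{C^{2,\alpha}}\le C\|s\phi_t\|_{C^{2,\alpha}}$ with $C$ uniform in $t$ is not automatic from the IFT, since the domains $\Omega(t)$ degenerate as $t\to 0$).

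Two smaller points. First, the limit Jacobi field $u$ lives on $S$, not on $M$; the surfaces $S(s_n,t_n)$ are graphs over $\Omega(t_n)\subset M$ but converge only to $S$, and the $\rho_Z$-even/odd decomposition you propose is unnecessary---the bumpiness hypothesis is stated directly for $\rho_Y$-invariant Jacobi fields on $S$. Second, your maximum-principle argument for $S(t)\subset\overline{H^+}$ has a gap: $S(t)$ is tangent to $H$ at the corners $O,O^*$ for every $t$, so a ``first touching'' argument does not immediately apply there. The paper instead uses a local minimal foliation near each corner (which exists by the implicit function theorem since $M$ is smooth there) as a barrier.
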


Later (see theorem~\ref{strong-uniqueness-theorem}) we will prove that for small $t$, the surfaces $S(t)$ have a very strong uniqueness property.
In particular, given $S$, the rounding $t\mapsto \Gamma(t)$, and any sufficiently small if $\tau>0$,  
there is a unique family $t\in(0,\tau]\to S(t)$
having the indicated properties.

\begin{remark}\label{in-H^+-remark}
Assertion~\eqref{in-H^+-item} of the theorem follows easily from the preceding assertions, provided
we replace $\tau$ by a suitable smaller number.
To see this, 
note by the smooth convergence $S(t)\to S$ away from corners, each point of $S(t)\cap H^-$  must 
lie within distance $\eps_n$ of the corners of $S$, where $\eps_n\to 0$.
By the implicit function theorem,
each corner $q$ of $S$ has a neighborhood $U\subset \sS\times[-h,h]$
that is foliated by minimal surfaces, one of which is $\overline{M}\cap U$.  For $t$ sufficiently small, the set of
points of $S(t)\cap H^-$ that are near $q$ will be contained entirely in $U$, which violates the maximum
principle unless $S(t)\cap H^-$ is empty.   
\end{remark}

{\bf Idea of the proof of theorem~\ref{bridged-approximations-theorem}}. 
(The details will take up the rest of section~\ref{rounding-section}.)
   The rounding is a one-parameter family of curves $\Gamma(t)$.
We extend the one-parameter family to a two-parameter family $\Gamma(t,s)$ (with $0\le s\le1$)
in such a way 
that $\Gamma(t,1)=\Gamma(t)$ and $\Gamma(t,0)=\pi(\Gamma(t))$.  Now $\Gamma(t,0)$
trivially bounds a minimal $Y$-surface that is a normal graph over $\Omega(t)$, namely $\Omega(t)$
itself (which is the normal graph of the zero function).  We then use the implicit function
theorem to get existence for all $(t,s)$ with $t$ sufficiently small of a minimal embedded $Y$-surface
$S(t,s)$ with boundary $\Gamma(t,s)$.  Then $t\mapsto S(t,1)$ will be the desired one-parameter family
of surfaces.

  \begin{definition}\label{Gamma(s,t)-definition}
For $0\leq t< t_0$, each $\Gamma(t)$  is the normal graph over $\pi(\Gamma (t))$ of a function $\phi _t : \pi(\Gamma (t))\rightarrow \RR$.  For  $0\leq s\leq 1$, define
 \begin{equation}
 \Gamma (t,s): =\,\,\graph \,\, s\phi _t.
  \end{equation}
  
  Note that $\pi (\Gamma (t,s)) =\Gamma (t,0)$.
  \end{definition}

\begin{proposition}\label{two-parameter-proposition}
There is a $\tau>0$ and a smooth two-parameter family
\[
   (t,s)\in (0,\tau]\times[0,1]\mapsto S(t,s)
\]
of $Y$-nondegenerate, minimal embedded $Y$-surfaces such that each $S(t,s)$ has boundary $\Gamma(t,s)$
and is the normal graph of a function $f_{t,s}:\Omega(t)\to \RR$  such that
\[
   \|f_{t,s}\|_0 + \|Df_{t,s}\|_0 \to 0
\]
as $t\to 0$.  The convergence $f_{t,s}\to 0$ as $t\to 0$ is smooth away from the corners of $S$.
\end{proposition}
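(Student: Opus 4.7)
The plan is a parameter-dependent implicit function theorem argument for normal graphs over the smooth minimal surface $M=\overline{S\cup\rho_Z S}$. Via the normal exponential map over $M$, a sufficiently $C^1$-small function $g$ on a subdomain $\Omega\subset M$ determines an embedded surface $\graph(g)$, and the minimal surface equation becomes a quasilinear elliptic PDE $\mathcal{H}(g)=0$ whose linearization at $g=0$ is the Jacobi operator $L_M$. The requirement that $\graph(g)$ have boundary $\Gamma(t,s)$ translates, by Definition~\ref{Gamma(s,t)-definition}, to the Dirichlet condition $g|_{\partial\Omega(t)}=s\phi_t$; since $\Omega(t)\subset M$ is minimal, $g\equiv 0$ solves this at $s=0$. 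Throughout the argument I restrict to $\rho_Y$-invariant functions, so that the resulting surfaces are automatically $Y$-surfaces.

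The heart of the proof is uniform invertibility of the Dirichlet linearization $L_{\Omega(t)}$ as $t\to 0$. I argue by contradiction: suppose there are normalized $\rho_Y$-invariant Jacobi fields $u_n$ on $\Omega(t_n)$ with $u_n|_{\partial\Omega(t_n)}=0$, $\|u_n\|_\infty=1$, and $t_n\to 0$. Interior Schauder estimates give subsequential smooth convergence on compact subsets of the interior of $S'$ (the component of $M\setminus\Gamma$ that $\Omega(t)$ approximates) to a Jacobi field $u$. Boundary regularity along the smooth arcs of $\Gamma$ is standard; at each corner $q$ of $\Gamma$, I rescale by $1/t_n$ so that $(\Omega(t_n)-q)/t_n$ converges to the rounded-quadrant or rounded-quadrant-pair model of Definition~\ref{Rounding1} (cf.\ Remark~\ref{meaning-of-translation-remark}), and deduce, by a Liouville-type argument for the flat Laplacian with zero Dirichlet data on the model domain, that $\|u_n\|_\infty$ cannot concentrate near $q$. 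Hence the sup of $u_n$ is realized in the interior, so $u\not\equiv 0$, and $u$ extends continuously to $\overline{S'}$ with $u|_\Gamma=0$. Applying Schwarz reflection across $Z\cup Z^*$ produces a nontrivial $\rho_Y$-invariant Jacobi field on an embedded $\rho_Y$-invariant minimal surface bounded by $\Gamma$, contradicting hypothesis~(4) of Theorem~\ref{main-bumpy-theorem}.

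With uniform invertibility in hand, the implicit function theorem together with continuation in $s$ completes the construction. Since $\|\phi_t\|_{C^{2,\alpha}}\to 0$ and the inverse of the Dirichlet Jacobi operator is uniformly bounded, for $\tau$ small enough the data $s\phi_t$ remains in the IFT neighborhood for every $s\in[0,1]$, yielding a unique solution $f_{t,s}$ with $\|f_{t,s}\|_{C^{2,\alpha}(\Omega(t))}=O(\|\phi_t\|)$; the linearization at $f_{t,s}$ stays close to $L_{\Omega(t)}$ and hence invertible, so continuation cannot break down before $s=1$. Joint smoothness in $(t,s)$ comes from pulling the problem back to a fixed reference domain $\Omega(t_0)$ via a smooth family of diffeomorphisms $\Phi_t\colon \Omega(t_0)\to\Omega(t)$ and applying the IFT there. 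The uniform $C^{2,\alpha}$ bound gives $\|f_{t,s}\|_0+\|Df_{t,s}\|_0\to 0$ as $t\to 0$, and Schauder iteration upgrades convergence to $C^k$ on compact subsets of $S$ away from the corners. The main obstacle throughout is exactly the singular behavior of the family $\Omega(t)$ at the corners of $\Gamma$: the linearization analysis there is the delicate step, but it is controlled by the explicit rounded-quadrant model of Definition~\ref{Rounding1} together with the bumpy hypothesis.
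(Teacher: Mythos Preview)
Your overall strategy (IFT for normal graphs, continuation in $s$ from the trivial solution at $s=0$, Jacobi-field blow-up at corners invoking the bumpy hypothesis) is the same as the paper's, but there are two genuine gaps in your execution, and the paper's proof is organized differently precisely to avoid them.

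\textbf{Gap 1: the boundary data is not $C^{2,\alpha}$-small.} You assert $\|\phi_t\|_{C^{2,\alpha}}\to 0$ and use this to keep $s\phi_t$ in the IFT neighborhood for all $s\in[0,1]$. But $M$ and $H$ have exactly first-order contact at each corner $q$ (see the footnote in the proof of the claim inside Proposition~\ref{tilted-suffices-proposition}), so at distance $r\lesssim t$ from $q$ the normal separation is $\sim r^2$. Hence on the support of $\phi_t$ (which lies within $\sim t$ of the corners) one has $|\phi_t|=O(t^2)$, $|D\phi_t|=O(t)$, but $|D^2\phi_t|=O(1)$: the second derivatives do \emph{not} go to zero. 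So a direct one-shot IFT from $s=0$ to $s=1$ in unweighted H\"older spaces does not work as stated; you would need weighted spaces adapted to the corner scale, which you do not set up. The paper sidesteps this entirely: it fixes a small Lipschitz bound $\eta$, defines $A$ (closed) $=\{(t,s):\Gamma(t,s)$ bounds a minimal normal graph with $\mathrm{Lip}\le\eta\}$ and $B$ (open by IFT) $=\{$same, $Y$-nondegenerate, $\mathrm{Lip}<\eta\}$, proves $A=B$ via the a priori Proposition~\ref{rounding-sequence-proposition}, and concludes by connectedness. The IFT is applied \emph{locally at each already-constructed surface}, never uniformly from zero, so only $C^1$ smallness is needed.

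\textbf{Gap 2: wrong blow-up scale.} In your Jacobi-field contradiction you rescale by $1/t_n$ at a corner $q$. The Liouville argument on the rounded model then only yields $\mathrm{dist}(p_n,q)/t_n\to\infty$; it does \emph{not} rule out $p_n\to q$ with $t_n\ll\mathrm{dist}(p_n,q)\to 0$. The paper (Claim~\ref{planar-limit-domain-claim} and Claim~\ref{eigenfunction-claim}) instead rescales by $1/\mathrm{dist}(p_n,\partial S_n)$, so that the max point sits at distance~$1$ from the boundary in the limit; the possible limit domains are then a quadrant, rounded quadrant, quadrant pair, rounded quadrant pair, or halfplane (depending on the ratio $t_n/\mathrm{dist}(p_n,\partial S_n)$), and the maximum-principle contradiction goes through uniformly.

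Both gaps are fixable, but the paper's open--closed argument with the Lipschitz constraint and the compactness Proposition~\ref{rounding-sequence-proposition} is cleaner than patching your direct approach with weighted norms.
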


Theorem~\ref{bridged-approximations-theorem} follows from proposition~\ref{two-parameter-proposition} by setting $S(t):=S(t,1)$.
(See remark~\ref{in-H^+-remark}.)

\begin{proof}[Proof of proposition~\ref{two-parameter-proposition}]
Fix a $\eta>0$ and a $\tau>0$ and consider the following subsets of the domain $D:=(0,\tau]\times[0,1]$:
\begin{enumerate}
\item the relatively closed set $A$ of all $(t,s)\in D$ such that $\Gamma(t,s)$ bounds a minimal embedded $Y$-surface
 that is the normal graph of a function from $\Omega(t)\to \RR$ with Lipschitz constant $\le \eta$.
\item the subset $B$ of $A$ consisting
    of all $(t,s)\in D$ such that $\Gamma(t,s)$ bounds a minimal embedded $Y$-surface
 that is $Y$-nondegenerate and that is the normal graph of a function from $\Omega(t)$ to $\RR$
 with Lipschitz constant $< \eta$.
\item the subset $C$ of $A$ consisting of all 
   $(t,s)\in D$ such that there is exactly one function whose Lipschitz constant is $\le \eta$
  and whose normal graph is a minimal embedded $Y$-surface with boundary $\Gamma(t,s)$.
\end{enumerate}
By proposition~\ref{rounding-sequence-proposition} below, 
we can choose $\eta$ and $\tau$ so that these three sets are equal: $A=B=C$.
Clearly the set $A$ is a relatively open closed subset of $(0,\tau]\times[0,1]$.  Also, $A$ is nonempty
since it contains $(0,\tau]\times\{0\}$. (This is because $\Gamma(t,0)$ is the boundary of the minimal
$Y$-surface $\Omega(t)$, which is the normal graph of the zero function on $\Omega(t)$).
By the implicit function theorem, the set $B$ is a relatively open subset of $(0,\tau]\times[0,1]$.

Since $A=B=C$ is nonempty and since it is both relatively closed and relatively open in $(0,\tau]\times [0,1]$,
we must have
\[
   A=B=C=(0,\tau]\times [0,1].
\]
For each $(t,s)\in (0,\tau]\times [0,1]=C$, let $f_{t,s}:\Omega(t)\to \RR$  be the unique function with Lipschitz
constant $\le \eta$ whose normal graph is a minimal embedded $Y$-surface $S(t,s)$ with boundary $\Gamma(t,s)$.
Since $B=C$, in fact $f_{t,s}$ has Lipschitz constant $<\eta$ and $S(s,t)$ is $Y$-nondegenerate.
By the $Y$-nondegeneracy and the implicit function theorem, $S(t,s)$ depends smoothly on $(t,s)$.
Also, 
\begin{equation}\label{C^1-convergence-to-0}
    \|f_{t,s}\|_0 + \|Df_{t,s}\|_0 \to 0
\end{equation}
as $t\to 0$ by proposition~\ref{rounding-sequence-proposition} below.  
Finally, the smooth convergence $S(t,s)\to S$ away from corners 
follows from~\eqref{C^1-convergence-to-0} by standard elliptic PDE.
\end{proof}

\begin{proposition}\label{rounding-sequence-proposition}
There is an $\eta>0$ with the following property.
Suppose $S_n$ is a sequence of minimal embedded $Y$-surfaces with $\partial S_n=\Gamma(t_n,s_n)$
where $t_n\to 0$ and $s_n\in [0,1]$.  Suppose also that each $S_n$ is the normal
graph of a function 
\[
   f_n: \Omega(t_n)\to\RR
\]
with Lipschitz constant $\le \eta$.  
Then 
\begin{enumerate}
\item\label{C^1-item} $\|f_n\|_0 + \|Df_n\|_0\to 0$. (In particular, $Lip(f_n)<\eta$ for all sufficiently large $n$.)
\item\label{nondegenerate-item} $S_n$ is $Y$-nondegenerate for all sufficiently large $n$,
\item\label{unique-function-item} If $g_n$ is a function with Lipschitz constant $\le \eta$
  and if the graph of $g_n$ is a minimal embedded $Y$-surface, then $g_n=f_n$ for all sufficiently large $n$.
\item\label{unique-surface-item} 
If $\Sigma_n$ is a sequence of minimal embedded $Y$-surfaces such that $\partial \Sigma_n = \partial S_n$
and such that $\Sigma_n\subset V(S,\eps_n)$ where $\eps_n\to 0$, then $\Sigma_n=S_n$ for all
sufficiently large $n$.
\end{enumerate} 
\end{proposition}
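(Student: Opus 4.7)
The plan is a compactness argument: the Lipschitz bound on a normal graph of a minimal surface gives uniform regularity, so subsequential smooth limits exist, and the bumpy metric hypothesis combined with Lemma~\ref{TubularNeighborhoodLemma} forces any such limit to be $S$ itself. All four conclusions will then follow by contradiction from this identification. The key technical obstacle is the behavior at the corners of $\Gamma$, where the rounding lives on the vanishing scale $t_n$; a rescaling at scale $1/t_n$ converts the local picture into a fixed model minimal-graph problem over a rounded quadrant (or rounded quadrant-pair), to which standard boundary regularity applies.

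Fix $\eta$ small. Given a sequence as in the hypotheses, interior estimates for the graphical minimal surface equation over the smooth portion of $M$ together with the bound $|Df_n|\le\eta$ give uniform $C^{k,\alpha}$ control on $f_n$ on compact subsets of $\Omega(t_n)$ that stay a definite distance from the corners of $\Gamma$. On the smooth portion of $\partial\Omega(t_n)$ the boundary values $f_n|_{\partial\Omega(t_n)} = s_n\phi_{t_n}$ tend to $0$ in $C^{k,\alpha}$ as $t_n \to 0$, and boundary Schauder estimates extend the control up to that portion of the boundary. The corner rescaling supplies the remaining uniform estimates on balls of radius $O(t_n)$ around each corner. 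After extracting a subsequence, $f_n$ converges smoothly on compact subsets of $\text{interior}(M)$ away from the corners of $\Gamma$ to a limit $f_\infty$, and the graph $S_\infty$ of $f_\infty$ is a $\rho_Y$-invariant embedded minimal surface in $\overline{H^+}\cap\{|z|\le h\}$, smooth except possibly at the corners of $\Gamma$, with $\partial S_\infty = \Gamma$.

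From $\|f_n\|_0 \le \|s_n\phi_{t_n}\|_0 + \eta\cdot\mathrm{diam}(\Omega(t_n))$ and the fact that $\Omega(t_n) \to S$ in Hausdorff distance inside $M$, we get $S_n \subset V(S, C\eta + o(1))$ for some constant $C$ independent of $n$ and $\eta$. Choosing $\eta$ smaller than the tolerance $\eps$ provided by Lemma~\ref{TubularNeighborhoodLemma} for the fixed surface $S$, the limit $S_\infty$ lies in $\overline{V(S,\eps)}$; hence Lemma~\ref{TubularNeighborhoodLemma}\eqref{unique-in-V-item} forces $S_\infty = S$, i.e., $f_\infty \equiv 0$. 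Since every subsequence admits a further subsequence for which this holds, the full sequence $f_n \to 0$ smoothly away from corners, and the corner rescaling gives $\|f_n\|_0+\|Df_n\|_0 \to 0$, which is conclusion~(1). For~(2): if $S_n$ admitted a nontrivial $\rho_Y$-invariant Jacobi field $v_n$ vanishing on $\partial S_n$, normalizing $\|v_n\|_\infty = 1$ and applying the same compactness to the linearized equation would yield a nontrivial $\rho_Y$-invariant Jacobi field on $S$ vanishing on $\Gamma$, contradicting hypothesis~\eqref{bumpy-hypothesis} of theorem~\ref{main-bumpy-theorem}. For~(3), apply~(1) to both $f_n$ and $g_n$; both are $C^1$-close to zero, so the implicit function theorem at the $Y$-nondegenerate surface $S$, with the rounded boundary data varying smoothly, gives unique solvability, whence $g_n = f_n$ for $n$ large. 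For~(4), if $\Sigma_n \subset V(S,\eps_n)$ with $\eps_n\to 0$ and $\partial\Sigma_n = \partial S_n$, then by Allard's regularity $\Sigma_n$ converges smoothly to $S$; so for $n$ large $\Sigma_n$ is the normal graph of some $g_n$ over $\Omega(t_n)$ with Lipschitz norm $<\eta$, and~(3) yields $\Sigma_n = S_n$.

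The principal technical work will be making these compactness estimates uniform across the moving corner regions. Standard interior and boundary Schauder theory covers everything at a definite distance from the corners, but near each corner of $\Gamma$ the domain $\Omega(t_n)$, its boundary $\pi(\Gamma(t_n))$, and the graph $S_n$ all change at scale $t_n$. The rescaling by $1/t_n$ reduces the situation to a fixed model problem, but carrying this out cleanly --- both for the convergence of $f_n$ in conclusion~(1) and for the Jacobi-field convergence needed in~(2) --- is the most delicate part of the argument.
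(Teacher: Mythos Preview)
Your overall architecture (compactness plus bumpiness forces the limit to be $S$, then contradiction) matches the paper's. But the corner analysis you describe is not the one that actually works, and this is where most of the content of the proof lies.

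You propose to rescale near each corner $q$ by $1/t_n$, obtaining a ``fixed model problem over a rounded quadrant (or rounded quadrant pair).'' This handles only the scale $t_n$. The obstruction to $\|Df_n\|_0\to 0$ is a sequence of points $p_n$ with $|Df_n(p_n)|\ge L>0$; after passing to a subsequence, $p_n\to q$ with $R_n:=\dist(p_n,q)$ satisfying possibly $t_n \ll R_n \ll 1$, or $R_n\sim t_n$, or $R_n/t_n\to\infty$ with $R_n\to 0$. The paper rescales by $1/R_n$ (not $1/t_n$) and obtains \emph{five} possible limit domains $\Omega'$: a quadrant, a rounded quadrant, a quadrant pair, a rounded quadrant pair, or a full plane, according to the limit of $t_n/R_n$. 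In each case the rescaled graph functions $f_n'$ (which have Lipschitz constant $\le\eta$, hence subconverge, but are \emph{not} a priori small) converge to a Lipschitz minimal graph $f'$ over $\Omega'$, and one must then prove a Liouville-type statement forcing $f'\equiv 0$: Bernstein's theorem plus the $\rho_Y$-symmetry for the full plane, and for the other four cases a Schwarz-reflection argument showing the end is planar (it contains rays, so cannot be catenoidal), followed by the maximum principle. None of this is in your sketch, and your ``standard Schauder away from the corners plus the $t_n$-scale model'' does not bridge the intermediate annulus $t_n\ll |\cdot - q|\ll 1$. The same multi-scale blow-up is needed for the Jacobi-field compactness behind~(2): if the normalized eigenfunction attains its maximum at $p_n\to q$, rescaling by $1/\dist(p_n,\partial S_n)$ yields a bounded harmonic function on one of the planar model domains which attains an interior maximum yet vanishes on the boundary, a contradiction.

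There is a second structural issue. You derive (3) from an implicit function theorem ``at the $Y$-nondegenerate surface $S$,'' but $S$ has corners, so the IFT does not apply to $S$ directly; indeed, circumventing this is the entire point of the rounding construction. You then reduce (4) to (3) by claiming Allard regularity lets you write $\Sigma_n$ as a normal graph over $\Omega(t_n)$, but Allard gives smooth convergence only away from the corners, and promoting $\Sigma_n$ to a global graph over $\Omega(t_n)$ again requires the corner analysis you have not supplied. The paper runs the dependency in the opposite direction: it proves (4) directly by another blow-up at scale $\dist(p_n,\partial S_n)$ (using catenoid barriers to force the rescaled $\Sigma'$ into the model plane, then producing an impossible bounded harmonic function as above), and then deduces (3) from (1) applied to $g_n$ together with (4).
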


\begin{proof}[Proof of proposition~\ref{rounding-sequence-proposition}.]
By lemma~\ref{TubularNeighborhoodLemma}, there is an $\eps>0$
 be such that $S$ is the only embedded minimal $Y$-surface in $V(S,\eps)$
with boundary $\Gamma$.
Choose $\eta>0$ small enough that if $f:S\to \RR$ is Lipschitz with Lipschitz constant $\le \eta$ and 
if $f|\Gamma=0$, then the normal graph of $f$ lies in $V(S,\eps)$.  In particular, if the graph of $f$
is a minimal embedded $Y$-surface, then $f=0$.

Since the $f_n$ have a common lipschitz bound $\eta$, they converge subsequentially
to a lipschitz function $f:S\to \RR$.  By the Schauder estimates, the convergence is smooth
away from the corners of $\Gamma$, so the normal graph of $f$ is minimal.  Thus by
choice of $\eta$, $f=0$.   This proves that
\[
   \|f_n\|_0 \to 0.
\]

Let
\[
  L = \limsup \|Df_n\|_0.
\]
We must show that $L=0$. By passing to a subsequence, we can
assume that the $\limsup$ is a limit, and we can choose a sequence
of points
 $p_n\in S_n\setminus\partial M_n=S_n\setminus\Gamma(t_n,s_n)$ such that
\[
    \lim | Df_n(p_n)| = L.
\]
By passing to a further subsequence, we can assume that the $p_n$
converge to a point $q\in S$.  If $q$ is not a corner of $S$, then
$f_n\to 0$ smoothly near $q$, which implies that $L=0$.

Thus suppose $q$ is a corner point of $S$, that is, one of the corners of $\Gamma$.  Let $R_n =
\dist(p_n,q)$.
Now translate $S_n$, $\Omega_n$, $Y$, and $p_n$ by $-q$ and dilate
by $1/R_n$ to get $S (t)'$, $\Omega_n'$, $Y_n'$ and $p_n'$.  Note
that $S(n')$ is the normal graph over $\Omega_n'$ of a function
$f_n'$ where the $\|Df_n'\|_0$ are bounded (independently of $n$).

By passing to a subsequence, we may assume that the $\Omega_n'$
converges to a planar region $\Omega'$, which must be one of the
following:
\begin{enumerate}
\item A quadrant 
\item a rounded quadrant. 
\item a quadrant pair.
\item a rounded quadrant pair. 
\item an entire plane.
\end{enumerate}
(If $q$ is $O$ or $O^*$, then (3), (4), and (5) occurs according
to whether $t_n/R_n$ tends to $0$, to a finite nonzero limit, or
to infinity.  If $q$ is one of the other corners, then (1) or (2)
occurs according to whether $t_n/R_n$ tends to $0$ or not.)  We
may also assume that the $f_n'$ converge to a lipschitz function
$f:\Omega'\to \RR$ and that the convergence is smooth away from
the origin. Furthermore, there is a point $p\in \Omega'$ with
\begin{equation}\label{e:Lpoint}
  \text{$|p|=1$ and $|Df(p)|=L$.}
\end{equation}

Suppose first that $\Omega'$ is a plane, which means that $q$ is
$O$ or  $O^*$, and thus that $Y'$ is the line that intersects the
plane of $\Omega'$ orthogonally. Since $S'$ is a minimal graph
over $\Omega'$,    $S'$ must also be a plane (by Bernstein's
theorem).  Since $Y$ interesects each $S(t)$ perpendicularly, $Y'$
must intersect $S'$ perpendicularly.  Thus $S'$ is a plane
parallel to $\Omega'$, so $Df'\equiv 0$. In particular, $L=0$ as
asserted.

Thus we may suppose that $\partial \Omega'$ (which is also
$\partial S'$) is nonempty.

By Schwartz reflection, we can extend $S'$ to a surface
$S^\dagger$ such that $\partial S^\dagger$ is a compact subset of
the plane $P$ containing $\partial S'$ and such that $S^\dagger$
has only one end, which is a lipschitz graph over that plane. Thus
the end is either planar or catenoidal.  It cannot be catenoidal
since it contains rays.  Hence the end is planar, which implies
that
\[
   \lim_{x\to\infty} f(x) = 0.
\]
But then $f\equiv 0$ by the maximum principle, so $Df\equiv 0$, and
therefore $L=0$ by \eqref{e:Lpoint}. This completes the proof that $\|Df_n\|_0\to 0$ and thus
the proof of assertion~\eqref{C^1-item}.

For the proofs of assertions~\eqref{nondegenerate-item}--\eqref{unique-surface-item}, 
it is convenient to make the following observation:

\begin{alt-claim}\label{planar-limit-domain-claim}
Suppose that $p_n\in S_n\setminus \partial S_n$ and that $\dist(p_n,\partial S_n)\to 0$.
Translate $S_n$ by $-p_n$ and dilate by $1/\dist(p_n, \partial S_n)$
to get a surface $S_n'$.  Then a subsequence of the $S_n'$ converges to one of the following
planar regions:
\begin{enumerate}[\upshape $\bullet$]
 \item a quadrant,
  \item a rounded quadrant,
\item a quadrant pair, 
 \item a rounded quadrant pair, or
 \item a halfplane.
\end{enumerate}
\end{alt-claim}

The claim follows immediately from the definitions (and the fact that $\|Df_n\|\to 0$) so we omit the proof.

Next we show assertion~\eqref{nondegenerate-item} of proposition~\ref{rounding-sequence-proposition}: 
 that $S_n$ is $Y$-nondegenerate for all sufficiently large $n$.
In fact, we prove somewhat more:

\begin{alt-claim}\label{eigenfunction-claim}
Suppose $u_n$ is an eigenfunction of the Jacobi operator on $S_n$ with eigenvalue $\lambda_n$,
normalized so that 
\[
    \|u_n\|_0 = \max  |u_n(\cdot)| = \max u_n(\cdot) = 1.
\]
Suppose also that the $\lambda_n$ are bounded.  Then (after passing to a subsequence)
the $S_n$ converge smoothly on compact sets to an eigenfunction $u$ on $S$ with
eigenvalue $\lambda=\lim_n\lambda_n$.
\end{alt-claim}

(With slightly more work, one could prove that for every $k$, the $k$th eigenvalue 
 of the jacobi operator on $S_n$ converges to the $k$th eigenvalue
of the jacobi operator on $S$.  However, we do not need that result.)

\begin{proof}
By passing to a subsequence, we can assume that 
the $\lambda_n$ converge to a limit $\lambda$, 
and that the $u_n$ converge smoothly
away from the corners of $S$ to a solution of
\[
    Ju = -\lambda u
\]
where $J$ is the jacobi operator on $S$. To prove the claim, it suffices to show
that $u$ does not vanish everywhere, and that $u$ extends continuously to the corners
of $S$.

Since $u$ is bounded, that $u$ extends continuously to the corners is a standard
removal-of-singularities result.
(One way to see it is as follows.  Extend $u$ by reflection to
the the smooth manifold-with-boundary $M= \overline{S\cup \rho_ZS}$.  Now $u$
solves $\Delta u = \phi u$ for a certain smooth function $\phi$ on $M$.  Let $v$ be the solution
of $\Delta v=\phi u$ on $M$ with $v|\partial M=0$ given by the Poisson formula.  Then $v$
is continuous on $M$ and smooth away from a finite set (the corners of $\Gamma$).
Away from the corners of $M$,  $u-v$ is a bounded harmonic function that vanishes
on $\partial M$.  But isolated singularities of bounded harmonic functions are removable,
so $u-v\equiv 0$.)

To prove that $u$ does not vanish everywhere, let $p_n$ be a point at which $u_n$
attains its maximum:
\[
   u_n(p_n) = 1 = \max_{S_n}|u_n(\cdot)|.
\]
By passing to a subsequence, we can assume that the $p_n$ converge to a point 
 $p\in \overline{S}$.
 We assert that $p\notin \partial S$.  For suppose $p\in\partial S$.
Translate\footnote{See Remark~\ref{meaning-of-translation-remark}.}
  $S_n$  by $-p_n$ and dilate by 
\[
  c_n := \frac1{\dist(p_n, \partial S_n)}= \frac1{\dist(p_n, \Gamma (t_n,s_n))}
\]
to get  $S_n'$.
Let $u_n'$ be the eigenfunction on $S_n'$ corresponding to $u_n$.
Note that $u_n'$ has eigenvalue $\lambda_n/c_n^2$.

We may assume (after passing to a subsequence) that the $S_n'$ converge to one of the 
planar regions $S'$ listed in lemma~\ref{planar-limit-domain-claim}. 
The convergence $S_n'\to S_n$ is smooth except possibly at the corner (if there
is one) of $S'$. 

By the smooth convergence of $S_n'$ to $S'$, the $u_n'$ converge
subsequentially to a jacobi field $u'$ on $S'$ that is smooth except
possibly at the corner (if there is one) of $S'$.   Since $S'$ is flat, $u'$ is a harmonic function.
Note that $u'(\cdot)$ attains its maximum
value of $1$ at $O$.  By the strong maximum principle for harmonic functions, $u'\equiv 1$ on the connected component of $S'\setminus \partial S'$
containing $O$.  But $u'\equiv 0$ on $\partial S'$, a contradiction.
Thus $p$ is in the interior of $S$, where the smooth convergence $u_n\to u$
implies that $u(p)=\lim u(p_n)=1$.

This completes
the proof of claim~\ref{eigenfunction-claim}
(and therefore also the proof of assertion~\eqref{nondegenerate-item} 
in proposition~\ref{rounding-sequence-proposition}.)
\end{proof}

To prove assertion~\eqref{unique-function-item} of proposition~\ref{rounding-sequence-proposition}, 
note that by assertion~\eqref{C^1-item} of the proposition applied to the $g_n$, 
\[
  \|g_n\|_0 + \|Dg_n\|\to 0.
\]
Thus if $\Sigma_n$ is the normal graph of $g_n$, then $\Sigma_n\subset V(S,\eps_n)$ for $\eps_n\to 0$.
Hence assertion~\eqref{unique-function-item} of the proposition is a special case 
of assertion~\eqref{unique-surface-item}.

Thus it remain only to prove assertion~\eqref{unique-surface-item}.  Suppose it is false.  
Then (after passing to a subsequence) there exist embedded minimal $Y$-surfaces
$\Sigma_n\ne S_n$ such that $\partial \Sigma_n=\partial S_n$ and such that
\[
  \text{$ \Sigma_n\subset \overline{V(S,\eps_n)}$ with $\eps_n\to 0$}. \tag{*}
\]
Now~\thetag{*} implies, by the extension of 
Allard's boundary regularity theorem in \cite{white-controlling-area},
that the $\Sigma_n$ converge smoothly to $S$ away from the corners of $S$.
(We apply theorem~6.1 of \cite{white-controlling-area} in 
the ambient space obtained by removing the 
corners of $S$ from $\sS^2\times \RR$.)

  Choose a point $q_n\in \Sigma_n$
that maximizes $\dist(\cdot, S_n)$.  Let $p_n$ be the point in $S_n$ closest to $q_n$.
Since $\partial S_n=\partial \Sigma_n$,
\begin{equation}\label{relative-distances}
    \dist(p_n,q_n) \le \dist(p_n, \partial \Sigma_n) = \dist(p_n, \partial S_n).
\end{equation}
By passing to a subsequence, we may assume that $p_n$ converges to a limit $p\in \overline{S}$.

If $p$ is not a corner of $S$,  then the smooth convergence $\Sigma_n\to S$ away
from the corners implies that there is a bounded $Y$-invariant jacobi field $u$ on
$S\setminus C$ such that $u$ vanishes on $\partial S\setminus C=\Gamma \setminus C$
and such that
\[
   \max |u(\cdot)| = u(p) = 1.
\]
By standard removal of singularities (see the second paragraph
of the proof of claim~\ref{eigenfunction-claim}), 
the function $u$ extends continuously to the corners.
By hypothesis, there is no such $u$. 
Thus $p$ must be one of the corners of $S$ (i.e., one of the corners of $\Gamma$.)
  Translate $S_n$, $\Sigma_n$, and $q_n$ by
$-p_n$ and dilate by $1/\dist(p_n, \partial S_n)$ to get $S_n'$, $\Sigma_n'$, and $q_n'$.

By passing to a subsequence, we can assume the the $S_n'$ converge to one
of the planar regions $S'$ listed in the statement of lemma~\ref{planar-limit-domain-claim}.
We can also assume that the $\Sigma'_n$ converge as sets to a limit set $\Sigma'$,
and that the points $q_n'$ converge to a limit point $q'$.
Note that 
\begin{equation}\label{bounded-distance-away}
  \sup_{\Sigma'} \dist(\cdot, S') = \dist(q',S') \le \dist(O, S') = 1
\end{equation}
by~\eqref{relative-distances}.

We claim that $\Sigma'\subset S'$.  
We prove this using catenoid barriers as follows.
Let $P$ be the plane containing $S'$ 
and consider a connected component $\mathcal{C}$ of the set of catenoids
whose waists are circles in $P\setminus S'$.
(There are either one or two such components according to whether
$P\setminus S'$ has one or two components.)   Note that the ends of each such
catenoid are disjoint from $\Sigma'$ since $\Sigma'$ lies with a bounded distance of $S'$.
By the strong maximum principle,
the catenoids in $\mathcal{C}$ either all intersect $S'$ or or all disjoint from $S'$.  
Now $\mathcal{C}$ contains catenoids whose waists are unit circles that arbitrarily far from $S'$.
Such a catenoid (if its waist is sufficiently far from $S'$) is disjoint from $\Sigma'$.  Thus all the
catenoids in $\mathcal{C}$ are disjoint from $\Sigma'$. We have shown that if the waist of catenoid
is a circle in $P\setminus S'$, then the catenoid is disjoint from $\Sigma'$.  The union of all such catenoids
is $\RR^3\setminus S'$, so $\Sigma'\subset S'$ as claimed.

Again by the extension of Allard's boundary regularity theorem
in \cite{white-controlling-area}*{theorem~6.1},
the $\Sigma_n'$ must converge smoothly to $S'$ except at the corner (if there
is one) of $S'$.

The smooth convergence of $\Sigma_n'$ and $S_n'$ to $S'$ implies
existence of a bounded jacobi field $u'$ on $S'$ that is smooth
except at the corner, that takes its maximum value of $1$ at $O$,
and that vanishes on $\partial S'$. 
Since $S'$ is flat, $u'$ is a harmonic function.  By the maximum principle, $u'\equiv 1$
on the connected component of $S'\setminus \partial S'$ containing $O$.
But that is a contradiction since $u'$ vanishes on $\partial S'$.
\end{proof}

 \section{Additional properties of the family $t\mapsto S(t)$}\label{additional-properties-section}
 
 We now prove that the surfaces $S(t)$ of theorem~\ref{bridged-approximations-theorem} 
 have a strong uniqueness property for small $t$:

\begin{theorem}\label{strong-uniqueness-theorem}
Let $t\in (0,\tau]\mapsto S(t)$ be the one-parameter family of minimal $Y$-surfaces given by 
theorem~\ref{bridged-approximations-theorem}.
For every sufficiently small $\eps>0$, there is a $\tau'>0$ with the following property.
For every $t\in (0,\tau']$, the surface $S(t)$ lies in $V(S,\eps)$ 
(the small neighborhood of $S$ defined in section~\ref{rounding-section}) 
and is the unique minimal embedded 
 $Y$-surface in $V(S,\eps)$ with boundary $\Gamma(t)$.
 \end{theorem}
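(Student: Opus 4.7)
The plan is to establish the theorem in two stages corresponding to its two assertions, the first being immediate and the second requiring a compactness/contradiction argument built on the tools already in place.

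The first assertion, that $S(t)\subset V(S,\eps)$ for all sufficiently small $t$, follows immediately from theorem~\ref{bridged-approximations-theorem}. Indeed, $S(t)$ is the normal graph over $\Omega(t)\subset M$ of a function $f_t$ with $\|f_t\|_0\to 0$, and the domains $\Omega(t)\subset M$ converge to $S$ in Hausdorff distance as $t\to 0$ because $\pi(\Gamma(t))\to\Gamma=\partial S$. Hence $S(t)\to S$ in Hausdorff distance, so for any fixed $\eps>0$ we have $S(t)\subset V(S,\eps)$ once $\tau'$ is chosen small enough.

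For the uniqueness assertion I would argue by contradiction. Fix $\eps>0$ small enough that lemma~\ref{TubularNeighborhoodLemma} applies (in particular small enough that nearest-point projection $\pi_M$ is defined on $\overline{V(S,\eps)}$). Suppose the claim fails. Then there exist $t_n\downarrow 0$ and minimal embedded $Y$-surfaces $\Sigma_n\subset\overline{V(S,\eps)}$ with $\partial\Sigma_n=\Gamma(t_n)$ and $\Sigma_n\neq S(t_n)$. The strategy is to show that, after passing to a subsequence, $\Sigma_n$ converges to $S$ in Hausdorff distance. Once this is done, $\Sigma_n\subset V(S,\eps_n)$ for some $\eps_n\downarrow 0$, and proposition~\ref{rounding-sequence-proposition}\eqref{unique-surface-item} applied to the family $t\mapsto S(t)$ forces $\Sigma_n=S(t_n)$ for all large $n$, a contradiction.

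To obtain the convergence, I would first invoke the isoperimetric inequality of remark~\ref{isoperimetric-equivalence-remark} to bound $\area(\Sigma_n)$ in terms of $\length(\Gamma(t_n))$, which is uniformly bounded. Standard varifold compactness then produces a subsequential limit $\Sigma$, a $\rho_Y$-invariant stationary integral varifold in $\overline{V(S,\eps)}$ with boundary $\Gamma$. The smooth convergence of the boundaries $\Gamma(t_n)\to\Gamma$ away from the corners, combined with the boundary regularity theorem of \cite{white-controlling-area}*{theorem~6.1}, upgrades the convergence to smooth convergence at all interior points and all smooth boundary points; at the corners, tangent-cone analysis exactly as in lemma~\ref{Schwarz-regularity-lemma} shows that the limit is a smooth embedded minimal $Y$-surface with boundary $\Gamma$ in $\overline{V(S,\eps)}$. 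By lemma~\ref{TubularNeighborhoodLemma}\eqref{unique-in-V-item}, $\Sigma=S$ as a set.

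The main obstacle is ruling out higher multiplicity in this varifold limit. If the limit were $kS$ with $k\ge 2$, then (since the $\Sigma_n$ are embedded) two distinct sheets of $\Sigma_n$ would collapse onto $S$; writing the two sheets locally as normal graphs of functions $u_n^{(1)},u_n^{(2)}$ over $S$, both vanishing on $\Gamma(t_n)$ after a small projection, the normalized difference $(u_n^{(1)}-u_n^{(2)})/\|u_n^{(1)}-u_n^{(2)}\|_0$ would converge (by the same argument scheme used in claim~\ref{eigenfunction-claim} and in the proof of proposition~\ref{rounding-sequence-proposition}\eqref{unique-surface-item}) to a nontrivial bounded $\rho_Y$-invariant Jacobi field on $S$ vanishing on $\Gamma$. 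This contradicts the $Y$-bumpiness hypothesis~\eqref{bumpy-hypothesis} of theorem~\ref{main-bumpy-theorem}. Hence the multiplicity is one, the Hausdorff convergence $\Sigma_n\to S$ holds, and the contradiction is reached as outlined above.
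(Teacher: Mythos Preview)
Your argument is essentially correct but takes a longer route than the paper, and in doing so picks up a technical obligation you do not fully discharge.

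The paper's proof is a two-line application of proposition~\ref{rounding-sequence-proposition}\eqref{unique-surface-item}, exploiting the quantifier structure of the statement. The theorem asserts something \emph{for every sufficiently small} $\eps$, so if it fails, it fails for a sequence $\eps_n\to 0$: for each $n$ there is some $t_n$ (which one may take to go to $0$) and a second minimal embedded $Y$-surface $\Sigma_n\subset V(S,\eps_n)$ with $\partial\Sigma_n=\Gamma(t_n)$. But this is exactly the hypothesis of proposition~\ref{rounding-sequence-proposition}\eqref{unique-surface-item}, which then forces $\Sigma_n=S(t_n)$, a contradiction. No area bounds, no varifold compactness, no multiplicity discussion are needed at this stage --- all of that work was already packaged into the proof of proposition~\ref{rounding-sequence-proposition}\eqref{unique-surface-item}.

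By contrast, you fix a single $\eps$ and then labor to prove $\Sigma_n\to S$ in Hausdorff distance so as to manufacture a sequence $\eps_n\to 0$ after the fact. This essentially reproves the content of proposition~\ref{rounding-sequence-proposition}\eqref{unique-surface-item}. Along the way you incur a loose end: the isoperimetric inequality of remark~\ref{isoperimetric-equivalence-remark} is stated for minimal surfaces in $N=\overline{H^+}\cap\{|z|\le h\}$, not for surfaces in $\overline{V(S,\eps)}$, and your $\Sigma_n$ are only assumed to lie in the latter. Since $\partial S=\Gamma\subset\partial N$, the tube $V(S,\eps)$ necessarily leaks out of $N$ near the boundary, so you would have to argue separately either that $\Sigma_n\subset N$ or that $\overline{V(S,\eps)}$ itself supports the needed isoperimetric bound. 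This is fixable, but it is extra work that the paper's quantifier trick avoids entirely.
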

 
\begin{proof}
Suppose the theorem is false.  Then there is a sequence of $\eps_n\to 0$
such that, for each $n$, either
\begin{enumerate}
\item there are arbitrarily large $t$ for which $S(t)$ is not contained in $V(S,\eps_n)$, or
\item there is a $t_n$ for which $S(t_n)$ is contained in $V(S,\eps_n)$ but such that
 $V(S,\eps_n)$ contains a second embedded minimal $Y$-surface $\Sigma_n$ with boundary $\Gamma(t_n)$.
 \end{enumerate}
 The first is impossible since $S(t)\to S$ as $t\to 0$.
 Thus the second holds for each $n$.  But (2) contradicts assertion~\eqref{unique-surface-item} of proposition~\ref{rounding-sequence-proposition}. 
\end{proof}

According to theorem~\ref{bridged-approximations-theorem}, 
for each embedded minimal $Y$-surface $S$ bounded by $\Gamma$,
we get a family of minimal surfaces $t\mapsto S(t)$ with $\partial S(t)=\Gamma(t)$.   The following theorem
says, roughly speaking, that as $t\to 0$, then the those surfaces account for all
minimal embedded $Y$-surfaces bounded by $\Gamma(t)$.

\begin{theorem}\label{all-accounted-for-theorem}
Let $t\mapsto \Gamma(t)$ be a rounding of $\Gamma$.  Let $S_n$ be a sequence of embedded minimal $Y$-surfaces
in $H^+\cap \{|z|\le h\}$ such that $\partial S_n = \Gamma(t_n)$ where $t_n\to 0$.  Suppose the number of points in
$S_n\cap Y^+$ is bounded independent of $n$.  Then, after passing to a subsequence, the $S_n$ converge
to smooth minimal embedded $Y$-surface $S$ bounded by $\Gamma$, and $S_n =S(t_n)$ for all sufficiently large $n$,
where $t\mapsto S(t)$ is the one-parameter family given by theorem~\ref{bridged-approximations-theorem}.
\end{theorem}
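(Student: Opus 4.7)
The plan is to extract a subsequential smooth limit $S$ of the $S_n$, show that $S$ is an embedded minimal $Y$-surface bounded by $\Gamma$, and then invoke theorem~\ref{strong-uniqueness-theorem} to force $S_n = S(t_n)$ for all large $n$. To start, I would establish uniform area, genus, and total-curvature bounds on the $S_n$. The isoperimetric inequality in $N$ (guaranteed by hypothesis~\eqref{no-closed-minimal-surface-hypothesis} of theorem~\ref{main-bumpy-theorem}, via remark~\ref{isoperimetric-equivalence-remark}) yields $\area(S_n) \le c_\gamma\length(\Gamma(t_n))$, which is uniformly bounded since $\Gamma(t_n)\to\Gamma$. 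The hypothesis that $\|S_n\cap Y^+\|$ is bounded then gives a uniform genus bound via corollary~\ref{$Y$-corollary}. Schwarz-doubling $S_n$ to $M_n = \overline{S_n\cup\rho_Z S_n}$ and applying Gauss--Bonnet (using boundedness of the sectional curvatures of $\sS^2\times\RR$) produces a uniform total-curvature bound on the $M_n$.

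With these bounds in hand, the curvature and total-curvature estimates of~\cite{white-curvature-estimates}*{theorem~3} let me pass to a subsequence along which the $M_n$ converge smoothly on compact subsets of the interior to an embedded minimal surface $M$. Away from the corners of $\Gamma$, the smooth boundary convergence $\Gamma(t_n)\to\Gamma$ together with Allard's boundary regularity theorem extends the convergence up to the smooth portion of the boundary; the corners are handled by the removable-singularity argument already used in the proof of lemma~\ref{Schwarz-regularity-lemma}. The limit $S$ inherits $\rho_Y$-invariance and containment in $\overline{H^+}\cap\{|z|\le h\}$ from the $S_n$, and the induced $\rho_Y$-equivariant diffeomorphisms $S_n\to S$ (valid once convergence is of multiplicity one) pass the $(-1)$-action on $H_1$ to the limit, so $S$ is a $Y$-surface with $\partial S=\Gamma$. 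The multiplicity-one assertion itself is forced by the bumpiness hypothesis~\eqref{bumpy-hypothesis} of theorem~\ref{main-bumpy-theorem}: otherwise, normalizing the vertical separation of two sheets of $S_n$ clustering on $S$ would produce a nonzero $\rho_Y$-invariant Jacobi field on $S$ vanishing on $\Gamma$, a contradiction.

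Now apply theorem~\ref{strong-uniqueness-theorem} to this limit $S$: fix $\eps>0$ sufficiently small that there is a $\tau'>0$ for which $S(t)$ is the unique embedded minimal $Y$-surface in $V(S,\eps)$ with boundary $\Gamma(t)$, for every $t\in(0,\tau']$. The smooth convergence $S_n\to S$ forces $S_n\subset V(S,\eps)$ for all large $n$, and $t_n\to 0$ gives $t_n\le\tau'$ for all large $n$, so the uniqueness clause yields $S_n=S(t_n)$, as required.

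The main obstacle is establishing multiplicity one in the second step: ruling out that two sheets of $S_n$ could collapse onto $S$. This is precisely where the $Y$-bumpiness hypothesis---the absence of nontrivial $\rho_Y$-invariant Jacobi fields on any minimal $Y$-surface in $N$ bounded by $\Gamma$---is indispensable. A secondary subtlety is the behavior of $S_n$ near the axial corners $O$ and $O^*$, where the rounded curves $\Gamma(t_n)$ differ most drastically from $\Gamma$; here the bound on $\|S_n\cap Y^+\|$ is what prevents topology (a handle along $Y$) from escaping into the corners in the limit, so that the standard total-curvature and removable-singularity arguments apply uniformly.
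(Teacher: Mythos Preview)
Your approach is essentially the paper's: uniform area bounds from the isoperimetric inequality, total-curvature bounds via Gauss--Bonnet, smooth subsequential convergence from \cite{white-curvature-estimates}*{theorem~3}, then conclude with theorem~\ref{strong-uniqueness-theorem}. But the Schwarz-doubling step is misplaced. The boundary $\partial S_n = \Gamma(t_n)$ is a \emph{rounded} curve: near the corners of $\Gamma$ (in particular near $O$ and $O^*$) it leaves $Z$, $Z^*$, and $X$, so $\rho_Z S_n$ does not glue to $S_n$ along those segments, and $M_n = \overline{S_n \cup \rho_Z S_n}$ is not the smooth doubled surface you have in mind. The doubling in the proof of proposition~\ref{bumpy-suffices-proposition} works precisely because $\partial S_i = \Gamma$ still contains the unrounded segments; here it does not.

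The fix is simple and is what the paper actually does: apply Gauss--Bonnet directly to $S_n$, which already has smooth boundary $\Gamma(t_n)$. The Euler characteristic is controlled by $\|S_n\cap Y^+\|$ via proposition~\ref{Y-surface-topology-propostion}, and the total geodesic curvature of $\Gamma(t_n)$ in $S_n$ is bounded by the total ambient curvature of $\Gamma(t_n)$, which stays uniformly bounded because the rounding definition forces $(1/t_n)(\Gamma(t_n)-q)$ to converge smoothly near each corner $q$. Your remaining steps (the multiplicity-one argument, the verification that the limit is a $Y$-surface, the corner discussion) are more detailed than the paper's terse treatment but not incorrect; the paper simply cites \cite{white-curvature-estimates}*{theorem~3} for smooth convergence away from the corners of $\Gamma$ and then invokes theorem~\ref{strong-uniqueness-theorem} directly.
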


\begin{proof}
The areas of the $S_n$ are uniformly bounded by hypothesis 
on the Riemannian metric on $\sS^2\times\RR$:
see~($3''$) in remark~\ref{isoperimetric-equivalence-remark}. 
Using  the Gauss-Bonnet theorem, the minimality of the $S_n$, 
and the fact that the sectional curvatures 
of $\sS^2\times\RR$ are bounded, it follows that 
\[
  \int_{S_n} \beta(S_n, \cdot)\,dA
\]
is uniformly bounded, where $\beta(S_n,x)$ is the square of the norm of the second fundamental form of $S_n$ at $x$.
It follows (see~\cite{white-curvature-estimates}*{theorem~3}) that after passing to a subsequence, 
the $S_n$ converge smoothly
(away from the corners of $\Gamma$)
 to an minimal embedded $Y$-surface $S$ with boundary $\Gamma$. 
By the uniqueness theorem~\ref{strong-uniqueness-theorem}, $S_n=S(t_n)$ for all sufficiently large $n$.
\end{proof}



\section{Counting the number of points in $Y\cap S(t)$}\label{counting-section}

Consider a rounding $t\rightarrow\Gamma(t)$ of a boundary curve $\Gamma$, as specified in 
definition~\ref{Rounding1}. 
There are two qualitatively different ways to do the rounding at the crossings $O$ and $O^*$. We describe
what can happen at $O$ (the same description holds at $O^*$):
\begin{enumerate}
\item Near $O$, each  $\Gamma(t)$ connects points of $Z^+$ to points
of $X^+$ (and therefore points of $Z^-$ to points of $X^-$), or
\item the curve $\Gamma(t)$ connects points of $Z^+$ to points of $X^-$
(and therefore points of $Z^-$ to points of $X^+$.)
\end{enumerate}
\begin{definition} \label{PosNegRounding}
 In case (1),  the rounding  $t\rightarrow\Gamma(t)$ is {\em positive} at $O$.
In case (2),   the rounding $t\rightarrow\Gamma(t)$ is {\em negative} at $O$. 
Similar statements hold at $O^*$.
\end{definition}
In what follows, we will use the notation $\|A\|$ to denote the number of elements in a finite set $A$.

\begin{proposition}\label{disagreements-proposition}
Let $S$ be an open minimal embedded $Y$-surface in 
$N:=\overline{H^+} \cap \{|z|\le h\}$ bounded by $\Gamma$.
Let $t\mapsto S(t)$ be the family given by theorem~ \ref{bridged-approximations-theorem}, and suppose $S\cap Y$ has
exactly $n$ points.  Then 
\[
     \|S(t)\cap Y\|  =  \|S\cap Y\|  + \delta(S, \Gamma(t))
\]
where $\delta(S,\Gamma(t))$ is $0$, $1$, or $2$ according to whether
according to whether the signs of $S$ and $\Gamma(t)$ agree at both $O$ and $O^*$, 
at one but not both of $O$ and $O^*$, or at neither $O$ nor $O^*$.
(In other words, $\delta(S,\Gamma(t))$ is the number of sign disagreements of $S$ and $\Gamma(t)$.
 See Figure~\ref{signs-figure}.)
\end{proposition}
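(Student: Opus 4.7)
The plan is to split $\|S(t)\cap Y\|$ into two contributions: $n$ intersection points that persist from $S\cap Y$, and new points emerging near the corners $O$ and $O^*$ under the rounding.

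First I would establish that the $n$ points of $S\cap Y$ all lie in the interior of $S$ and are transverse intersections. Near $O$, the smooth surface $M=\overline{S\cup\rho_Z S}$ (by Lemma~\ref{Schwarz-regularity-lemma}) is tangent to $H$ at $O$ with $M\cap H = X\cup Z$ locally. Since the axes $X,Y,Z$ are mutually perpendicular at $O$ (a consequence of the $G$-invariance of the metric), $Y$ is perpendicular to $T_O M$ and meets $M$ only at $O$ locally, and $O\notin S$; the same holds at $O^*$. At each interior point $p\in S\cap Y$, $p$ is a fixed point of $\rho_Y|_S$; since $S$ is a $Y$-surface, $\rho_Y$ preserves orientation on $S$ and so acts on $T_p S$ as multiplication by $-1$, which forces $T_p S = (T_p Y)^\perp$. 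Thus $Y$ is transverse to $S$ at each of the $n$ points, and by the smooth convergence $S(t)\to S$ away from corners (Theorem~\ref{bridged-approximations-theorem}) each perturbs to a unique transverse intersection point of $S(t)\cap Y$ for small $t$. The same smooth convergence, together with the fact that $\overline{S}\cap Y = (S\cap Y)\cup\{O,O^*\}$, rules out any other intersection of $S(t)\cap Y$ except near $O$ or $O^*$.

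To count the new intersections near $O$, I would rescale by $1/t$ about $O$ in the tangent space $T_O(\sS^2\times\RR)\cong\RR^3$. By the definition of rounding, $(1/t)(\Gamma(t)-O)$ converges smoothly to a planar rounded quadrant pair $\Gamma'\subset T_O H$. Because $M$ is tangent to $H$ at $O$ and $\|f_t\|_0+\|Df_t\|_0\to 0$, a Bernstein-type argument as in the proof of Proposition~\ref{rounding-sequence-proposition} shows that the rescaled surfaces $(1/t)(S(t)-O)$ converge (subsequentially) to a planar region $\Omega'\subset T_O H$ bounded by $\Gamma'$, namely the rescaled limit of $\Omega(t)$. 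Since $Y$ is perpendicular to $T_O H$ and meets it transversely only at $O$, for sufficiently small $t$ the number of points of $S(t)\cap Y$ near $O$ equals $1$ if $O$ lies in the interior of $\Omega'$, and equals $0$ if $O\notin\overline{\Omega'}$.

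Finally I would identify $\Omega'$. The curve $M\cap H = X\cup Z$ partitions $M$ near $O$ into four quadrants; the two containing $S$ are those whose projection to $H$ matches the sign of $S$ at $O$, and the other two contain $\rho_Z S$. The nearest-point projection $\pi$ carries each arc of $\Gamma(t)$ near $O$ (which lies in the two quadrants of $H$ specified by the sign of $\Gamma(t)$ at $O$) into the corresponding pair of $M$-quadrants. Because $S(t)$ is a small normal graph over $\Omega(t)$ approximating $S$ away from the corners, $\Omega(t)$ must agree with $S$ outside small neighborhoods of $O$ and $O^*$. Hence when the signs of $S$ and $\Gamma(t)$ agree at $O$, the arcs of $\pi(\Gamma(t))$ sit inside $S$ and merely trim small corner regions off the $S$-quadrants; in the rescaled limit, $\Omega'$ consists of the outer portions of the $S$-quadrants and does not contain $O$. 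When the signs disagree, the arcs of $\pi(\Gamma(t))$ lie in the $\rho_Z S$-quadrants, so $\Omega(t)$ must extend from $S$ across $X\cup Z$ to include the corner regions of $\rho_Z S$ bounded by these arcs, and $\Omega'$ then contains a full planar neighborhood of $O$. The analogous count applies at $O^*$, and summing the two contributions yields $\|S(t)\cap Y\| = n + \delta(S,\Gamma(t))$. The main obstacle is the identification of $\Omega'$ in the sign-disagreement case, which requires a careful analysis of the connectivity of $M\setminus\pi(\Gamma(t))$ and of how $\pi$ carries the boundary arcs across $X\cup Z$ into the opposite pair of $M$-quadrants.
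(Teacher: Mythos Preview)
Your argument is correct, but it is considerably more elaborate than what the paper does. The paper's proof is essentially a two-line observation: since $S(t)$ is the normal graph of $f_t$ over $\Omega(t)\subset M$, and since $Y$ meets $M$ perpendicularly at every point of $Y\cap M$ (by $\rho_Y$-invariance of $M$), the normal-graph map sets up a bijection between $Y\cap S(t)$ and $Y\cap\Omega(t)$. One then simply reads off $Y\cap\Omega(t)$: it consists of the $n$ interior points $Y\cap S$ together with whichever of $O$, $O^*$ happen to lie in $\Omega(t)$, and Remark~\ref{where-is-O-remark} already tells you that $O\in\Omega(t)$ precisely when the signs of $S$ and $\Gamma(t)$ at $O$ disagree.

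What you do instead is reconstruct this same picture by a blow-up analysis: you rescale by $1/t$ at $O$, invoke the Bernstein-type argument from Proposition~\ref{rounding-sequence-proposition} to show the rescaled $S(t)$ flattens onto the rescaled $\Omega(t)$, and then argue about which planar region the rescaled $\Omega(t)$ converges to. This works, and your identification of the limiting region $\Omega'$ in the agree/disagree cases is correct. But all of it is avoided by the paper's direct use of the normal-graph parametrization, which already packages the near-corner geometry for you. Your approach has the advantage of being more self-contained (it does not lean on Remark~\ref{where-is-O-remark}), at the cost of redoing work that the setup of Theorem~\ref{bridged-approximations-theorem} has already absorbed.
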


\begin{proof}
Recall that $S(t)$ is normal graph over $\Omega(t)$, the region in $M$ bounded by
the image of $\Gamma(t)$ under the nearest point projection from a neighborhood of $M$ to $M$.
It follow immediately that 
\[
   \| Y\cap S(t)\| =  \|Y\cap \Omega(t)\|.
\]
Note that $Y\cap \Omega(t)$ consists of $Y\cap S$ 
together with one or both of  the points $O$ and $O^*$.  
(The points $O$ and $O^*$ in $\partial S=\Gamma$ do not belong to $S$ because $S$ is open.)
Recall also (see remark~\ref{where-is-O-remark}) that $O\in \Omega(t)$ if and only if $S$ and $\Gamma(t)$
have the same sign at $O$.  Likewise, $O^*\in \Omega(t)$ if and only if $S$ and $\Gamma(t)$ have
the same sign at $O^*$.  The result follows immediately.
\end{proof}

\begin{definition} 
Let $\Mm(\Gamma)$ be the set of all open, minimal embedded $Y$-surfaces $S\subset N$
  such that $\partial S=\Gamma$.
  (Here $\Gamma=\Gamma_C$ is the curve in the statement of theorem~\ref{main-bumpy-theorem}.)

Let $\Mm(\Gamma,n,s)$ be the set surfaces $S$ in $\Mm(\Gamma)$ 
  such that $S\cap Y=n$ and such that $S$ has sign $s$ at $O$.

 If $\Gamma'$ is a smooth, $\rho_Y$-invariant curve (e.g., one of the rounded curves $\Gamma(t)$)
 in $\overline{H^+}$ such 
 that $\Gamma'/\rho_Y$ has
  exactly one component, we let $\Mm(\Gamma',n)$ be the set of 
  embedded minimal $Y$-surfaces $S$ in 
    $\overline{H^+}$ such that $\partial S=\Gamma'$ 
    and such that $S\cap Y$ has exactly $n$ points.
\end{definition}

\begin{proposition}\label{double-positive-rounding-proposition}
Suppose the rounding $t\mapsto \Gamma(t)$ is positive at $O$ and at $O^*$.
\begin{enumerate}[\upshape (1)]
\item If $n$ is even and $S\in \Mm(\Gamma,n,+)$, then $S(t)\in \Mm(\Gamma(t),n)$.
\item If $n$ is odd and $S\in \Mm(\Gamma,n,s)$, then $S(t)\in \Mm(\Gamma(t),n+1)$.
\item If $n$ is even and $S\in \Mm(\Gamma,n,-)$, then $S(t)\in \Mm(\Gamma(t),n+2)$.
\end{enumerate}
\end{proposition}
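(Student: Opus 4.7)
The plan is to reduce the statement to a direct application of Proposition~\ref{disagreements-proposition} combined with Lemma~\ref{parity-sign-lemma}. By Theorem~\ref{bridged-approximations-theorem}, $S(t)$ is already known to be a minimal embedded $Y$-surface with boundary $\Gamma(t)$, so the only thing to verify is the count $\|S(t)\cap Y\|$ and the fact that $\Gamma(t)/\rho_Y$ is connected (so that the notation $\Mm(\Gamma(t),\cdot)$ makes sense).

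First I would invoke Proposition~\ref{disagreements-proposition}, which gives
\[
  \|S(t)\cap Y\| \;=\; \|S\cap Y\| + \delta(S,\Gamma(t)) \;=\; n + \delta(S,\Gamma(t)),
\]
where $\delta(S,\Gamma(t))\in\{0,1,2\}$ counts the sign disagreements between $S$ and $\Gamma(t)$ at $O$ and $O^*$. Since the rounding is positive at both $O$ and $O^*$ by hypothesis, $\delta(S,\Gamma(t))$ is simply the number of points in $\{O,O^*\}$ at which $S$ has sign $-$.

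Next I would use Lemma~\ref{parity-sign-lemma} to determine the sign of $S$ at $O^*$ from the sign at $O$ and the parity of $n$: the two signs agree when $n$ is even and disagree when $n$ is odd. The three cases are then immediate. In case~(1), $S$ has sign $+$ at both $O$ and $O^*$, so $\delta=0$ and $\|S(t)\cap Y\|=n$. In case~(2), $S$ has opposite signs at $O$ and $O^*$, so exactly one is $-$, giving $\delta=1$ and $\|S(t)\cap Y\|=n+1$. In case~(3), $S$ has sign $-$ at both, so $\delta=2$ and $\|S(t)\cap Y\|=n+2$.

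Finally, I would check that $\Gamma(t)/\rho_Y$ is connected: positivity of the rounding at both endpoints produces two components of $\Gamma(t)$ (compare Figure~\ref{rounding-corners-figure}), but $\rho_Y$ reverses both $X$ and $Z$ directions at $O$ (since $(X,Y,Z)$ is a positively oriented frame and $\rho_Y$ fixes $Y$ pointwise), hence swaps the $X^+\!$--containing component with the $X^-\!$--containing component, so the quotient has one component. I do not anticipate any real obstacle here; the content is entirely bookkeeping of signs, with Proposition~\ref{disagreements-proposition} and Lemma~\ref{parity-sign-lemma} doing the work.
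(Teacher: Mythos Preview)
Your proposal is correct and follows essentially the same approach as the paper's proof: both reduce immediately to Proposition~\ref{disagreements-proposition} combined with Lemma~\ref{parity-sign-lemma} to compute $\delta(S,\Gamma(t))$ in each case. Your added verification that $\Gamma(t)/\rho_Y$ is connected is a detail the paper leaves implicit, but it is correct and does no harm.
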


\begin{remark}\label{sign-switch-remark}
Of course, the statement remains true if we switch all the signs.
\end{remark}

\begin{proof}
If $n$ is odd and  $S\in \Mm(\Gamma,n,s)$, then 
 $S$ has different signs at $O$ and $O^*$ by lemma~\ref{parity-sign-lemma}, 
and thus $\delta(S,\Gamma(t))=1$.

Now suppose that $n$ is even and that $S\in \Mm(\Gamma,n,s)$.  
Then by lemma~\ref{parity-sign-lemma}, the surface $S$ has the same
sign at $O^*$ as at $O$, namely $s$.  Thus $\delta(S,\Gamma)$ is $0$ if $s=+$ and is
$2$ if $s$ is negative. Proposition~\ref{double-positive-rounding-proposition} now follows
immediately from proposition~\ref{disagreements-proposition}.
\end{proof}

\begin{proposition}\label{mixed-sign-rounding-proposition}
Suppose the rounding $t\mapsto \Gamma(t)$ has sign $s$ at $O$ and $-s$ at $O^*$.
\begin{enumerate}[\upshape (1)]
\item If $n$ is odd and $S\in\Mm(\Gamma,n,s)$, then $S(t)\in \Mm(\Gamma(t),n)$.
\item If $n$ is even and $S$ is in $\Mm(\Gamma,n,+)$ or $\Mm(\Gamma,n,-)$, then $S(t)\in \Mm(\Gamma(t),n+1)$.
\item If $n$ is odd and $S\in \Mm(\Gamma,n,-s)$, then $S(t)\in \Mm(\Gamma(t),n+2)$.
\end{enumerate}
\end{proposition}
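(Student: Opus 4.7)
The plan is to reduce all three assertions to Proposition~\ref{disagreements-proposition} combined with Lemma~\ref{parity-sign-lemma}, exactly in the style of the proof of Proposition~\ref{double-positive-rounding-proposition}. The fact that $S(t)$ lies in $\Mm(\Gamma(t),k)$ for the appropriate $k$ is already built into Theorem~\ref{bridged-approximations-theorem} (smoothness, embeddedness, $Y$-surface property, and $\partial S(t)=\Gamma(t)$); only the count $k = \|S(t)\cap Y\|$ needs verification, and Proposition~\ref{disagreements-proposition} tells us
\[
  \|S(t)\cap Y\| \;=\; n + \delta(S,\Gamma(t)),
\]
so everything is a matter of computing $\delta(S,\Gamma(t))\in\{0,1,2\}$ in each case.

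The bookkeeping is driven by two facts. First, by hypothesis the rounding has sign $s$ at $O$ and sign $-s$ at $O^*$, so the signs of $\Gamma(t)$ disagree at the two corners. Second, by Lemma~\ref{parity-sign-lemma}, the signs of $S$ at $O$ and $O^*$ agree when $n=\|S\cap Y\|$ is even and disagree when $n$ is odd. Combining these gives a direct case analysis: in~(1), $n$ is odd and $S$ has sign $s$ at $O$, hence $-s$ at $O^*$, so $S$ and $\Gamma(t)$ agree at both corners and $\delta=0$; in~(3), $n$ is odd and $S$ has sign $-s$ at $O$, hence $s$ at $O^*$, so $S$ and $\Gamma(t)$ disagree at both corners and $\delta=2$; in~(2), $n$ is even, so $S$ has the same sign at $O$ and $O^*$, and since $\Gamma(t)$ has opposite signs at $O$ and $O^*$, exactly one disagreement occurs, giving $\delta=1$.

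Putting these together yields $\|S(t)\cap Y\|=n$, $n+1$, and $n+2$ respectively, which is precisely the content of the three assertions. There is no real obstacle here — the work has all been done in the preceding propositions; the proposition is essentially a second instance of the same computation carried out for Proposition~\ref{double-positive-rounding-proposition}, only with the sign of $\Gamma(t)$ at $O^*$ reversed. In fact, one could alternatively deduce Proposition~\ref{mixed-sign-rounding-proposition} from Proposition~\ref{double-positive-rounding-proposition} by applying the latter to the ``mixed'' rounding and relabeling, but the direct case analysis above is shorter and more transparent.
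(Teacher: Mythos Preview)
Your proof is correct and follows exactly the approach the paper indicates: the paper itself says only that ``the proof is almost identical to the proof of proposition~\ref{double-positive-rounding-proposition},'' and you have carried out precisely that case analysis using Proposition~\ref{disagreements-proposition} together with Lemma~\ref{parity-sign-lemma}. Your explicit computation of $\delta(S,\Gamma(t))$ in each of the three cases is accurate and complete.
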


The proof is almost identical to the proof of proposition~\ref{double-positive-rounding-proposition}.

\begin{theorem}\label{main-count-theorem}
For every nonnegative integer $n$ and for each sign $s$, the set $\Mm(\Gamma,n,s)$ has
an odd number of surfaces.
\end{theorem}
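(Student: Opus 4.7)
The plan is to reduce the mod-2 count of $\MM(\Gamma, n, s)$ to mod-2 counts of minimal $Y$-surfaces bounded by the smooth rounded curves $\Gamma(t)$. I would fix one rounding for each of the four sign choices at the two corners $O$ and $O^*$; call these $\Gamma^{++}(t), \Gamma^{--}(t), \Gamma^{+-}(t), \Gamma^{-+}(t)$, where the superscripts indicate the signs at $O$ and $O^*$. Theorem~\ref{all-accounted-for-theorem} guarantees that for every $N$ there is $t_0=t_0(N)>0$ such that for $t\in(0,t_0]$, every surface in $\MM(\Gamma^{\pm\pm}(t),m)$ with $m\le N$ is of the form $S(t)$ for a unique $S\in\MM(\Gamma)$, and Propositions~\ref{double-positive-rounding-proposition} and~\ref{mixed-sign-rounding-proposition} give an explicit bijection between $\MM(\Gamma^{\pm\pm}(t),m)$ and a specific disjoint union of at most four sets of the form $\MM(\Gamma, n', s')$.

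Writing $a_n := \|\MM(\Gamma,n,+)\|\bmod 2$ and $b_n := \|\MM(\Gamma,n,-)\|\bmod 2$, these correspondences give four families of linear equations mod~2 expressing each smooth count $\|\MM(\Gamma^{\pm\pm}(t),m)\|\bmod 2$ as a sum of four consecutive $a_n$ and $b_n$. Moreover, $\MM(\Gamma^{++}(t),m)$ and $\MM(\Gamma^{--}(t),m)$ are empty for odd $m$, because a $Y$-surface with a two-component boundary must have an even number of $Y$-fixed points (Corollary~\ref{$Y$-corollary}), and dually $\MM(\Gamma^{+-}(t),m)$ and $\MM(\Gamma^{-+}(t),m)$ are empty for even $m$. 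The resulting system is triangular, so knowing every smooth count mod~2 determines each $a_n$ and $b_n$ inductively.

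It remains to evaluate the smooth counts. For this I would use a degree-theoretic deformation argument: the mod-2 number of embedded minimal $Y$-surfaces of a fixed topological type bounded by a smooth bumpy curve on $\partial N$ is invariant under $G$-equivariant isotopies of the curve through bumpy smooth configurations (cf.~\cite{white-bumpy}), provided uniform area and curvature bounds hold along the isotopy. For the $++$ rounding I would isotope the two components of $\Gamma^{++}(t)$ on $H$ to small smooth circles around a $\rho_Y$-interchanged pair of points of $H$ lying well away from $Y$; in the resulting simple configuration the unique bounding minimal $Y$-surface is the pair of tiny strictly-stable disks on $H$, giving $\|\MM(\Gamma^{++}(t),0)\|\equiv 1$ and $\|\MM(\Gamma^{++}(t),2k)\|\equiv 0\bmod 2$ for $k\ge 1$. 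A parallel argument with $\Gamma^{+-}(t)$ shrunk on $H$ to a small smooth circle around a single point $q\in Y^+$ produces $\|\MM(\Gamma^{+-}(t),1)\|\equiv 1$ and $\|\MM(\Gamma^{+-}(t),2k+1)\|\equiv 0$ for $k\ge 1$. Symmetric arguments handle the $--$ and $-+$ cases. Substituting into the triangular system and inducting on $n$ then gives $a_n=b_n=1$ for every $n\ge 0$.

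The main obstacle will be the degree-theoretic invariance step. One needs uniform area bounds along the entire isotopy (provided by the isoperimetric inequality of Remark~\ref{isoperimetric-equivalence-remark}, which persists in a neighborhood of the given metric), uniform curvature bounds via Gauss-Bonnet together with~\cite{white-curvature-estimates}*{Theorem~3}, and a bumpy-boundary genericity statement ensuring that a generic isotopy through bumpy boundary curves encounters only isolated transverse bifurcations of $\rho_Y$-invariant minimal surfaces, so that new surfaces can only appear and disappear in pairs and the mod-2 count is preserved. The strict stability hypothesis~\eqref{strictly-stable-hypothesis} of Theorem~\ref{main-bumpy-theorem} anchors the simple-configuration limit by ruling out bifurcation of the trivial bounding disk(s) there, and Lemma~\ref{parity-sign-lemma} together with Corollary~\ref{$Y$-corollary} guarantees the parity consistency of the resulting equations.
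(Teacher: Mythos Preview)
Your approach is essentially the same as the paper's: set up the bijection between $\MM(\Gamma(t),m)$ and a union of four sets $\MM(\Gamma,n',s')$ via Propositions~\ref{double-positive-rounding-proposition} and~\ref{mixed-sign-rounding-proposition}, evaluate the smooth count $\|\MM(\Gamma(t),m)\|_{\text{mod }2}$, and solve the resulting triangular recursion. The paper invokes Theorem~\ref{Y-degree-theorem} (proved in~\cite{hoffman-white-number}) for the smooth count, while you sketch that theorem's proof directly by isotoping to tiny circles; the paper also uses only one rounding per case rather than all four, but this is cosmetic.

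There is one point where your sketch is imprecise. Your assertion that along a generic isotopy ``new surfaces can only appear and disappear in pairs'' is not true as stated, because $\Gamma(t)$ bounds a surface $\Omega(t)$ lying in $\partial N$ (namely in $H$), and when such a boundary surface crosses a stability threshold a single interior surface can branch off, changing the count by~$1$. Theorem~\ref{Y-degree-theorem} handles this by counting a surface in $\partial N$ only when it is stable; the paper then checks separately (Lemma~\ref{strictly-stable-lemma}, using hypothesis~\eqref{strictly-stable-hypothesis}) that $\Omega(t)$ \emph{is} strictly stable for small $t$, so that the count used in the recursion agrees with the one computed by the degree theorem. In your isotopy argument this is what actually forces the invariance: once $\Omega(t)$ is strictly stable, shrinking the curve in $H$ keeps the bounded region strictly stable throughout, so no branching occurs. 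Your remark that the strict stability hypothesis ``anchors the simple-configuration limit'' mislocates its role---the tiny disks at the end are automatically strictly stable; the hypothesis is needed at the \emph{start} of the isotopy, for the original $\Gamma(t)$.
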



\begfig
\centerline{
         \includegraphics[width=3.0in]{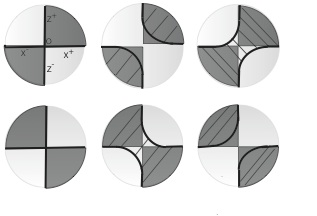}
}
 \caption{\label{signs-figure} 
  {\bf The sign of $S$ and $\Gamma (t)$ at $O$.}\newline 
   The behavior near $O$ of a surface $S\subset H^+$ with boundary $\Gamma$.\newline  {\bf First Column:}
  The surface $S$, here illustrated by
 the darker shading,   is tangent at
 $O$ to either the positive quadrants of $H$ (as illustrated on
 top) or the negative quadrants (on the bottom).  In the sense of section~\ref{sign-section},
  $S$ is positive at $O$ in the top illustration and negative  in the bottom illustration.  \newline {\bf Second column:} A curve $\Gamma (t)$ in a  positive rounding $t\rightarrow \Gamma(t)$ of $\Gamma$.   The  striped regions lie in the projections
 $\Omega (t)$ defined in Theorem~\ref{bridged-approximations-theorem}. Note that on the top $O\not\in \Omega (t) $.
  On the bottom, $O\in \Omega (t)$.   
 \newline  {\bf Third Column:} A curve $\Gamma (t)$ in a negative rounding of $\Gamma$. The striped regions lie in
 $\Omega (t)$.  Note that on top we have  $O\in \Omega (t)$.
  On the bottom, $O\not \in \Omega(t)$. 
  }
\endfig


\begin{remark}\label{periodic-case-done-remark}
Note that theorem~\ref{main-count-theorem} is the same as theorem~\ref{main-bumpy-theorem},
because ever since section~\ref{bumpy-section}, we have been working with an arbitrary Riemannian metric
on $\sS^2\times\RR$ that satisfies the hypotheses of theorem~\ref{main-bumpy-theorem}.
By proposition~\ref{bumpy-suffices-proposition}, 
theorem~\ref{main-bumpy-theorem} implies 
theorem~\ref{special-existence-theorem-tilted},
 which by proposition~\ref{tilted-suffices-proposition}
  implies theorem~\ref{special-existence-theorem}, 
  which by proposition~\ref{reduction-to-H^+-proposition} 
  implies theorem~\ref{theorem1} for $h<\infty$.  
  Thus in proving theorem~\ref{main-count-theorem}, we complete the proof of the periodic case of 
  theorem~\ref{theorem1}.
\end{remark}

\begin{proof}
Let $f(n,s)$ denote the mod $2$ number of surfaces in $\Mm(\Gamma,n,s)$.
Note that $f(n,s)=0$ for $n<0$ since $Y\cap S$ cannot have a negative number of points.
The theorem asserts that $f(n,s)=1$ for every $n\ge 0$.

We prove that the theorem by induction.  Thus we let $n$ be a nonnegative integer,
we assume that $f(k,s)=1$ for all nonnegative $k< n$ and $s=\pm$, and we must prove
that $f(n,s)=1$.

{\bf Case 1}: $n$ is even and $s=+$.

To prove that $f(n,+)=1$, we choose a rounding
 $t\mapsto \Gamma(t)$ that is positive at both $O$ and $O^*$.

We choose $\tau$ sufficiently small that for every $S\in\Mm(\Gamma)$ with $\|Y\cap S\|\le n$,
the family $t\mapsto S(t)$ is defined for all $t\in (0,\tau]$.   We may also choose $\tau$ small
enough that if $S$ and $S'$ are two distinct such surfaces, then $S(t)\ne S'(t)$ for $t\le\tau$.
(This is possible since $S(t)\to S$ and $S'(t)\to S'$ as $t\to 0$.)

By theorem~\ref{all-accounted-for-theorem}, we can fix a $t$ sufficiently small that
 for each surface $\Sigma\in \Mm(\Gamma(t),n)$, there is a surface 
$S=S_\Sigma\in \Mm(\Gamma)$
such that $\Sigma=S_\Sigma(t)$.  Since all such $S(t)$ are $\rho_Y$-nondegenerate,
this implies
\begin{equation}\label{suitably-bumpy}
\text{The surfaces in $\Mm(\Gamma(t), n)$ are all $\rho_Y$-nondegenerate.}
\end{equation}

By proposition~\ref{double-positive-rounding-proposition}, $S_\Sigma$ belongs to the union $U$ of
\begin{equation}\label{gang-of-four}
 \text{$\Mm(\Gamma,n,+)$,  $\Mm(\Gamma,(n-1),+)$, $\Mm(\Gamma,(n-1),-)$, and $\Mm(\Gamma, (n-2),-)$.}
\end{equation}
By the same proposition, if $S$ belongs to the union $U$, then $S(t)\in \Mm(\Gamma(t),n)$.
Thus $\Sigma\mapsto S_\Sigma$ gives a bijection from $\Mm(\Gamma(t),n)$ to $U$,
so the number of surfaces in $\Mm(\Gamma(t),n)$ is equal to the sum of the numbers of surfaces
in the four sets in~\eqref{gang-of-four}.  Reducing mod $2$ gives
\begin{equation}\label{recursion-formula-even}
  \| \Mm(\Gamma(t), n) \|_\text{mod $2$} = f(n,+) + f((n-1),+) + f((n-1),-) + f((n-2),-).
\end{equation}
By induction, $f(n-1,+)=f(n-1,-)$ (it is $0$ for $n=0$ and $1$ if $n\ge 2$), so
\begin{equation}\label{recursion-formula-even}
  \| \Mm(\Gamma(t), n) \|_\text{mod $2$} = f(n,+)  + f((n-2),-).
\end{equation}

As mentioned earlier, we have good knowledge about the mod $2$ number of minimal surfaces
bounded by suitably bumpy smooth embedded curves.  In particular, $\Gamma(t)$ is smooth
and embedded and has the bumpiness property~\eqref{suitably-bumpy}, which implies
that (see theorem~\ref{Y-degree-theorem})
\begin{equation}\label{rounded-curve-count}
\|\Mm(\Gamma(t),n)\|_\text{mod $2$}
=
\begin{cases}
1 &\text{if $n=1$ and $\Gamma(t)$ is connected}, \\
1 &\text{if $n=0$ and $\Gamma(t)$ is not connected, and} \\
0 &\text{in all other cases.}
\end{cases}
\end{equation}
Combining~\eqref{recursion-formula-even} and~\eqref{rounded-curve-count} gives
    $f(n,+)=1$.

{\bf Case 2}: $n$ is even and $s$ is $-$.  The proof is exactly like the proof of case 1, except
that we use a rounding that is negative at $O$ and at $O^*$.  (See remark~{sign-switch-remark}.)

{\bf Cases 3 and 4}: $n$ is odd and $s$ is $+$ or $-$.

The proof is almost identical to the proof in the even case, except that we use
a rounding $t\mapsto \Gamma(t)$ that has sign $s$ at $O$ and $-s$ at $O^*$.
In this case we still get a bijection $\Sigma\mapsto S_\Sigma$, but it is a bijection
from $\Mm(\Gamma(t),n)$ to the union $U$ of the sets
\begin{equation}\label{odd-gang-of-four}
 \text{$\Mm(\Gamma,n,s)$, $\Mm(\Gamma,(n-1),+)$, $\Mm(\Gamma,(n-1),-)$, and $\Mm(\Gamma,(n-2),-s)$.}
\end{equation}
Thus $\Mm(\Gamma(t),n)$ and $U$ have the same number of elements mod $2$:
\[
  \|\Mm(\Gamma(t),n)\|_ \text{mod $2$} =  f(n,s) + f((n-1),+) + f((n-1),-) + f((n-2),-s).
\]
As in case 1, $f((n-1),+)=f((n-1),-)$ by induction, so their sum is $0$:
\[
  \|\Mm(\Gamma(t),n)\|_ \text{mod $2$} =  f(n,s) + f(n-2,-s).
\] 
Combining this with~\eqref{rounded-curve-count} gives
$f(n,s)=1$.
\end{proof}


\renewcommand{\thesubsection}{\thetheorem}
   
\section{Counting minimal surfaces bounded by smooth curves}\label{smooth-count-section}

In the previous section, we used certain the facts about mod $2$ numbers of minimal surfaces
bounded by smooth curves.  In this section we state those facts, and show that they apply
in our situation.   The actual result we need is theorem~\ref{Y-degree-theorem} below,
and the reader may go directly to that result.  However, we believe it may be helpful to 
first state a simpler result that has the main idea of theorem~\ref{Y-degree-theorem}:

\begin{theorem}\label{degree-theorem}
Suppose $N$ is compact, smooth, strictly mean convex Riemannian $3$-manifold diffeomorphic to the
a ball.
Suppose also that $N$ contains no smooth, closed minimal surfaces.  
Let $\Sigma$ be any compact $2$-manifold
with boundary.   Let $\Gamma$ be a smooth embedded curve in $\partial N$, and
let $\MM(\Gamma, \Sigma)$ be the set of embedded minimal surfaces in $N$ that have boundary $\Gamma$ and
that are diffeomorphic to $\Sigma$.   Suppose all the surfaces in $\MM(\Gamma,\Sigma)$ are nondegenerate.
Then the number of those surfaces is odd if $\Sigma$ is a disk or union of disks, and is even if not.
\end{theorem}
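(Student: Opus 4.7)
The plan is to establish this by a mod~$2$ degree theory for embedded minimal surfaces of prescribed topological type bounded by a fixed curve in a strictly mean convex region. The first step is finiteness of $\MM(\Gamma,\Sigma)$: by the assumption that $N$ contains no closed minimal surfaces and the isoperimetric equivalence in remark~\ref{isoperimetric-equivalence-remark}, the areas are uniformly bounded. Combined with Gauss--Bonnet and the fixed topology of $\Sigma$, this yields a uniform $L^{2}$ bound on the second fundamental form, and then standard curvature estimates give smooth subsequential convergence of any sequence in $\MM(\Gamma,\Sigma)$. The nondegeneracy hypothesis forces $\MM(\Gamma,\Sigma)$ to be discrete, hence finite.

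Second, I would show that $|\MM(\Gamma,\Sigma)| \bmod 2$ is invariant under deformations of the pair $(N,\gamma,\Gamma)$ through admissible triples, i.e., strictly mean convex Riemannian balls containing no closed minimal surfaces together with a smooth embedded boundary curve. Joining the given data to a chosen model via such a path $s\in[0,1]$, one considers the parametrized moduli space
\[
  \mathcal{U} := \{ (s, M) : s\in [0,1],\ M\in \MM(\Gamma_s,\Sigma) \}.
\]
At nondegenerate pairs $\mathcal{U}$ is locally a smooth $1$-manifold by the implicit function theorem, and a generic choice of path (inside a Banach manifold of metrics and boundary curves) makes the projection $\mathcal{U}\to [0,1]$ have only simple fold singularities, at which two branches of nondegenerate minimal surfaces meet and cancel. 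The finiteness of Step~1, applied uniformly along the path, shows $\mathcal{U}$ is a compact $1$-manifold whose boundary sits over $\{0,1\}$, so the parities of the fibers at $s=0$ and $s=1$ coincide.

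Third, the parity is computed in a standard model. Fix a round Euclidean ball $B\subset\RR^3$, which is admissible, and let $\Gamma_0\subset\partial B$ be $k$ tiny, nearly round circles at widely separated points of $\partial B$, where $k$ is the number of components of $\Gamma$. For $\Gamma_0$ sufficiently small, a blow-up argument at each component together with Bernstein's theorem forces every embedded minimal surface bounded by $\Gamma_0$ to be, near each component, a graph close to a flat disk; this rules out both nontrivial topology on each component and surfaces joining distinct components. Hence $\MM(\Gamma_0,\Sigma_0)$ consists of exactly one surface when $\Sigma_0$ is the disjoint union of $k$ disks with boundary $\Gamma_0$ and is empty otherwise. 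A small generic perturbation of the round metric makes this surface nondegenerate. Combined with Step~2, this gives $|\MM(\Gamma,\Sigma)|\equiv 1 \pmod 2$ when $\Sigma$ is a disk or a union of disks and $|\MM(\Gamma,\Sigma)|\equiv 0 \pmod 2$ otherwise.

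The main obstacle is the bifurcation analysis in Step~2: verifying that a generic path meets the degenerate stratum transversally at simple folds, so that surfaces appear and disappear in cancelling pairs. This requires a transversality argument within a Banach manifold of metrics and boundary curves, a choice of ambient function space in which the minimal-surface equation is elliptic Fredholm of index zero, and uniform control preventing nondegenerate solutions from escaping as folds are approached; the last is furnished by the finiteness argument of Step~1 applied along the path. One must also ensure that the deforming triples remain in the admissible class, which is automatic since strict mean convexity is open and, by remark~\ref{isoperimetric-equivalence-remark}, the no-closed-minimal-surface condition is open as well.
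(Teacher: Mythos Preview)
The paper does not prove this theorem; it simply cites \cite{hoffman-white-number}*{theorem~2.1}. Your three-step mod~$2$ degree strategy (finiteness, parity invariance along a path, computation in a model) is the standard approach and is essentially what that reference does.

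However, your implementation of Step~2 differs from the cited argument in a way that introduces a genuine gap. You propose to deform the entire triple $(N,\gamma,\Gamma)$ to a round Euclidean ball with small circles on its boundary, and you justify remaining in the admissible class by noting that strict mean convexity and the no-closed-minimal-surface condition are \emph{open}. But openness does not give you a path: you need the space of admissible Riemannian balls to be path-connected, and you have not argued that. A linear interpolation of metrics need not preserve mean convexity of the boundary, nor the absence of closed minimal surfaces.

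The approach in \cite{hoffman-white-number} (and sketched in the paper's commented-out proof of theorem~\ref{Y-degree-theorem}) avoids this entirely: one keeps $(N,\gamma)$ fixed and isotopes only the curve $\Gamma$ within $\partial N$. Since $\partial N$ is a $2$-sphere, any finite collection of disjoint embedded circles can be isotoped to a collection of tiny, nearly round circles. One then shows directly (e.g.\ via the argument in the last paragraph of \S3 of \cite{white-new-applications}) that a sufficiently small, nearly circular curve in a mean-convex $3$-manifold bounds exactly one embedded minimal surface, a strictly stable disk. This sidesteps any question about connecting metrics and makes your Step~3 work inside the original $N$ rather than in a separate model.
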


See~\cite{hoffman-white-number}*{theorem~2.1} for the proof.

If we replace the assumption of strict mean convexity by mean convexity, then $\Gamma$ may bound
a minimal surface in $\partial N$.   In that case, theorem~\ref{degree-theorem}
 remains true provided (i) we assume that
no two adjacent components of $\partial N\setminus \Gamma$ are both minimal surfaces, 
and (ii) we count minimal surfaces
in $\partial N$ only if they are stable.
  Theorem~\ref{degree-theorem} also generalizes to the case of curves and surfaces
invariant under a finite group $G$ of symmetries of $N$.  If one of those symmetries is $180^\circ$ rotation
about a geodesic $Y$, then the theorem also generalizes to $Y$-surfaces:

\begin{theorem}\label{Y-degree-theorem}
Let $N$ be a compact region in a smooth Riemannian $3$-manifold such that $N$ is homeomorphic to the $3$-ball.
Suppose that $N$ has piecewise smooth, weakly mean-convex boundary,
and that $N$ contains no closed minimal surfaces.
Suppose also that  $N$ admits a $180^\circ$ rotational symmetry $\rho_Y$ about a geodesic $Y$.

Let $C$ be a $\rho_Y$-invariant smooth closed curve in $(\partial N)\setminus Y$ such that $C/\rho_Y$ is connected,
 and such that no two adjacent components of $(\partial N)\setminus C$ are both smooth minimal surfaces.
Let $\Mm^*(C,n)$ be the collection of $G$-invariant, 
minimal embedded $Y$-surfaces $S$ in $N$ with boundary $C$ such that
(i) $S\cap Y$ has exactly $n$ points, and
(ii) if $S\subset \partial N$, then $S$ is stable.
Suppose $C$ is $(Y,n)$-bumpy in the following sense:
all the $Y$-surfaces in  $\Mm^*(C,n)$ are $\rho_Y$-nondegenerate (i.e., have no nontrivial $\rho_Y$-invariant
jacobi fields.)
Then:
\begin{enumerate}
\item\label{union-of-disks-case}
     If $C$ has two components and $n=0$, the number of surfaces in $\Mm^*(C,n)$ is odd.
\item\label{disk-case}
     If $C$ has one component and $n=1$, the number of surfaces in $\Mm^*(C,n)$ is odd.
\item\label{more-complicated-case}
     In all other cases, the number of surfaces in $\Mm^*(C,n)$ is even.
\end{enumerate}
\end{theorem}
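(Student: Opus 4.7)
The plan is to reduce Theorem~\ref{Y-degree-theorem} to the non-equivariant result Theorem~\ref{degree-theorem} by passing to the topological quotient $\tilde N := N/\rho_Y$ and counting minimal disks there.  Topologically $\tilde N$ is a $3$-ball, and the image $\tilde Y := (Y \cap N)/\rho_Y$ of the axis becomes an arc in $\tilde N$ whose endpoints lie on $\partial \tilde N$ but whose interior lies in the interior of $\tilde N$ (this is the branching locus of the order-two quotient map $N \to \tilde N$).  By Proposition~\ref{Y-surface-topology-propostion}, each surface $S \in \Mm^*(C,n)$ descends to a topologically embedded disk $\tilde S = S/\rho_Y$ in $\tilde N$ with $\partial \tilde S = \tilde C := C/\rho_Y$ and with $n$ transverse interior intersections with $\tilde Y$ (the images of the fixed-point set $S \cap Y$).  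Conversely, every embedded minimal disk in $\tilde N$ bounded by $\tilde C$ that meets $\tilde Y$ transversely in $n$ points lifts to a unique $Y$-surface in $\Mm^*(C,n)$.  Counting $\Mm^*(C,n)$ thus reduces to counting such disks.

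Next I would argue that the mod-$2$ count of these disks, for fixed $\tilde C$ and fixed $n$, is invariant under generic $\rho_Y$-equivariant deformations.  This is an adaptation of the cobordism argument proving Theorem~\ref{degree-theorem} in \cite{hoffman-white-number}: in a generic $1$-parameter family of bumpy metrics and/or curves, the moduli space of $Y$-surfaces in $\Mm^*(\cdot, n)$ forms a $1$-manifold over the parameter interval, and bifurcations occur in $\rho_Y$-equivariant pairs (fold-type creation/annihilation), so parity is preserved.  The area bound of Remark~\ref{isoperimetric-equivalence-remark} (guaranteed by the hypothesis that $N$ contains no closed minimal surface) prevents escape to infinity, and the ``no two adjacent components of $(\partial N)\setminus C$ are both minimal'' hypothesis together with the stability caveat for surfaces in $\partial N$ prevents spurious boundary transitions, exactly as in \cite{hoffman-white-number}.

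The theorem then follows from a reference computation.  In $\partial \tilde N \cong S^2$, the complement of the two endpoints of $\tilde Y$ has two isotopy classes of embedded circles: those encircling $0$ or $2$ endpoints and those encircling exactly $1$.  A curve $\tilde C$ lifts to a curve $C \subset \partial N$ with $2$ components in the first class and with $1$ component in the second class.  I would deform $\tilde C$ along the cobordism to a small embedded loop:
\begin{enumerate}
\item[(a)] If $\tilde C$ encircles no endpoint of $\tilde Y$, shrink it to a small circle far from $\tilde Y$: then the unique minimal disk bounded by $\tilde C$ is a small cap with $n = 0$, giving $|\Mm^*(C,0)| \equiv 1 \pmod 2$ and $|\Mm^*(C,n)| \equiv 0$ for $n\geq 1$.
\item[(b)] If $\tilde C$ encircles one endpoint of $\tilde Y$, shrink it to a small loop around that endpoint: the unique minimal disk meets $\tilde Y$ exactly once, giving $|\Mm^*(C,1)| \equiv 1$ and $|\Mm^*(C,n)| \equiv 0$ for $n\neq 1$.
\end{enumerate}
This matches cases~\eqref{union-of-disks-case} and~\eqref{disk-case} of the theorem respectively, and the even counts in case~\eqref{more-complicated-case} follow from the vanishing of the reference together with cobordism invariance.

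The main obstacle is the technical setup of the equivariant cobordism: one must verify transversality in the $\rho_Y$-equivariant moduli spaces and handle surfaces that approach or lie on $\partial N$ or on $Y$ (corresponding to non-transverse intersections downstairs).  Since the quotient metric on $\tilde N$ is singular along $\tilde Y$, it is cleaner to phrase the cobordism argument directly in $N$ with $\rho_Y$-equivariance enforced throughout, rather than working in $\tilde N$; the bijection in Paragraph 1 is used only to translate the reference computation.  These technical issues are essentially the same ones addressed in \cite{hoffman-white-number} for Theorem~\ref{degree-theorem} and its symmetric refinements, and no new ingredient beyond careful equivariant bookkeeping is required.
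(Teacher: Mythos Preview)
Your proposal is correct and follows essentially the same strategy as the paper's (commented-out) sketch, which in turn summarizes the argument in \cite{hoffman-white-number}*{\S4.7}: establish mod-$2$ invariance of $\|\Mm^*(C,n)\|$ under generic $\rho_Y$-equivariant isotopies of $C$ in $\partial N$, then compute at a reference curve obtained by shrinking $C$ to one small circle about a point of $Y\cap\partial N$ (when $C$ is connected) or to a pair of small, well-separated circles (when $C$ has two components). Your quotient $\tilde N = N/\rho_Y$ is a useful heuristic for organizing the two isotopy classes of $\tilde C$ in $\partial\tilde N\setminus\{\text{two branch points}\}$ and for predicting the reference counts, but---as you yourself note---the singular cone metric along $\tilde Y$ means the actual cobordism must be run equivariantly upstairs in $N$, which is exactly what the paper does; so the quotient adds expository clarity but no technical shortcut.
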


We remark (see corollary~\ref{$Y$-corollary})
 that in case~\eqref{disk-case}, each surface in $\Mm^*(C,n)$ is a disk, 
 in case~\eqref{union-of-disks-case}, each surface
in $\Mm^*(C,n)$ is the union of two disks, 
and in case~\eqref{more-complicated-case}, each surface is $\Mm^*(C,n)$ 
has more complicated topology (it is connected but not simply connected).

Theorem~\ref{Y-degree-theorem} is proved 
in \cite{hoffman-white-number}*{\S4.7}.

In the proof of theorem~\ref{main-count-theorem}, 
we invoked the conclusion of theorem~\ref{Y-degree-theorem}.
We now justify that. 
  Let $\Gamma(t)$ be the one of the curves formed by rounding $\Gamma$ in section~\ref{roundings-subsection}.
Note that $\Gamma(t)$ bounds a unique minimal surface $\Omega(t)$ that lies in the helicoidal portion of 
 $\partial N$, i.e, that lies in $H\cap \{|z|\le h\}$.
(The surface $\Omega(t)$ is a topologically a disk, an annulus, or a pair of disks, depending 
on the signs of the rounding at $O$ and $O^*$.)
Note also that the complementary region $(\partial N)\setminus \Omega(t)$ is piecewise smooth, but
not smooth.
To apply theorem~\ref{Y-degree-theorem} as we did, we must check that:
\begin{enumerate}[\upshape (i)]
\item\label{no-closed-surface-item} $N$ contains no closed minimal surfaces.
\item\label{no-adjacent-item} No two adjacent components of $\partial N$ are smooth minimal surfaces.
\item\label{stable-item} The surface $\Omega(t)$ is strictly stable. 
    (We need this because in the proof of Theorem~\ref{main-count-theorem}, we counted $\Omega(t)$,
     whereas theorem~\ref{Y-degree-theorem} tells us to count it only if it is stable.)
\end{enumerate}
Now~\eqref{no-closed-surface-item} is true by hypothesis on the Riemannian metric on $N$:
see~theorem~\ref{main-bumpy-theorem}\eqref{no-closed-minimal-surface-hypothesis}.
Also,~\eqref{no-adjacent-item} is true because (as mentioned above) the surface $(\partial N)\setminus \Omega(t)$
is piecewise-smooth but not smooth.

On the other hand,~\eqref{stable-item} need not be true in general. 
However, in the proof of theorem~\ref{main-count-theorem}, 
we were allowed to choose $t>0$ as small as we like,
and~\eqref{stable-item} is true if $t$ is sufficiently small:

\begin{lemma}\label{strictly-stable-lemma}
Let $t\mapsto \Gamma(t)\subset H$ be a rounding as in theorem~\ref{bridged-approximations-theorem}.
Then the region $\Omega(t)$ in $\partial N$ bounded by $\Gamma(t)$ is strictly stable
provided $t$ is sufficiently small.
\end{lemma}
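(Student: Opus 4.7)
The plan is to identify the limiting region, invoke the strict stability hypothesis on $H\setminus\Gamma$, and then argue that the first Dirichlet eigenvalue of the Jacobi operator depends continuously on the region under the (corner-smoothing) convergence $\Omega(t)\to\Omega_0$.

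More concretely, let $\Omega_0$ denote the bounded component of $(H\cap\{|z|\le h\})\setminus\Gamma$ to which $\Omega(t)$ converges as $t\to 0$ (the region depends on whether the rounding is positive or negative at $O$ and $O^*$, but in all cases it is one of the two bounded components of $H\setminus\Gamma$). By hypothesis~\eqref{strictly-stable-hypothesis} of theorem~\ref{main-bumpy-theorem}, $\Omega_0$ is a strictly stable $\gamma$-minimal surface; in the variational formulation this means that the first Dirichlet eigenvalue $\lambda_0$ of the Jacobi operator on $\Omega_0$ (with Dirichlet condition on the closure of its piecewise smooth boundary, the corners included as boundary points) is strictly positive.

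Next I would show that, for each fixed rounding, $\lambda(t):=\lambda_1(\Omega(t))\to\lambda_0$ as $t\to 0$. The convergence $\overline{\Omega(t)}\to\overline{\Omega_0}$ is smooth away from the corners of $\Gamma$, and only cosmetic modifications occur inside the balls $\mathbf{B}(q,t)$ about the corners $q$. I would prove the two inequalities separately:
\begin{enumerate}[\upshape(a)]
\item $\limsup_{t\to 0}\lambda(t)\le\lambda_0$: test the Rayleigh quotient on $\Omega(t)$ with a positive first eigenfunction $u_0$ of $\Omega_0$, multiplied by a logarithmic cutoff $\chi_\epsilon$ that vanishes on $\bigcup_q \mathbf{B}(q,\epsilon)$ and equals one outside $\bigcup_q\mathbf{B}(q,2\epsilon)$. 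Since points have vanishing capacity in dimension two, the cutoff contributes $O(1/|\log\epsilon|)$ to both numerator and denominator of the Rayleigh quotient; letting $\epsilon\to 0$ after $t\to 0$ gives the bound.
\item $\liminf_{t\to 0}\lambda(t)\ge\lambda_0$: take positive first eigenfunctions $u_t$ on $\Omega(t)$ normalized in $L^2$ and extract a subsequential limit. Uniform area bounds together with standard elliptic estimates on compact subsets of $\Omega_0\setminus\{\text{corners}\}$ yield a nonnegative weak limit $u$ on $\Omega_0$ satisfying $Ju=-\lambda u$ with $\lambda=\liminf\lambda(t)$; the capacity-zero argument again shows $u$ has no loss of $L^2$-mass at the corners and vanishes in the Sobolev sense on $\partial\Omega_0$, so $\lambda\ge\lambda_0$.
\end{enumerate}
Combining (a) and (b) gives $\lambda(t)\to\lambda_0>0$, hence $\lambda(t)>0$ for all sufficiently small $t$, which is precisely the strict stability of $\Omega(t)$.

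The main obstacle is handling the corners in step (b): one has to rule out the possibility that the eigenfunctions $u_t$ concentrate near the $O(t)$-scale rounded corners and produce a phantom limit with smaller eigenvalue. This is controlled by the two-dimensional capacity-zero property of isolated points, using Morrey-type logarithmic cutoffs as in step (a). The remaining ingredients —smooth convergence of $\Omega(t)$ to $\Omega_0$ away from the corners, area bounds coming from the isoperimetric hypothesis $(3')$ of remark~\ref{isoperimetric-equivalence-remark}, and standard Schauder estimates— are routine in this context.
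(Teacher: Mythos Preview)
Your approach is essentially the same as the paper's: track the first Dirichlet eigenvalue $\lambda(t)$ of the Jacobi operator on $\Omega(t)$, show its limit as $t\to 0$ is an eigenvalue of the limiting region, and invoke the strict-stability hypothesis~\eqref{strictly-stable-hypothesis} of theorem~\ref{main-bumpy-theorem}.

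There is, however, one inaccuracy and one redundancy worth noting. First, your identification of the limit region is not quite right. As $t\to 0$, the rounded curve $\Gamma(t)$ converges to $\Gamma$, and $\Omega(t)$ converges to the \emph{union} of the two bounded components of $H\setminus\Gamma$, not to a single one of them. (Indeed, depending on the signs of the rounding at $O$ and $O^*$, $\Omega(t)$ is topologically a disk, an annulus, or a pair of disks; in each case its limit is the two-component region.) This does not damage your conclusion, since hypothesis~\eqref{strictly-stable-hypothesis} says both components are strictly stable, hence so is their union. Second, your step~(a) is unnecessary: to conclude $\lambda(t)>0$ for small $t$ you only need $\liminf_{t\to 0}\lambda(t)\ge\lambda_0>0$, i.e., your step~(b). (An a~priori upper bound on $\lambda(t)$ is still needed to get $H^1$ control of the eigenfunctions, but this follows from domain monotonicity: $\Omega(t)$ contains a fixed nonempty open set for all small $t$.)

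The paper's proof is shorter because it reuses work already done: claim~\ref{eigenfunction-claim} establishes exactly the eigenfunction-convergence statement you need, with $L^\infty$ rather than $L^2$ normalization, and the argument there (showing the max point cannot escape to a corner by blow-up to a planar harmonic function) plays the same role as your capacity-zero cutoff. Your direct $L^2$/Rellich argument is a perfectly good alternative.
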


(We remark that is a special case of a more general principle: if two strictly stable minimal
surfaces are connected by suitable thin necks, the resulting surface is also strictly stable.)

\begin{proof}
Let $\lambda(t)$ be the lowest eigenvalue of the jacobi operator on $\Omega(t)$.
Note that $\lambda(t)$ is bounded. (It is bounded below by the lowest
eigenvalue of a domain in $H$ that contains all the $\Omega(t)$ and above by the lowest
eigenvalue of a domain that is contained in all the $\Omega(t)$.)  
It follows that any subsequential limit $\lambda$ as $t\to 0$ of the $\lambda(t)$ is an eigenvalue
of the jacobi operator on $\Omega$, where $\Omega$ is the region in $H$ bounded
by $\Gamma$.
(This is a special case of claim~\ref{eigenfunction-claim}.)

By hypothesis\footnote{This is the only place where that hypothesis is used.}~\eqref{strictly-stable-hypothesis}
of theorem~\ref{main-bumpy-theorem}, $\Omega$ is strictly stable, so $\lambda>0$ and therefore
 $\lambda(t)>0$ for all sufficiently small $t>0$.
\end{proof}

 \section{General results on existence of limits}\label{general-results-section}
    
At this point, we have completed the proof of theorem~\ref{theorem1} in the case $h<\infty$.
That is,
we have established the existence of periodic genus-$g$ helicoids in $\sS^2(R)\times \RR$.
During that proof (in sections~\ref{bumpy-section}--\ref{smooth-count-section}), 
we considered rather general Riemannian metrics on $\sS^2(R)\times\RR$.
However, from now on we will always use the standard product metric.
In the remainder of the paper, 
\begin{enumerate}
\item We prove existence of nonperiodic genus-$g$ helicoids in $\sS^2(R)\times \RR$ 
by taking limits of periodic examples as the period tends to $\infty$.
\item We prove existence of
helicoid-like surfaces in $\RR^3$ by taking suitable
limits of nonperiodic examples in $\sS^2(R)\times \RR$ as $R\to\infty$.
\end{enumerate}
(We remark that one can also get periodic genus $g$-helicoids in $\RR^3$
as limits of periodic examples in $\sS^2(R)\times \RR$ as $R\to\infty$ with the
period kept fixed.)

Of course one could take the limit as sets in the Gromov-Hausdorff sense.
But to get smooth limits, one needs curvature estimates and local area bounds:
without curvature estimates, the limit need not be smooth, 
whereas with curvature estimates but without local area bounds, limits might be minimal 
laminations rather than smooth, properly embedded surfaces.

In fact, local area bounds are the key, because such bounds allow
one to use the following compactness theorem 
(which extends similar results in~\cite{choi-schoen}, \cite{anderson}, 
and~\cite{white-curvature-estimates}):


\begin{theorem}[General Compactness Theorem]\label{general-compactness-theorem}
Let $\Omega$ be an open subset of a Riemannian $3$-manifold.
Let $g_n$ be a sequence of smooth Riemannian metrics on $\Omega$ converging smoothly
to a Riemannian metric $g$.
Let $M_n\subset \Omega$ be a sequence of properly embedded surfaces 
such that $M_n$ is minimal with respect to $g_n$.
Suppose also that  the area and the genus of $M_n$ are bounded independently of $n$.
Then (after passing to a subsequence) the $M_n$ converge to a  smooth, properly embedded
$g$-minimal surface $M'$.  For each connected component $\Sigma$ of $M'$,
either
\begin{enumerate}
\item the convergence to $\Sigma$ is smooth with multiplicity one, or
\item the convergence is smooth (with some multiplicity $>1$) away from a discrete set $S$.
\end{enumerate}
In the second case, if $\Sigma$ is two-sided, then it must be stable.

Now suppose $\Omega$ is an open subset of $\RR^3$.  (The metric $g$ need not be flat.)
If $p_n\in M_n$ converges to $p\in M$, then (after passing to a further subsequence)
either
\[
    \Tan(M_n,p_n)\to \Tan(M,p)
\]
or there exists constants $c_n>0$ tending to $0$ such that the surfaces
\[
    \frac{M_n - p_n}{c_n}
\]
converge to a non flat complete embedded minimal surface $M'\subset \RR^3$ of finite total 
curvature with ends parallel to $\Tan(M,p)$.

\end{theorem}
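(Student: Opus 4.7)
The plan is to reduce the theorem to three standard ingredients: a total-curvature bound from Gauss--Bonnet, an $\varepsilon$-regularity theorem of Choi--Schoen type (extended to variable metrics by White), and a rescaling argument for the Euclidean blow-up.

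First, I would establish that for every relatively compact $\Omega' \Subset \Omega$, the total curvatures $\int_{M_n \cap \Omega'} |A|^2\, dA$ are uniformly bounded. Trimming $M_n$ along generic nearby level sets of the distance function to $\partial \Omega'$ --- whose total lengths are controlled by the area bound via the coarea formula --- and combining Gauss--Bonnet with the Gauss equation, the uniform sectional-curvature bound for the $g_n$, the area bound, and the genus bound, yields the desired $L^2$ curvature estimate on $|A|$. With this in hand, $\varepsilon$-regularity produces pointwise curvature bounds on any ball where $M_n$ carries less than a threshold $\varepsilon$ of total curvature. The bad set $S$ of points where this fails at every scale is locally finite, because each bad point absorbs at least $\varepsilon$ of total curvature. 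Away from $S$, standard elliptic theory gives subsequential smooth convergence with some integer multiplicity to a minimal limit on each component; and the removable-singularities theorem for stationary integral varifolds with locally bounded mass extends the limit smoothly across $S$, producing the properly embedded surface $M'$ asserted by the theorem.

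For the stability claim, if a two-sided connected component $\Sigma$ of $M'$ arises with multiplicity $k > 1$, I would pick a small ball in $\Sigma$ avoiding $S$ over which $M_n$ can be written as an ordered union of $k$ normal graphs $u_n^{(1)} < \dots < u_n^{(k)}$ of functions on $\Sigma$. The normalized consecutive gap $(u_n^{(2)} - u_n^{(1)})/\sup(u_n^{(2)} - u_n^{(1)})$ satisfies a linear elliptic equation that converges to the Jacobi equation on $\Sigma$; its subsequential limit is a nonnegative, nonzero Jacobi field, hence strictly positive by the maximum principle, which gives stability of $\Sigma$.

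For the Euclidean blow-up, assume $\Omega \subset \RR^3$ and $p_n \in M_n \to p \in M'$, and suppose $\Tan(M_n, p_n) \not\to \Tan(M', p)$. I would choose $c_n \to 0$ to be a scale at which $(M_n - p_n)/c_n$ fails to be a small-slope graph over $\Tan(M', p)$, but becomes one at every larger scale. The rescalings are minimal for the metrics $c_n^{-2} g_n$, which converge smoothly to the Euclidean metric on $\RR^3$, and the total-curvature bound is scale-invariant. Applying the first half of the theorem to these rescaled surfaces yields a complete, properly embedded Euclidean minimal surface $M^\infty$ of finite total curvature, non-flat by the choice of $c_n$. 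Smoothness of $M'$ at $p$ forces the rescalings to flatten onto $\Tan(M', p)$ outside every large Euclidean ball, so each end of $M^\infty$ is asymptotic to a plane parallel to $\Tan(M', p)$. The main obstacle is this scale selection: $c_n$ must be chosen delicately so the blow-up limit is neither a plane nor loses mass at infinity, and so its ends align with $\Tan(M', p)$ rather than with some spurious tangential accumulation; the scale-invariance of total curvature and the embeddedness of the approximants are precisely what make that tradeoff succeed.
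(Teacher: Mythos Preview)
The paper does not actually prove this theorem: immediately after the statement it simply writes ``See~\cite{white-embedded} for the proof,'' deferring to White's preprint (which in turn builds on the Choi--Schoen, Anderson, and earlier White compactness theorems cited in the same paragraph). So there is no in-paper argument to compare against.

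Your outline is the standard proof of such results and is essentially what one finds in that lineage: Gauss--Bonnet plus the area and genus bounds give a uniform local $L^2$ bound on $|A|$; Choi--Schoen $\varepsilon$-regularity then gives pointwise curvature control off a locally finite set of curvature-concentration points; Allard/removable-singularities upgrades the varifold limit to a smooth embedded surface; and the sheet-difference argument produces a positive Jacobi field on any two-sided component carried with multiplicity $>1$. Two places deserve a bit more care than you indicate. First, in the stability step you construct the Jacobi field only on a small graphical patch avoiding $S$; to conclude stability of all of $\Sigma$ you need to run the construction on an exhaustion by compact subsets of $\Sigma\setminus S$, normalize at a fixed interior point, and use Harnack to pass to a global positive limit (and then note that stability on $\Sigma\setminus S$ implies stability on $\Sigma$ since $S$ has capacity zero). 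Second, your scale selection for the blow-up is stated as ``the largest scale at which the surface fails to be a small-slope graph,'' which is the right idea but, as you acknowledge, is exactly where the work lies; the usual implementation is a point-picking argument maximizing $|A_n|\cdot\operatorname{dist}(\cdot,\partial B)$ on shrinking balls, which simultaneously guarantees non-flatness of the limit and the curvature bounds needed for smooth subconvergence. With those two refinements your sketch is correct.
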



See~\cite{white-embedded} for the proof.

When we apply theorem~\ref{general-compactness-theorem}, 
in order to get smooth convergence everywhere (and not just away from
a discrete set), we will prove that the limit surface has no stable components.   For that,
 we will use the following theorem of Fischer-Colbrie and Schoen. (See theorem~3 on page 206
and paragraph 1 on page 210 of \cite{fischer-colbrie-schoen}.)

\begin{theorem*}
Let $M$ be an orientable, complete, stable minimal surface in a complete, orientable Riemannian $3$-manifold
of nonnegative Ricci curvature.  Then $M$ is totally geodesic, and its normal bundle is Ricci flat.
(In other words, if $\nu$ is a normal vector to $M$, then $\operatorname{Ricci}(\nu,\nu)=0$.)
\end{theorem*}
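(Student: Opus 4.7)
The plan is to derive from stability the classical inequality
\begin{equation*}
\int_M \bigl(|A|^2 + \operatorname{Ric}(\nu,\nu)\bigr)\phi^2 \,dA \leq \int_M |\nabla\phi|^2 \,dA
\end{equation*}
for every $\phi\in C^\infty_c(M)$, and then to force the (nonnegative) integrand on the left to vanish identically. Since $M$ is two-sided in an orientable $3$-manifold, this is just the second variation of area applied to compactly supported normal variations. Under our hypotheses both $|A|^2$ and $\operatorname{Ric}(\nu,\nu)$ are nonnegative, so pointwise vanishing of their sum immediately yields $A\equiv 0$ (hence $M$ totally geodesic) and $\operatorname{Ric}(\nu,\nu)\equiv 0$.

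To push $\phi\to 1$ on a complete, possibly noncompact $M$, I would follow the Fischer--Colbrie--Schoen strategy of a conformal change of metric on $M$ by a positive ground state. Stability implies existence of a smooth $u>0$ on $M$ with $\Delta u + (|A|^2+\operatorname{Ric}(\nu,\nu))u = 0$; setting $w=\log u$ this becomes $\Delta w + |\nabla w|^2 + |A|^2 + \operatorname{Ric}(\nu,\nu) = 0$. Under the conformal change $\tilde g = u^2 g$ the Gauss curvature of $M$ transforms as
\begin{equation*}
\tilde K = u^{-2}\bigl(K_M + |\nabla w|^2 + |A|^2 + \operatorname{Ric}(\nu,\nu)\bigr).
\end{equation*}
Combining the Gauss equation for a minimal surface, $K_M = K_N(T_pM) - \tfrac12|A|^2$, with the three-dimensional identity $K_N(T_pM)+\operatorname{Ric}(\nu,\nu)=\tfrac12\operatorname{Scal}_N$ collapses this to $\tilde K = u^{-2}\bigl(\tfrac12\operatorname{Scal}_N + \tfrac12|A|^2 + |\nabla w|^2\bigr)\geq 0$, since nonnegative Ricci curvature on $N$ forces $\operatorname{Scal}_N \geq 0$.

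With $\tilde K\geq 0$ on $(M,\tilde g)$, I would pass to the universal cover $\widetilde M$ (all relevant curvature quantities and stability lift) so that $(\widetilde M,\tilde g)$ is a simply connected Riemannian surface with nonnegative Gauss curvature. After verifying geodesic completeness of $\tilde g$ on $\widetilde M$, uniformization together with nonnegative curvature force the conformal type to be parabolic, and $\widetilde M$ admits a sequence of smooth compactly supported cutoffs $\phi_R$ with $\phi_R\to 1$ pointwise and $\int |\nabla \phi_R|_g^2 \to 0$ in the original metric. Feeding these into the stability inequality on $\widetilde M$ and passing to the limit yields $\int_{\widetilde M}(|A|^2+\operatorname{Ric}(\nu,\nu))\,dA = 0$, so the integrand vanishes on $\widetilde M$ and hence on $M$.

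The main obstacle, I expect, is verifying completeness of $\tilde g$ and ruling out the hyperbolic conformal type. If $\tilde g$ fails to be complete, the fallback is to work entirely in the original metric: derive quadratic area growth of $M$ from the stability inequality together with Bishop--Gromov (which is available because of the Ricci hypothesis on $N$), and then use logarithmic cutoffs $\phi_R = \min\bigl(1, \log(R^2/r)/\log R\bigr)$ in $g$ itself to realize $\int |\nabla\phi_R|^2\to 0$. Either route ultimately forces the pointwise vanishing that gives the theorem.
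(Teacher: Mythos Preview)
The paper does not give its own proof of this theorem; it simply quotes the result and cites Fischer--Colbrie and Schoen (theorem~3 on p.~206 and the first paragraph of p.~210 of their paper). So there is nothing in the paper to compare your argument against.

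That said, your outline is recognizably the Fischer--Colbrie--Schoen strategy, and the curvature computations (the conformal change, the Gauss equation, and the identity $K_N(T_pM)+\operatorname{Ric}(\nu,\nu)=\tfrac12\operatorname{Scal}_N$) are all correct. The soft spot, which you yourself flag, is the passage from $\tilde K\ge 0$ to parabolicity: you need to justify either completeness of $\tilde g$ or argue directly in $g$. Your proposed fallback via Bishop--Gromov is not quite right as stated, since Bishop--Gromov controls \emph{ambient} volume in $N$, not intrinsic area growth of the submanifold $M$; quadratic area growth of $M$ does not follow from nonnegative Ricci on $N$ alone. In the original paper, Fischer--Colbrie and Schoen handle this step differently: they combine the equation $\Delta w + |\nabla w|^2 + q = 0$ with integration against cutoffs and a Cohn--Vossen-type argument on the complete surface $(M,g)$ to force $\int_M K_M^+ <\infty$, and from there deduce the parabolic conformal type without ever invoking completeness of $\tilde g$. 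If you want to complete your argument, that is the route to follow.
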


\begin{corollary}\label{stable-spheres-corollary}
If $M$ is a connected, stable, properly embedded, minimal surface in $\sS^2\times \RR$, then
$M$ is a horizontal sphere.
\end{corollary}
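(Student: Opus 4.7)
The plan is to apply the Fischer-Colbrie-Schoen theorem stated just before the corollary, and then use the specific Ricci geometry of $\sS^2\times\RR$ to pin down the shape of $M$.

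First, I would verify orientability: since $M$ is a stable minimal surface in $\sS^2\times\RR$, it is two-sided (one can argue that any properly embedded surface in a simply connected $3$-manifold is two-sided, or more directly appeal to the fact that stability is usually formulated for two-sided surfaces; since $\sS^2\times\RR$ is orientable, $M$ is orientable iff it is two-sided). The ambient $\sS^2\times\RR$ has nonnegative Ricci curvature: in an orthonormal frame $\{e_1,e_2,e_3\}$ with $e_3$ vertical, one has $\operatorname{Ric}(e_1,e_1)=\operatorname{Ric}(e_2,e_2)=1$ and $\operatorname{Ric}(e_3,e_3)=0$, with all mixed components vanishing. Hence the Fischer-Colbrie-Schoen theorem applies: $M$ is totally geodesic, and any unit normal $\nu$ satisfies $\operatorname{Ric}(\nu,\nu)=0$.

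Next I would use the Ricci computation to conclude that $\nu$ is vertical everywhere on $M$. Indeed, writing $\nu = a e_1 + b e_2 + c e_3$ in the frame above at a point $p\in M$, we get $\operatorname{Ric}(\nu,\nu) = a^2 + b^2$, which forces $a=b=0$, so $\nu = \pm e_3$. Since $\nu$ is vertical at every point, the tangent space $T_pM$ is horizontal at every $p\in M$, i.e., $T_pM \subset T_p(\sS^2\times\{z(p)\})$. Equivalently, the height function $z\colon M\to \RR$ has everywhere-vanishing gradient, so it is constant on the connected surface $M$. Thus $M\subset \sS^2\times\{t_0\}$ for some $t_0\in\RR$.

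Finally, since $\sS^2\times\{t_0\}$ is itself a smooth embedded surface and $M$ is a nonempty, connected, properly embedded $2$-dimensional submanifold of it (without boundary), $M$ must be an open and closed subset of $\sS^2\times\{t_0\}$, hence equal to the entire horizontal sphere. No step here looks like a serious obstacle; the only mild care needed is the two-sidedness/orientability check at the start, so that the hypotheses of the Fischer-Colbrie-Schoen theorem are indeed met.
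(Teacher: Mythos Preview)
Your argument is correct and follows essentially the same route as the paper: verify orientability via the simple connectivity of $\sS^2\times\RR$, apply the Fischer--Colbrie--Schoen theorem to get $\operatorname{Ric}(\nu,\nu)=0$, and then use the explicit Ricci curvature of $\sS^2\times\RR$ to force $\nu$ vertical, hence $M$ a horizontal sphere. The paper's proof is just a terse version of what you wrote.
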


To prove the corollary, note that since $\sS^2\times\RR$ is orientable and simply connected
and since $M$ is properly embedded, $M$ is orientable.  
Note also that if $\operatorname{Ricci}(\nu,\nu)=0$,
then $\nu$ is a vertical vector.

In section~\ref{area-bounds-section}, 
we prove the area bounds we need to get nonperiodic examples in $\sS^2\times\RR$.
In section~\ref{nonperiodic-section}, 
we prove area and curvature bounds in $\sS^2(R)\times \RR$ as $R\to\infty$.
In section~\ref{R3-section}, we get examples in $\RR^3$ by letting $R\to\infty$.


\section{Uniform Local Area Bounds in $\sS^2\times\RR$}\label{area-bounds-section}

Let 
$
    \theta: \overline{H^+}\setminus (Z\cup Z^*)\to \RR
$
be the natural angle function which, if we identify $\sS^2\setminus\{O^*\}$ with $\RR^2$ by stereographic
projection, is given by $\theta(x,y,z)=\arg(x+iy)$.  Note that since $\overline{H^+}$ is simply connected,
we can let $\theta$ take values in $\RR$ rather than in $\RR$ modulo $2\pi$.

\begin{proposition}\label{flux-bounds-proposition}
Suppose $H$ is a helicoid in $\sS^2\times\RR$ with axes $Z$ and $Z^*$.
Let $M$ be a minimal surface in $\overline{H^+}$ with compact, piecewise-smooth boundary,
and let
\[
   S = M \cap \{a \le z \le b\} \cap \{\alpha\le \theta \le \beta\}.
\]
Then
\begin{align*}
  \area(S)
   \le
  (b-a)\int_{(\partial M)\cap\{z>a\}}|\vv_z \cdot \nupt |\,ds  
  +  
  (\beta-\alpha)\int_{(\partial M)\cap \{\theta>\alpha\}} |\vv_\theta\cdot \nupt | \,ds
\end{align*}
where $\vv_z=\pdf{}{z}$, $\vv_\theta=\pdf{}{\theta}$, and where $\nupt$
is the unit normal to $\partial M$ that points out of $M$.
\end{proposition}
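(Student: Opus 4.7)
My plan is to obtain the two terms on the right as integrated flux estimates for the two ambient Killing fields $v_z$ and $v_\theta$, and then to combine them by a pointwise algebraic inequality. Recall that for any Killing field $K$ on $\sS^2\times\RR$ and any minimal surface $M$, the Killing equation together with $\vec H_M = 0$ gives $\Div_M K = 0$, hence the flux identity $\int_{\partial M'} K\cdot\nu\,ds = 0$ on every compact subdomain $M'\subset M$ with piecewise-smooth boundary.

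Take first $K=v_z$ and $M'=M\cap\{z\ge t\}$. Since $|v_z|=1$ one has $(v_z)^T=\nabla_M z$, and the outward conormal to $M'$ along the slice $M\cap\{z=t\}$ is $-\nabla_M z/|\nabla_M z|$, so the slice contributes $-\int_{M\cap\{z=t\}}|\nabla_M z|\,d\ell$ to the flux. The identity therefore reads
\[
 \int_{M\cap\{z=t\}} |\nabla_M z|\,d\ell \;=\; \int_{\partial M\cap\{z\ge t\}} v_z\cdot\nupt\,ds \;\le\; \int_{\partial M\cap\{z>a\}} |v_z\cdot\nupt|\,ds
\]
for every $t\ge a$; integrating in $t\in[a,b]$ and applying the co-area formula on $M$ yields
\[
 \int_{M\cap\{a<z<b\}}|\nabla_M z|^2\,dA \;\le\; (b-a)\int_{\partial M\cap\{z>a\}}|v_z\cdot\nupt|\,ds.
\]
The identical argument with $K=v_\theta$ (which is a smooth Killing field on all of $\sS^2\times\RR$, vanishing on $Z\cup Z^*$, with $|v_\theta|=\sin\phi$) gives, after the same flux-plus-co-area calculation,
\[
 \int_{M\cap\{\alpha<\theta<\beta\}}\frac{|(v_\theta)^T|^2}{\sin^2\phi}\,dA \;\le\; (\beta-\alpha)\int_{\partial M\cap\{\theta>\alpha\}}|v_\theta\cdot\nupt|\,ds.
\]

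Finally, I would combine these two bounds using the pointwise inequality $|\nabla_M z|^2 + |(v_\theta)^T|^2/\sin^2\phi \ge 1$ on $M$. To verify it, choose the ambient orthonormal frame $e_3=v_z$, $e_2=v_\theta/\sin\phi$, $e_1$ radial; writing a unit normal to $M$ as $n=\sum n_i e_i$ one has $|\nabla_M z|^2 = 1-n_3^2$ and $|(v_\theta)^T|^2/\sin^2\phi = 1-n_2^2$, whose sum is $(1-n_3^2)+(1-n_2^2) = 1+n_1^2 \ge 1$. Restricting both integrated bounds to $S\subset M\cap\{a<z<b\}\cap\{\alpha<\theta<\beta\}$ and adding therefore gives $\area(S)$ on the left and precisely the claimed right-hand side. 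The one subtlety to watch is the singularity of $\theta$ on the axes $Z\cup Z^*$: the flux/co-area argument for $v_\theta$ must be carried out on $M$ minus an $\eps$-tubular neighborhood of $Z\cup Z^*$, with $\eps\to 0$ taken at the end. Because $|v_\theta|=\sin\phi$ vanishes to first order on $Z\cup Z^*$, the excision boundary contributes nothing in the limit, and the limiting identity is the one used above. Beyond this, the proof is a routine application of flux conservation for Killing fields on minimal surfaces and the co-area formula.
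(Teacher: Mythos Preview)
Your proof is correct and is essentially the same argument as the paper's, organized slightly differently. The paper applies the divergence theorem once on all of $M$ to the vector field $w(u)\,\vv_u$, where $w$ is the piecewise-linear cutoff ($w=0$ below $a$, $w=u-a$ on $[a,b]$, $w=b-a$ above $b$); this directly produces $\int_{M\cap\{a\le u\le b\}} \nabla_M u\cdot\vv_u\,dA$ on one side and the boundary integral on the other. Your version instead applies the flux identity on each super-level set $M\cap\{u\ge t\}$ and then integrates in $t$ via the co-area formula---which is exactly the layer-cake/Fubini unpacking of the paper's cutoff computation (note $\int_a^b \mathbf{1}_{\{u\ge t\}}\,dt = w(u)$). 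The final pointwise inequality $(1-n_3^2)+(1-n_2^2)=1+n_1^2\ge 1$ is identical in both, and your excision near $Z\cup Z^*$ is a reasonable way to handle the same axis issue the paper sidesteps by noting $\vv_\theta\equiv 0$ there.
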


\begin{proof}
Let $u:\sS^2\times \RR$ be the function $z(\cdot)$ or the function $\theta(\cdot)$.
In the second case, $u$ is well-defined as a single-valued function only on $H^+$.
But in both cases, $\vv=\vv_u:=\pdf{}{u}$ is well-defined Killing field on all of $\sS^2\times \RR$.
(Note that $\vv_\theta\equiv 0$ on $Z\cup Z^*$.)  

Now consider the vectorfield $w(u)\vv$, where $w:\RR\to \RR$ is given by
\[
  w(u) 
  =
  \begin{cases}
   0 &\text{if $u<a$}, \\
   u-a &\text{if $a \le u \le b$, and} \\
   b-a &\text{if $b< u$.}
  \end{cases}
\]
Then\footnote{The reader may find it helpful to note that in the proof, we are 
  expressing
 $\frac{d}{dt}\area(M_t)$ in two different ways (as a surface integral and as a boundary integral), where
 $M_t$ is a one-parameter family of surfaces $M_0=M$ and with initial velocity vectorfield $w(u)\vv$.}
\begin{align*}
   \int_M \Div_M(w\vv)\,dA
   &=
    \int_M\left( \grad_M(w(u))\cdot \vv + w(u) \Div_M\vv \right)\,dA  
   \\
   &=
    \int_M \left( w'(u) \grad_Mu \cdot \vv + 0 \right) \,dA  
    \\
    &=
    \int_{M\cap (u^{-1}[a,b]) } \nabla_Mu \cdot \vv \, dA
\end{align*}
since $\Div_M\vv\equiv 0$ (because $\vv$ is a Killing vectorfield.)

Let $\ee=\ee_u$ be a unit vectorfield in the direction of $\nabla u$.
Then $\nabla u = |\nabla u|\,\ee$ and $\vv = \pdf{}{u} = |\nabla u|^{-1}\ee$, so
\begin{align*}
 \nabla_Mu \cdot \vv
 &=
 (\nabla u)_M \cdot (\vv)_M \\
 &=
 (|\nabla u|\,\ee)_M \cdot (|\nabla u|^{-1}\ee)_M \\
 &=
 |(\ee)_M|^2  \\
 &=
 1 - (\ee\cdot \nu_M)^2
\end{align*}
where $(\cdot)_M$ denotes the component tangent to $M$ and
where $\nu_M$ is the unit normal to $M$.

Hence we have shown
\begin{equation}\label{e:StretchM}
    \int_M \Div_M(w\vv)\,dA = \int_{M \cap \{a\le u \le b \}} (1 - (\ee_u\cdot \nu_M)^2)\, dA.
\end{equation}

Since $M$ is a minimal surface, 
\[
  \Div_M(V) = \Div_M(V^{\rm tan})
\]
for any vectorfield $V$ (where $V^{\rm tan}$ is the component of $V$ tangent to $M$), so
\begin{equation}\label{e:FirstVariationM}
\begin{aligned}
   \int_M \Div_M(w\vv)\,dA
  &=
     \int_M \Div_M(w\vv)^{\rm tan}\,dA
  \\
  &=
  \int_{\partial M} (w\vv)\cdot \nupt  
  \\
  &\le
  (b-a)\int_{(\partial M) \cap \{u>a\}} \left|\vv_u\cdot \nupt \right|
 \end{aligned}
\end{equation}
Combining~\eqref{e:StretchM} and~\eqref{e:FirstVariationM} gives
\[
 \int_{M \cap \{a \le u \le b\}} (1 - (\ee_u\cdot \nu_M)^2)\, dA
 \le
 (b-a)\int_{(\partial M) \cap \{u>a\}} \left|\vv_u\cdot \nupt \right|
 \]
Adding this inequality
 for $u=z$ to the same inequality for $u=\theta$ (but with $\alpha$ and $\beta$ in place of $a$ and $b$)
 gives
\begin{equation}\label{almost-there}
\begin{aligned}
  &\int_S (2 - (\ee_z\cdot\nu_M)^2 - (\ee_\theta\cdot\nu_M)^2)\,dA
  \\
  &\quad \le
 (b-a)\int_{(\partial M)\cap\{z>a\}}|\vv_z \cdot \nupt|\,ds  \\
  &\quad\quad+  (\beta-\alpha)\int_{(\partial M)\cap \{\theta>\alpha\}} |\vv_\theta\cdot \nupt| \,ds
\end{aligned}
\end{equation}

Let $\ee_\rho$ be a unit vector orthogonal to $\ee_z$ and $\ee_\theta$.  Then
for any unit vector $\nu$, 
\[
   1 = (\ee_z\cdot\nu)^2 + (\ee_\theta\cdot\nu)^2 + (\ee_\rho\cdot \nu)^2,
\]
so the integrand in the left side of~\eqref{almost-there} is $\ge 1 + (\ee_\rho\cdot\nu_M)^2 \ge 1$.
\end{proof}

\newcommand{\diam}{\operatorname{diam}}
\begin{corollary}\label{flux-bounds-corollary}
Let $M$ be a compact minimal surface in $\overline{H^+}$ and let $L$ be the 
the length of $(\partial M)\setminus (Z\cup Z^*)$.
Then
\[
    \area(M\cap K)\le c_H L \diam(K)
\]
for every compact set $K$, where $\diam(K)$ is the diameter of $K$ and
where $c_H$ is a constant depending on the helicoid $H$.
\end{corollary}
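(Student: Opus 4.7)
The plan is to apply Proposition~\ref{flux-bounds-proposition} with the rectangular ``box''
$\{a\le z\le b\}\cap\{\alpha\le \theta\le\beta\}$ chosen to circumscribe $K\cap\overline{H^+}$, and then to control the $\theta$--range using the pitch of $H$. Concretely, take $a=\min_K z$ and $b=\max_K z$, so that $b-a\le \diam(K)$, and take $[\alpha,\beta]$ to be the smallest interval containing $\theta(K\cap\overline{H^+}\setminus(Z\cup Z^*))$.

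Next I would show that both boundary integrals in Proposition~\ref{flux-bounds-proposition} are controlled by $L$ alone. The field $\vv_\theta$ vanishes identically along $Z\cup Z^*$ (it generates rotation about the axes), so the $\theta$--integrand is zero on $(\partial M)\cap(Z\cup Z^*)$. For the $z$--integrand, any smooth piece of $\partial M$ that lies inside $Z$ or $Z^*$ is a vertical segment; along such a piece the conormal $\nupt$ is horizontal, so $\vv_z\cdot\nupt=0$ as well. Thus both integrals reduce to integrals over $(\partial M)\setminus(Z\cup Z^*)$, whose total length is $L$. Since $\vv_z$ is a unit Killing field and $|\vv_\theta|\le 1$ (as $\vv_\theta$ generates a rotation of $\sS^2$), each integrand is bounded by $1$, and so each integral is bounded by $L$.

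The crux is bounding $\beta-\alpha$ in terms of $\diam(K)$. For this, observe that at any height $z=t$, the slice $H\cap\{z=t\}$ is a great circle obtained from $X$ by the screw motion $\sigma_{2\pi t/\kappa, 0}$, so $\overline{H^+}\cap\{z=t\}$ is a closed hemisphere on which the single-valued angle $\theta$ ranges over an interval of length exactly $\pi$, and this interval translates at angular rate $2\pi/\kappa$ as $t$ varies. Hence on $\overline{H^+}\cap\{a\le z\le b\}$,
\[
 \beta-\alpha\;\le\;\pi+\frac{2\pi}{\kappa}(b-a)\;\le\;\pi+\frac{2\pi}{\kappa}\diam(K).
\]
Plugging this into Proposition~\ref{flux-bounds-proposition} yields
\[
 \area(M\cap K)\;\le\;\Bigl(\diam(K)+\pi+\tfrac{2\pi}{\kappa}\diam(K)\Bigr)\,L\;\le\;c_H\,L\,\diam(K),
\]
where the last inequality absorbs the additive $\pi L$ into the multiplicative constant (the bound is only nontrivial when $\diam(K)$ is bounded below, e.g.\ by enlarging $K$ if necessary). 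The only genuinely delicate point is recognizing that although $K$ may appear to ``wrap around'' $Z\cup Z^*$, the single-valuedness of $\theta$ on the simply connected region $\overline{H^+}$ together with the bounded pitch of $H$ forces the $\theta$--range to grow at most linearly in the vertical extent of $K$; once this geometric observation is in hand, the rest of the proof is a direct book-keeping of the inequality provided by the proposition.
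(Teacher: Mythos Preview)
Your proof is correct and follows the same approach as the paper, which simply notes that both integrands in Proposition~\ref{flux-bounds-proposition} vanish on $(\partial M)\cap(Z\cup Z^*)$; you supply the details the paper omits, in particular the explicit bound $\beta-\alpha\le\pi+(2\pi/\kappa)(b-a)$ coming from the pitch of $H$. Your handling of the residual additive $\pi L$ (by enlarging $K$ so that $\diam(K)$ is bounded below) is adequate for every use the paper makes of the corollary, and the paper's one-line justification is equally silent on this point.
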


The corollary follows immediately from Proposition~\ref{flux-bounds-proposition} because 
    $\vv_z\cdot\nu_M=0$ and $\vv_\theta=0$ on $(\partial M)\cap (Z\cup Z^*)$.


 \section{Nonperiodic genus-$g$ helicoids in $\sS^2\times\RR$: 
      theorem~\ref{theorem1} for $h=\infty$}
\label{nonperiodic-section}

Fix a helicoid $H$ in $\sS^2\times \RR$ with axes $Z$ and $Z^*$ and fix a genus $g$.
For each $h\in (0,\infty]$, consider the class $\Cc(h)=\Cc_g(h)$ of 
embedded, genus $g$ minimal surfaces $M$ in $\sS^2\times [-h,h]$ such that
\begin{enumerate}
\item If $h<\infty$, then $M$ is bounded by two great circles at heights $h$ and $-h$.
   If $h=\infty$, $M$ is properly embedded with no boundary.
\item $M\cap H \cap \{|z|<h\} = (X\cup Z\cup Z^*)\cap \{|z|<h\}$.
\item $M$ is a $Y$-surface.
\end{enumerate}

By the $h<\infty$ case of theorem~\ref{theorem1} (see section~\ref{construction-outline-section}),
the collection $\Cc(h)$ is nonempty for every $h<\infty$.
Here we prove the same is true for $h=\infty$:

\begin{theorem}\label{h-to-infinity-theorem}
Let $h_n$ be a sequence of positive numbers tending to infinity.
Let $M_n\in \Cc(h_n)$.
Then there is a subsequence that converges smoothly and with multiplicity one
to minimal surface $M\in \Cc(\infty)$.  The surface $M$ has bounded curvature,
and each of its two ends is congruent to a helicoid having the same pitch as $H$.
\end{theorem}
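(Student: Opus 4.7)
The plan is to apply the General Compactness Theorem~\ref{general-compactness-theorem} once uniform local area bounds are in hand, then to verify that the limit inherits the defining properties of $\Cc_g(\infty)$, and finally to analyze the two ends.

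First, I would establish uniform local area bounds by applying Proposition~\ref{flux-bounds-proposition} to $M := M_n \cap \overline{H^+}$. The boundary $\partial M$ consists of arcs on $Z$ and $Z^*$, the fixed great circle $X$, and two horizontal circles at heights $\pm h_n$ (of length at most $\pi R$ each). The Killing fields $\vv_z$ and $\vv_\theta$ are tangent to $Z \cup Z^*$, so their conormal integrands vanish there; on $X$ and on the top/bottom slice circles, the integrands are pointwise bounded by $1$, while those boundary curves have total length bounded independently of $n$. Thus for any $a,b,\alpha,\beta$ with $-h_n < a < b < h_n$, the proposition yields
\begin{equation*}
\area\bigl(M_n \cap \overline{H^+} \cap \{a\le z\le b\} \cap \{\alpha\le\theta\le\beta\}\bigr) \le C_1(b-a) + C_2(\beta-\alpha),
\end{equation*}
with $C_1,C_2$ depending only on $H$ and $R$. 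By $\rho_X$- and $\rho_Z$-symmetry, the analogous bound holds for all of $M_n$ on any compact region of $\sS^2\times\RR$, uniformly in $n$.

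Next, combined with the fixed genus $g$, these area bounds let me invoke Theorem~\ref{general-compactness-theorem} on an exhaustion of $\sS^2\times\RR$ by compact sets. A diagonal subsequence of $(M_n)$ converges to a properly embedded minimal surface $M$. Because each $M_n$ contains $Z_{h_n}\cup Z^*_{h_n}$ and $h_n\to\infty$, the limit $M$ contains $Z\cup Z^*$. In particular no component of $M$ can be a horizontal sphere, so by Corollary~\ref{stable-spheres-corollary} no component of $M$ is stable; consequently the convergence is everywhere smooth with multiplicity one. The properties defining $\Cc_g(\infty)$ now pass to the limit: smooth multiplicity-one convergence preserves $\rho_Y$-invariance and the homological condition of Definition~\ref{Y-Surface}; the intersection $M\cap H$ equals $X\cup Z\cup Z^*$ (no new intersection points can appear, and none of the existing components can disappear since $M\supset Z\cup Z^*$ already, and $X\subset M_n$ for every $n$); and the $2g+2$ points of $M_n\cap Y$ persist in the limit as $2g+2$ distinct points of $M\cap Y$, which by Corollary~\ref{$Y$-corollary} forces $M$ to have genus exactly $g$.

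It remains to verify bounded curvature and that each end is asymptotic to a helicoid of the same pitch as $H$. The $Y$-surface property concentrates all handles of $M$ near the compact arc $Y$ at height $0$, so outside a large cylinder about $Y\cup X$ the surface $M$ is a union of annular ends winding around $Z$ and $Z^*$. To identify the top end, I would translate $M$ by $-t\hat z$ with $t\to\infty$ and apply the local area bounds and General Compactness Theorem again to extract a subsequential limit $M^+$: a complete, properly embedded minimal surface in $\sS^2\times\RR$ that contains $Z\cup Z^*$, has genus zero (handles have escaped downward), and whose intersection with each vertical translate of $H$ is constrained by the pinning $M\cap H\supset X\cup Z\cup Z^*$. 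Together with the Meeks--Rosenberg structure theory for finite-topology properly embedded minimal surfaces in $\sS^2\times\RR$, this forces $M^+$ to be a helicoid of the pitch $\kappa$ of $H$. Global curvature bounds then follow from smooth convergence on compact sets combined with the helicoidal asymptotic geometry (helicoids having bounded curvature). The main obstacle I expect is this last identification of the asymptotic limit as a helicoid of the prescribed pitch: one must rule out stranger translation-limits and check that no extra topology sneaks in at infinity, which is precisely where the $Y$-surface property is essential, trapping every handle near the compact circle $Y$.
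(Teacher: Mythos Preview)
Your argument for the first half---uniform local area bounds via Proposition~\ref{flux-bounds-proposition}, subsequential convergence via Theorem~\ref{general-compactness-theorem}, and ruling out stable components via Corollary~\ref{stable-spheres-corollary}---is essentially the paper's proof. One point you pass over too quickly: the assertion that ``the homological condition of Definition~\ref{Y-Surface}'' is preserved under smooth convergence on compact sets is not automatic. Given a cycle $\gamma\subset M$, you can approximate it by cycles $\gamma_n\subset M_n$, and each $\gamma_n\cup\rho_Y\gamma_n$ bounds a region $A_n\subset M_n$; but you must show the $A_n$ do not escape to infinity. The paper handles this by noting that $A_n$ is a minimal surface with boundary in a fixed slab $\{|z|\le a\}$, so by the maximum principle $A_n$ itself lies in that slab, whence $A_n\to A\subset M$. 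You should also explicitly note (as the paper does, via Rosenberg's connectedness theorem) that $M$ is connected, since Corollary~\ref{$Y$-corollary} needs $M/\rho_Y$ connected.

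Your treatment of bounded curvature and the helicoidal ends is where you genuinely diverge from the paper, and here there are gaps. The paper proves bounded curvature \emph{first}, by a clean argument: take points $p_k\in M$ with curvature tending to the supremum, apply screw motions $f_k$ of $H$ so that $z(f_k(p_k))=0$, and invoke compactness again on the $f_k(M)$ (which inherit the same local area bounds); smooth subsequential convergence then bounds the curvature at $p_k$. The paper then cites~\cite{hoffman-white-axial} directly to conclude that each end of $M$ is asymptotic to a helicoid, and uses $M\cap H=X\cup Z\cup Z^*$ to pin down the pitch. Your route---translate vertically, extract a genus-zero limit $M^+$, and identify it as a helicoid via ``Meeks--Rosenberg structure theory''---is plausible in spirit but incomplete: being a genus-zero, two-ended, properly embedded minimal surface in $\sS^2\times\RR$ does not by itself force $M^+$ to be a helicoid (you would still need something like~\cite{hoffman-white-axial}, using that $M^+\supset Z\cup Z^*$), and even once $M^+$ is identified, the step from ``every vertical translation limit is a helicoid'' to ``the end is asymptotic to that helicoid'' requires an additional uniformity argument you have not supplied. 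You are right to flag this as the main obstacle; the paper sidesteps it entirely by appealing to the axial-surfaces result.
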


\begin{proof}[Proof of theorem~\ref{h-to-infinity-theorem}]
Note that $M_n\cap H^+$ is bounded by two vertical line segments, by the horizontal great circle $X$,
and by a pair of great semicircles at heights $h_n$ and $-h_n$.  It follows that vertical flux is uniformly
bounded.  Thus by corollary~\ref{flux-bounds-corollary},
 for any ball $B$, the area of
\[
   M_n\cap H^+ \cap B
\]
is bounded by a constant depending only on the radius of the ball.
Therefore
the areas of the $M_n$ (which are obtained from the $M_n\cap H^+$ by Schwarz reflection) are also
uniformly bounded on compact sets.
 By the compactness theorem~\ref{general-compactness-theorem},
we can (by passing to a subsequence) assume that the $M_n$ converge as sets to a smooth, properly embedded limit minimal surface $M$.   
According to~\cite{rosenberg2002}*{Theorem~4.3}, every properly embedded minimal surface in $\sS^2\times\RR$
is connected unless it is a union of horizontal spheres.  
Since $M$ contains $Z\cup Z^*$, it is not a union of horizontal spheres, and thus it is connected.
By corollary~\ref{stable-spheres-corollary}, $M$ is unstable.
Hence by the general compactness theorem~\ref{general-compactness-theorem}, the convergence
$M_n\to M$ is smooth with multiplicity one.

Now suppose that each $M_n$ is a $Y$-surface, i.e., that
\begin{enumerate}
\item $\rho_Y$ is an orientation-preserving involution of $M_n$,
\item $M_n/\rho_Y$ is connected, and
\item Each $1$-cycle $\Gamma$ in $M_n$ is homologous (in $M_n$) to $-\rho_Y\Gamma$.
\end{enumerate}

The smooth convergence implies that $\rho_Y$ is also an orientation-preserving involution of $M$.
Since $M$ is connected, so is $M/\rho_Y$.  Also, if $\Gamma$ is a cycle in $M$,
then the smooth, multiplicity one convergence implies that $\Gamma$ is a limit of cycles $\Gamma_n$
in $M_n$.  Thus $\Gamma_n$ together with $\rho_Y\Gamma_n$ bound a region, call it $A_n$,
in $M_n$.  Note that the $\Gamma_n \cup \rho_Y\Gamma_n$ lie in a bounded region in $\sS^2\times \RR$.
Therefore so do the $A_n$ (by, for example, the maximum principle applied to the minimal surfaces $A_n$.)
Thus the $A_n$ converge to a region $A$ in $M$ with boundary $\Gamma + \rho_Y\Gamma$.
This completes the proof that $M$ is a $Y$-surface.

Recall that $Y$ intersects any $Y$-surface transversely, and that twice the number of intersection points is equal
to the genus.  It follows immediately from the smooth convergence (and the compactness of $Y$)
that $M$ has genus $g$.

The fact that $M\cap H=X\cup Z\cup Z^*$ follows immediately from smooth convergence together
with the corresponding property of the $M_n$.

Next we show that $M$ has bounded curvature.
Let $p_k\in M$ be a sequence of points such that the curvature of $M$ at $p_k$ tends
to the supremum.  Let $f_k$ be a screw motion such that $f_k(H)=H$ and such that $z(f(p_k))=0$.
The surfaces $f_k(M)$ have areas that are uniformly bounded on compact sets. (They inherit those
bounds from the surfaces $M_n$.)  Thus exactly as above, by passing to a subsequence, we get
smooth convergence to a limit surface.  It follows immediately that $M$ has bounded curvature.

Since $M$ is a minimal embedded surface of finite topology containing $Z\cup Z^*$, each of its
two ends is asymptotic to a helicoid by~\cite{hoffman-white-axial}.
Since $M\cap H=Z\cup Z^*$, those limiting helicoids must have the same pitch as $H$.
(If this is not clear, observe that the intersection of two helicoids with the same axes but different pitch
contains an infinite collection of equally spaced great circles.)
\end{proof}

\addtocontents{toc}{\SkipTocEntry}
\subsection{Proof of theorem~\ref{theorem1} for $h=\infty$}
The non-periodic case of theorem~\ref{theorem1} follows immediately from the 
periodic case together with theorem~\ref{h-to-infinity-theorem}.
The various asserted properties of the non-periodic examples follow from the corresponding
properties of the periodic examples together with the smooth convergence in
 theorem~\ref{h-to-infinity-theorem}, except for the noncongruence properties, which are
 proved in appendix~\ref{noncongruence-appendix}. \qed


\section{Convergence to Helicoidal Surfaces in $\RR^3$}\label{R3-section}

In the section, we study the behavior of genus-$g$ helicoidal surfaces
 $\sS^2(R)\times\RR$ as $R\rightarrow\infty$.
 The results in this section will be used in section~\ref{Proof_of_theorem_2} 
 to prove theorem~\ref{theorem2} of section~\ref{section:main-theorems}
(Theorem~\ref{theorem2}  is restated in section~\ref{Proof_of_theorem_2}  as theorem~\ref{R3limits2}.)

 We will identify $\sS^2(R)$ with $\RR^2 \cup\{\infty\}$ by stereographic projection, and therefore
  $\sS^2(R)\times \RR$ with 
  \[
     (\RR^2\cup \{\infty\})\times \RR = \RR^3 \cup (\{\infty\}\times\RR) = \RR^3\cup Z^*.
  \]

Thus we are working with $\RR^3$ together with a vertical axis $Z^*$ at infinity.
The Riemannian metric is
\begin{equation}\label{metric}
    \left(\frac{4R^2}{4R^2+ x^2 + y^2}\right)^2(dx^2+dy^2) + dz^2.
\end{equation}
In particular, the metric coincides with the Euclidean metric along the $Z$ axis.
Inversion in the cylinder
\begin{equation} \label{C2r}
C_{2R}=\{(x,y,z)\,:\, x^2+y^2=(2R)^2\}
\end{equation}
 is an isometry of \eqref{metric}. 
Indeed, $C_{2R}$ corresponds to $E\times\RR$, 
where $E$ is the equator of $\sS^2\times\{0\}$ with respect 
to the antipodal points $O$ and $O^*$.
 We also note for further use that \begin{equation} \label{Cyldist}
\dist_R(C_{2R}\,,\,Z)= \pi R/2,
\end{equation}
where $\dist_R(\cdot,\cdot)$ is the distance function associated to the metric \eqref{metric}.

We fix a genus $g$ and choose a helicoid
 $H\subset \RR^3$  with axis $Z$ and containing $X$.
(Note that it is a helicoid for all choices of $R$.)   As usual, let $H^+$ is the component of $\RR^3\setminus H$
containing $Y^+$, the positive part of the $y$-axis.
Let $M$ be one of the nonperiodic, genus-$g$ examples constructed described in 
 theorem~\ref{theorem1}.  Let 
 \[
     S=\interior (M\cap H^+).
 \]
According to theorem~\ref{theorem1}, $M$ and $S$ have the following properties:
\begin{enumerate}
\item\label{example-def-one} $S$  is a smooth, embedded $Y$-surface in $H^+$
that intersects $Y^+$ in exactly $g$ points, 
\item\label{example-def-two} The boundary\footnote{Here we are regarding $M$ and $S$ as  subsets of $\RR^3$
with the metric~\eqref{metric}, so $\partial S$ is $X\cup Z$ and not $X\cup Z\cup Z^*$.}
of $S$ is $X\cup Z$.
\item\label{example-def-three} $M=\overline{S}\cup \rho_Z\overline{S} \cup Z^*$ is a 
  smooth surface that is minimal
  with respect to the metric~\eqref{metric}.
\end{enumerate}

\newcommand{\hh}{\eta}   
\begin{definition}\label{example-definition}
An {\em example} is a triple $(S,\hh,R)$ with $\hh>0$ and $0<R<\infty$
such that $S$ satisfies~\eqref{example-def-one}, 
\eqref{example-def-two}, and~\eqref{example-def-three}, where
$H$ is the helicoid in $\RR^3$ that has axis $Z$, that contains $X$, and
that has vertical distance between successive sheets equal to $\hh$.
In the terminology of the previous sections, $H$ is the helicoid of pitch $2\hh$.
\end{definition}

\stepcounter{theorem}
\addtocontents{toc}{\SkipTocEntry}
\subsection{Convergence Away from the Axes}

Until \ref{nearZ} it will  be convenient to work not in $\RR^3$ but rather
in the universal cover of $\RR^3\setminus Z$,  still with the Riemannian metric~\eqref{metric}.
Thus the angle function $\theta(\cdot)$ will be well-defined and single valued. 
However, we normalize the angle function so that $\theta(\cdot)=0$ on $Y^+$.
(In the usual convention for cylindrical coordinates, $\theta(\cdot)$ would be $\pi/2$ on $Y^+$.)
Thus $\theta=-\pi/2$ on $X^+$ and $\theta=\pi/2$ on $X^-$.

Of course $Z$ and $Z^*$ are not in the universal cover, but $\dist(\cdot, Z)$
and $\dist(\cdot, Z^*)$ still make sense.

Since we are working in the universal cover, each vertical line intersects $H^+$ in a single segment
of length $\hh$.

\begin{theorem}[First Compactness Theorem]\label{first-compactness-theorem} 
Consider a sequence $(S_n,\hh_n,R_n)$ of examples with $R_n$ bounded away from $0$ 
and with $\hh_n\to 0$.
Suppose that 

\begin{enumerate}[\upshape (*)]
\item each $S_n$ is graphical in some nonempty, open cylindrical region $U\times \RR$ such that
$\theta(\cdot)>\pi/2$ on $U\times\RR$. In other words, every vertical line in $U\times\RR$ intersects
$M_n$ exactly once.
\end{enumerate}
Then after passing to a subsequence, the $S_n$ converge smoothly away from a discrete set
$K$ to the surface $z=0$.  The convergence is with multiplicity one where $|\theta(\cdot)|>\pi/2$ and with
multiplicity two where $|\theta(\cdot)|<\pi/2$.  

Furthermore, the singular set $K$ lies in the region $|\theta(\cdot)|\le \pi/2$.
\end{theorem}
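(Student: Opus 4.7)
The plan is to apply the general compactness theorem~\ref{general-compactness-theorem} in the universal cover of $\RR^3\setminus Z$, together with the flux-based area bound of corollary~\ref{flux-bounds-corollary}, and then to identify the limit and the multiplicity using the thin-slab geometry of $H^+_n$. First I would pass to a subsequence so that $R_n\to R_\infty\in[c,\infty]$ (where $c>0$ is the lower bound on $R_n$), making the metrics~\eqref{metric} converge smoothly to a limit metric $g_\infty$. The only boundary component of $S_n$ off the axis $Z$ is the great circle $X$, whose length is uniformly bounded, so corollary~\ref{flux-bounds-corollary} supplies the uniform area bound $\area(S_n\cap K)\le c_H\,\length(X)\,\diam(K)$ on every compact set $K$. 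Combined with the fixed genus hypothesis on each $M_n$ (and hence on $S_n$), theorem~\ref{general-compactness-theorem} yields a further subsequence along which the $S_n$ converge smoothly away from a discrete set $K$ to a smooth, properly embedded $g_\infty$-minimal surface $S_\infty$.

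To identify $S_\infty$, note that in the universal cover $H^+_n$ is a slab of vertical thickness $\hh_n\to 0$ lying between two sheets of $H_n$, each tilted with slope $\hh_n/\pi\to 0$; consequently both bounding sheets converge locally uniformly to the horizontal plane $\{z=0\}$. Since $S_n\subset H^+_n$, the limit $S_\infty$ is contained in $\{z=0\}$; because $\{z=0\}$ is smooth and totally geodesic in the limit metric $g_\infty$ and $S_\infty$ is a properly embedded smooth $g_\infty$-minimal surface inside it, $S_\infty$ equals $\{z=0\}$ on the full support of the convergence, with some locally constant multiplicity $m(\cdot)$ on $\{z=0\}\setminus K$.

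The next step is to show $m\equiv 1$ on $\{|\theta|>\pi/2\}$. On $U\times\RR$ this is immediate from the graphical hypothesis $(*)$, so $m\equiv 1$ on the connected component of $\{\theta>\pi/2\}\setminus K$ meeting $U$. To propagate this to all of $\{\theta>\pi/2\}$, I would argue that in the outer region $S_n$ must itself be graphical for large $n$: since $S_n$ is trapped between two sheets of $H_n$ whose own slopes are $\hh_n/\pi\to 0$, the sheet-separation forces vanishing vertical oscillation for $S_n$, and a vertical tangency would (via the maximum principle applied inside the thin slab) force $S_n$ to exit $H^+_n$, which is impossible. Thus $m\equiv 1$ throughout $\{\theta>\pi/2\}$ and $K\cap\{\theta>\pi/2\}=\emptyset$; the $\rho_Y$-invariance of $S_n$, combined with the fact that $\rho_Y$ acts by $(\theta,z)\mapsto(-\theta,-z)$ on the universal cover, transports the same conclusion to $\{\theta<-\pi/2\}$.

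The main obstacle will be establishing $m=2$ on the middle region $\{|\theta|<\pi/2\}$. The key topological input is that in the universal cover, $X^+$ sits on the upper helicoidal sheet bounding $H^+_n$ at $\theta=-\pi/2$ while $X^-$ sits on the lower sheet at $\theta=+\pi/2$, so the connected embedded $Y$-surface $S_n$ (with boundary containing both $X^+$ and $X^-$) must span the full vertical extent of the slab as $\theta$ traverses the middle region; this forces at least two sheets of $S_n$ to accumulate on $\{z=0\}$ near a generic vertical line in the middle, giving $m\ge 2$. The upper bound $m\le 2$ would follow from the bounded genus: any third sheet, combined with embeddedness of $S_n$ and connectedness of the limit plane, would require additional handles in $S_n$, violating $\genus(S_n)\le g$ and the curvature-concentration picture supplied by theorem~\ref{general-compactness-theorem}. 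Since $m$ is locally constant on the complement of $K$, transitions between $m=1$ and $m=2$ can occur only across points of $K$, forcing $K\subset\{|\theta|\le\pi/2\}$.
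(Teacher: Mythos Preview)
Your proposal has two genuine gaps.

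\textbf{The area bound.} You invoke corollary~\ref{flux-bounds-corollary}, but that result is stated for a \emph{compact} minimal surface, and the nonperiodic $S_n$ extends to $|\theta|\to\infty$. More importantly, $\length(X)=2\pi R_n$ is not uniformly bounded (the hypothesis only gives $R_n\ge c>0$), and the constant $c_{H_n}$ depends on the changing helicoid $H_n$; so neither $L$ nor $c_H$ in your inequality is controlled. The paper obtains the area bound by a different mechanism: since the $S_n$ are graphs on $U\times\RR$, standard gradient estimates give smooth convergence there, so the area blowup set
\[
Q=\{q:\ \limsup_n \area(S_n\cap\BB(q,r))=\infty\ \text{for all }r>0\}
\]
is contained in $\{z=0\}\setminus U$, a proper subset of the connected surface $\{z=0\}$. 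The constancy theorem for area blowup sets then forces $Q=\emptyset$, which is exactly the uniform local area bound needed for theorem~\ref{general-compactness-theorem}.

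\textbf{The multiplicity for $|\theta|<\pi/2$.} Your bound $m\le 2$ via genus is incorrect: three disjoint parallel graphs have genus zero, so high multiplicity does not by itself force handles. The correct reason is that $\partial S_n\supset X$ sits at $\theta=\pm\pi/2$, and crossing a multiplicity-one boundary curve changes the limit multiplicity by exactly $\pm 1$; since $m=1$ for $|\theta|>\pi/2$, this forces $m\in\{0,2\}$ in the middle. Your lower bound (``$S_n$ spans the slab'') is too vague to exclude $m=0$; you must show that the limit \emph{set} of $\tilde S_n:=S_n\cap\{|\theta|<\pi/2\}$ fills the whole middle region. The paper does this with theorem~\ref{all-or-nothing-theorem}: since $\tilde S_n$ is a nonempty minimal surface in the thin region $\{|\theta|\le\pi/2,\ |z|\le 2\eta_n\}$ with $\partial\tilde S_n$ on the vertical cylindrical side, every vertical line through that region lies within distance $4\eta_n$ of $\tilde S_n$, so the limit fills the region and $m=2$. (Incidentally, once $K$ is known to be discrete, $\{\theta>\pi/2\}\setminus K$ is already connected, so your maximum-principle step to propagate $m=1$ is unnecessary.)
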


\begin{remark}\label{compactness-remark}
Later (Corollary~\ref{hypothesis-satisfied-corollary} and Corollary~\ref{singularities-in-Y-corollary}) 
we will show the hypothesis \thetag{*} is not needed 
and  that the singular set $K$ lies in $Y^+$.
\end{remark}

\begin{proof}
By passing to a subsequence and scaling,  we can assume that the $R_n$ converge to a limit $R\in [1,\infty]$.
Note that the $H_n^+$ converge as sets to the surface $\{z=0\}$ in the universal cover of $\RR^3\setminus Z$.
Thus, after passing to a subsequence, the $S_n$ converge as sets to a closed subset 
of the surface $\{z=0\}$.    By standard estimates for minimal graphs, the convergence is smooth
(and multiplicity one) in $U\times\RR$.   Thus the area blowup set
\[
  Q: = \{ q: \text{$\limsup_n \area(S_n\cap \BB(q,r)) =\infty$ for all $r>0$} \}
\]
is contained in $\{z=0\}\setminus U$ and is therefore a proper subset of $\{z=0\}$.
The constancy theorem for area blow up sets~\cite{white-controlling-area}*{theorem~4.1} 
states that the area blowup set of a sequence
of minimal surfaces cannot be a nonempty proper subset of a smooth, connected two-manifold, provided
the lengths of the boundaries are uniformly bounded on compact sets.   Hence $Q$ is empty.
   That is, the areas
of the $S_n$ are uniformly bounded on compact sets.

Thus by the general compactness theorem~\ref{general-compactness-theorem},
after passing
to a subsequence, the $S_n$ converge smoothly away from a discrete set $K$
to a limit surface $S'$ lying in $\{z=0\}$.
The surface $S'$ has some constant multiplicity  in the region where $\theta(\cdot)>\pi/2$.  
Since the $S_n\cap (U\times\RR)$ are graphs,
that multiplicity must be $1$.  By $\rho_Y$ symmetry, the multiplicity is also $1$
where $\theta<-\pi/2$.
  Since each $S_n$ has boundary $X$, the multiplicity
of $S'$ where $|\theta(\cdot)| <\pi/2$ must be $0$ or $2$.

Note that $\tilde S_n:=S_n\cap \{|\theta(\cdot)|<\pi/2\}$ is nonempty and lies in the solid cylindrical region
\begin{equation}\label{the-region}
    \{|\theta(\cdot)|\le \pi/2\} \cap \{ |z|\le 2\hh_n\}
\end{equation} 
and that $\partial \tilde S_n$ lies on the cylindrical, vertical edge of that region.  
It follows (by theorem~\ref{all-or-nothing-theorem} in appendix~\ref{hemisphere-appendix}) 
that for $\hh_n/r_n$ sufficiently small,
every vertical line that intersects the region~\eqref{the-region} is at distance at most $4\hh_n$
from $S_n$.
Thus the limit of the $\tilde S_n$ as sets is all of $\{z=0\}\cap \{|\theta(\cdot)|\le \pi/2\}$,
and so the multiplicity there is two, not zero.

Since the convergence $S_n\to S'$ is smooth wherever $S'$ has multiplicity $1$ (either
by the General Compactness Theorem~\ref{general-compactness-theorem} 
or by the Allard Regularity Theorem), 
     $|\theta(\cdot)|$ must be $\le \pi/2$ at
each point of $K$. 
\end{proof}


\begin{theorem}\label{second-compactness-theorem}
Let $(S_n, \hh_n, R_n)$ be a sequence of examples with $R_n\ge 1$ and with $\hh_n\to 0$.
Let $f_n$ be the screw motion through angle $\alpha_n$ that maps $H_n^+$ to itself, 
and assume that $|\alpha_n|\to \infty$.
Let $S_n' = f_n(S_n)$.   Suppose each $S_n'$ is graphical in some nonempty open cylinder $U\times\RR$.
Then the $S_n'$ converge smoothly (on compact sets) with multiplicity one to the surface $\{z=0\}$.
\end{theorem}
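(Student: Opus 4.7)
The plan is to reduce to Theorem~\ref{first-compactness-theorem} by pulling back via the isometries $f_n^{-1}$. Since $f_n$ is a screw motion about the vertical axis $Z$, it maps vertical lines to vertical lines, so graphicality of $S_n'$ on $U\times\RR$ is equivalent to graphicality of $S_n = f_n^{-1}(S_n')$ on $f_n^{-1}(U)\times\RR$. In the universal cover of $\RR^3\setminus Z$, the screw motion $f_n$ acts by $(\theta,z)\mapsto(\theta+\alpha_n,\,z+v_n)$, where $v_n = \alpha_n\hh_n/\pi$ is the vertical component forced by $f_n(H_n)=H_n$. Since $|\alpha_n|\to\infty$, the $\theta$-range of $f_n^{-1}(U)$ tends to $\mp\infty$, so for all sufficiently large $n$ the cylinder $f_n^{-1}(U)$ is contained in $\{|\theta|>\pi/2\}$; using that each $S_n$ is invariant under $\rho_Y\colon(\theta,z)\mapsto(-\theta,-z)$, I may arrange that $f_n^{-1}(U)\subset\{\theta>\pi/2\}$, so that Theorem~\ref{first-compactness-theorem} applies to the sequence $S_n$.

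Theorem~\ref{first-compactness-theorem} then yields, after passing to a subsequence, smooth convergence of $S_n$ to $\{z=0\}$ away from a discrete set contained in $\{|\theta|\le\pi/2\}$, with multiplicity one on $\{|\theta|>\pi/2\}$. For any compact $K'$ in the universal cover, $f_n^{-1}(K')$ lies in $\{|\theta|>\pi/2\}$ for $n$ large, so $S_n\cap f_n^{-1}(K')$ converges smoothly and with multiplicity one to $\{z=0\}\cap f_n^{-1}(K')$. Because $f_n$ is an isometry, it preserves $C^k$-closeness and multiplicity of convergence, so
\[
S_n'\cap K' \;=\; f_n\bigl(S_n\cap f_n^{-1}(K')\bigr)
\]
converges smoothly with multiplicity one to $f_n(\{z=0\})\cap K' = \{z=v_n\}\cap K'$.

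The remaining task is to show $v_n\to 0$, so that the limit plane is $\{z=0\}$. Here I use that $S_n\subset H_n^+$: in the universal cover, $S_n$ is contained in the single slab
\[
\Sigma_n \;:=\; \{(\theta,z)\,:\,\theta\hh_n/\pi-\hh_n/2<z<\theta\hh_n/\pi+\hh_n/2\},
\]
whose upper helicoidal boundary passes through the lift of $X^+$ at $(-\pi/2,0)$ and whose lower boundary passes through the lift of $X^-$ at $(\pi/2,0)$. A direct substitution using $v_n=\alpha_n\hh_n/\pi$ shows that $f_n(\Sigma_n)=\Sigma_n$, so $S_n'=f_n(S_n)\subset\Sigma_n$ as well. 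On any bounded $\theta$-range the vertical thickness of $\Sigma_n$ is $O(\hh_n)\to 0$, so $S_n'\cap K'$ lies in a vertical neighborhood of $\{z=0\}$ whose radius tends to zero. This is compatible with the smooth convergence to $\{z=v_n\}\cap K'$ only if $v_n\to 0$, and therefore $S_n'\to\{z=0\}$ smoothly with multiplicity one.

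The main obstacle is precisely this vertical drift: $v_n=\alpha_n\hh_n/\pi$ need not tend to zero a priori, so the pullback-and-push-forward by $f_n$ only delivers convergence to the moving plane $\{z=v_n\}$. The slab-confinement argument in the last paragraph closes the gap by trapping $S_n'$ in an $O(\hh_n)$-thick slab collapsing to $\{z=0\}$, forcing $v_n\to 0$ and giving the stated conclusion.
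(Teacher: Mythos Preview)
Your reduction to Theorem~\ref{first-compactness-theorem} has a gap. That theorem requires a \emph{fixed} cylinder $U\times\RR$ on which all the $S_n$ are graphical: in its proof, graphicality on the fixed $U$ is what shows the area blow-up set lies in the proper subset $\{z=0\}\setminus U$ and is hence empty. Your regions $f_n^{-1}(U)$ drift to $|\theta|=\infty$ and share no common nonempty open set, so hypothesis~$(*)$ is not verified. (Appealing to Corollary~\ref{hypothesis-satisfied-corollary} here would be circular, since that corollary is proved via Theorem~\ref{graphical-theorem}, which in turn uses Theorem~\ref{second-compactness-theorem}.) The same drift problem bites again when you push forward: the conclusion of Theorem~\ref{first-compactness-theorem} is smooth convergence of $S_n$ on \emph{fixed} compact subsets of $\{|\theta|>\pi/2\}$, which says nothing about the moving sets $f_n^{-1}(K')$. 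So the claim that $S_n\cap f_n^{-1}(K')$ is smoothly close to $\{z=0\}\cap f_n^{-1}(K')$ is unjustified, and the downstream discussion of convergence to $\{z=v_n\}$ and of $v_n\to 0$ rests on it.

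The paper's approach is to rerun the proof of Theorem~\ref{first-compactness-theorem} directly for the surfaces $S_n'$. Your own slab observation is in fact the correct starting point: since $f_n(H_n^+)=H_n^+$, each $S_n'$ lies in $H_n^+$, which collapses to $\{z=0\}$. Graphicality of $S_n'$ on the \emph{fixed} cylinder $U\times\RR$ then controls the area blow-up set exactly as before. The only new feature is the multiplicity step: the boundary of $S_n'$ in the universal cover is $f_n(X)$, sitting at $\theta=\pm\pi/2+\alpha_n$, and therefore leaving every compact set as $|\alpha_n|\to\infty$. Hence on any compact set the limit has a single constant multiplicity, equal to~$1$ from $U\times\RR$, so the convergence is smooth everywhere with multiplicity one and no singular set. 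In this direct argument the quantity $v_n$ never enters.
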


The proof is almost identical to the proof of theorem~\ref{first-compactness-theorem}, so we omit it.

\begin{theorem}\label{graphical-theorem}
For every genus $g$ and  angle $\alpha>\pi/2$, there is a $\lambda<\infty$ with the following
property.  If $(S,\hh,R)$ is a genus-$g$ example (in the sense of definition~\ref{example-definition}) with 
\[
    \dist(Z,Z^*) = \pi R > 4\lambda \hh,
\]
then $S$ is graphical in the region
\[
   Q(\lambda \hh, \alpha):= \{|\theta(\cdot)| \ge \alpha, \, \dist(\cdot, Z\cup Z^*) \ge \lambda \hh \}.
\]
\end{theorem}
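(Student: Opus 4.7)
The plan is to argue by contradiction using Theorem~\ref{first-compactness-theorem}. Suppose the conclusion fails: for some $g$ and some $\alpha>\pi/2$ there exist $\lambda_n\to\infty$, genus-$g$ examples $(S_n,\hh_n,R_n)$ with $\pi R_n>4\lambda_n\hh_n$, and witness points $p_n\in S_n$ lying in $Q(\lambda_n\hh_n,\alpha)$ at which $\Tan(S_n,p_n)$ is vertical. Set $d_n:=\dist(p_n,Z\cup Z^*)$, so $\lambda_n\hh_n\le d_n$; since any point of $\sS^2\times\RR$ satisfies $\dist(\cdot,Z)+\dist(\cdot,Z^*)=\pi R_n$, also $d_n\le \pi R_n/2$. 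Rescaling by $1/d_n$ yields new examples $(\tilde S_n,\tilde \hh_n,\tilde R_n)$ with $\tilde \hh_n=\hh_n/d_n\le 1/\lambda_n\to 0$ and $\tilde R_n=R_n/d_n\ge 2/\pi$. After rescaling, $\tilde p_n$ sits at unit distance from $Z\cup Z^*$, with $|\theta(\tilde p_n)|\ge\alpha$ and vertical tangent plane. Using the $\rho_Y$-symmetry I may assume $\theta(\tilde p_n)\ge\alpha$, and after a further subsequence I may assume $\dist(\tilde p_n,Z)=1$ (the case $\dist(\tilde p_n,Z^*)=1$ is identical on interchanging the two axes). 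A screw motion $f_n$ preserving $\tilde H_n^+$ then brings $\tilde p_n$ into the bounded region $\{\alpha\le\theta<\alpha+2\pi,\,0\le z<\tilde\hh_n,\,\dist(\cdot,Z)=1\}$, producing $S_n':=f_n(\tilde S_n)$ and $q_n:=f_n(\tilde p_n)$ lying in a fixed bounded set.

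It remains to apply Theorem~\ref{first-compactness-theorem} to $(S_n',\tilde\hh_n,\tilde R_n)$. Its standing hypotheses ($\tilde\hh_n\to 0$ and $\tilde R_n$ bounded below) are in force, so the main obstacle is to verify the hypothesis~$(*)$, namely that each $S_n'$ is a vertical graph in some nonempty open cylindrical region $U\times\RR$ with $\theta>\pi/2$ on $U$. My plan is to choose $U$ to be a thin annular sector bounded away from $Z\cup Z^*$ at a fixed angle $\theta^*>\pi/2$ chosen to differ by a uniform positive amount from the single handle angle of $S_n'$ modulo $2\pi$: the $g$ handle points of $S_n'$ all lie on the single ray $f_n(Y^+)$, hence share one angle modulo $2\pi$; after passing to a subsequence these angles converge, and any $\theta^*$ different from the limit works for all large $n$. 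Over such a $U$ the surface $S_n'$ has genus zero; being contained in the component $\tilde H_n^+$ of the complement of the rescaled helicoid, it lies in a tilted slab of vertical thickness $\tilde\hh_n\to 0$; and by Corollary~\ref{flux-bounds-corollary} its area is uniformly bounded on compact sets. Standard curvature estimates for minimal surfaces trapped in vanishingly thin slabs, combined with Allard's regularity theorem, then force $S_n'\cap(U\times\RR)$ to converge smoothly with multiplicity one to the horizontal sheet $\{z=0\}\cap(U\times\RR)$, so $S_n'$ is a vertical graph over $U$ for all large $n$, verifying~$(*)$.

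With~$(*)$ established, Theorem~\ref{first-compactness-theorem} yields, after passing to a subsequence, smooth convergence $S_n'\to\{z=0\}$ away from a discrete set $K\subset\{|\theta(\cdot)|\le\pi/2\}$, with multiplicity one where $|\theta(\cdot)|>\pi/2$. Passing to a further subsequence, $q_n\to q_\infty$ with $\theta(q_\infty)\ge\alpha>\pi/2$, so $q_\infty\notin K$; the smooth multiplicity-one convergence at $q_\infty$ forces $\Tan(S_n',q_n)\to\Tan(\{z=0\},q_\infty)$, which is horizontal, contradicting the verticality of $\Tan(S_n',q_n)$ and completing the proof. The delicate step, as noted, is the handle-localization argument that produces the graphical sector $U$ needed for hypothesis~$(*)$; everything else is a fairly direct contradiction once Theorem~\ref{first-compactness-theorem} is applied.
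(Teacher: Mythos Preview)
Your overall strategy---contradiction, rescaling so that $\dist(p_n,Z\cup Z^*)=1$, and appealing to the compactness theorems---matches the paper's. The decisive difference is how hypothesis~$(*)$ is obtained, and here your argument has a genuine gap.

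Your verification of~$(*)$ rests on the claim that away from the ``handle angle'' (the angle of $f_n(Y^+)$) the surface $S_n'\cap(U\times\RR)$ is forced to be a graph. The ingredients you cite---genus zero over $U$, confinement to a slab of vanishing thickness, and local area bounds---do \emph{not} by themselves preclude vertical tangent planes: a catenoidal neck has genus zero, sits in an arbitrarily thin slab, and has bounded area, yet is nowhere graphical. In particular, genus zero does not rule out necks, and Allard's theorem requires a multiplicity-one hypothesis that you have not established. What you are implicitly assuming is that non-graphical behavior (neck formation) can occur only at the $Y^+$ intersection points. That is exactly the content of Theorem~\ref{limit-is-a-catenoid-theorem}, which is proved \emph{after} the present theorem and in fact invokes it; so the argument is circular. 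A second, smaller issue: after your screw motion $f_n$, the surface $S_n'$ is generally no longer an ``example'' in the sense of Definition~\ref{example-definition} (its boundary circle $f_n(X)$ has moved), so Theorem~\ref{first-compactness-theorem} does not literally apply; you would need to split into the cases where the screw angle stays bounded (use Theorem~\ref{first-compactness-theorem}) and where it tends to infinity (use Theorem~\ref{second-compactness-theorem}).

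The paper sidesteps the whole problem by a single extra idea: rather than taking an arbitrary witness $p_n$, it lets $\alpha_n\ge\alpha$ be the \emph{largest} angle for which $S_n$ fails to be graphical in $Q(\lambda_n\hh_n,\alpha_n)$ (such a largest angle exists because the ends of $M_n$ are helicoidal). Then by maximality, $S_n$ \emph{is} graphical in $Q(\lambda_n\hh_n,\beta)$ for every $\beta>\alpha_n$, which furnishes hypothesis~$(*)$ automatically---no handle-localization argument needed. One then chooses $p_n$ with $\theta(p_n)=\alpha_n$ and vertical tangent, rescales, and splits into Case~1 ($\alpha_n\to\tilde\alpha<\infty$, use Theorem~\ref{first-compactness-theorem}) and Case~2 ($\alpha_n\to\infty$, screw back and use Theorem~\ref{second-compactness-theorem}); in either case the limit point sits in the region of smooth multiplicity-one convergence, contradicting the vertical tangent. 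Replacing your witness-selection step with this maximal-angle choice fixes the proof.
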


\begin{proof}
Suppose the result is false for some $\alpha>\pi/2$, and let $\lambda_n\to\infty$.
Then for each $n$, there is an example $(S_n,\hh_n,r_n)$ such that
\begin{equation}
\dist_n(Z,Z^*) > 4\lambda_n \hh_n
\end{equation}
and such that $S_n$  is not a graphical in $Q(\lambda_n\hh_n,\alpha)$.

Here $\dist_n(\cdot,\cdot)$ denotes distance with 
respect the metric that comes from $\sS^2(R_n)\times \RR$.
However, henceforth we will write $\dist(\cdot, \cdot)$ instead of $\dist_n(\cdot,\cdot)$ to reduce
notational clutter.

Since the ends of $M_n=\overline{S_n\cup\rho_ZS_n}$ 
are asymptotic to helicoids as $z\to\pm\infty$, 
note that $S_n$ is graphical in $Q(\lambda_n \hh_n, \beta)$ for all sufficiently large $\beta$.
Let $\alpha_n\ge \alpha$ be the largest angle such that $S_n$ is 
not graphical in $Q(\lambda_n\hh_n, \alpha_n)$.
Note that there must be a point $p_n\in S_n$
such that
\begin{align}
 \theta(p_n)&=\alpha_n, \\ 
 \dist(p_n,Z\cup Z^*) &> \lambda_n \hh_n, 
 \end{align}
 and such that $\Tan(S_n,p_n)$ is vertical.
 Without loss of generality, we may assume (by scaling) that $\dist(p_n, Z\cup Z^*)=1$.
 In fact, by symmetry of $Z$ and $Z^*$, we may assume that 
 \[
   1 = \dist(p_n,Z) \le \dist(p_n, Z^*),
 \]
 which of course implies that $\pi R_n = \dist(Z,Z^*) \ge 2$, and therefore that
\[
  \lambda_n\hh_n \le \frac12.
\]
By passing to a further subsequence, we can assume that
\[
  \lambda_n \hh_n \to \mu\in [0,\frac12].
\]
Since $\lambda_n\to\infty$, this forces
\[
  \hh_n\to 0.
\]
We can also assume that
\[
\alpha_n\to \talpha \in [\alpha, \infty].
\]

Case 1:  $\talpha<\infty$.
Then the $p_n$ converge to a point $p$ with $\theta(p)=\talpha$
and with $\dist(p,Z)=\dist(p,Z\cup Z^*)=1$.

Note that $M_n$ is graphical in the region $Q(\lambda_n \hh_n, \alpha_n)$,
and that those regions converge to $Q(\mu, \alpha)$.  Thus by the Compactness
Theorem~\ref{first-compactness-theorem}, 
the $S_n$ converge smoothly and with multiplicity one to $\{z=0\}$
in the region $|\theta(\cdot)|>\pi/2$.   But this is a contradiction since $p_n\to p$,
which is in that region, and since $\Tan(S_n,p_n)$ is vertical.

Case 2: Exactly as in case 1, except that we apply a screw motion $f_n$ to $M_n$
such that $\theta(f_n(p_n))=0$. (We then use theorem~\ref{second-compactness-theorem}
 rather than Theorem~\ref{first-compactness-theorem}.)
\end{proof}

\begin{corollary}\label{hypothesis-satisfied-corollary}
The hypothesis~(*) in Theorems~\ref{first-compactness-theorem} and~\ref{second-compactness-theorem}
 is always satisfied provided $n$ is sufficiently large.
\end{corollary}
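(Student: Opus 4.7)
The plan is to deduce the corollary directly from Theorem~\ref{graphical-theorem}, which has already been proved. Fix once and for all any $\alpha > \pi/2$ (e.g.\ $\alpha = \pi$), and let $\lambda = \lambda(g,\alpha) < \infty$ be the constant produced by that theorem. In the setting of either Theorem~\ref{first-compactness-theorem} or Theorem~\ref{second-compactness-theorem}, the sequence $\eta_n$ tends to $0$ while $R_n$ is bounded away from $0$, so $\pi R_n > 4\lambda\eta_n$ holds for all $n$ sufficiently large. Theorem~\ref{graphical-theorem} then gives that $S_n$ is graphical on
\[
Q(\lambda\eta_n,\alpha) = \{\,|\theta(\cdot)| \ge \alpha,\ \dist(\cdot, Z\cup Z^*) \ge \lambda\eta_n\,\}.
\]

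For Theorem~\ref{first-compactness-theorem}, I would fix once and for all a bounded open set $U$ in (the universal cover of) a horizontal plane with $\theta(U) \subset (\alpha, \alpha+1)$ and $\dist(U,Z) \ge 2$. By~\eqref{Cyldist}, $\dist(Z,Z^*) = \pi R_n/2$ is uniformly bounded below, so $\dist(U,Z^*)$ is also uniformly bounded below by some $c_1 > 0$. Thus for $n$ large we have $\lambda\eta_n < \min(2, c_1)$, and consequently $U \times \RR \subset Q(\lambda\eta_n,\alpha)$. Then $S_n$ is graphical over $U \times \RR$ and $\theta > \pi/2$ there, verifying hypothesis~(*).

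For Theorem~\ref{second-compactness-theorem}, note that the screw motion $f_n$ sends vertical lines to vertical lines, so $S_n' = f_n(S_n)$ is graphical on $f_n(Q(\lambda\eta_n,\alpha))$. Lifting $f_n$ to the universal cover as $(\rho,\theta,z) \mapsto (\rho,\theta+\alpha_n,z+c_n)$ transforms this region into
\[
\{\,|\theta(\cdot) - \alpha_n| \ge \alpha,\ \dist(\cdot, Z\cup Z^*) \ge \lambda\eta_n\,\},
\]
whose branch $\{\theta \ge \alpha + \alpha_n\}$ is always unbounded above in $\theta$. Setting $\tau_n := \max(\alpha + \alpha_n,\, \pi/2) + 1$, I would choose a bounded open $U_n$ with $\theta(U_n) \subset (\tau_n, \tau_n + 1)$ and $\dist(U_n, Z) \ge 2$; for $n$ large, $U_n \times \RR \subset f_n(Q(\lambda\eta_n,\alpha))$, so $S_n'$ is graphical on $U_n \times \RR$ with $\theta > \pi/2$.

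There is no substantial obstacle at this stage, since the real work has already gone into Theorem~\ref{graphical-theorem}. The only mild point requiring any care is in Theorem~\ref{second-compactness-theorem}, where $|\alpha_n| \to \infty$: after a large $\theta$-shift one must check that the graphical region still meets the half-space $\{\theta > \pi/2\}$, and this is immediate because one branch of $\{|\theta - \alpha_n| \ge \alpha\}$ extends all the way to $+\infty$ in $\theta$ regardless of the sign or size of $\alpha_n$.
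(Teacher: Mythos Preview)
Your approach is exactly what the paper intends: the corollary is stated immediately after Theorem~\ref{graphical-theorem} precisely because it follows at once from that theorem. The argument for Theorem~\ref{first-compactness-theorem} is fine in spirit; two small cosmetic points are that \eqref{Cyldist} gives the distance from $Z$ to the equatorial cylinder (so $\dist(Z,Z^*)=\pi R_n$, not $\pi R_n/2$), and the specific constant $2$ in ``$\dist(U,Z)\ge 2$'' should just be any fixed positive number small enough to be realizable for all $R_n$ in the given range.

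For Theorem~\ref{second-compactness-theorem} there is a small gap: the hypothesis~\thetag{*} asks for a \emph{single} region $U$ that works for all large $n$ (this is how it is used in the proof, where one needs smooth graphical convergence on $U\times\RR$ to conclude the area-blowup set is a proper subset). Your sets $U_n$ have $\theta(U_n)\subset(\tau_n,\tau_n+1)$ with $\tau_n\to\infty$ when $\alpha_n\to+\infty$, so they drift off and do not give a common $U$. The remedy is actually simpler than what you wrote: after the screw motion, $S_n'$ is graphical on $\{|\theta-\alpha_n|\ge\alpha,\ \dist(\cdot,Z\cup Z^*)\ge\lambda\eta_n\}$, and since $|\alpha_n|\to\infty$ the excluded strip $|\theta-\alpha_n|<\alpha$ eventually misses \emph{any} fixed bounded $\theta$-interval. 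Hence a fixed $U$ (bounded in $\theta$ and bounded away from $Z\cup Z^*$) lies in the graphical region for all large $n$. Note also that the hypothesis in Theorem~\ref{second-compactness-theorem} does not require $\theta>\pi/2$, so you need not arrange that.
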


\stepcounter{theorem}
\addtocontents{toc}{\SkipTocEntry}
\subsection{Catenoidal Necks} The next theorem shows that, in the compactness theorem~\ref{first-compactness-theorem}, any point away from $Z\cup Z^*$
where the convergence is not smooth must lie on $Y$, and that near such a point,
the  $S_n$ have small catenoidal necks.

\begin{theorem}\label{limit-is-a-catenoid-theorem}
Let $(S_n,\hh_n,R_n)$ be a sequence of examples and
 $p_n\in S_n$ be a sequence of points  such that
\begin{equation}\label{slope-bigger-than-delta-hypothesis}
\slope(S_n,p_n)\ge \delta>0, 
\end{equation}
(where $\slope(S_n,p_n)$ is the slope of the tangent plane to $S_n$ at $p_n$) and such that
\begin{equation}\label{away-from-axes-hypothesis}
\frac{\dist(p_n,{Z\cup Z^*})}{\hh_n}  \to \infty.
\end{equation}
Then there exist positive numbers $c_n$ such that (after passing to a subsequence) the surfaces
\begin{equation}\label{rescaled-catenoidish}
   \frac{S_n-p_n}{c_n}
\end{equation}
converge to a catenoid in $\RR^3$.  
The waist of the catenoid is a horizontal circle, and the line $(Y^+-p_n)/c_n$ converges to a line
that intersects the waist  in two diametrically opposite points.

Furthermore, 
\begin{equation}\label{h_n-relatively-large}
\frac{\hh_n}{c_n} \to \infty. 
\end{equation}
and
\begin{equation}\label{h_n-relatively-large2}
\frac{\dist(p_n, S_n\cap Y^+)}{\hh_n} \to 0.  
\end{equation}
\end{theorem}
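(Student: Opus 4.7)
The plan is to produce a rescaled limit via the General Compactness Theorem~\ref{general-compactness-theorem}, identify it as a catenoid through rigidity for complete minimal surfaces of finite total curvature, and then use the $\rho_Y$-symmetry and the slab structure of $H_n^+$ to extract the remaining geometric information. First, after applying $\mu_E$ if necessary I may assume $\dist(p_n,Z)\le\dist(p_n,Z^*)$, and then rescale by $1/\dist(p_n,Z)$. The rescaled examples $(\tilde S_n,\tilde\hh_n,\tilde R_n)$ satisfy $\tilde\hh_n\to 0$ and $\tilde R_n\ge 2/\pi$ (using the hypothesis together with $\dist(p_n,Z\cup Z^*)\le\pi R_n/2$). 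After a screw motion and passing to a subsequence, $\tilde p_n$ converges to some $\tilde p$ with $\dist(\tilde p,Z)=1$.

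Theorem~\ref{first-compactness-theorem} and Corollary~\ref{hypothesis-satisfied-corollary} give that $\tilde S_n$ converges smoothly away from a discrete singular set $K\subset\{|\theta(\cdot)|\le\pi/2\}$ to the surface $\{z=0\}$. The slope hypothesis $\slope(\tilde S_n,\tilde p_n)\ge\delta$ rules out smooth convergence at $\tilde p_n$, so $\tilde p\in K$. I then apply the second alternative of Theorem~\ref{general-compactness-theorem} at $\tilde p_n$: there exist scales $\tilde c_n\to 0$ along which $(\tilde M_n-\tilde p_n)/\tilde c_n$ converges to a non-flat, complete, embedded minimal surface $M'\subset\RR^3$ of finite total curvature with ends parallel to the horizontal plane $\Tan(\{z=0\},\tilde p)$. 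By the L\'opez--Ros uniqueness theorem for complete embedded minimal surfaces of finite total curvature with parallel ends, $M'$ must be a catenoid; since its ends are horizontal, the axis is vertical and the waist is a horizontal circle. Put $c_n=\tilde c_n\dist(p_n,Z)$ to transport the rescaling back.

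The crucial step is to show $\dist(\tilde p_n,Y^+)/\tilde c_n$ remains bounded, so that $(Y^+-\tilde p_n)/\tilde c_n$ converges subsequentially to a horizontal line $\ell$. Granted this, the $\rho_Y$-invariance of $\tilde M_n$ descends to a $180^\circ$-rotation symmetry of $M'$ about $\ell$; among horizontal axes of $180^\circ$-rotation symmetry of a vertical-axis catenoid, only those passing through the center of the waist qualify, so $\ell$ passes through that center and meets the waist circle in exactly two diametrically opposite points. To establish the boundedness I argue by contradiction: if $\dist(\tilde p_n,Y^+)/\tilde c_n\to\infty$, then for large $R$ the ball $B(\tilde p_n,R\tilde c_n)$ is disjoint from its $\rho_Y$-image, so by Proposition~\ref{genus-0-proposition} the portion of $\tilde S_n$ inside has genus zero. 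Using the bound $\|\tilde M_n\cap Y\|=2g+2$ from Theorem~\ref{theorem1}, a counting of the $\rho_Y$-pairings of the finitely many singular points of the multiplicity-two convergence, combined with the observation that each on-$Y^+$ neck consumes a pair of fixed points via the waist-$\ell$ intersection, forces $\tilde p_n$ to stay near $Y^+$.

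The remaining two estimates then follow. For $\hh_n/c_n\to\infty$: locally near $p_n$ in the universal cover, $S_n$ is confined to $H_n^+$, which (since $\dist(p_n,Z\cup Z^*)\gg\hh_n$) is a near-horizontal slab of vertical width $\hh_n$; rescaling by $c_n$ gives a slab of width $\hh_n/c_n$, and a complete catenoid does not fit in any finite-width horizontal slab, forcing $\hh_n/c_n\to\infty$. For $\dist(p_n,S_n\cap Y^+)/\hh_n\to 0$: the two waist-$\ell$ intersection points correspond via smooth convergence to two points of $\tilde M_n\cap Y$ within distance $O(\tilde c_n)$ of $\tilde p_n$; unscaled these are points of $M_n\cap Y$ within $O(c_n)$ of $p_n$, and (choosing the half-axis so that $p_n$ is near $Y^+$) they lie in $S_n\cap Y^+$, so dividing by $\hh_n$ and using $c_n/\hh_n\to 0$ completes the argument. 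The main obstacle is the topological/combinatorial counting in the third paragraph that prevents the blowup point from drifting off $Y^+$.
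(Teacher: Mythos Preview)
Your overall strategy is the right one, and your arguments for $\hh_n/c_n\to\infty$ and for the final estimate are fine once the earlier steps are in place. But there are two genuine gaps.

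\textbf{The catenoid identification is not justified.} The L\'opez--Ros theorem classifies complete embedded minimal surfaces of \emph{genus zero} and finite total curvature; it says nothing about surfaces with merely parallel ends. You have not established that $M'$ has genus zero, and in general a blow-up limit of finite total curvature with horizontal ends could a priori have positive genus. The paper handles this by splitting on $\alpha=\lim\theta(p_n)$. When $|\alpha|<\pi/2$, the point $\tilde p$ lies strictly inside the multiplicity-two region of the convergence in Theorem~\ref{first-compactness-theorem}, so monotonicity gives area ratio at most $2$ at all scales, hence $M'$ has density at infinity $\le 2$ and therefore at most two ends; since $M'$ is non-flat it has exactly two, and Schoen's theorem forces a catenoid. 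When $|\alpha|=\pi/2$, the point sits on the boundary $X^\pm$ of $S_n$, so one first Schwarz-reflects across $X^\pm$; the reflected surface $\widetilde S_n$ near $p_n$ is disjoint from its $\rho_Y$-image, so Proposition~\ref{genus-0-proposition} gives genus zero for the blow-up, and \emph{then} L\'opez--Ros applies. One must also check the line $\lim (X^\pm-p_n)/c_n$ does not survive in the limit (catenoids contain no lines), so that $(S_n-p_n)/c_n$ and not just the reflected surface converges to the full catenoid.

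\textbf{The argument that $p_n$ stays near $Y^+$ is incomplete.} You correctly flag this as the main obstacle, and the counting sketch you give does not close. The paper's route avoids any global counting: once the catenoid limit $S$ is known, the waist is the unique simple closed geodesic in $S$, so for large $n$ there is a unique simple closed geodesic $\gamma_n\subset S_n$ with $(\gamma_n-p_n)/c_n\to\text{waist}$. If $\rho_Y\gamma_n\ne\gamma_n$, then (since $S_n$ is a $Y$-surface) $\gamma_n\cup\rho_Y\gamma_n$ bounds a region $A_n\subset S_n$, and the maximum principle applied to the angle function $\theta$ on $A_n$ (whose level sets are totally geodesic) forces $\theta|\overline{A_n}$ to attain its maximum on one boundary component; passing to the blow-up limit produces a half-catenoid on which a horizontal linear function attains its maximum on the waist, a contradiction. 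Hence $\gamma_n$ is $\rho_Y$-invariant, the limiting line $Y'=\lim(Y^+-p_n)/c_n$ is an axis of $180^\circ$ symmetry of the waist circle and therefore bisects it, and $\dist(p_n,S_n\cap Y^+)/c_n$ stays bounded. This is what you should replace your third paragraph with.
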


\begin{proof}
By scaling and passing to a subsequence, we may assume that
\begin{equation}\label{distance-to-Z-normalized}
   1 = \dist(p_n, Z) \le \dist(p_n, Z^*).
\end{equation}
and that $\theta(p_n)$ converges to a limit $\alpha\in [-\infty,\infty]$.
By~\eqref{away-from-axes-hypothesis} (a statement that is scale invariant)
 and by~\eqref{distance-to-Z-normalized}, $\hh_n\to 0$.
Thus by~theorem~\ref{graphical-theorem} (and standard estimates for minimal graphs), $|\alpha|\le \pi/2$.

First we prove that there exist $c_n\to 0$ such that the surfaces $(S_n-p_n)/c_n$
converge subsequentially to a catenoid with horizontal ends.

{\bf Case 1}: $|\alpha|=\pi/2$.  By symmetry, it suffices to consider the case $\alpha=\pi/2$.
Let $\widetilde S_n$ be obtained from $S_n$ by Schwartz reflection about $X^-$.

By the last sentence of the general compactness theorem~\ref{general-compactness-theorem}, 
there exist numbers $c_n\to 0$
such that (after passing to a subsequence) the surfaces
\[
  \frac{\widetilde{S}_n - p_n}{c_n}
\]
converge smoothly to a complete, non-flat, properly embedded minimal surface $\widetilde{S}\subset \RR^3$ of finite
total curvature whose ends are horizontal.   
By proposition~\ref{genus-0-proposition}, $\widetilde{S}$ has genus $0$.  
By  a theorem of Lopez and Ros\cite{lopez-ros}, the only 
nonflat, properly embedded minimal surfaces in $\RR^3$ with genus zero and finite total curvature are the catenoids.
Thus $\tilde S$ is a catenoid.   Note that 
\[
   \frac{S_n-p_n}{c_n}
\]
converges to a portion $S$ of $\widetilde{S}$.  Furthermore, $S$ is either all of $\widetilde{S}$, or 
it is a portion of $\widetilde S$ bounded by a horizontal line $\widetilde{X}= \lim_n ((X^- - p_n)/c_n$ 
in $\widetilde{S}$.  
Since catenoids contain no lines, 
in fact $S=\widetilde{S}$ is a catenoid.

{\bf Case 2}:  $|\alpha|<\pi/2$.
By the last statement of the general compactness theorem~\ref{general-compactness-theorem}, 
there are $c_n>0$ tending to $0$ such
that (after passing to a subsequence) the surfaces
\[
 \frac{S_n-p_n}{c_n}
\]
converge smoothly to a complete, nonflat, embedded minimal surface $S\subset \RR^3$ of finite
total curvature with ends parallel to horizontal planes.  By monotonicity, 
\begin{equation}\label{monotonicity}
  \limsup_n \frac{\area\left(\frac{S_n-p_n}{c_n} \cap \BB(0,\rho)\right)}{\pi \rho^2} \le 2
\end{equation}
for all $\rho>0$.  Thus $S'$ has density at infinity $\le 2$, so it has at most two ends.  
If it had just one end, it would be a plane. But it is not flat, so that is impossible. Hence it has two ends. 
By a theorem of Schoen~\cite{schoen-uniqueness}, 
a properly embedded minimal surface in $\RR^3$ of 
with finite total curvature and two ends must be  a catenoid.

This completes the proof that (after passing to a subsequence) the surfaces $(S_n-p_n)/c_n$
converge to a catenoid $S$ with horizontal ends.

Note that for large $n$, there is a simple closed geodesic $\gamma_n$ in $S_n$
such that $(\gamma_n-p_n)/c_n$ converges to the waist of the catenoid $S$.
Furthermore, $\gamma_n$ is unique in the following sense: if $\gamma_n'$ is a
simple closed geodesic in $S_n$ that converges to the waist of the catenoid $S$, then
$\gamma_n'=\gamma_n$ for all sufficiently large $n$.  (This follows from the implicit
function theorem and the fact that the waist $\gamma$ of the catenoid is non-degenerate
as a critical point of the length function.)

\begin{claim} $\rho_Y\gamma_n=\gamma_n$ for all sufficiently large $n$.
\end{claim}

\begin{proof}[Proof of claim]
 Suppose not.  
Then (by passing to a subsequence) we can assume that $\gamma_n\ne\rho_Y\gamma_n$ for all $n$.
Thus (passing to a further subsequence) the curves $(\rho_Y\gamma_n-p_n)/c_n$ do one of the following:
(i) they converge to $\gamma$, (ii) they converge to another simple closed geodesic in $S$ having the same
length as $\gamma$, or (iii) they diverge to infinity.  
Now (i) is impossible by the uniqueness of the $\gamma_n$.  Also, (ii) is impossible because the waist
$\gamma$ is the only simple closed geodesic in the catenoid $S$.
Thus (iii) must hold: the curves $(\rho_Y\gamma_n - p_n)/c_n$ diverge to infinity.

Since $S_n$ is a $Y$-surface, $\gamma_n$ together with $\rho_Y\gamma_n$ bound a region $A_n$
in $S_n$.  By the maximum principle, $\theta(\cdot)$ restricted to $A_n$ has its maximum on one of the
two boundary curves $\gamma_n$ and $\rho_Y\gamma_n$ and (by symmetry) its minimum on the other.
(Note that the level sets of $\theta$ are totally geodesic and therefore minimal.)

\newcommand{\hA}{\hat{A}}
By passing to a subsequence, we can assume that the regions $(A_n-p_n)/c_n$ converges to a subset $\hA$
of the catenoid $S$.
Note that $\hA$ is the closure of one of the components of $S\setminus \Gamma$.  (This is because one of the two boundary components of $(A_n-p_n)/c_n$,
namely $(\gamma_n-p_n)/c_n$, converges to the waist of the catenoid, 
whereas the other boundary component, namely $(\rho_Y\gamma_n-p_n)/c_n$, diverges to infinity.)
The fact that $\theta | \overline{A_n}$ attains it maximum on $\gamma_n$ implies that
there is a linear function $L$ on $\RR^3$ with horizontal gradient such that $L|  \hA$
attains its maximum on the waist $\gamma=\partial \hA$.  But that is impossible since the catenoid $S$
has a horizontal waist.  This proves the claim.
\end{proof}

Since each $\gamma_n$ is $\rho_Y$-invariant (by the claim), it follows that 
the waist $\gamma$ is invariant under $180^\circ$ rotation about the line $Y'$,
where $Y'$ is a subsequential limit of the curves $(Y_n-p_n)/c_n$.
Since $\gamma$ is a horizontal circle, $Y'$ must be a line that bisects the circle.
Thus
\begin{equation}\label{normalized-distance-to-Y}
   \frac{\dist(p_n, S_n\cap Y)}{c_n} \to \dist(O, S\cap Y') < \infty.
\end{equation}

Note that $\hh_n/c_n \to \infty$, since if it converged to a finite limit, 
then the regions $(H_n^+ - p_n) /c_n$ would converge to a horizontal slab of finite
thickness and the catenoid $S$ would be contained in that slab, a contradiction.
This completes the proof of~\eqref{h_n-relatively-large}.

Finally,~\eqref{h_n-relatively-large2} follows immediately 
from~\eqref{h_n-relatively-large} and~\eqref{normalized-distance-to-Y}.
\end{proof}

\begin{corollary}\label{singularities-in-Y-corollary}
The singular set $K$ in Theorem~\ref{first-compactness-theorem} is a finite subset of $Y^+$.
In fact (after passing to a subsequence), $p\in K$ if and only if there is a sequence $p_n\in Y^+\cap S_n$
such that $p_n\to p$.
\end{corollary}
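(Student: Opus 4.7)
The plan is to establish both directions of the equivalence, using Theorem~\ref{limit-is-a-catenoid-theorem} for the forward direction and a short $\rho_Y$-symmetry argument on multiplicity-two graphical limits for the reverse.

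For the forward direction, let $p \in K$. The first step is to extract $p_n \in S_n$ with $p_n \to p$ and $\slope(S_n, p_n) \ge \delta$ for some $\delta > 0$. This is forced by the failure of smooth convergence at $p$: if the tangent planes $\Tan(S_n, \cdot)$ remained uniformly close to horizontal throughout a neighborhood of $p$, then $S_n$ would locally be near-flat minimal graphs and hence converge smoothly, contradicting $p \in K$. Since $p \notin Z \cup Z^*$ and $\eta_n \to 0$, the ratio $\dist(p_n, Z \cup Z^*)/\eta_n$ tends to $\infty$, so Theorem~\ref{limit-is-a-catenoid-theorem} applies. Its conclusion $\dist(p_n, S_n \cap Y^+)/\eta_n \to 0$ yields points $q_n \in S_n \cap Y^+$ with $q_n \to p$. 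Hence $p$ is a limit of points in $Y^+ \cap S_n$, proving one containment and showing $p \in \overline{Y^+}$. Since $\|S_n \cap Y^+\| = g$, after passing to a further subsequence only finitely many such limit points can arise, so $K$ is finite and contained in $Y^+$.

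For the reverse direction, suppose $q_n \in Y^+ \cap S_n$ converges to $q$, and assume towards a contradiction that $q \notin K$. By Theorem~\ref{first-compactness-theorem}, near $q$ the $S_n$ converge smoothly to $\{z=0\}$; because $\theta(q) = 0$, the multiplicity is $2$. Thus on some neighborhood $U \subset \{z=0\}$ of $q$, the surfaces $S_n$ are a disjoint union of two smooth graphs $z = f_{n,1}(x,y)$ and $z = f_{n,2}(x,y)$ with $f_{n,i} \to 0$ smoothly. The $\rho_Y$-symmetry, acting as $(x,y,z) \mapsto (-x,y,-z)$, either fixes each sheet individually or swaps the two. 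In the first case the $\rho_Y$-invariance of a graph forces $f_{n,i}(-x,y) = -f_{n,i}(x,y)$, so $f_{n,i}(0,y) \equiv 0$ and each sheet contains the whole arc $Y^+ \cap U$, contradicting $\|S_n \cap Y^+\| = g$. In the second case, writing the sheets as $z = f_n(x,y)$ and $z = -f_n(-x,y)$, disjointness forces $f_n(x,y) \ne -f_n(-x,y)$ throughout $U$ and in particular $f_n(0,y) \ne 0$, so $S_n \cap Y^+ \cap U = \emptyset$, contradicting $q_n \to q \in U$. Either case yields a contradiction, so $q \in K$.

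The principal obstacle is the slope-bounded-below extraction in the forward direction: ``failure of smooth convergence'' must be converted into a definite quantitative tilt of $\Tan(S_n,\cdot)$ somewhere near $p$, enabling the use of Theorem~\ref{limit-is-a-catenoid-theorem}. Once that hypothesis is in hand, the quantitative catenoid theorem does the work, and the $\rho_Y$-symmetry analysis in the reverse direction is elementary.
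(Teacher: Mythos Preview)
Your proof is correct. The paper states this corollary without proof, treating it as an immediate consequence of Theorem~\ref{limit-is-a-catenoid-theorem}; your argument fills in the details in the natural way. The forward direction is exactly what the theorem is designed for, and your extraction of points with slope bounded below (via failure of smooth graphical convergence) together with the bound $\|S_n\cap Y^+\|=g$ yields both $K\subset Y^+$ and finiteness.

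For the reverse direction there is a shorter alternative that bypasses the two-sheet case analysis. Since $\rho_Y$ is an orientation-preserving involution of $S_n$ fixing $q_n$, the differential $d\rho_Y$ at $q_n$ (which is $180^\circ$ rotation about $T_{q_n}Y$) must restrict to an orientation-preserving linear involution of $T_{q_n}S_n$; this forces $T_{q_n}S_n$ to be the plane orthogonal to $Y$ at $q_n$, hence vertical. If the convergence $S_n\to\{z=0\}$ were smooth at $q$, then $\Tan(S_n,q_n)$ would have to converge to the horizontal plane, contradicting verticality. Your sheet-swapping dichotomy is a correct graphical unpacking of this same symmetry constraint, so the two arguments are equivalent in substance; the tangent-plane version is just more compact.
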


The following definition is suggested by theorem~\ref{limit-is-a-catenoid-theorem}:

\begin{definition}\label{neck-definition}
Let $(S,\hh,R)$ be an example (as in definition~\ref{example-definition}).  
Consider the set of points of $S$ at which the tangent plane is vertical.
A {\em neck} of $S$ is a connected component of that set consisting of a simple closed
curve that intersects $Y^+$ in exactly two points.
The {\em radius}
of the neck is half the distance between those two points, and the {\em axis} of the neck
is the vertical line that passes through the midpoint of those two points.
\end{definition}


\begin{theorem}\label{graph-form-theorem}
Suppose that $(S,\hh,R)$ is an example (as in definition~\ref{example-definition}) and that $V$ is a vertical line.
If $V$ is not too close to $Z\cup Z^*$ and also not too close to any neck
axis, then $V$ intersects $M$ in at most two points, and the tangent planes
to $M$ at those points are nearly horizontal.   
Specifically, 
for every $\eps>0$, there is a $\lambda$ (depending only on genus and $\eps$) 
with the following properties.
Suppose that
\[
   \frac{\dist(V,Z\cup Z^*)}{\hh} \ge \lambda,
\]
and that for every neck axis $A$, either
\[
    \frac{\dist(V,A)}{r(A)} \ge \lambda
\]
(where $r(A)$ is the neck radius)
or
\[
    \frac{\dist(V,A)}{\hh} \ge 1.
\]
Then
\begin{enumerate}[\upshape (i)]
\item\label{slope-small-item}The slope of the tangent plane at each point in $V\cap M$ is $<\eps$, and
\item\label{one-or-two-item} $V$ intersects $\overline{S}$ in exactly one point if $\theta(V)>\pi/2$ and in exactly
two points if $\theta(V)\le \pi/2$.
\end{enumerate}
\end{theorem}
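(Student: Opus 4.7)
The plan is to argue by contradiction, leveraging the compactness machinery established in the preceding sections. Suppose the conclusion fails for some $\eps>0$. Then there exist $\lambda_n\to\infty$, examples $(S_n,\hh_n,R_n)$, and vertical lines $V_n$ satisfying the hypotheses with $\lambda=\lambda_n$, yet for which either~\eqref{slope-small-item} or~\eqref{one-or-two-item} fails. Dilate so that $\dist(V_n,Z\cup Z^*)=1$; the first hypothesis then forces $\hh_n\to 0$. By Corollary~\ref{hypothesis-satisfied-corollary} together with Theorem~\ref{first-compactness-theorem}, after passing to a subsequence the $S_n$ converge to the surface $\{z=0\}$ smoothly away from a discrete set $K\subset Y^+$, with multiplicity $1$ where $|\theta(\cdot)|>\pi/2$ and multiplicity $2$ where $|\theta(\cdot)|<\pi/2$; pass to a further subsequence so that $V_n$ converges to a vertical line $V_\infty$.

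The easy case is $V_\infty\cap K=\emptyset$. Then in a small tubular neighborhood of $V_\infty$ the convergence $S_n\to\{z=0\}$ is smooth of some multiplicity $m\in\{1,2\}$, so for large $n$ every tangent plane of $S_n$ at a point of $V_n\cap S_n$ has slope $<\eps$ and $V_n$ meets $\overline{S_n}$ transversally in exactly $m$ points. Since $K\subset Y^+$, the multiplicity $m$ matches the claimed count, contradicting the assumed failure of~\eqref{slope-small-item} or~\eqref{one-or-two-item}.

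Otherwise $V_\infty$ passes through a point $q\in K$. By Corollary~\ref{singularities-in-Y-corollary} and Theorem~\ref{limit-is-a-catenoid-theorem}, associated to $q$ are neck axes $A_n\to q$ with neck radii $c_n=r(A_n)$ satisfying $c_n\to 0$ and $\hh_n/c_n\to\infty$; after rescaling by $1/c_n$ around $A_n$ the $S_n$ converge (on a subsequence) to a catenoid with horizontal waist. The two alternatives provided by the neck hypothesis, namely $\dist(V_n,A_n)\ge\lambda_n r(A_n)$ or $\dist(V_n,A_n)\ge\hh_n$, combine with $\hh_n/c_n\to\infty$ to give
\[
 \dist(V_n,A_n)/c_n\to\infty
\]
in either case. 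Thus the rescaled lines $V_n$ recede to infinity in the catenoid picture, where the catenoid is $C^1$-close to a pair of horizontal planes. Consequently the tangent planes to $S_n$ along $V_n$ near $A_n$ have slope $<\eps$ and $V_n$ crosses $\overline{S_n}$ transversally in exactly two points there, matching the multiplicity-$2$ count at $q\in Y^+$. This again contradicts the failure of~\eqref{slope-small-item} or~\eqref{one-or-two-item}.

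The main obstacle will be in the neck case, namely ensuring that no intersections of $V_n$ with $\overline{S_n}$ in a small neighborhood of $q$ are either missed or spuriously produced in an intermediate regime between the macroscopic scale (distance $\Theta(1)$ from $q$) and the catenoidal scale $c_n$. The cleanest way to handle this is to combine the two descriptions on an overlap annulus $\{Rc_n\le\dist(\cdot,A_n)\le r\}$, with $R$ large and $r$ small: on such an annulus both the smooth multiplicity-$2$ convergence to $\{z=0\}$ and the asymptotically-planar ends of the catenoid describe the same pair of nearly horizontal sheets, each met by $V_n$ in a single nearly-horizontal transversal point, so a standard limiting argument rules out hidden intersections.
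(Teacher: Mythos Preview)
Your approach has a genuine gap that you yourself identify but do not close: the mesoscale between the macroscopic picture (smooth convergence to $\{z=0\}$ away from $K$) and the catenoidal scale $c_n$. In the neck case you assert that because $\dist(V_n,A_n)/c_n\to\infty$, the rescaled $V_n$ ``recede[s] to infinity in the catenoid picture, where the catenoid is $C^1$-close to a pair of horizontal planes,'' and you conclude small slope and the correct count. But the convergence of $(S_n-A_n)/c_n$ to the catenoid is only on compact sets, so it gives no information about $S_n$ at the location of $V_n$, which has escaped every compactum in that rescaling. Your proposed overlap annulus $\{Rc_n\le\dist(\cdot,A_n)\le r\}$ does not bridge this: the catenoid limit controls $S_n$ only out to distance $O(c_n)$ from $A_n$, and the macroscopic limit only down to distance $\Theta(1)$; the two do not overlap. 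Closing this would require an additional argument (e.g., a removable-singularity or curvature-estimate argument across scales), which you have not supplied.

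The paper sidesteps this difficulty by organizing the proof differently, and the reorganization is the main content. For~\eqref{slope-small-item}, instead of centering the blow-up at $q\in K$ and letting $V_n$ drift off, the paper centers at the \emph{bad point} $p_n\in V_n\cap S_n$ where the slope is $\ge\eps$. Theorem~\ref{limit-is-a-catenoid-theorem} then produces a catenoid around $p_n$, so the associated neck axis $A_n$ has $\dist(V_n,A_n)/r(A_n)$ bounded and $\dist(V_n,A_n)/\hh_n\to 0$ (since $\hh_n/c_n\to\infty$), contradicting the neck hypothesis directly---no mesoscale analysis needed. For~\eqref{one-or-two-item}, the paper first fixes $\lambda$ large enough that~\eqref{slope-small-item} already holds, so $V_n$ is transverse to $S_n$; it then \emph{deforms} $V_n$ by increasing $\theta(V_n)$ at constant distance from $Z$. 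The neck hypotheses are preserved along this motion (you move away from $Y^+$, hence away from all neck axes), so transversality persists and the intersection count changes only when $V_n$ crosses $X$. One may therefore assume $\theta(V_n)\ge\pi/4$, where Theorem~\ref{first-compactness-theorem} gives the count directly. This deformation trick is what replaces your overlap argument.
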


\begin{proof} Let us first prove that there is a value $\Lambda<\infty$  of $\lambda$ such that 
assertion~\eqref{slope-small-item} holds.  Suppose not.   
Then there exist examples $(S_n,\hh_n,R_n)$
and vertical lines $V_n$ such that
\begin{equation}\label{thing-one}
   \frac{\dist(V_n,Z\cup Z^*)}{\hh_n} \ge \lambda_n\to \infty,
\end{equation}
and such that
\begin{equation}\label{thing-two}
   \frac{\dist(V_n,A)}{r(A)} \ge \lambda_n \quad\text{or}\quad  \frac{\dist(V_n,A)}{\hh_n} \ge 1
\end{equation}
for every neck axis $A$ of $S_n$, 
but such that $V_n\cap S_n$ contains a point $p_n$ at which the slope of $M_n$ is $\ge \eps$.

Note that~\eqref{thing-one} and~\eqref{thing-two} are scale invariant.  We can can choose coordinates
so that $p_n$ is at the origin and, by Theorem~\ref{limit-is-a-catenoid-theorem}, 
we can choose scalings so that the $S_n$
converge smoothly to a catenoid in $\RR^3$.  Let $A'$ be the axis of the catenoid, $r(A')$ be the
radius of the waist of the catenoid, and $V'$ be the vertical line through the origin.
Then $\dist(A',V')$ is finite, $r(A')$ is finite and nonzero, and $\hh_n\to\infty$ 
by~\eqref{h_n-relatively-large}.  Thus if $A_n$ is the neck axis of $S_n$ that
converges to $A'$, then
\[
   \lim_n \frac{\dist(V_n,A_n)}{r(A_n)}  <\infty
   \quad\text{and}\quad
   \lim_n\frac{\dist(V_n,A_n)}{\hh_n}  =  0,
\]
contradicting~\eqref{thing-two}.  This proves
that there is a value
of $\lambda$, call it $\Lambda$, that makes assertion~\eqref{slope-small-item} of the theorem true.

Now suppose that there is no $\lambda$ that makes assertion~\eqref{one-or-two-item} true.
Then there is a sequence $\lambda_n\to\infty$, a sequence of examples $(S_n,\hh_n,R_n)$,
and a sequence of vertical lines $V_n$ such that~\eqref{thing-one} and~\eqref{thing-two}
hold, but such that $V_n$ does not intersect $M_n$ in the indicated number of points.
By scaling, we may assume that 
\[
   1 = \dist(V_n, Z)\le \dist(V_n, Z^*),
\]
which implies that $R_n$ is bounded below and (by~\eqref{thing-one}) that $\hh_n\to 0$.
We may also assume that $\theta(V_n)\ge 0$, and
that each $\lambda_n$ is greater than $\Lambda$.
Thus $V_n$ intersects $S_n$ transversely.
For each fixed $n$, if we move $V_n$ in such a way that $\dist(V_n,Z)=1$ stays constant and
that $\theta(V_n)$ increases, then~\eqref{thing-one} and~\eqref{thing-two} remain true, 
so $V_n$ continues to be transverse to $M_n$.
Thus as we move $V_n$ in that way, the number of points in 
   $V_n\cap \overline{S_n}$ does not change unless 
$V_n$ crosses $X$, so we may assume that $\theta(V_n)\ge \pi/4$.
But now Theorem~\ref{first-compactness-theorem} and Remark~\ref{compactness-remark} 
imply that $V_n\cap \overline{S_n}$ has
the indicated number of intersections, contrary to our assumption that it did not.
\end{proof}

\begin{corollary}\label{form-of-graph-corollary}
Let $\eps>0$ and $\lambda>1$ be as in Theorem~\ref{graph-form-theorem}, and 
let $(S,\hh,R)$ be an example.
Consider the following cylinders: 
vertical solid cylinders of radius $\lambda \hh$ about $Z$ and $Z^*$, 
and for each neck 
axis\,\footnote{See~\ref{neck-definition} for the definition of ``neck axis".} $A$ of $S$
with $\dist(A,Z\cup Z^*)>(\lambda-1) \hh$, a vertical solid cylinder with axis $A$ 
and radius $\lambda r(A)$.
 Let $J$ be the union of those 
cylinders.
Then $S\setminus J$ consists of two components, one
of which can be parametrized as
\[
   \{ (r\cos\theta, r\sin\theta, f(r,\theta)):   r>0,\, \theta\ge -\pi/2\}  \setminus J
\]
where $f(r,-\pi/2) \equiv 0$ and where
\begin{equation}\label{trapped}
       \hh\left( \frac{\theta}{\pi} - \frac{1}2\right) \le f(r,\theta) \le \hh\left( \frac{\theta}{\pi} + \frac{1}2\right).
\end{equation}
\end{corollary}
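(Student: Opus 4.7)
The plan is to work in the universal cover of $\RR^3 \setminus Z$ and to fix the lift of $\overline{H^+}$ that contains $Y^+$ at $\theta = 0$. That lift is precisely the strip
\[
\{(r,\theta,z) : \hh(\theta/\pi - 1/2) \le z \le \hh(\theta/\pi + 1/2)\},
\]
whose upper boundary sheet of $H$ passes through $X^+$ at $(\theta,z)=(-\pi/2,0)$ and whose lower sheet passes through $X^-$ at $(\pi/2,0)$. I will apply Theorem~\ref{graph-form-theorem} to every vertical line avoiding $J$, and define $f(r,\theta)$ as the largest $z$-coordinate among the intersections of that line with $\overline{S}$ (in the chosen lift).

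The first step is to verify that Theorem~\ref{graph-form-theorem}'s hypotheses hold for each vertical line $V$ outside $J$. Being outside the $Z$- and $Z^*$-cylinders gives $\dist(V,\,Z\cup Z^*)\ge\lambda\hh$. For each neck axis $A$ with $\dist(A,\,Z\cup Z^*)>(\lambda-1)\hh$, being outside the cylinder of radius $\lambda r(A)$ gives $\dist(V,A)/r(A)\ge\lambda$. For any remaining neck axis (with $\dist(A,\,Z\cup Z^*)\le(\lambda-1)\hh$), the triangle inequality yields $\dist(V,A)\ge\lambda\hh-(\lambda-1)\hh=\hh$, so $\dist(V,A)/\hh\ge 1$. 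Thus Theorem~\ref{graph-form-theorem} applies in all cases.

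Next, on $\Omega^+:=\{(r,\theta): r>0,\ \theta\ge -\pi/2,\ (r\cos\theta,r\sin\theta)\notin\pi_h(J)\}$, Theorem~\ref{graph-form-theorem}(ii) gives exactly one intersection of $V_{r,\theta}$ with $\overline{S}$ if $\theta>\pi/2$ and exactly two if $-\pi/2\le\theta\le\pi/2$, each having tangent-plane slope $<\eps$. Consequently $f$ is well-defined, single-valued, and smooth (by transversality) on $\Omega^+$. The trapping inequality~\eqref{trapped} is immediate because $\overline{S}$ is contained in the strip. The boundary condition $f(r,-\pi/2)\equiv 0$ holds because $X^+\subset\overline{S}$ lies on the upper edge of the strip at $\theta=-\pi/2$, which sits at $z=0$; so $f(r,-\pi/2)\ge 0$, while the upper bound of~\eqref{trapped} forces $f(r,-\pi/2)\le 0$.

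Finally, the graph $\Sigma_+$ of $f$ is a connected subset of $\overline{S}\setminus J$ (since $\Omega^+$ is connected), and its $\rho_Y$-image $\Sigma_-$, parametrized over $\{r>0,\ \theta\le \pi/2\}$ by $-f(r,-\theta)$, is a second connected piece. The two are disjoint in $\RR^3$: at a common $(r,\theta)$ with $|\theta|\le\pi/2$ they correspond to the upper versus the lower of the two intersections, while distinct multiples of $2\pi$ in $\theta$ give $f$-values in disjoint strips shifted by $2k\hh$ (by~\eqref{trapped}), so the universal-cover parameterization injects into $\RR^3$. Invoking $\rho_Y$-symmetry to handle $\theta<-\pi/2$, the union $\Sigma_+\cup\Sigma_-$ exhausts $S\setminus J$, giving exactly the two stated components. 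The only real technical point is the first step: neck axes too close to $Z\cup Z^*$ for $J$ to surround them explicitly are absorbed by the $Z$- and $Z^*$-cylinders via the triangle inequality.
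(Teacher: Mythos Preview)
Your proof is correct and is exactly the argument the paper intends: the corollary is stated without proof, the sentence after it noting only that \eqref{trapped} ``expresses the fact that $S$ lies in $H^+$'' and that the second component is obtained from the first by $\rho_Y$. You have supplied the details the paper omits, and your triangle-inequality verification that neck axes with $\dist(A,Z\cup Z^*)\le(\lambda-1)\hh$ automatically satisfy $\dist(V,A)\ge\hh$ is precisely the reason the corollary restricts attention to neck axes with $\dist(A,Z\cup Z^*)>(\lambda-1)\hh$.

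One small point worth making explicit (you gesture at it with ``by transversality'') is the smoothness of $f$ across $\theta=\pi/2$, where the intersection count drops from two to one: the lower branch exits $\overline S$ through $X^-\subset\partial S$ there, so the upper branch continues smoothly as the unique intersection for $\theta>\pi/2$. Your assertion that $\Omega^+$ is connected deserves a word too---the lift of each neck cylinder to the universal cover projects to a single bounded disk in the $(r,\theta)$ half-plane, and removing finitely many such disks (together with the strips $r\le\lambda\hh$ and $r$ large) cannot disconnect it---but this is routine and the paper does not address it either.
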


Of course, by $\rho_Y$ symmetry, the other component of $S\setminus J$ can be written
\[
   \{ (r\cos\theta, r\sin\theta, -f(r,-\theta)):   r>0,\quad \theta\le \pi/2\}  \setminus J.
\]

The inequality~\eqref{trapped} expresses the fact that $S$ lies in $H^+$.
Note that in Corollary~\ref{form-of-graph-corollary}, because we are working in the universal cover of
 $\RR^3\setminus Z$,
each vertical cylinder about a neck axis in the collection $J$ 
intersects $H^+$ in a single connected component. (If we were working in $\RR^3$, it would intersect
$H^+$ in infinitely many components.)   Thus the portion of $S$ that lies in such a cylinder is a single
catenoid-like annulus.  If we were working in $\RR^3$, the portion of $S$ in such a cylinder would
be that annulus together with countably many disks above and below it.

\begin{remark}\label{form-of-graph-remark}
In Corollary~\ref{form-of-graph-corollary}, 
the function $f(r,\theta)$ is only defined for $\theta\ge -\pi/2$. 
It is positive
for $\theta> -\pi/2$ and it vanishes where $\theta=\pi/2$.   Note that we can extend $f$ by Schwarz reflection
to get a function $f(r,\theta)$ defined for all $\theta$:
\begin{align*}
  f(r,\theta) &= f(r,\theta) \qquad\text{for $\theta\ge -\pi/2$, and} \\
  f(r,\theta) &= -f(r, -\pi-\theta)  \qquad\text{for $\theta < -\pi/2$.}
\end{align*}
Corollary~\ref{form-of-graph-corollary} 
states that, after removing the indicated cylinders, we can express $S$ (the portion of $M$
in $H^+$) as the union of two multigraphs: the graph of the original, unextended $f$ together with the image of that graph under
$\rho_Y$.  Suppose we remove from $M$ those cylinders together with their images under $\rho_Z$.
Then the remaining portion of $M$ can be expressed as the the union of two multigraphs:
the graph of the extended function $f$ (with $-\infty<\theta<\infty$) together with the image of that graph 
under $\rho_Y$.
\end{remark}

\begin{remark}
Note that $H\setminus (Z\cup X)$ consists of four quarter-helicoids, two of which are
described in the universal cover of $\RR^3\setminus Z$ by
\[
    z =   \frac{\hh}{\pi} \left( \theta + \frac{\pi}2\right), \quad (\theta\ge  -\pi/2)
\]
and
\[
   z =  \frac{\hh}{\pi}\left( \theta - \frac{\pi}2\right), \quad (\theta \le \pi/2).
\]
(As in the rest of this section, we are measuring $\theta$ from $Y^+$ rather than from $X^+$.)
These two quarter-helicoids overlap only in the region $-\pi/2<\theta<\pi/2$:
a vertical line in that region intersects both quarter-helicoids in points that are
distance $\hh$ apart, whereas any other vertical line intersects only one
of the two quarter-helicoids.
Roughly speaking, theorem~\ref{graph-form-theorem} 
and corollary~\ref{form-of-graph-corollary} say
that if $(S,\hh,R)$ is an example with $R/\hh$ large, then $S$ must be obtained from these two
quarter-helicoids by joining them by catenoidal necks away from $Z$ and 
in some possibly more complicated way near $Z$.   The catenoidal necks lie
along the $Y$-axis.

Figure~\ref{unrolled-figure} illustrates the intersection of $M=S\cup\rho_Z S$ with a vertical cylinder with axis  $Z$.  The shaded region is the intersection of the cylinder with $H^+$. The intersections of the cylinder with the quarter-helicoids are represented by halflines  on the boundary of  the shaded region:  $\theta\geq - \pi/$2 on top of the shaded region, and  $\theta\leq \pi/2$ on the bottom. The radius of the cylinder is chosen so that the cylinder passes though a catenoidal neck of S that can be thought of as joining the quarter-helicoids, allowing $S$ to make a transition from approximating one quarter helicoid to approximating to the other. The transition takes place in the region $-\pi/2\leq\theta\leq \pi/2$.
\end{remark}


\begfig
\begin{center}
\includegraphics[height=40mm]{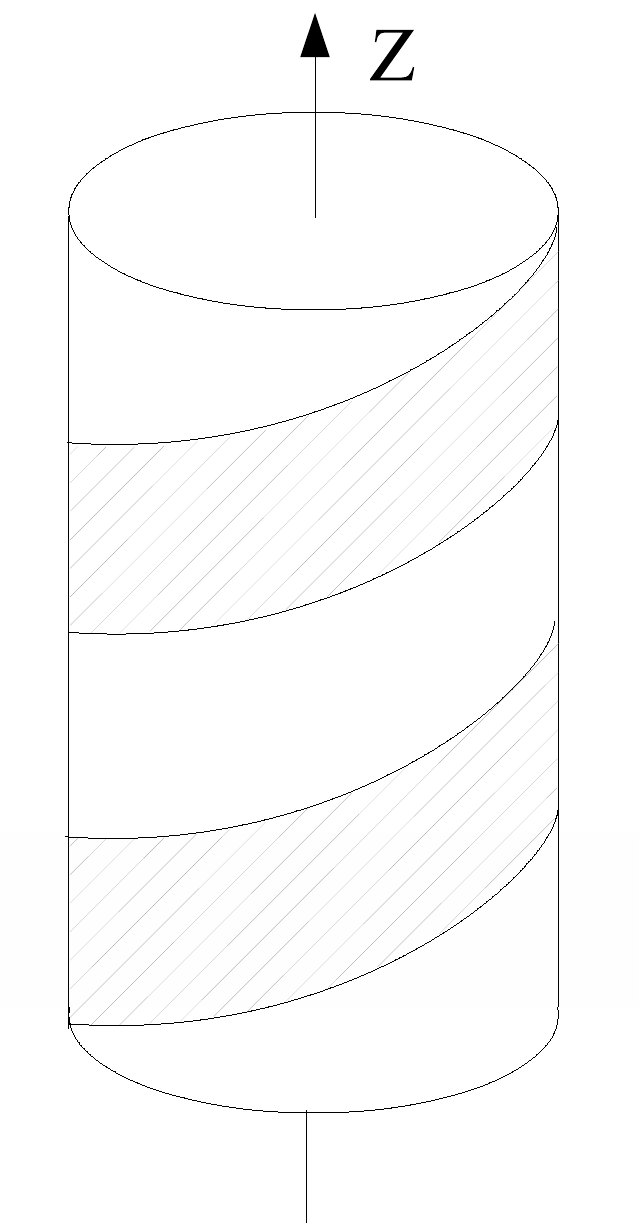}
\hspace{1cm}
\includegraphics[height=40mm]{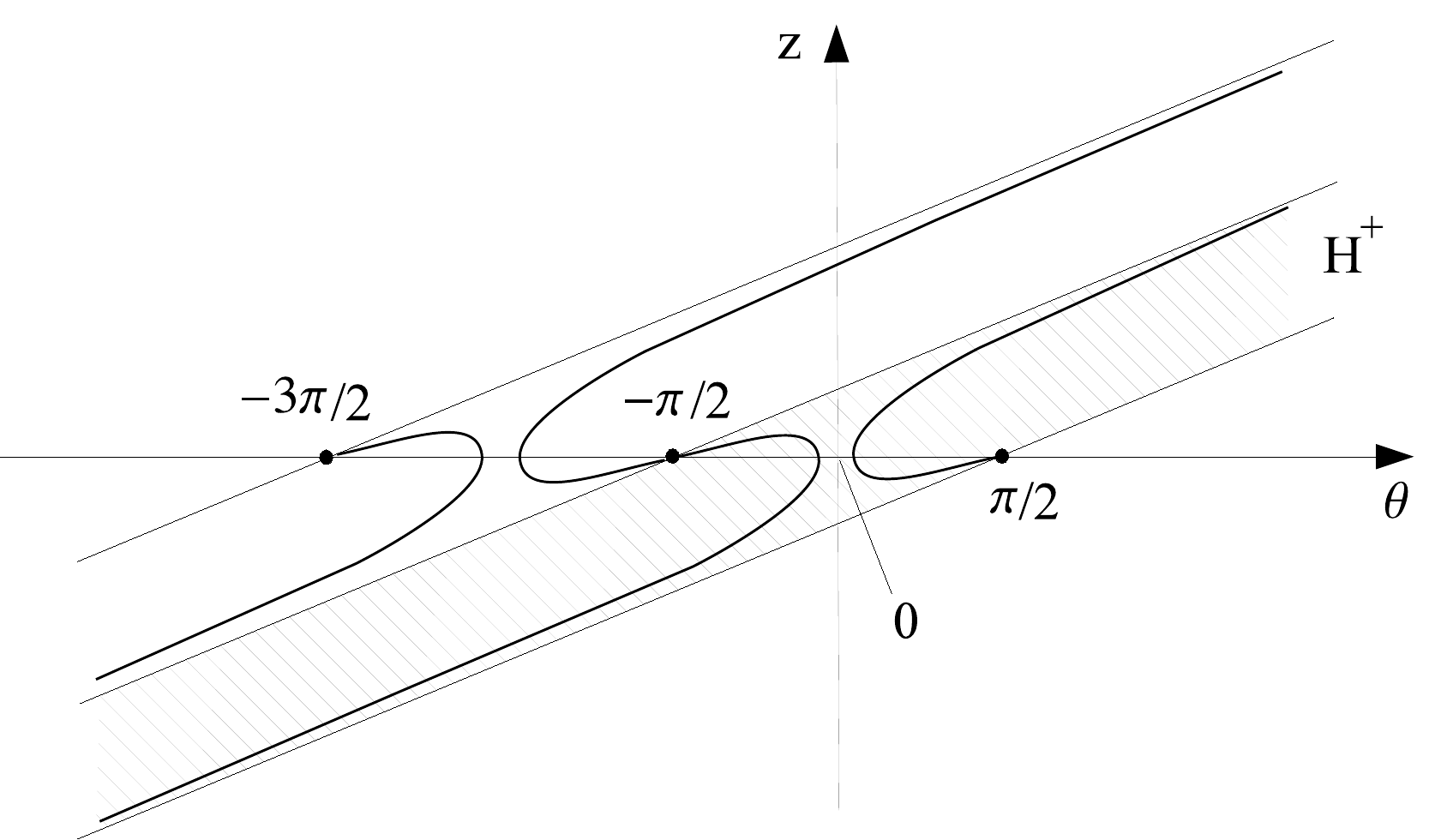}
\end{center}
\caption{{\bf Left}: the shaded region is the intersection of $H^+$ with the vertical cylinder of axis $Z$ and
radius $r$. {\bf Right}: intersection of $M$ with the same cylinder, unrolled in the plane. We use cylindrical coordinates $(r,\theta,z)$, with $\theta=0$ being the positive $Y$-axis.
The radius $r$ is chosen
so that the cylinder intersects a catenoidal neck. The positive $X$-axis intersects the cylinder at the point $(\theta,z)=(-\pi/2,0)$. The negative $X$-axis intersects the cylinder at the point $(\theta,z)=(\pi/2,0)$, which is the same as the point $(-3\pi/2,0)$ on the cylinder.}
\label{unrolled-figure}
\endfig


\stepcounter{theorem}
\addtocontents{toc}{\SkipTocEntry}
\subsection{Behavior near $Z$}\label{nearZ}

In this section, 
we consider examples (see definition~\ref{example-definition}) $(S_n, 1, R_n)$ 
with $\eta=1$ fixed and with $R_n\to\infty$.
We will work in $\RR^3$ (identified with $(\sS^2(R_n)\times \RR)\setminus Z^*$ by stereographic
projection as described at the beginning of section~\ref{R3-section}), 
rather than in the universal cover of $\RR^3\setminus Z$.

\newcommand{\tM}{\tilde M}
\begin{theorem}\label{R-to-infinity-theorem}
Let $(S_n,1, R_n)$ be a sequence of examples with $R_n\to\infty$.
Let $\sigma_n$ be a sequence of screw motions of $\RR^3$ that map $H$ to itself.
Let
\[
  M_n = \sigma_n( \overline{S_n \cup \rho_Z S_n}).
\]
In other words, $M_n$ is the  full genus-$g$ example (of which  $S_n$ is the subset in the interior of $H^+$)  followed by 
the screw motion $\sigma_n$.
Then (after passing to a subsequence), the $M_n$ converge
smoothly on compact sets to a properly embedded, multiplicity-one minimal surface $M$ in $\RR^3$.
 Furthermore, there is a solid cylinder $C$ about $Z$ such that $M\setminus C$ is the union
of two multigraphs.
\end{theorem}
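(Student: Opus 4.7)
The strategy is to apply the general compactness theorem~\ref{general-compactness-theorem}, extract multigraph structure outside a cylinder from corollary~\ref{form-of-graph-corollary}, and rule out higher multiplicity using the Fischer-Colbrie--Schoen theorem cited in \S\ref{general-results-section}. I first establish uniform area bounds on compact sets of $\RR^3$. Since $\sigma_n$ preserves $H$ and hence $Z\cup Z^*$, the surface $\sigma_n^{-1}(M_n)\cap\overline{H^+}=\overline{S_n}$ has boundary in $X\cup Z\cup Z^*$; because the Killing fields $\vv_z$ and $\vv_\theta$ are tangent to $Z\cup Z^*$, the boundary integrals in proposition~\ref{flux-bounds-proposition} see only the fixed arc $X$, yielding $\area(S_n\cap K)\le c\,\diam(K)$ with $c$ independent of $n$. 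As $R_n\to\infty$ the axis $Z^*$ recedes to infinity, so any fixed compact set is eventually unaffected by it. Combined with the genus bound $g$, theorem~\ref{general-compactness-theorem} gives a subsequence $M_n\to M$ where $M$ is a properly embedded minimal surface in $\RR^3$ and, on each component, convergence is either smooth with multiplicity one or smooth away from a discrete set with some multiplicity $\ge 2$, in which case that component is stable.

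Next I produce the cylinder $C$. Each $S_n$ has at most $g$ necks in the sense of definition~\ref{neck-definition}, so after passing to a further subsequence I may assume that each neck either remains within a fixed distance of $Z$ with its radius bounded below, or escapes every compact neighborhood of $Z$; I may also assume the horizontal lines $\sigma_n(Y)$ converge to a horizontal line through $Z$. Choose a solid vertical cylinder $C$ about $Z$ of radius large enough (in terms of the parameter $\lambda$ of corollary~\ref{form-of-graph-corollary}) to contain every persistent neck together with its surrounding cylinder of radius $\lambda r$. For any compact $K\subset\RR^3\setminus C$ and all sufficiently large $n$, the set $M_n\cap K$ is disjoint from every neck cylinder of $M_n$ and from the cylinder about $Z^*$, so by corollary~\ref{form-of-graph-corollary} it is a union of two nearly-horizontal multigraphs trapped between quarter-helicoid sheets as in~\eqref{trapped}. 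Passing to the limit, $M\cap(\RR^3\setminus C)$ is a union of two multigraphs whose sheets are separated by a gap of size $\eta=1$ and so do not collapse.

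That separation forces the convergence on the two outer multigraphs to be smooth and multiplicity-one. Any other connected component of $M$ must lie entirely in the bounded cylinder $C$; if it had multiplicity $\ge 2$, theorem~\ref{general-compactness-theorem} would make it a complete stable orientable minimal surface in $\RR^3$, hence a plane by Fischer-Colbrie--Schoen, contradicting its containment in a bounded set. Hence every component has multiplicity one and the convergence is smooth on compact sets, which yields properness of $M$ as well.

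The main obstacle is the second step: in $\sS^2(R_n)\times\RR$ the necks lie on the compact circle $Y$, but under stereographic projection $Y$ becomes a line in $\RR^3$, so a priori necks can drift to infinity along it. The finite-genus bound limits the number of necks, allowing a subsequence that cleanly separates persistent necks (absorbed into $C$) from escaping ones (exiting any fixed compact set); the delicate verification is that escaping necks, though possibly catenoidal with nonnegligible radius, leave no trace on compact sets of $\RR^3\setminus C$, which follows because corollary~\ref{form-of-graph-corollary} places them inside cylinders disjoint from $K$ for large $n$.
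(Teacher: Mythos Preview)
Your multigraph step and your multiplicity-one step are essentially right, but the first step---the uniform local area bound via proposition~\ref{flux-bounds-proposition}---does not go through, and that is the crux of this theorem.

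The boundary integrals in proposition~\ref{flux-bounds-proposition} are \emph{not} uniformly bounded as $R_n\to\infty$. The relevant boundary arc is $X$, which in $\sS^2(R_n)$ is a great circle of length $2\pi R_n$; along $X$ the conormal $\nupt$ is tangent to $H$, and one computes $|\vv_z\cdot\nupt|\sim (\pi r)^{-1}$ at distance $r$ from $Z$, so $\int_X|\vv_z\cdot\nupt|\,ds$ diverges (logarithmically in $R_n$), while $\int_X|\vv_\theta\cdot\nupt|\,ds$ diverges even faster. Thus corollary~\ref{flux-bounds-corollary} gives $\area(S_n\cap K)\le c_n\diam(K)$ with $c_n\to\infty$, which is useless here. (Those flux bounds were designed for the $h\to\infty$ limit at fixed $R$, where $\partial S_n$ has length independent of $h$; they are not uniform in $R$.)

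The paper reverses your order of operations: it \emph{first} uses corollary~\ref{form-of-graph-corollary} to get the two-multigraph structure outside a solid cylinder $C$ about $Z$ (your second step), which already gives smooth multiplicity-one convergence---and hence local area bounds---in $\RR^3\setminus C$. This confines the area blowup set
\[
Q=\{p:\limsup_n\area(M_n\cap\BB(p,r))=\infty\ \text{for all }r>0\}
\]
to the cylinder $C$. The decisive tool you are missing is the \emph{halfspace theorem for area blowup sets}~\cite{white-controlling-area}*{7.5}: if such a $Q$ lies in a halfspace of $\RR^3$, it must contain an entire plane. Since $Q\subset C$ lies in a halfspace but cannot contain a plane, $Q=\emptyset$, and now the general compactness theorem~\ref{general-compactness-theorem} applies. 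Your concluding argument---that any multiplicity-$>1$ component would be stable, hence a plane, hence not confined to $C$---then finishes the proof exactly as the paper does.
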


Thus the family $\FF$ of all such subsequential limits~$M$ (corresponding to arbitrary sequences of $M_n$ and 
$\sigma_n$) is compact with respect to smooth
convergence.  It is also closed under screw motions that leave $H$ invariant.  Those two
facts immediately imply the following corollary:

\begin{corollary}\label{R-to-infinity-corollary} 
Let $\FF$ be the family of all such subsequential limits.
For each solid cylinder $C$ around $Z$, each $M\in \FF$, and each $p\in C\cap M$, the
curvature of $M$ at $p$ is bounded by a constant $k(C)<\infty$ depending
only on $C$ (and genus).
\end{corollary}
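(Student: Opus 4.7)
The plan is to argue by contradiction, using only the two properties of $\FF$ highlighted immediately above the corollary: compactness under smooth convergence on compact sets, and closure under screw motions preserving $H$.

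Suppose the conclusion fails for some solid cylinder $C$ about $Z$. Then there exist $M_n\in\FF$ and points $p_n\in C\cap M_n$ such that the norm of the second fundamental form of $M_n$ at $p_n$ tends to infinity. The cylinder $C$ is noncompact, but we can remedy this: let $\sigma_n$ be the screw motion about $Z$ that is the composition of vertical translation by $-z(p_n)$ with rotation by the matching angle (so that $\sigma_n$ preserves $H$). Since $\sigma_n$ is an isometry of $\RR^3$ that fixes $Z$ setwise, it preserves the solid cylinder $C$; in particular $\sigma_n(p_n)\in C\cap\{z=0\}$, which has compact closure. By closure of $\FF$ under such screw motions, $\sigma_n(M_n)\in\FF$, and since $\sigma_n$ is an isometry the curvature of $\sigma_n(M_n)$ at $\sigma_n(p_n)$ equals that of $M_n$ at $p_n$, hence still blows up.

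Next, pass to a subsequence so that $\sigma_n(p_n)\to p_\infty$ in the closure of $C\cap\{z=0\}$. By compactness of $\FF$, after a further subsequence $\sigma_n(M_n)$ converges smoothly on compact sets to some $M\in\FF$. In particular, the second fundamental forms of $\sigma_n(M_n)$ are uniformly bounded on a neighborhood of $p_\infty$, contradicting the assumption that their norms at $\sigma_n(p_n)$ tend to infinity.

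The main step that requires care is the reduction to a bounded region: one must choose $\sigma_n$ to be a screw motion that actually preserves $H$ (not merely a vertical translation), so that invariance of $\FF$ applies. Because the rotational component of such a screw motion fixes $Z$ and hence preserves every cylinder about $Z$, this reduction costs nothing and the compactness argument goes through immediately. No other ingredients are needed.
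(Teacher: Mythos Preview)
Your proof is correct and is exactly the standard argument the paper has in mind: the authors do not write out a proof at all, noting only that compactness of $\FF$ together with its closure under screw motions preserving $H$ ``immediately imply'' the corollary. Your write-up simply spells out this implication via the natural contradiction argument, and the care you take to use a genuine $H$-preserving screw motion (rather than a bare vertical translation) is precisely the point.
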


\begin{proof}[Proof of theorem~\ref{R-to-infinity-theorem}]
Let $0<d_1^n < d_2^n < \dots < d_g^n$ be the distances of the points in $S_n\cap Y^+$ to the origin.
By passing to a subsequence, we may assume that the limit
\[
 d_k:= \lim_{n\to\infty} d_k^n  \in [0,\infty]
\]
exists for each $k$.  Let $d$ be the largest finite element of $\{d_k: k=1,\dots, g\}$.
By passing to a further subsequence, we may assume that the $\sigma_nM_n$ converge
as sets to a limit set $M$.

Let $C$ be a solid cylinder of radius $\ge (\lambda+1)(1+d)$ around 
  $Z$ where $\lambda$ is as in
Corollary~\ref{form-of-graph-corollary}
for $\eps=1$.   Let $\hat C$ be any larger solid cylinder around $Z$.
By corollary~\ref{form-of-graph-corollary} (see also remark~\ref{form-of-graph-remark}),
for all sufficiently large $n$,  $M_n\cap (\hat C\setminus C)$ is the union of two
 smooth multigraphs, and for each vertical line $V$
in $\hat C\setminus C$, each connected component of $V\setminus H$ intersects $\tM_n$ at most twice.
In fact, all but one such component must intersect $M_n$ exactly once.

By standard estimates for minimal graphs, the convergence $M_n\to M$ is smooth
and multiplicity $1$ in the region $\RR^3\setminus C$.  
It follows immediately that 
$M\setminus C$
is the union of two multigraphs, and that
 the area blowup set
\[
    Q:= \{ p:\text{$ \limsup_{n\to\infty} \area(\sigma_nM_n\cap \BB(p,r))=\infty$ for every $r>0$} \}
\]
is contained in $C$.

The halfspace theorem for area blowup sets \cite{white-controlling-area}*{7.5} says that if an area blowup set is contained in a halfspace
of $\RR^3$, then that blowup set must contain a plane.  Since $Q$ is contained in the cylinder $C$,
it is contained in a halfspace but does not contain a plane.
Thus $Q$ must be empty.
Consequently, the areas of the $M_n$ are uniformly bounded locally.  Since the genus is also bounded,
we have, by the General Compactness Theorem~\ref{general-compactness-theorem}, that $M$ is a smooth embedded minimal hypersurface, and that 
either
\begin{enumerate}
\item the convergence $M_n\to M$ is smooth and multiplicity $1$, or
\item the convergence $M_n\to M$ is smooth with some multiplicity $m>1$ away from a discrete set.
   In this case, $M$ must be stable.
\end{enumerate}

Since the multiplicity is $1$ outside of the solid cylinder $C$, it follows that the convergence
$M_n\to M$ is everywhere smooth with multiplicity $1$.
\end{proof}

\begin{theorem}\label{asymptotic-to-H-theorem}
Suppose that in Theorem~\ref{R-to-infinity-theorem}, the screw motions $\sigma_n$ are all the identity map.
Let $M$ be a subsequential limit of the $M_n$, and suppose that $M\ne H$.
Then $M\cap H=X\cup Z$ and $M$ is asymptotic to $H$ at infinity.
\end{theorem}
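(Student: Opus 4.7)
The plan is to prove the two conclusions separately, using the smooth multiplicity-one convergence $M_n\to M$ from Theorem~\ref{R-to-infinity-theorem}, the intersection property $M_n\cap H=X\cup Z$ (valid in the $\RR^3$ picture because $Z^*$ has been sent to infinity by stereographic projection), and the multigraph structure of each $M_n$ outside a cylinder given by Corollary~\ref{form-of-graph-corollary}.

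For the identity $M\cap H=X\cup Z$, the inclusion $X\cup Z\subseteq M\cap H$ is automatic because $X\cup Z\subset M_n$ for every $n$, and the convergence is smooth. For the reverse inclusion, I would argue by contradiction: if there were $p\in(M\cap H)\setminus(X\cup Z)$, then since $M\ne H$ and both are smooth minimal surfaces near $p$, unique continuation forces them to differ on every neighborhood of $p$, and the standard local description of tangential intersections recalled in the footnote of Section~\ref{adjusting-pitch-section} says that $M\cap H$ near $p$ consists of a finite union of smooth arcs crossing at $p$. Fix a ball $B$ around $p$ disjoint from $X\cup Z$. The set $M\cap H\cap B$ is then one-dimensional with $p$ as an interior point, while smooth convergence in $B$ forces $M_n\cap H\cap B$ to be nonempty for all large $n$, contradicting $M_n\cap H=X\cup Z$.

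For the asymptotics, I would pass the structure of Corollary~\ref{form-of-graph-corollary} to the limit. Each $M_n$ has a uniformly bounded number of neck axes (controlled by the genus $g$), all on $Y$; after passing to a subsequence these necks converge to a finite subset of $Y$. By Corollary~\ref{form-of-graph-corollary}, outside a fixed solid cylinder about $Z$ together with cylinders about the neck axes, $M_n$ is the union of two multigraphs trapped (in the universal cover of $\RR^3\setminus Z$) in a slab of vertical width $\eta=1$ between two quarter-helicoids of $H$. Smooth convergence $M_n\to M$ away from $Z$ and from the limit neck points then shows that outside a cylinder $C'$ about $Z$, the limit $M$ is also the union of two multigraphs trapped in the same slab. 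Hence $M$ is a properly embedded minimal surface in $\RR^3$ containing $Z$, of finite genus, with bounded curvature by Corollary~\ref{R-to-infinity-corollary}, and with both ends trapped between parallel helicoids about $Z$ of the pitch of $H$. To promote this trapping to asymptoticness, I would invoke the classification of axial helicoidal ends: an annular end of a properly embedded, finite-genus minimal surface in $\RR^3$ that contains a vertical ray of $Z$ and is trapped between parallel helicoids about $Z$ of pitch $2\eta$ is smoothly asymptotic to a helicoid about $Z$ of that pitch. This is the $\RR^3$ analogue of the result from~\cite{hoffman-white-axial} used earlier in the $\sS^2\times\RR$ setting. The slab trapping then forces the asymptotic helicoid at each end to differ from $H$ by at most a vertical translate of magnitude at most $\eta$, and $\rho_Y$-symmetry of $M$ relates the translates at the top and bottom ends, giving the asserted asymptoticness to $H$.

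The main obstacle is precisely this last step: upgrading the uniform but non-vanishing trapping $|f-h|\le\eta$ to genuine decay $f-h\to 0$ as $r\to\infty$. Corollary~\ref{form-of-graph-corollary} alone does not suffice; one needs the bounded-curvature, finite-genus structure of $M$ together with an asymptotic rigidity theorem for helicoidal ends containing an axial line. By contrast, the intersection identity and the passage of the multigraph structure to the limit are direct consequences of smooth convergence and the local geometry of intersections of distinct minimal surfaces.
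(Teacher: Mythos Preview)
Your argument for $M\cap H=X\cup Z$ is essentially the paper's, though you leave implicit why smooth convergence forces $M_n\cap H\cap B\ne\emptyset$: the point is that near $p$ the surface $M$ has points on both sides of $H$ (otherwise the strong maximum principle would give $M=H$ locally), and smooth convergence then puts $M_n$ on both sides of $H$ in $B$ as well. The paper phrases this as: transverse intersection away from $X\cup Z$ is ruled out by smooth convergence, and tangential touching is ruled out by the strong maximum principle.

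For the asymptotics, you have correctly located the obstacle but not removed it. The black box you invoke---an ``$\RR^3$ analogue of~\cite{hoffman-white-axial}''---is not quite the right reference, and the actual result the paper uses (Theorem~4.1 of~\cite{hoffman-white-geometry}) has hypotheses that require real work: one must show that $M$ has \emph{exactly one end} and that $M\cap\{z=0\}$ consists, outside a compact set, of two embedded arcs tending to infinity. The paper proves both as separate lemmas. The one-end lemma uses the multigraph structure plus $\rho_Z$-invariance and a barrier argument. The level-set lemma is more delicate: one writes $M\setminus Z(R)$ as a graph $z=f(r,\theta)$ with $|f(r,\theta)-\theta|$ bounded, then rescales $f_s(r,\theta)=s^{-1}f(sr,\theta)$ and shows that $sf_s\to g$ with $g-\theta$ a bounded entire harmonic function, hence constant, hence zero; this gives $r\,\partial_\theta f\to 1$, so $f(r,\cdot)$ is monotone in $\theta$ for large $r$, which pins down the zero set. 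Your multigraph-trapping argument gives only $|f-\theta|\le\eta$, which is exactly the input to this rescaling step, not its conclusion.

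Finally, your $\rho_Y$ argument for identifying the asymptotic helicoid as $H$ itself is both more complicated than necessary and incomplete as stated (it presupposes one-endedness, and the helicoid's invariance under vertical translation by $\eta$ means $\rho_Y$ alone does not immediately force $v=0$). The paper's argument is simpler: once $M$ is asymptotic to \emph{some} helicoid $H'$, the fact that $M$ contains the line $X$ (which extends to infinity) forces $H'$ to contain $X$ as well, and the only helicoid with axis $Z$ and the given pitch containing $X$ is $H$.
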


\begin{proof}
Since $M_n\cap H=X\cup Z$ for each $n$, the smooth convergence implies that $M$ cannot
intersect $H$ transversely at any point not in $X\cup Z$.  It follows from the strong maximum principle
that $M$ cannot touch $H\setminus (X\cup Z)$.

Since $M$ is embedded, has finite topology, and has infinite total curvature, it follows
from work by Bernstein and Breiner~\cite{bernstein-breiner-conformal} 
 or by Meeks and Perez~\cite{meeks-perez-end} that $M$ is asymptotic to some helicoid $H'$
at infinity.  The fact that $M\cap H=X\cup Z$ implies that $H'$ must be $H$.

The works of Bernstein-Breiner and Meeks-Perez quoted in the previous
paragraph rely on many deep results of Colding and Minicozzi.
We now give a more elementary proof that $M$ is asymptotic to a helicoid at infinity.

According to theorem~4.1 of~\cite{hoffman-white-geometry}, a properly immersed nonplanar
minimal surface in $\RR^3$ with finite genus, one end, and bounded curvature must be asymptotic
to a helicoid and must be conformally a once-punctured Riemann surface provided it contains $X\cup Z$
and provided it intersects some horizontal plane $\{x_3=c\}$ in a set that, outside of a compact region
in that plane, consists of two disjoint smooth embedded curves tending to $\infty$.
Now $M$ contains $X\cup Z$ and has 
bounded curvature (by corollary~\ref{R-to-infinity-corollary}).   
Thus to prove theorem~\ref{asymptotic-to-H-theorem}, it suffices to prove lemmas~\ref{one-end-lemma} 
and~\ref{z=0-level-lemma} below.
\end{proof}

\begin{lemma}\label{one-end-lemma}
Let $M$ be as in theorem~\ref{asymptotic-to-H-theorem}.  Then $M$ has exactly one end.
\end{lemma}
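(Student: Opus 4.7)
The plan is to show that $M\setminus\overline{B(0,R_0)}$ is connected for every sufficiently large $R_0$. Since $M$ is connected (it is a smooth multiplicity-one limit of the connected surfaces $M_n$) and unbounded (it contains $Z$), this yields exactly one end.

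First I collect the properties of $M$ needed. By Theorem~\ref{R-to-infinity-theorem}, $M$ is a properly embedded smooth minimal surface containing $Z\cup X$ and inheriting the symmetries $\rho_X$, $\rho_Y$, $\rho_Z$; it has genus at most $g$ (smooth multiplicity-one convergence) and bounded curvature (Corollary~\ref{R-to-infinity-corollary}). Each $M_n$ has genus $g$ and so a bounded number of necks in the sense of Definition~\ref{neck-definition}, all lying on $Y$. I then apply Corollary~\ref{form-of-graph-corollary} together with Remark~\ref{form-of-graph-remark} to each $M_n$ and pass to a subsequential limit: the cylinder around $Z^*$ recedes to infinity, and the finitely many neck axes either converge (giving necks of $M$ on a bounded subarc of $Y$) or escape to infinity. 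The upshot is a radius $\rho_0>0$, a finite collection $C_1,\dots,C_k$ of neck cylinders contained in a bounded region, and a graphical description of $M$ outside $J:=\{r\le\rho_0\}\cup C_1\cup\cdots\cup C_k$ as the union of two multigraphs $G_+$ and $G_-=\rho_Y G_+$ with graph functions satisfying $|f(r,\theta)-\hh\theta/\pi|\le\hh/2$.

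Choose $R_0$ large enough that $B(0,R_0)$ contains every $C_i$ and is much larger than $\rho_0$. Then $M\setminus\overline{B(0,R_0)}$ decomposes into the restricted multigraphs $G_\pm\cap\{r^2+f^2>R_0^2\}$, each of which is connected (being parametrized by a connected region in the $(r,\theta)$-plane), together with a ``near-axis region'' $M\cap\{r\le\rho_0,\,|z|>\sqrt{R_0^2-\rho_0^2}\}$ that contains the two rays of $Z$ outside the ball. To show the multigraph pieces and the near-axis region all lie in a single connected component of $M\setminus\overline{B(0,R_0)}$, I apply Theorem~\ref{R-to-infinity-theorem} to the translated sequence $\sigma_n M_n$, where $\sigma_n$ is vertical translation by $-z_n$ with $z_n\to+\infty$ at a rate comparable to $R_0$. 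The subsequential limit is a properly embedded minimal surface in $\RR^3$ containing $Z$, whose horizontal slices at heights near $0$ are approximately single lines through $Z$; this shows that in $M$ itself at large positive $z$, the disk $\{r\le\rho_0\}\cap M$ is a single connected curve through $Z$ that attaches smoothly to one of the multigraph sheets on the other side of the cylinder $\{r=\rho_0\}$. The analogous argument at $z_n\to-\infty$ handles the bottom, and together these chain the two multigraph components to each other through $Z$, yielding connectedness.

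The main obstacle is the third paragraph: rigorously verifying that at large $|z|$ the part of $M$ inside the thin cylinder $\{r\le\rho_0\}$ is a single sheet attached to $Z$ rather than two disconnected sheets both accumulating on $Z$. This is where one must combine bounded curvature, finite genus, and the catenoidal-neck analysis of Theorem~\ref{limit-is-a-catenoid-theorem} with the vertical-translation compactness of Theorem~\ref{R-to-infinity-theorem}: necks cannot accumulate vertically (they lie in a bounded region of $Y$), so at heights $|z|\gg R_0$ the rescaled limit has no necks and is forced to be a helicoid-like sheet passing through $Z$, which provides the needed bridge.
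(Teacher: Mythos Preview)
Your approach is different from the paper's and, as you yourself flag, has a genuine gap at the ``main obstacle'' in the third paragraph. The paper avoids this difficulty entirely by a short symmetry-and-barrier argument.

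First, a technical point: in Theorem~\ref{R-to-infinity-theorem} the maps $\sigma_n$ must be screw motions preserving $H$. Pure vertical translations do not preserve a helicoid of finite pitch, so your invocation of that theorem with ``$\sigma_n$ is vertical translation by $-z_n$'' is not literally valid. This is fixable by using screw motions instead, but it signals that the compactness machinery is being used somewhat loosely.

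More importantly, your plan to chain the two multigraphs $G_\pm$ to each other through the near-axis region at large $|z|$ is only sketched. You assert that a subsequential limit of the shifted surfaces has ``horizontal slices that are approximately single lines through $Z$'', hence provides a bridge; but establishing this requires knowing that the limit has no necks \emph{and} is connected across the cylinder $\{r=\rho_0\}$ at the relevant height, which is close to what you are trying to prove. The neck-location argument (necks lie on a bounded part of $Y$) is correct, but turning it into a rigorous statement that the near-axis piece at height $z_0$ joins smoothly to exactly one sheet of each multigraph would take real work.

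The paper's proof sidesteps all of this. Instead of removing a ball, it removes the finite solid cylinder $Z(R)\cap\{|z|\le R\}$. Outside the infinite cylinder $Z(R)$, $M$ is already known (Theorem~\ref{R-to-infinity-theorem}) to consist of two multigraphs interchanged by $\rho_Z$. Now let $\mathcal{E}$ be the component of $M\setminus(Z(R)\cap\{|z|\le R\})$ containing $Z^+\cap\{z>R\}$. Since $\rho_Z$ fixes $Z$ pointwise, $\mathcal{E}$ is $\rho_Z$-invariant. By the maximum principle with catenoidal barriers, $\mathcal{E}$ cannot be trapped inside $Z(R)$, so it meets one multigraph; by $\rho_Z$-invariance it meets both. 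Any hypothetical second component would then lie in $Z(R)\cap\{z<-R\}$, again ruled out by catenoid barriers. This uses only the $\rho_Z$ symmetry (which you list but never exploit) and the maximum principle, and needs no analysis of the fine structure of $M$ near $Z$ at large heights.
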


\begin{proof}
Let $Z(R)$ denote the solid cylinder with axis $Z$ and radius $R$.
By theorem~\ref{R-to-infinity-theorem}, for all sufficiently large $R$, the set 
\[
   M \setminus Z(R)
\]
is the union of two connected components (namely multigraphs) that are related to each other by $\rho_Z$.
We claim that for any such $R$, the set
\[
   M\setminus (Z(R)\cap \{|z|\le R\}) \tag{*}
\]
contains exactly one connected component. 
To see that is has exactly one component, 
let $\mathcal{E}$ be the component of \thetag{*} containing $Z^+\cap\{z>R\}$. 
 Note that $\mathcal{E}$ is invariant under
$\rho_Z$.  Now $\mathcal{E}$ cannot be contained in $Z(R)$ by the maximum principle (consider
catenoidal barriers).    Thus $\mathcal{E}$ contains one of the two connected components of 
  $M\setminus Z(R)$.
By $\rho_Z$ symmetry, it must then contain both components of $M\setminus Z(R)$. 
 It follows that if the set~\thetag{*} had
a connected component other than $\mathcal{E}$, that component would have to lie
in $Z(R) \cap \{z<-R\}$.  But such a component would violate
the maximum principle.
\end{proof}

\begin{lemma}\label{z=0-level-lemma}
Let $M$ be as in theorem~\ref{asymptotic-to-H-theorem}.  Then $M\cap \{z=0\}$ is the union
of $X$ and a compact set.
\end{lemma}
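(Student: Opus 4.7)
The plan is to use the multigraph structure of $M$ at infinity (theorem~\ref{R-to-infinity-theorem} and corollary~\ref{form-of-graph-corollary}) to show that outside a bounded region, the intersection $M\cap\{z=0\}$ contains no points beyond those on $X$; the remaining part then lies in a bounded region and hence is compact.

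Concretely, I would fix a set $J$ consisting of a solid vertical cylinder about $Z$ together with vertical cylinders about each of the finitely many (since $\genus(M)$ is finite) catenoidal necks of $M$, chosen so that $S:=\interior(M\cap H^+)$ decomposes outside $J$ into two multigraphs $S^a, S^b$ related by $\rho_Y$.  The component $S^a$ containing $X^+$ in its closure is parametrized by
\[
  (r\cos\theta,\,r\sin\theta,\,f(r,\theta)),\qquad r>0,\ \theta\ge -\pi/2,
\]
with $f(r,-\pi/2)\equiv 0$ and the trapping inequality $(\eta/\pi)(\theta-\pi/2)\le f(r,\theta)\le (\eta/\pi)(\theta+\pi/2)$.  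The trapping alone restricts zeros of $f$ to the strip $\theta\in[-\pi/2,\pi/2]$, and since $S^a\subset H^+$ is strictly disjoint from $H$, a point of $S^a$ with $\theta=\pi/2$ and $z=0$ would have to lie on $X^-\subset H$, which is impossible.

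The heart of the argument is to show $f(r,\theta)>0$ strictly for all $\theta>-\pi/2$ on $S^a\setminus J$.  The function $f$ is harmonic on the minimal surface $S^a$ (being the restriction of the coordinate $z$) and vanishes on the boundary ray $\theta=-\pi/2$.  For each approximating example $M_n$ at radius $R_n$, corollary~\ref{form-of-graph-corollary} places $S_n^a\setminus J_n$ in $C^1$-proximity to the upper quarter-helicoid $z=(\eta/\pi)(\theta+\pi/2)$, which is strictly positive for $\theta>-\pi/2$; hence $f_n\ge 0$ there.  Smooth convergence $M_n\to M$ off $J$ (theorem~\ref{R-to-infinity-theorem}) passes this to $f\ge 0$, and the strong maximum principle---combined with $f\not\equiv 0$, since the trapping forces $f\to\infty$ as $\theta\to\infty$---upgrades this to $f>0$ strictly in the interior.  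The corresponding statements on $S^b$ and on the $H^-$ half of $M$ follow by $\rho_Y$- and $\rho_Z$-symmetry, respectively.

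Combining, $(M\setminus J)\cap\{z=0\}\subset X$.  The remaining intersection $M\cap J\cap\{z=0\}$ is contained in the horizontal cross-section $J\cap\{z=0\}$---a finite union of bounded disks---and so is bounded and closed, hence compact.  Therefore $M\cap\{z=0\}=X\cup K$ with $K$ compact.  The main obstacle is the positivity claim in step three: although each $f_n$ is geometrically bounded below by a function close to the positive upper quarter-helicoid away from its necks, one must carefully rule out that the limit $f$ develops new interior zeros, either through the oscillation permitted by the trapping or through a neck escaping to infinity; this is handled by combining smooth off-$J$ convergence $M_n\to M$ with the bounded genus (hence finite neck count) and the strong maximum principle.
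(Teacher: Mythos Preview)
Your argument has a genuine gap at the step you yourself flag as the ``main obstacle.''  You assert that corollary~\ref{form-of-graph-corollary} places $S_n^a\setminus J_n$ in $C^1$-proximity to the \emph{upper} quarter-helicoid $z=(\eta/\pi)(\theta+\pi/2)$, and from this deduce $f_n\ge 0$.  But the corollary gives only the trapping inequality
\[
   \frac{\eta}{\pi}\Bigl(\theta-\frac{\pi}{2}\Bigr) \le f_n(r,\theta) \le \frac{\eta}{\pi}\Bigl(\theta+\frac{\pi}{2}\Bigr),
\]
which places $f_n$ somewhere between the two sheets, not near the upper one.  For $\theta\in(-\pi/2,\pi/2)$ the lower bound is negative, so trapping alone does not give $f_n\ge 0$.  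Your maximum-principle bootstrap therefore never gets started: you need $f\ge 0$ as input, and nothing you cite supplies it.  Indeed, the statement you aim for --- that $f>0$ on \emph{all} of $S^a\setminus J$ --- is stronger than what the lemma requires, which is only that $f\ne 0$ for $\theta\in(-\pi/2,\pi/2]$ once $r$ is large.

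The paper's proof takes a different route and avoids this difficulty.  Working with the extended multigraph (remark~\ref{form-of-graph-remark}) over $r\ge R$ and all $\theta\in\RR$, it performs a blowdown: setting $f_s(r,\theta)=s^{-1}f(sr,\theta)$ and letting $s\to\infty$, the trapping bounds force $sf_s$ to converge to a harmonic function $g$ on $\RR^2\setminus\{0\}$ with $|g-\theta|$ bounded.  A Liouville argument then gives $g\equiv\theta$, whence $r\,\partial_\theta f(r,\theta)\to 1$ as $r\to\infty$.  This yields strict monotonicity of $\theta\mapsto f(r,\theta)$ on a bounded $\theta$-interval for all large $r$, so the zero at $\theta$ corresponding to $X^+$ is the unique one.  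The argument establishes the sign of $\partial_\theta f$ rather than the sign of $f$ itself, and this is what the trapping inequality actually controls asymptotically.
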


\begin{proof}
In the following argument, it is convenient to choose the angle function $\theta$
on $H^+$ so that $\theta=0$ on $X^+$, $\theta=\pi/2$ on $Y^+$, and $\theta=\pi$
on $X^-$.  

By theorem~\ref{R-to-infinity-theorem}, for all sufficiently large $R$, the set 
\[
   M \setminus Z(R)
\]
is the union of two multigraphs that are related to each other by $\rho_Z$.

By the smooth convergence $M_n\to M$ together with
 corollary~\ref{form-of-graph-corollary} and remark~\ref{form-of-graph-remark}, 
 one of the components of $M\setminus Z(R)$ 
 can be parametrized
as
\[
   (r\cos\theta, r\sin\theta, f(r,\theta)) \qquad (r\ge R,\,  \theta \in \RR)
\]
where
\begin{equation}\label{boundary-condition}
f(r,0)\equiv 0
\end{equation}
and
\begin{equation}\label{new-bound}
    \theta-\pi < f(r,\theta) < \theta + \pi.
\end{equation}
(The bound~\eqref{new-bound} looks different from the bound~\eqref{trapped}
 in corollary~\ref{form-of-graph-corollary} 
because there we were measuring $\theta$ from $Y^+$ whereas here we are measuring it from $X^+$.)

Of course $f$ solves the minimal surface equation in polar coordinates.

For $0<s<\infty$, define a function $f_s$ by
\[
    f_s(r,\theta) =  \frac1{s} f( s r, \theta).
\]
Going from $f$ to $f_s$ corresponds to dilating $S$ by $1/s$.  
(To be more precise, $(r,\theta) \mapsto (r\cos\theta, r\sin\theta, f_s(r,\theta))$
parametrizes the dilated surface.)
Thus the function $f_s$ will also solve the polar-coordinate minimal surface equation.
By~\eqref{new-bound},
\begin{equation}\label{f_s-bound}
   \theta - \pi \le  s f_s(r,\theta) \le \theta+ \pi.
\end{equation}
By the Schauder estimates for $f_s$ and by the bounds~\eqref{f_s-bound},
the functions $s f_s$ converge smoothly (after passing to a subsequence) as $s\to 0$ to a harmonic
function $g(r,\theta)$ defined for all $r>0$ and satisfying
\begin{equation}\label{g-bound}
   \theta - \pi \le g \le \theta + \pi.
\end{equation}
Here ``harmonic'' is with respect to the standard conformal structure on $\sS^2$ (or equivalently
on $\RR^2$), so $g$ satisfies the equation
\[
  g_{rr} + \frac1r g_{r} + \frac1{r^2}g_{\theta\theta} = 0.
\]
Now define $G: \RR^2\to \RR$ by
\[
  G(t,\theta) = g(e^t, \theta).
\]
Then $G$ is harmonic in the usual sense: $G_{tt} + G_{\theta\theta}=0$.

By~\eqref{g-bound}, $G(t,\theta)-\theta$ is a bounded, entire harmonic function, and therefore is constant.
Also, $G-\theta$ vanishes where $\theta=0$, so it vanishes everywhere.  Thus
\[
  g(r,\theta)\equiv \theta,
\]
and therefore $\pdf{}{\theta}g \equiv 1$.

The smooth convergence of $s f_s$ to $g$ implies that
\[
  \lim_{r\to\infty} r \pdf{}{\theta}f(r,\theta) = 1,
\]
where the convergence is uniform given bounds on $\theta$.
Thus there is a $\rho<\infty$ such that for each $r\ge \rho$,
the function
\[
    \theta\in [-2\pi, 2\pi] \mapsto f(r,\theta)
\]
is strictly increasing.  Thus it has exactly one zero in this interval,
namely $\theta=0$.  But by the bounds~\eqref{new-bound}, $f(r,\theta)$ never
vanishes outside this interval.   Hence for $r\ge \rho$, $f(r,\theta)$
vanishes if and only if $\theta=0$.

So far we have only accounted for one component of $M\setminus Z(R)$.
But the behavior of the other component follows by $\rho_Z$ symmetry.
\end{proof}


\section{The proof of theorem~\ref{theorem2}} \label{Proof_of_theorem_2}

For the reader's convenience, we restate theorem~\ref{theorem2}
before proving it:

\begin{theorem}\label{R3limits2}
Let $R_n$ be a sequence of radii tending to
infinity.   For each $n$, let $M_{+}(R_n)$ and $M_{-}(R_n)$ be 
genus-$g$ surfaces in $\sS^2(R_n)\times \RR$ satisfying the list
of properties in theorem~\ref{theorem1}, where $H$ is the helicoid of pitch $1$ 
and $h=\infty$.
 Then, after passing to
a subsequence, the $M_+(R_n)$ and $M_{-}(R_n)$ converge smoothly on
compact sets to limits $M_+$ and $M_{-}$ with the
following properties:
\begin{enumerate}
\item\label{helicoidlike} $M_{+}$ and $M_{-}$ are complete, properly
embedded minimal surfaces in $\RR^3$ that are asymptotic to the
standard helicoid $H\subset\RR^3$. 
\item\label{intersection-property} If  $M_s \ne H$, then $M_s\cap H=X\cup Z$ and
   $M$ has sign $s$ at $O$ with respect to $H$.
\item\label{Y-surface-property} $M_s$ is a $Y$-surface.
\item\label{point-count} 
  $\|M_s\cap Y\| = 2\,\|M_s\cap Y^+\| +1 = 2\,\genus(M_s)+1$.
 \item\label{genus-bound} If $g$ is even, then
      $M_{+}$ and $M_{-}$ each have genus at most $g/2$.
      If $g$ is odd, then $\genus(M_{+})+\genus(M_{-})$
       is at most $g$.
 \item\label{genus-parity} The genus of $M_{+}$ is even. 
          The genus of $M_{-}$ is odd.
\end{enumerate}
\end{theorem}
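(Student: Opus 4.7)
The plan is to apply the compactness results of section~\ref{R3-section} to extract subsequential smooth limits and then read off each asserted property. After identifying $\sS^2(R_n)\times\RR$ with $\RR^3$ via stereographic projection from $O^*$ as in~\eqref{metric}, theorem~\ref{R-to-infinity-theorem} (applied with $\sigma_n=\mathrm{id}$ and $\hh=1$) delivers a subsequence of $M_s(R_n)$ converging smoothly and with multiplicity one on compact subsets to a properly embedded minimal surface $M_s\subset\RR^3$. If $M_s\ne H$, theorem~\ref{asymptotic-to-H-theorem} gives $M_s\cap H=X\cup Z$ and asymptoticity to $H$, establishing~\eqref{helicoidlike} and~\eqref{intersection-property}; the sign at $O$ is preserved under smooth convergence, since it is read off from the tangent plane at $O$ together with the way $M_s$ fills in the quadrants of $H$ nearby.

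Property~\eqref{Y-surface-property} passes to the limit exactly as in the proof of theorem~\ref{h-to-infinity-theorem}: smooth multiplicity-one convergence makes $\rho_Y$ an orientation-preserving involution of $M_s$, any $1$-cycle in $M_s$ is approximated by $1$-cycles in $M_s(R_n)$, and the homological relation $\Gamma\sim-\rho_Y\Gamma$ is witnessed by regions that remain in a bounded part of $\RR^3$. For the count~\eqref{point-count}, observe that $M_s$ is connected (it contains $Z$ and is either $H$ itself or has a single helicoidal end by lemma~\ref{one-end-lemma}) with exactly one end, so corollary~\ref{$Y$-corollary} gives $\|M_s\cap Y\|=2\genus(M_s)+1$; combined with the $\rho_Y$-symmetry that interchanges $Y^+$ with $Y^-$ while fixing $O$, this yields $\|M_s\cap Y\|=2\|M_s\cap Y^+\|+1$.

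For the genus bound~\eqref{genus-bound}, when $g$ is even each $M_s(R_n)$ is $\mu_E$-invariant, so the $g$ points of $M_s(R_n)\cap Y^+$ are paired by $\mu_E$. Since $\mu_E$ exchanges the $O$-side and $O^*$-side of the arc $Y^+$, at most one point per pair (the one nearer $O$) can remain within bounded distance of $O$; the $O^*$-side recedes to infinity as $R_n\to\infty$, so only those near-$O$ survivors contribute to $M_s\cap Y^+$, giving $\genus(M_s)\le g/2$. When $g$ is odd, $M_-(R_n)=\mu_E M_+(R_n)$; if $k$ of the points of $M_+(R_n)\cap Y^+$ lie in the near-$O$ half of $Y^+$, then $\mu_E$ sends these to $k$ points in the far half of $M_-(R_n)\cap Y^+$ and sends the remaining $g-k$ points of $M_+(R_n)\cap Y^+$ to $g-k$ points in the near-$O$ half of $M_-(R_n)\cap Y^+$. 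Only near-$O$ points survive in the limit, so $\|M_+\cap Y^+\|+\|M_-\cap Y^+\|\le k+(g-k)=g$.

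The parity assertion~\eqref{genus-parity} is the most delicate step. The plan is to invoke an $\RR^3$ analog of lemma~\ref{parity-sign-lemma} for the $Y$-surface $S_s=M_s\cap H^+$, whose boundary in $\RR^3$ is $X\cup Z$: at $O$ the rays $X^\pm,Z^\pm$ are paired in the geodesic completion of $S_s$ according to the sign $s$ of $M_s$, while at the single helicoidal end of $M_s$ the pairing is determined by $H$ itself and is independent of $s$. The parity of $\|S_s\cap Y^+\|=\genus(M_s)$ is then forced by whether these two pairings agree (even, as expected for $M_+$) or disagree (odd, as expected for $M_-$). The main obstacle is making this limiting parity lemma rigorous: one must verify that $S_s$ is itself a $Y$-surface with connected $\rho_Y$-quotient and the right end behavior at infinity so that corollary~\ref{$Y$-corollary} can be applied directly to $S_s$. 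The area and curvature bounds underlying the initial compactness step are already packaged into theorem~\ref{R-to-infinity-theorem} via the flux estimates of corollary~\ref{flux-bounds-corollary} and the stable-surface classification of corollary~\ref{stable-spheres-corollary}.
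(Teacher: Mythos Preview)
Your outline for parts~\eqref{helicoidlike}--\eqref{genus-bound} matches the paper's proof essentially step for step: smooth convergence via theorem~\ref{R-to-infinity-theorem} and theorem~\ref{asymptotic-to-H-theorem}, the $Y$-surface property via the slab-trapping argument from theorem~\ref{h-to-infinity-theorem}, the point count via corollary~\ref{$Y$-corollary}, and the genus bound by tracking which of the $g$ points of $M_s(R_n)\cap Y^+$ stay near $Z$ versus drift toward $Z^*$ (with the $\mu_E$ pairing forcing $a=b$ when $g$ is even, and $M_-(R_n)=\mu_E M_+(R_n)$ exchanging the roles of $a$ and $b$ when $g$ is odd).

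For part~\eqref{genus-parity} you have correctly identified both the strategy and the gap. The paper closes the gap exactly along the lines you suggest, but rather than abstractly verifying that $S_s$ is a $Y$-surface and then invoking corollary~\ref{$Y$-corollary}, it reads off the end structure of $S_s$ directly from the multigraph description in corollary~\ref{form-of-graph-corollary} (see also remark~\ref{form-of-graph-remark} and the opening paragraphs of the proof of lemma~\ref{z=0-level-lemma}). Concretely: for $R$ large, $S_s\setminus Z(R)$ consists of two multigraph components, one whose closure contains $X^+$ and $Z^+$ and one whose closure contains $X^-$ and $Z^-$; this is the ``pairing at infinity'' you allude to, and it is indeed independent of $s$. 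After excising a finite solid cylinder, $S_s$ has exactly these two pieces, so whether $S_s$ has one end or two is decided entirely by whether the pairing at $O$ agrees with the pairing at infinity. If $s=+$, then near $O$ the boundary of $S_s$ joins $X^+$ to $Z^+$, matching the pairing at infinity, so $S_s$ has two ends and $\|S_s\cap Y\|$ is even; if $s=-$, the pairings cross and $S_s$ has one end, so $\|S_s\cap Y\|$ is odd. Since $\|S_s\cap Y\|=\|M_s\cap Y^+\|=\genus(M_s)$, this gives the parity. The multigraph structure from section~\ref{R3-section} is thus the missing ingredient that makes your ``limiting parity lemma'' rigorous.

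One minor inaccuracy: the local area bounds feeding into theorem~\ref{R-to-infinity-theorem} do not come from the flux estimate of corollary~\ref{flux-bounds-corollary} (that was for the $\sS^2\times\RR$ step); they come instead from the multigraph structure outside a cylinder together with the constancy and halfspace theorems for area-blowup sets.
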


\begin{proof}
Smooth convergence to a surface asymptotic to $H$ was proved in
theorems~\ref{R-to-infinity-theorem}
and~\ref{asymptotic-to-H-theorem}.
Statements~\eqref{helicoidlike} and \eqref{intersection-property}
 follow immediately from the smooth convergence
 and the corresponding properties of the the surfaces $M_s(R_n)$.
 
Next we prove Statement~\eqref{Y-surface-property}.
Note that $\rho_Y$ invariance of $M_s$ follows immediately
from the smooth convergence and the $\rho_Y$ invariance of the $M_s(R_n)$.
To show that $M_s$ is a $Y$-surface, we must show that $\rho_Y$
acts on the first homology group of $M_s$ by multiplication by $-1$.  
Let $\gamma$  be a closed curve in $M_s$.  We must  show
that $\gamma\cup \rho_Y \gamma$ bounds a region of
$M^\infty _s$.  The curve $\gamma$ is approximated by curves
$\gamma_n \subset M_s(R_n)$. Since each $M_s(R_n)$ is a
$Y$-surface, $\gamma_n \cup \rho_Y \gamma_n$ bounds a compact
region in $W_n\subset M_s(R_n)$. These regions converge uniformly
on compact sets to a region $W\subset M_s$ with boundary
$\gamma \cup \rho_Y \gamma$ but a priori that region might not be compact.
By the maximum principle, each $W_n$ is contained in the smallest slab
of the form $\{|z|\le a\}$ containing $\gamma_n\cup\rho_Y\gamma_n$.
Thus $W$ is also contained in such a slab.  Hence $W$ is compact, since
 $M_s$ contains only one end and that end is helicoidal (and therefore is not
 contained in a slab.)  This completes the proof of 
 Statement~\eqref{Y-surface-property}.


Next we prove Statement~\eqref{point-count}.
Because
$M_s$ is a $Y$-surface,  
\[
   \|  M_s \cap Y \| =2 -\chi(M_s)
\]
by Statement~3 of Proposition~\ref{Y-surface-topology-propostion}.
Also,
\[
   \chi(M_s)= 2-2\genus(M_s)-1
\]
since $M_s$ has exactly one end.  Combining the
last two identities gives Statement~\eqref{point-count}.
(Note that $M_s\cap Y$ consists of the points of $M_s\cap Y^+$,
the corresponding points in $M_s\cap Y^-$, and the origin.)

To prove Statement~\eqref{genus-bound}, note
that $M_{+}(R_n)\cap Y^+$ contains exactly $g$ points.
By passing to a subsequence, we can assume (as $n\to\infty$)
that $a$ of those points stay a bounded distance from $Z$,
that $b$ of those points stay a bounded distance from $Z^*$, and
that for each of the remaining $g-a-b$ points, the distance from the point to $Z\cup Z^*$
  tends to infinity.

By smooth convergence,
\[
  \|M_{+}\cap Y^+\| = a.
\]
so the genus of $M_{+}$ is $a$ by Statement~\eqref{point-count}.

If $g$ is even, then $M_{+}(R_n)$ is symmetric by reflection in the totally
geodesic cylinder
\begin{equation}\label{cylinder-again}
   \dist(\cdot, Z) = \dist(\cdot, Z^*).
\end{equation}
It follows that $a=b$, so $\genus(M_{+})\le a \le g/2$.
The proof for $M_{-}$ and $g$ even is identical.

If $g$ is odd, then $M_{-}(R_n)$ is obtained from $M_{+}(R_n)$ by reflection
in the cylinder~\eqref{cylinder-again}.  Hence exactly $b$ of the points
of $M_{-}(R_n)\cap Y^+$ stay a bounded distance from $Z$.
It follows that $M_{-}\cap Y^+$ has exactly $b$ points, and therefore
that $M_{-}$ has genus $b$.  Hence
\[
 \genus(M_{+}) + \genus(M_{-}) = a + b \le g,
\]
which completes the proof of statement~\eqref{genus-bound}.

It remain only to prove statement~\eqref{genus-parity}: the genus of $M_{+}$
is even and the genus of $M_{-}$ is odd. 
By statement~\eqref{point-count}, this is equivalent to showing that $\|M_s\cap Y^+\|$ is even or odd
according to whether $s$ is $+$ or $-$.
Let
\[
   S = S_s = M_s\cap H^+.
\]
Then $M_s\cap Y^+= S\cap Y$, so it suffices to show that $\|S\cap Y\|$
is even or odd according to whether $s$ is $+1$ or $-1$.
By Proposition~\ref{Y-surface-topology-propostion}, 
this is equivalent to showing that $S$ has two ends
if $s$ is $+$ and one end if $s$ is $-$.

Let $Z(R)$ be the solid cylinder of radius $R$ about $Z$.  
By~Corollary~\ref{form-of-graph-corollary} (see also the first three paragraphs 
of the proof of Lemma~\ref{z=0-level-lemma}), we can choose
$R$ sufficiently large that $S\setminus Z(R)$ has two components.
One component is a multigraph on which $\theta$ goes from $\theta(X^+)$ to $\infty$, and on which $z$ is unbounded above.  The other component is a multigraph
on which $\theta$ goes from $\theta(X^-)$ to $-\infty$ and on which $z$ is unbounded
below.    In particular, if we remove a sufficiently large finite solid cylinder
\[
   C: = Z(R) \cap \{|z|\le A\}  
\]
from $S$, then the resulting surface $S\setminus C$ has two components.
(We choose $A$ large enough that $C$ contains all points of $H\setminus Z$
at which the tangent plane is vertical.)
One component has in its closure $X^+\setminus C$ and $Z^+\setminus C$,
and the other component has in its closure $X^-\setminus C$ 
and $Z^-\setminus C$.   Consequently $Z^+$ and $X^+$ belong to an end
of $S$, and $X^-$ and $Z^-$ also belong to an end of $S$.

Thus $S$ has one or two ends according to whether $Z^+$ and $X^-$ belong
to the same end of $S$ or different ends of $S$.  Note that they belong to the same
end of $S$ if and only if every neighborhood of $O$ contains a path in $S$
with one endpoint in $Z^+$ and the other endpoint in $X^-$, i.e., if and only
if $M_s$ is negative at $O$.  
By the smooth convergence, $M_s$ is negative at $O$ if and only if the $M_s(R_n)$
are negative at $O$, i.e., if and only if $s=-$.
\end{proof}

\begin{corollary}
If $g$ is $1$ or $2$, then $M_{+}$ has genus $0$ and $M_{-}$ has genus $1$.
\end{corollary}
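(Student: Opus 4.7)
The plan is to deduce the corollary directly from statements~\eqref{genus-bound} and~\eqref{genus-parity} of theorem~\ref{R3limits2}, with no additional input required. The only work is to check the arithmetic in the two cases $g=1$ and $g=2$, using that $\genus(M_{+})$ is even (hence lies in $\{0,2,4,\dots\}$) and $\genus(M_{-})$ is odd (hence lies in $\{1,3,5,\dots\}$).

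First I would treat $g=2$. Statement~\eqref{genus-bound} gives $\genus(M_{+})\le g/2 = 1$ and $\genus(M_{-})\le g/2 = 1$. Combining with statement~\eqref{genus-parity}, the only even nonnegative integer that is $\le 1$ is $0$, so $\genus(M_{+})=0$; the only odd positive integer $\le 1$ is $1$, so $\genus(M_{-})=1$.

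Next I would treat $g=1$. Since $g$ is odd, statement~\eqref{genus-bound} gives $\genus(M_{+})+\genus(M_{-})\le 1$. Statement~\eqref{genus-parity} forces $\genus(M_{-})\ge 1$ (since it is odd and nonnegative), so the inequality must be an equality with $\genus(M_{+})=0$ and $\genus(M_{-})=1$.

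There is no real obstacle here: the corollary is a pure arithmetic consequence of the parity and upper-bound statements already established in theorem~\ref{R3limits2}. The substantive content—namely, that one can control parity of the genus under the limit $R_n\to\infty$, and that at most half of the handles can stay near $Z$ in the even case—is entirely absorbed into the statements~\eqref{genus-bound} and~\eqref{genus-parity} of that theorem.
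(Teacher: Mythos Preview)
Your proof is correct and takes essentially the same approach as the paper, which simply states that the corollary follows immediately from statements~\eqref{genus-bound} and~\eqref{genus-parity} of theorem~\ref{R3limits2}. You have merely spelled out the arithmetic that the paper leaves implicit.
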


The corollary follows immediately from statements~\eqref{genus-bound}
and~\eqref{genus-parity} of theorem~\ref{R3limits2}.

\appendix 

\section{Minimal surfaces in a thin hemispherical shell}\label{hemisphere-appendix}

If $D$ is a hemisphere of radius $R$ and $J$ is an interval with 
 $|J|/R$ is sufficiently small, we show that the projection of a minimal surface $M\subset D\times J$ with 
$\partial M\subset \partial D\times J$ onto $D$ covers most of $D$. This result is used in the proof of
theorem~\ref{first-compactness-theorem} to show that a certain
multiplicity is $2$, not $0$.

\begin{theorem}\label{all-or-nothing-theorem}
Let $D=D_R$ be an open hemisphere in the sphere $\sS^2(R)$
and let $J$ be an interval with length $|J|$.  If $|J|/R$ is sufficiently small,
the following holds.
Suppose that $M$
is a nonempty minimal surface in $D\times J$ with $\partial M$ in $(\partial D)\times J$.
Then every point $p\in D$ is within distance $4 |J|$ of $\Pi(M)$,
where $\Pi: D\times I \to D$ is the projection map.
\end{theorem}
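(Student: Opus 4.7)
I argue by contradiction via a sliding catenoid barrier. Suppose some point $p\in D$ satisfied $\dist(p,\Pi(M))>4|J|$, so the solid cylinder $B(p,4|J|)\times J$ is disjoint from $M$. Since $|J|/R$ is small, the metric on $B(p,R/2)\times J$ is uniformly close to the flat Euclidean product, so I may use standard Euclidean catenoids, perturbed via the implicit function theorem into exact minimal surfaces in $\sS^2(R)\times\RR$ (with small error), as barriers. Writing $J=[z_-,z_+]$ and $h=|J|/2$, let $\Sigma_a$ be the (approximate) minimal catenoid with vertical axis $\{p\}\times\RR$, waist of radius $a>0$ at midheight of $J$, and boundary circles on $\sS^2(R)\times\{z_-,z_+\}$. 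In the Euclidean model, the outer (boundary) radius of $\Sigma_a$ equals $a\cosh(h/a)$, which as a function of $a$ has a unique minimum $\rho_*\approx 0.755\,|J|$ at the critical value $a_*\approx |J|/2.4$ (determined by $(h/a)\tanh(h/a)=1$) and is strictly increasing on the ``fat'' branch $a\in[a_*,\infty)$. Hence $\{\Sigma_a:a\ge a_*\}$ foliates the region outside $\Sigma_{a_*}$ in the slab $\sS^2(R)\times J$. Since $\rho_*<4|J|$, the critical $\Sigma_{a_*}$ lies inside $B(p,4|J|)\times J$, hence is disjoint from $M$; so every $q\in M$ lies on a unique leaf $\Sigma_{a(q)}$ with $a(q)>a_*$.

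Now I slide outward. Set
\[
a^*:=\inf_{q\in M} a(q).
\]
Since $M$ is compact, $a^*$ is finite and attained at some $q\in M$, at which $\Sigma_{a^*}$ is tangent to $M$ from the inside. I claim the touch is interior to both surfaces. On the minimal surface $M$, the height function $z$ is harmonic, so attains its extrema on $\partial M\subset\partial D\times J$; hence $M$ meets $\sS^2(R)\times\{z_\pm\}$ only along $\partial D\times\{z_\pm\}$, which is far from the axis $\{p\}\times\RR$ when $|J|/R$ is small, so $q\notin\partial\Sigma_{a^*}$. For the claim $q\notin\partial M$: by the geometry of the catenoid family, $a^*\le r_0$, and hence the outer radius $a^*\cosh(h/a^*)$ of $\Sigma_{a^*}$ is bounded by a constant multiple of $r_0$, where $r_0$ is the horizontal distance from $p$ to the nearest point of $M$; for $|J|/R$ sufficiently small and $p$ not too close to $\partial D$, this outer radius is strictly less than $\dist(p,\partial D)$, so $\Sigma_{a^*}$ does not meet $\partial D\times J$. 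Thus $q$ is an interior tangent point of two minimal surfaces, with $M$ on one side of $\Sigma_{a^*}$. By the strong maximum principle, $M$ and $\Sigma_{a^*}$ coincide on the connected component of $M$ through $q$, which contradicts the incompatibility $\partial M\subset\partial D\times J$ versus $\partial\Sigma_{a^*}\subset\sS^2(R)\times\{z_\pm\}$.

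The main obstacle is the quantitative control required to ensure the first touch is an interior touch rather than a contact at $\partial M$, and to handle the case when $p$ is close to $\partial D$. The first part uses the hypothesis $|J|/R\ll 1$: the hemisphere $D$ is ``large'' compared to $|J|$, giving the catenoid family enough room to expand before reaching $\partial D\times J$. The second part is a separate easy case: when $p$ lies within $4|J|$ of $\partial D$, then since $\partial M\subset\partial D\times J$ is a nonempty collection of curves that together with the fact that $M$ is a nonempty connected minimal surface forces $\Pi(M)$ to include points of $\partial D$ close to $p$'s nearest point on $\partial D$ (via, for instance, a maximum-principle argument comparing $M$ to vertical totally geodesic cylinders through arcs of $\partial D$), and such points are automatically within $4|J|$ of $p$. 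These two cases together give the theorem.
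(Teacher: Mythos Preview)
Your catenoid-barrier idea is the same as the paper's, but the implementation has a real gap that the paper avoids by a genuinely different maneuver.

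The problem is that your expanding family $\{\Sigma_a\}_{a\ge a_*}$ centered at $p$ may need to reach waist radii $a$ comparable to $R$ before touching $M$. Nothing in the hypotheses prevents $r_0=\dist(p,\Pi(M))$ from being of order $R$: $M$ could be concentrated in a thin neighborhood of $\partial D\times J$, with $\Pi(\partial M)$ a short arc of $\partial D$ far from $p$. In that regime your perturbation of Euclidean catenoids via the implicit function theorem is no longer available (the curvature of $\sS^2(R)$ is not negligible on scale $R$), and your bound ``outer radius $\lesssim r_0$'' does not give outer radius $<\dist(p,\partial D)$, so you cannot rule out a first touch at $\partial M$. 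Your ``separate easy case'' for $p$ near $\partial D$ does not rescue this: there is no reason $\Pi(M)$ must contain points of $\partial D$ near the point closest to $p$, and the vague comparison with totally geodesic cylinders does not produce one.

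The paper handles exactly this difficulty by a two-step argument rather than a single expanding barrier. First it fixes one \emph{small} catenoid (waist comparable to $|J|$), shows it exists in $\sS^2(R)\times\RR$ for $R$ large by an area comparison, places it in the forbidden cylinder near $p$, and then \emph{slides} it to every position in $D$; the maximum principle keeps $M$ disjoint from all translates, forcing $M$ into a tube of width $O(|J|)$ about $\partial D\times J$. Second, it takes a rotationally symmetric least-area competitor with boundary $(\partial D)\times\partial J$ trapped in that tube, lets $R\to\infty$, and obtains a nontrivial translationally invariant Jacobi field on the flat strip $L\times[0,1]$, contradicting the maximum principle for harmonic functions. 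The sliding step is what your argument is missing: it confines $M$ globally without ever needing a barrier of scale larger than $|J|$.
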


\begin{proof}
By scaling, it suffices to prove that if $J=[0,1]$ is an interval of length $1$, then every point
$p$ of $D_R$ is within distance $4$ of $\Pi(M)$ provided $R$ is sufficiently large.

For the Euclidean cylinder $\BB^2(0,2)\times [0,1]$, 
the ratio of the area of the cylindrical side (namely $4\pi$) to the sum of the areas
of top and bottom (namely $8\pi$) is less than $1$.  Thus the same is true
for all such cylinders of radius $2$ and height $1$ in $\sS^2(R)\times \RR$, provided $R$ is sufficiently large.
It follows that there is a catenoid in $\sS^2(R)\times \RR$ whose boundary consists of
the two circular edges of the cylinder.

Let $\Delta\subset D_R$ be a disk of radius $2$ containing the point $p$.
By the previous paragraph, we may suppose that $R$ is sufficiently large
that there is a catenoid $C$ in $\Delta\times[0,1]$ whose boundary consists of the two circular
edges of $\Delta\times [0,1]$.  In other words, $\partial C= (\partial \Delta)\times(\partial [0,1])$.

Now suppose $\dist(p, \Pi(M))> 4$.  Since $\Delta$ has diameter $4$,
it follows that $\Pi(M)$ is disjoint from $\Delta$ and therefore that $M$ is disjoint from $C$.
 If we slide $C$ around in $D_R\times [0,1]$, it can never
bump into $M$ by the maximum principle.  That is, $M$ is disjoint from the union $K$ of
all the catenoids in $D_R\times [0,1]$ that are congruent to $C$.  Consider all surfaces
of rotation with boundary $(\partial D_R)\times (\partial [0,1])$ that are disjoint from $M$ and from $K$.
The surface $S$ of least area in that collection is a catenoid, and (because it is disjoint from $K$)
it lies with distance $2$ of $(\partial D_R)\times [0,1]$.

We have shown: if $R$ is sufficiently large and if the theorem is false for $D_R$ and $J=[0,1]$,
then there is a catenoid $S$ such that $S$ and $\Sigma:=(\partial D_R)\times [0,1]$ have the same boundary
and such that $S$ lies within distance $2$ of $\Sigma$.

Thus if the theorem were false, there would be a sequence of radii $R_n\to \infty$
and a sequence of catenoids $S_n$ in $D_{R_n}\times[0,1]$ such that
\begin{equation*}
   \partial S_n = \partial \Sigma_n
\end{equation*}
and such that $S_n$ lies within distance $2$ of $\Sigma_n$, where
\[
 \Sigma_n = (\partial D_{R_n})\times [0,1].
\]
We may use coordinates in which the origin is in $(\partial D_{R_n})\times \{0\}$.
As $n\to \infty$, the $\Sigma_n$ converge smoothly to an infinite flat strip $\Sigma=L\times[0,1]$ in $\RR^3$
(where $L$ is a straight line in $\RR^2$),
and the $S_n$ converges smoothly to a minimal surface $S$ such that (i) $\partial S=\partial \Sigma$,
(ii) $S$ has the translational invariance that $\Sigma$ does, and (iii) $S$ lies within a bounded
distance of $\Sigma$.   It follows that $S=\Sigma$.

Now both $\Sigma_n$ and $S_n$ are minimal surfaces.  
(Note that $\Sigma_n$ is minimal, and indeed totally
geodesic, since $\partial D$ is a great circle.)
Because $\Sigma_n$ and $S_n$ have the boundary and converge smoothly to the same limit $S$,
it follows that there is a nonzero Jacobi field $f$ on $S$ that vanishes at the boundary. Since $S$ is flat,
$f$ is in fact a harmonic function.
The common rotational symmetry of $\Sigma_n'$ and $S_n'$ implies that the Jacobi field is translationally
invariant along $S$, and thus that it achieves its maximum somewhere.  (Indeed, it attains its maximum
on the entire line $L\times\{1/2\}$.)  But then $f$ must be constant,
which is impossible since $f$ is nonzero and vanishes on $\partial S$.
\end{proof}


\section{Noncongruence Results}\label{noncongruence-appendix}

In this section, we prove the noncongruence results in theorem~\ref{theorem1}:
that $M_+$ and $M_-$ are not congruent by any orientation-preserving isometry of $\sS^2\times\RR$,
and that if the genus is even, they are not congruent by any isometry of $\sS^2\times\RR$.
For these results, we have to assume that $H$ does not have infinite pitch, i.e., that $H\ne X\times \RR$.
See remark~\ref{extra-symmetry-remark}.

We first consider the periodic case: $M_+$ and $M_-$ are properly embedded
in $\sS^2\times(-h,h)$ where $0<h<\infty$.  
We will assume that the height $h$ is not an integer multiple of the vertical separation between sheets of $H$,
i.e, that the boundary circles of $M_s$ are not vertical translates of $X$.  This assumption makes the proof
simpler, and we never actually use the noncongruence results.
Suppose $\phi: \sS^2\times\RR\to \sS^2\times\RR$ is an isometry
such that $\phi(M_{+})=M_{-}$. 
We must show that $\phi$ is orientation-reversing on $\sS^2\times\RR$
and that the genus of $M_{+}$ is odd.
 Let $G= \partial M_{+}=\partial M_{-}$.
Then $\phi(G)=G$.  Note that $G$ consists of two horizontal circles, and that
the only two vertical lines that intersect both circles are $Z$ and $Z^*$ (by the hypothesis
on $h$.)  It follows that the symmetry group of $G$ is the same
as the symmetry group of $H\cap \{|z|<h\}$.
In particular, from $\phi(G)=G$ we see that
\begin{enumerate}[\upshape (i)]
\item $\phi$ belongs to
$
  \mathcal{S}:= \{I, \rho_X, \rho_Y, \rho_Z(=\rho_{Z^*}) \} 
$
if $\phi$ is orientation-preserving on $\sS^2\times\RR$, and
\item $\phi =\mu_E\circ \psi$ for 
  some $\psi$ in $\mathcal{S}$ if $\phi$ is 
  orientation-reversing on $\sS^2\times\RR$.
\end{enumerate}
Now $M_{+}\ne M_{-}$ since their signs at $O$ are different.
Thus $\phi \notin \mathcal{S}$, 
 since $M_{+}$ is invariant under the maps in $\mathcal{S}$.

This proves that $\phi$ is orientation-reversing on $\sS^2\times\RR$.
 Thus $\phi=\mu_E\circ \psi$
for some $\psi\in \mathcal{S}$, so
\begin{equation}
  M_{-} =\mu_E\circ \psi M_{+} = \mu_E(M_{+}) 
\end{equation}
since $M_{+}$ is invariant under the maps in $\mathcal{S}$.
If the genus were even, then $\mu_E(M_{+})=M_{+}$,
contradicting the fact that $M_{+}\ne M_{-}$. Thus the genus is odd.

This completes the proof of the noncongruence assertions in the case $h<\infty$
  (assuming that   $h$ not a multiple of the vertical separation between sheets of $H$).
The proof of the noncongruence assertions when $h=\infty$ is very similar. 
Suppose $\phi: \sS^2\times\RR\to \sS^2\times\RR$ is an isometry
such that $\phi(M_{+})=M_{-}$. 
We must show that $\phi$ is orientation-reversing on $\sS^2\times\RR$
and that the genus of $M_{+}$ is odd.
Note that $M_s$ contains $Z\cup Z^*$, and (since $M_s\cup H = X\cup Z\cup Z^*$) that those are the
only vertical lines contained in $M_s$.  Also, $M_s$ contains $X$, but no other horizontal great circle,
since if it did, it would be invariant under a vertical screw motion and therefore would have infinite genus.
Consequently $\phi$ maps $X\cup Z\cup Z^*$ to itself.  The rest of the proof is almost identical to the 
proof when $h<\infty$.

\begin{remark}\label{extra-symmetry-remark}
If $H$ is the infinite-pitch helicoid $X\times\RR$, 
then $H$ has an extra symmetry, namely the reflection $\mu_Y$ in the totally geodesic
cylinder $Y\times\RR$.  This extra symmetry allows us to choose $M_+$ and $M_-$ to violate the noncongruence
assertions of theorem~\ref{theorem1}: specifically, once we have chosen $M_+$, we can let
 $M_-$ be $\mu_Y M_+$.  Then $M_+$ and $M_-$ will satisfy all the assertions of theorem~\ref{theorem1}
 except for the noncongruence assertions, which they will violate.
 \end{remark}

\nocite{pedrosa-ritore}
\nocite{hoffman-wei}
\newcommand{\hide}[1]{}

\begin{bibdiv}

\begin{biblist}

\bib{allard-first-variation}{article}{
  author={Allard, William K.},
  title={On the first variation of a varifold},
  journal={Ann. of Math. (2)},
  volume={95},
  date={1972},
  pages={417--491},
  issn={0003-486X},
  review={\MR {0307015},
  Zbl 0252.49028.}}
  \hide{(46 \#6136)}
  
\bib{anderson}{article}{
   author={Anderson, Michael T.},
   title={Curvature estimates for minimal surfaces in $3$-manifolds},
   journal={Ann. Sci. \'Ecole Norm. Sup. (4)},
   volume={18},
   date={1985},
   number={1},
   pages={89--105},
   issn={0012-9593},
   review={\MR{803196 (87e:53098)}},
}

\bib{bernstein-breiner-conformal}{article}{
    AUTHOR = {Bernstein, Jacob and Breiner, Christine},
     TITLE = {Conformal structure of minimal surfaces with finite topology},
   JOURNAL = {Comment. Math. Helv.},
  FJOURNAL = {Commentarii Mathematici Helvetici. A Journal of the Swiss
              Mathematical Society},
    VOLUME = {86},
      YEAR = {2011},
    NUMBER = {2},
     PAGES = {353--381},
      ISSN = {0010-2571},
   MRCLASS = {53A10 (49Q05)},
  MRNUMBER = {2775132 (2012c:53009)},
MRREVIEWER = {Fei-Tsen Liang},
       DOI = {10.4171/CMH/226},
       URL = {http://dx.doi.org/10.4171/CMH/226},
}

\bib{choi-schoen}{article}{
   author={Choi, Hyeong In},
   author={Schoen, Richard},
   title={The space of minimal embeddings of a surface into a
   three-dimensional manifold of positive Ricci curvature},
   journal={Invent. Math.},
   volume={81},
   date={1985},
   number={3},
   pages={387--394},
   issn={0020-9910},
   review={\MR{807063 (87a:58040)}},
   doi={10.1007/BF01388577},
}

\bib{fischer-colbrie-schoen}{article}{
   author={Fischer-Colbrie, Doris},
   author={Schoen, Richard},
   title={The structure of complete stable minimal surfaces in $3$-manifolds
   of nonnegative scalar curvature},
   journal={Comm. Pure Appl. Math.},
   volume={33},
   date={1980},
   number={2},
   pages={199--211},
   issn={0010-3640},
   review={\MR{562550 (81i:53044)}},
   doi={10.1002/cpa.3160330206},
}

\bib{gulliver-removability}{article}{
   author={Gulliver, Robert},
   title={Removability of singular points on surfaces of bounded mean
   curvature},
   journal={J. Differential Geometry},
   volume={11},
   date={1976},
   number={3},
   pages={345--350},
   issn={0022-040X},
   review={\MR{0431045 (55 \#4047)}},
}

\bib{hoffman-karcher-wei}{incollection}{
AUTHOR={Hoffman, David},
AUTHOR = {Karcher, Hermann},
AUTHOR={ Wei, Fu Sheng},
     TITLE = {The genus one helicoid and the minimal surfaces that led to
              its discovery},
 BOOKTITLE = {Global analysis in modern mathematics ({O}rono, {ME}, 1991;
              {W}altham, {MA}, 1992)},
     PAGES = {119--170},
 PUBLISHER = {Publish or Perish},
   ADDRESS = {Houston, TX},
      YEAR = {1993},
   MRCLASS = {53A10 (30F30)},
  MRNUMBER = {1278754 (95k:53011)},
MRREVIEWER = {Udo Hertrich-Jeromin},
}

\bib{hoffman-traizet-white-2}{article}{
   author={Hoffman, David},
   author={Traizet, Martin},
   author={White, Brian},
   title={Helicoidal minimal surfaces of prescribed genus, II},
   note={Preprint}
   date={2013},
}

\bib{hoffman-wei}{article}{
    AUTHOR = {Hoffman, David},
    AUTHOR = {Wei, Fusheng},
     TITLE = {Deforming the singly periodic genus-one helicoid},
   JOURNAL = {Experiment. Math.},
  FJOURNAL = {Experimental Mathematics},
    VOLUME = {11},
      YEAR = {2002},
    NUMBER = {2},
     PAGES = {207--218},
      ISSN = {1058-6458},
   MRCLASS = {53A10 (53C42)},
  MRNUMBER = {1959264 (2004d:53012)},
MRREVIEWER = {Rafael L{\'o}pez},
       URL = {http://projecteuclid.org/getRecord?id=euclid.em/1062621216},
}

\bib{hoffman-white-genus-one}{article}{
   author={Hoffman, David},
   author={White, Brian},
   title={Genus-one helicoids from a variational point of view},
   journal={Comment. Math. Helv.},
   volume={83},
   date={2008},
   number={4},
   pages={767--813},
   issn={0010-2571},
   review={\MR{2442963 (2010b:53013)}},
}

\bib{hoffman-white-geometry}{article}{
   author={Hoffman, David},
   author={White, Brian},
   title={The geometry of genus-one helicoids},
   journal={Comment. Math. Helv.},
   volume={84},
   date={2009},
   number={3},
   pages={547--569},
   issn={0010-2571},
   review={\MR{2507253 (2010f:53013)}},
   doi={10.4171/CMH/172},
}

\bib{hoffman-white-number}{article}{
   author={Hoffman, David},
   author={White, Brian},
   title={On the number of minimal surfaces with a given boundary},
   language={English, with English and French summaries},
   note={G\'eom\'etrie diff\'erentielle, physique math\'ematique,
   math\'ematiques et soci\'et\'e. II},
   journal={Ast\'erisque},
   number={322},
   date={2008},
   pages={207--224},
   issn={0303-1179},
   isbn={978-285629-259-4},
   review={\MR{2521657 (2010h:53007)}},
}

\bib{hoffman-white-axial}{article}{
   author={Hoffman, David},
   author={White, Brian},
   title={Axial minimal surfaces in $S^2\times\bold R$ are helicoidal},
   journal={J. Differential Geom.},
   volume={87},
   date={2011},
   number={3},
   pages={515--523},
   issn={0022-040X},
   review={\MR{2819547}},
}

\bib{lopez-ros}{article}{
   author={L{\'o}pez, Francisco J.},
   author={Ros, Antonio},
   title={On embedded complete minimal surfaces of genus zero},
   journal={J. Differential Geom.},
   volume={33},
   date={1991},
   number={1},
   pages={293--300},
   issn={0022-040X},
   review={\MR{1085145 (91k:53019)}},
}

\bib{meeks-perez-end}{article}{
   author={Meeks, William H., III},
   author={P{\'e}rez, Joaqu{\'{\i}}n},
 title={Embedded minimal surfaces of finite topology},
 date={2009},
 note={Preprint (http://www.ugr.es/local/jperez/papers/papers.htm/)},
}

\bib{MeeksRosenberg2005}{article}{
  author={Meeks, William H.},
  author={Rosenberg, Harold},
  title={The theory of minimal surfaces in ${\mathbb {M}}\times \mathbb {R}$},
  journal={Comment. Math. Helv.},
  volume={80},
  date={2005},
  number={4},
  pages={811--858},
  issn={0010-2571},
  review={\MR {2182702},
  Zbl 1085.53049.}
} \hide{ (2006h:53007)}

\bib{pedrosa-ritore}{article}{,
    AUTHOR = {Pedrosa, Renato H. L.},
    AUTHOR={Ritor{\'e}, Manuel},
     TITLE = {Isoperimetric domains in the {R}iemannian product of a circle
              with a simply connected space form and applications to free
              boundary problems},
   JOURNAL = {Indiana Univ. Math. J.},
  FJOURNAL = {Indiana University Mathematics Journal},
    VOLUME = {48},
      YEAR = {1999},
    NUMBER = {4},
     PAGES = {1357--1394},
      ISSN = {0022-2518},
     CODEN = {IUMJAB},
   MRCLASS = {53C42 (35R35)},
  MRNUMBER = {MR1757077 (2001k:53120)},
MRREVIEWER = {Rabah Souam},
}

\bib{rosenberg2002}{article}{
  author={Rosenberg, Harold},
  title={Minimal surfaces in ${\mathbb {M}}\sp 2\times \mathbb {R}$},
  journal={Illinois J. Math.},
  volume={46},
  date={2002},
  number={4},
  pages={1177--1195},
  issn={0019-2082},
  review={\MR {1988257},
  Zbl 1036.53008.}
} \hide{ (2004d:53015)}

\bib{schoen-uniqueness}{article}{
   author={Schoen, Richard M.},
   title={Uniqueness, symmetry, and embeddedness of minimal surfaces},
   journal={J. Differential Geom.},
   volume={18},
   date={1983},
   number={4},
   pages={791--809 (1984)},
   issn={0022-040X},
   review={\MR{730928 (85f:53011)}},
}

\bib{smale-bridge}{article}{
   author={Smale, Nathan},
   title={A bridge principle for minimal and constant mean curvature
   submanifolds of ${\bf R}^N$},
   journal={Invent. Math.},
   volume={90},
   date={1987},
   number={3},
   pages={505--549},
   issn={0020-9910},
   review={\MR{914848 (88i:53101)}},
   doi={10.1007/BF01389177},
}

\bib{weber-hoffman-wolf}{article}{
    AUTHOR = {Weber, Matthias},
    AUTHOR= {Hoffman, David },
    AUTHOR={Wolf, Michael},
     TITLE = {An embedded genus-one helicoid},
   JOURNAL = {Ann. of Math. (2)},
  FJOURNAL = {Annals of Mathematics. Second Series},
    VOLUME = {169},
      YEAR = {2009},
    NUMBER = {2},
     PAGES = {347--448},
      ISSN = {0003-486X},
     CODEN = {ANMAAH},
   MRCLASS = {53A10 (49Q05)},
  MRNUMBER = {2480608 (2010d:53011)},
MRREVIEWER = {Antonio Alarc{\'o}n},
       DOI = {10.4007/annals.2009.169.347},
       URL = {http://dx.doi.org/10.4007/annals.2009.169.347},
}

\bib{white-space}{article}{
   author={White, Brian},
   title={The space of $m$-dimensional surfaces that are stationary for a
   parametric elliptic functional},
   journal={Indiana Univ. Math. J.},
   volume={36},
   date={1987},
   number={3},
   pages={567--602},
   issn={0022-2518},
   review={\MR{905611 (88k:58027)}},
   doi={10.1512/iumj.1987.36.36031},
}

\bib{white-curvature-estimates}{article}{
   author={White, B.},
   title={Curvature estimates and compactness theorems in $3$-manifolds for
   surfaces that are stationary for parametric elliptic functionals},
   journal={Invent. Math.},
   volume={88},
   date={1987},
   number={2},
   pages={243--256},
   issn={0020-9910},
   review={\MR{880951 (88g:58037)}},
   doi={10.1007/BF01388908},
}

\bib{white-new-applications}{article}{
   author={White, Brian},
   title={New applications of mapping degrees to minimal surface theory},
   journal={J. Differential Geom.},
   volume={29},
   date={1989},
   number={1},
   pages={143--162},
   issn={0022-040X},
   review={\MR{978083 (90e:49051)}},
}

\bib{white-stable-bridge}{article}{
   author={White, Brian},
   title={The bridge principle for stable minimal surfaces},
   journal={Calc. Var. Partial Differential Equations},
   volume={2},
   date={1994},
   number={4},
   pages={405--425},
   issn={0944-2669},
   review={\MR{1383916 (97d:49043)}},
   doi={10.1007/BF01192091},
}

\bib{white-unstable-bridge}{article}{
   author={White, Brian},
   title={The bridge principle for unstable and for singular minimal
   surfaces},
   journal={Comm. Anal. Geom.},
   volume={2},
   date={1994},
   number={4},
   pages={513--532},
   issn={1019-8385},
   review={\MR{1336893 (96k:49063)}},
}

\bib{white-local-regularity}{article}{
   author={White, Brian},
   title={A local regularity theorem for mean curvature flow},
   journal={Ann. of Math. (2)},
   volume={161},
   date={2005},
   number={3},
   pages={1487--1519},
   issn={0003-486X},
   review={\MR{2180405 (2006i:53100)}},
   doi={10.4007/annals.2005.161.1487},
}

\bib{white-isoperimetric}{article}{
   author={White, Brian},
   title={Which ambient spaces admit isoperimetric inequalities for
   submanifolds?},
   journal={J. Differential Geom.},
   volume={83},
   date={2009},
   number={1},
   pages={213--228},
   issn={0022-040X},
   review={\MR{2545035}},
}

\bib{white-controlling-area}{article}{
 author={White, Brian},
 title={Controlling area blow-up in 
        minimal or bounded mean curvature varieties 
        and some extensions of the Allard regularity theorems},
 date={2013},
 note={Preprint},
}

\bib{white-embedded}{article}{
 author={White, Brian},
 title={Embedded minimal surfaces in $3$-manifolds with local bounds on area and genus},
 date={2013},
 note={Preprint},
}

\bib{white-bumpy}{article}{
 author={White, Brian},
 title={Bumpy metrics for minimal submanifolds},
 date={2013},
 note={Preprint},
}

\end{biblist}

\end{bibdiv}

\end{document}